\numberwithin{equation}{subsection}
\newtheorem{theorem}{Theorem}
\newtheorem{lemma}[theorem]{Lemma}
\newtheorem{proposition}[theorem]{Proposition}
\newtheorem{corollary}[theorem]{Corollary}
\newtheorem{conj}[theorem]{Conjecture}
\theoremstyle{definition}
\newtheorem{defn}[theorem]{Definition}
\theoremstyle{remark}
\newtheorem{remark}[theorem]{Remark}
\newtheorem*{Remark}{Remark}
\newtheorem{para}[theorem]{}
\newcommand{\bC}{\mathbb{C}}
\newcommand{\bD}{\mathbb{D}}
\newcommand{\bF}{\mathbb{F}}
\newcommand{\bG}{\mathbb{G}}
\newcommand{\bL}{\mathbb{L}}
\newcommand{\bP}{\mathbb{P}}
\newcommand{\bQ}{\mathbb{Q}}
\newcommand{\bZ}{\mathbb{Z}}
\newcommand{\bbA}{\mathbf{A}}
\newcommand{\bbB}{\mathbf{B}}
\newcommand{\bbC}{\mathbf{C}}
\newcommand{\bbD}{\mathbf{D}}
\newcommand{\bbE}{\mathbf{E}}
\newcommand{\bbG}{\mathbf{G}}
\newcommand{\cA}{\mathcal{A}}
\newcommand{\cD}{\mathcal{D}}
\newcommand{\cM}{\mathcal{M}}
\newcommand{\cO}{\mathcal{O}}
\newcommand{\cT}{\mathcal{T}}
\newcommand{\cZ}{\mathcal{Z}}
\newcommand{\fa}{\mathfrak{a}}
\newcommand{\fb}{\mathfrak{b}}
\newcommand{\fc}{\mathfrak{c}}
\newcommand{\fd}{\mathfrak{d}}
\newcommand{\fe}{\mathfrak{e}}
\newcommand{\cris}{\mathrm{cris}}
\newcommand{\KS}{{\mathrm{KS}}}
\newcommand{\Finf}{F_{\infty}}
\newcommand{\el}{l(n)}
\DeclareMathOperator{\GL}{GL}
\DeclareMathOperator{\SL}{SL}
\DeclareMathOperator{\GSp}{GSp}
\DeclareMathOperator{\SO}{SO}
\DeclareMathOperator{\GSpin}{GSpin}
\DeclareMathOperator{\spin}{GSpin_{3,2}}
\DeclareMathOperator{\Spf}{Spf}
\DeclareMathOperator{\Mp}{Mp}
\DeclareMathOperator{\End}{End}
\DeclareMathOperator{\Spec}{Spec}
\DeclareMathOperator{\Span}{Span}
\DeclareMathOperator{\Fil}{Fil}
\DeclareMathOperator{\disc}{disc}
\DeclareMathOperator{\diag}{diag}
\DeclareMathOperator{\Nm}{Nm}
\DeclareMathOperator{\Id}{Id}
\DeclareMathOperator{\Tr}{Tr}
\DeclareMathOperator{\rk}{rk}
\DeclareMathOperator{\Frob}{Frob}
\DeclareMathOperator{\gr}{gr}
\DeclareMathOperator{\Pic}{Pic}
\newcommand{\yunqing}[1]{{\color{Blue} \sf  Yunqing: [#1]}}
\begin{document}

\title{Reductions of abelian surfaces over global function fields}

\author{Davesh Maulik}
\author{Ananth N. Shankar}
\author {Yunqing Tang}

\maketitle

\begin{abstract}
    Let $A$ be a non-isotrivial ordinary abelian surface over a global function field with good reduction everywhere. Suppose that $A$ does not have real multiplication by any real quadratic field with discriminant a multiple of $p$. We prove that there are infinitely many places modulo which $A$ is isogenous to the product of two elliptic curves.
\end{abstract}
\setcounter{tocdepth}{1}
\tableofcontents

\section{Introduction}

\subsection{The main results}

Let $p$ be an odd prime and let $\cA_2$ denote the moduli stack of principally polarized abelian surfaces over $\bF_p$.
We view $\cA_2$ as (the special fiber of the canonical integral model of) a GSpin Shimura variety and let $Z(m)$ denote the Heegner divisors in $\cA_2$; more precisely, $Z(m)$ parametrizes abelian surfaces with a special endomorphism $s$ such that $s\circ s$ is the endomorphism given by multiplication by $m$ (see \S\ref{sec_sp_end}). 
\begin{theorem}\label{thm_main}
Assume $p\geq 5$.
Let $C$ be an irreducible smooth quasi-projective curve with a finite morphism  $C\rightarrow \cA_{2,\overline{\bF}_p}$. Assume that the generic point of $C$ corresponds to an ordinary abelian surface. 
\begin{enumerate}
    \item\label{item_S} 
    If the image of $C$ is not contained in any Heegner divisor $Z(m)$, and if $C$ is projective, then there exist infinitely many $\bar{\bF}_p$-points on $C$ which correspond to non-simple abelian surfaces.
    \item\label{item_H}
    If the image of $C$ is contained in some $Z(m)$ such that $p\nmid m$, then there exist infinitely many $\bar{\bF}_p$-points on $C$ which correspond to abelian surfaces isogenous to self-products of elliptic curves
\end{enumerate}
\end{theorem}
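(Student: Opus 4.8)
The strategy is to run the intersection-theoretic argument of Charles and of Shankar--Tang on the special fibre of the GSpin Shimura variety $\cA_2$, now over $\bar\bF_p$: bound from below the total intersection number of $C$ with the Heegner divisors $Z(m)$, bound from above the local contribution at each geometric point, and deduce that $C$ must meet the $Z(m)$ at infinitely many points where $A_x$ carries an ``unexpected'' special endomorphism, which then forces the splitting. For the global input I would pass to a smooth toroidal compactification of $\cA_{2,\bar\bF_p}$ and invoke modularity of the generating series $\sum_{m>0}Z(m)q^m$ --- a vector-valued modular form of weight $5/2$ valued in $\Pic_\bQ$ (Borcherds, Kudla--Millson; the integral, hence characteristic-$p$, statement following from flatness of the integral models of the $Z(m)$ together with the work of Howard and Madapusi-Pera). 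Pairing with the class of $C$: since $C\to\cA_2$ is finite, the Hodge bundle $\omega$ (whose powers carry the Hasse invariant) has $\deg(\omega|_C)>0$, and in part~\ref{item_S} one has $(C\cdot Z(m))\geq 0$ because $C$ lies in no $Z(m)$; since the Eisenstein part of the generating series pairs $C$ with a positive multiple of $\deg(\omega|_C)$, the form $\Phi_C=\sum_m(C\cdot Z(m))q^m$ is nonzero with nonnegative coefficients and nonzero Eisenstein part, so $\sum_{m\leq X}(C\cdot Z(m))\asymp X^{5/2}$.

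For the local input, fix $x\in C(\bar\bF_p)$, set $R=\widehat{\cO}_{C,x}\cong\bar\bF_p[[t]]$, and let $L(A_x)$ be the positive-definite lattice of special endomorphisms of $A_x$. Using the classification of endomorphism algebras of abelian surfaces over $\bar\bF_p$, its rank is $1$, $3$, or $5$: rank $5$ exactly when $A_x$ is supersingular, rank $3$ exactly when $A_x$ is isogenous to the square of an ordinary elliptic curve, and rank $1$ otherwise (in particular whenever $A_x$ is simple). By Grothendieck--Messing theory the special endomorphisms deforming over $R/t^{n+1}$ form a sublattice $L_n\subseteq L(A_x)$ cut out by an explicit obstruction built from the Hodge filtration on the Dieudonn\'e crystal of $\cA_C$, and $\mathrm{mult}_x(C\cdot Z(m))$ is, up to bounded error, $\sum_{v\in L(A_x),\,Q(v)=m}(\text{order to which }v\text{ deforms along }C)$. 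In part~\ref{item_S} the generic fibre of $\cA_C$ over $R$ carries no special endomorphism, so $\bigcap_nL_n=0$; one needs the quantitative statement that the $L_n$ shrink fast enough that $\sum_{m\leq X}\mathrm{mult}_x(C\cdot Z(m))=o(X^{5/2})$.

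Assembling these, one argues by contradiction for part~\ref{item_S}: were there only finitely many non-simple points on $C$, then --- using that an abelian surface is isogenous to a product $E_1\times E_2$ precisely when it carries a special endomorphism of square norm, i.e.\ lies on some $Z(d^2)$ --- all but finitely many points of $C$ would be simple and ordinary, with $L(A_x)$ of rank $1$ supported on the single square class fixed by the real quadratic subfield of $\bQ(\pi_{A_x})$. A bookkeeping of which $Z(m)$ such a point can lie on, fed into the quantitative deformation bound (and with the fact that the finitely many non-ordinary, in particular supersingular, points contribute $o(X^{5/2})$), shows that the total contribution of these points to $\sum_{m\leq X}(C\cdot Z(m))$ has strictly smaller order than $X^{5/2}$, contradicting the global bound. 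For part~\ref{item_H}, $C\subseteq Z(m)$ with $p\nmid m$, so the generic surface has a special endomorphism $s_0$ with $s_0\circ s_0=[m]$, while $Z(m)$ is itself the characteristic-$p$ fibre of the integral model of a GSpin Shimura variety of signature $(2,2)$ --- a Hilbert or quaternionic modular surface --- carrying its own special divisors; rerunning the global-to-local argument inside $Z(m)$ (the hypothesis $p\nmid m$ is what keeps the deformation theory at the new special points under control) yields infinitely many points carrying a special endomorphism independent of $s_0$, and combining this with the elementary fact that a non-simple abelian surface admitting real multiplication must be isogenous to a self-product $E\times E$ gives the conclusion. When $m$ is a perfect square, $Z(m)$ parametrizes products $E_1\times E_2$ and the assertion reduces to the statement that two non-isotrivial elliptic curves over a global function field have infinitely many places of exceptional isogeny, handled identically on a product of two modular curves.

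The step I expect to be the main obstacle is the local estimate at the supersingular points. There $L(A_x)$ attains its maximal rank $5$, so already the crude count $\#\{v\in L(A_x):Q(v)\leq X\}\asymp X^{5/2}$ matches the shape of the global main term, and a mere bound on deformation orders is insufficient: one must show that the special endomorphisms of the supersingular $p$-divisible group genuinely fail to deform along $C$ --- that the ranks of the $L_n$ drop and the indices $[L(A_x):L_n]$ grow at a uniformly controlled rate. This requires a careful analysis of the local model / Rapoport--Zink space at a supersingular point, of the Hodge filtration on the corresponding Dieudonn\'e module, and of how the tangent direction $T_xC$ sits relative to the special divisors through $x$; securing this estimate, together with assembling the characteristic-$p$ modularity and the positivity of the Eisenstein part of the special-divisor generating series, is where the real work lies.
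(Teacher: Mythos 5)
Your architecture — modularity of the generating series giving a global lower bound, a local deformation-theoretic upper bound at each point, and a contradiction from summing over $m$ — is the paper's architecture, and you correctly flag the supersingular points as the hard case. But there is a genuine error in what you assert about that case, and it is exactly the error the paper is designed to repair.

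You write that the finitely many supersingular points contribute $o(X^{5/2})$ to $\sum_{m\leq X}(C\cdot Z(m))$, treating them like the ordinary points. This is false, and it is the central new difficulty of the equicharacteristic setting that distinguishes this theorem from its mixed-characteristic analogue. At a supersingular $x$ the lattice $L(A_x)$ has rank $5$, so its theta series is already a weight-$5/2$ modular form with nonzero Eisenstein part; the local intersection multiplicity $\mathrm{mult}_x(C\cdot Z(m))$, summed over $m\le X$, is $\asymp X^{5/2}$ — the same order as the global main term, not smaller. No bound on deformation orders alone can make a single supersingular point negligible in the crude sum over all $m$. Your argument by contradiction therefore does not close: after subtracting off the ordinary points you would still face a supersingular contribution commensurate with the global term, with no visible positive gap.

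The paper overcomes this with three interlocking ideas you do not mention. First, one does not sum over all $m\le X$ but restricts to a sparse arithmetic set (prime squares $\ell^2$ for part~\ref{item_S}; for part~\ref{item_H}, positive-density $m$ prime to $p$ chosen so that $Z(m)$ is proper on the Hilbert surface, which also sidesteps your toroidal-boundary correction). Second, the Decay Lemma (proved via an explicit computation of the $F$-crystal $\bL_{\cris}$ in Kisin-style coordinates, not merely cited) produces a rank-$3$ submodule of $L(A_x)\otimes\bZ_p$ decaying rapidly, so that for $m$ in an appropriate range the representable $m$ are captured by a rank-$2$ sublattice $P_n$ of growing discriminant; by a Duke/Tarasov-type equidistribution input, the density of prime squares represented by $P_n$ tends to $0$, and these bad $m$ are discarded into a set $S_M$ of negligible size. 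Third, and crucially, even after all this the supersingular contribution is \emph{not} $o(\,)$ of the global but only $\le\frac{11}{12}$ of its allotted share $g_P(m)$: one must also exploit the \emph{very} rapid decay of one extra vector (reflecting the singularity of the non-ordinary locus at a superspecial point) and compare Fourier coefficients of Eisenstein series (Bruinier--Kuss formula, Siegel mass formula, and explicit local-density computations at $p$) to pin down this ratio strictly below $1$; this is where the hypothesis $p\ge 5$ enters. Until you supply (i) the sparsification of $m$, (ii) an actual proof of the decay and very-rapid decay, and (iii) the quantitative Eisenstein comparison that converts "rapid decay'' into a ratio $<1$, the contradiction in your final paragraph does not go through.
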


In \Cref{thm_main}(\ref{item_H}), note that the elliptic curve may vary for these points.  An equivalent statement is that there exist infinitely many $\bar{\bF}_p$-points on $C$ which correspond to abelian surfaces whose N\'eron--Severi ranks are strictly larger than that of the generic point of $C$. Note that in the case (\ref{item_H}), any irreducible component of $Z(m)\subset \cA_2$ is an irreducible component of a Hecke translate of some Hilbert modular surface associated to the real quadratic field $F=\bQ(\sqrt{m})$ (if $m$ is a square number, then we obtain a Hecke translate of the self-product of the modular curve). 

\begin{Remark}
The assumption that the generic point is ordinary is necessary (especially if we formulate the theorem in terms of the N\'eron--Severi rank). For instance, in the case (\ref{item_H}), we may take $C$ to be an irreducible component of the non-ordinary locus. If $p$ is inert in $F$, then all the points on $C$ are supersingular and the N\'eron--Severi rank does not jump. If $p$ is split in $F$, then the only points where the N\'eron--Severi rank jumps are the finitely many supersingular points.
\end{Remark}

\begin{Remark}\label{whynotcompact} We make the (technical) assumption that $C$ is projective in (\ref{item_S}) because the Heegner divisors $Z(m)$ are all non-compact and we plan to remove this assumption in future work. On the other hand, the Hilbert modular surfaces considered in (\ref{item_H}) do contain compact special divisors (see the second half of \S\ref{sec_sp_end} for the definitions of special divisors in the Hilbert case, and \S\ref{def_setT} for a criterion of when these special divisors are compact) whose $\overline{\bF}_p$ points parameterize abelian surfaces isogenous to a self-product of elliptic curves. By working exclusively with these compact special divisors, we no longer need assume that $C$ is projective. 
\end{Remark}

\begin{remark}\label{rmk_realquad}
A modification of our argument shows that with the same assumption in (\ref{item_S}), for a fixed real quadratic number field $F$, there are infinitely many ordinary $\bar{\bF}_p$-points on $C$ such that the corresponding abelian surfaces admit real multiplication by $F$. 
Here we need to assume $p\geq 7$ if $p$ is ramified in $F$. Otherwise, $p\geq 5$ is enough.
\end{remark}

To prove \Cref{thm_main}(\ref{item_S}), we consider the intersection number of $C$ and $Z(\ell^2)$, where $\ell$ is a varying prime number. If we consider $Z(\ell)$ with $\ell \equiv 3 \bmod 4$ instead, we prove
\begin{theorem}\label{thm_max}
Suppose we have the same assumptions as in \Cref{thm_main}(\ref{item_S}). Then there are infinitely many ordinary $\bar{\bF}_p$-points on $C$ such that, for each of these points, the corresponding abelian surface admits real multiplication by the ring of integers of some real quadratic field (note that the quadratic fields may vary for these points).
\end{theorem}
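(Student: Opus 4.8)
The plan is to reproduce the intersection-theoretic argument behind \Cref{thm_main}(\ref{item_S}), but with the Heegner divisor $Z(\ell)$ in place of $Z(\ell^2)$, letting $\ell$ run over primes with $\ell\equiv 3\bmod 4$ and $\ell\neq p$; by Dirichlet's theorem there are infinitely many such $\ell$. The role of the congruence condition is purely arithmetic. If $z\in C(\overline{\bF}_p)$ lies on $Z(\ell)$, then by definition the associated abelian surface $A_z$ carries a special endomorphism $s$ with $s\circ s=[\ell]$; since $\ell$ is not a perfect square, the minimal polynomial of $s$ over $\bZ$ is $x^2-\ell$, so $\bZ[s]\cong\bZ[\sqrt{\ell}]$, and because $\ell$ is prime with $\ell\equiv 3\bmod 4$ this is the \emph{full} ring of integers of the real quadratic field $F_\ell:=\bQ(\sqrt{\ell})$. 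Thus every $\overline{\bF}_p$-point of $C$ on $Z(\ell)$ gives an abelian surface with real multiplication by $\cO_{F_\ell}$; distinct primes $\ell$ give distinct fields $F_\ell$, and (as $\ell\neq p$ is odd) $p$ is unramified in $F_\ell$, which is why $p\geq 5$ is enough. So it suffices to prove that the set $\cY$ of \emph{ordinary} $\overline{\bF}_p$-points of $C$ whose abelian surface has real multiplication by the maximal order of some real quadratic field is infinite.

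To this end I would carry over the two estimates underlying \Cref{thm_main}(\ref{item_S}). \textbf{Global lower bound.} By modularity of the generating series $\sum_m Z(m)q^m$, in the Chow group of a toroidal compactification of $\cA_2$ one has $\overline{Z(\ell)}\sim c(\ell)\,\omega + (\text{boundary})$, where $\omega$ is the Hodge bundle and $c(\ell)$ is, up to a cuspidal correction of strictly smaller order in $\ell$, the $\ell$-th Fourier coefficient of a half-integral weight ($5/2$) Eisenstein series, so $c(\ell)$ grows like $\ell^{3/2}$. Since the image of $C$ is a projective curve lying in $\cA_{2,\overline{\bF}_p}$, it misses the boundary, and since it is contained in no $Z(m)$, $Z(\ell)|_C$ is a well-defined effective divisor on $C$ of degree $\asymp c(\ell)\cdot\deg(\omega|_C)$, with $\deg(\omega|_C)>0$ because $\omega$ is ample on the minimal compactification and $C\to\cA_2$ is finite. \textbf{Uniform local upper bound.} For every $z\in C(\overline{\bF}_p)$ the intersection multiplicity $\mathrm{mult}_z(Z(\ell)|_C)$ is $o(c(\ell))$ as $\ell\to\infty$: at an ordinary point one writes $Z(\ell)$ locally as a union of formal subtori in Serre--Tate coordinates, indexed by the special endomorphisms of $A_z$ of norm $\ell$, and bounds their number together with the order of tangency with $C$; at a (non-ordinary) supersingular point one must instead control how a special endomorphism of norm $\ell$ can deform in the formal neighborhood of $z$, using the Dieudonn\'e/Rapoport--Zink description — this is the delicate input, and it is exactly the estimate used to prove \Cref{thm_main}(\ref{item_S}).

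Granting these, the support of $Z(\ell)|_C$ contains at least $\deg(Z(\ell)|_C)/\max_z\mathrm{mult}_z(Z(\ell)|_C)$ points, a quantity tending to infinity with $\ell$. Since $C$ has only finitely many non-ordinary points, for all large $\ell\equiv 3\bmod 4$ the divisor $Z(\ell)|_C$ is supported at (arbitrarily many, hence at least one) ordinary points of $C$, each of which lies in $\cY$ by the first paragraph. If $\cY$ were finite, then — every ordinary point of $C\cap Z(\ell)$ lying in $\cY$ — the ordinary part of $C\cap Z(\ell)$ would have size at most $\#\cY$ for every such $\ell$, which together with the fixed bound on the number of non-ordinary points of $C$ would bound $\#(C\cap Z(\ell))$ independently of $\ell$, contradicting the divergence just obtained. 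Hence $\cY$ is infinite, which is \Cref{thm_max}.

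The main obstacle is the uniform local bound at the supersingular points of $C$: one must show that, in the formal neighborhood of each of the finitely many such points, the locus along which a special endomorphism of norm $\ell$ survives contributes to $\deg(Z(\ell)|_C)$ an amount negligible compared with $c(\ell)\sim\ell^{3/2}$. This is precisely the content of the corresponding step in \Cref{thm_main}(\ref{item_S}) (there applied to $Z(\ell^2)$, with $c(\ell^2)\sim\ell^{3}$), established through the deformation theory of special endomorphisms and the structure of the crystalline realization at supersingular points; beyond reusing it, the only new content in \Cref{thm_max} is the elementary remark that $\bZ[\sqrt{\ell}]=\cO_{\bQ(\sqrt{\ell})}$ for primes $\ell\equiv 3\bmod 4$, which upgrades the ``N\'eron--Severi jump'' conclusion to ``real multiplication by a maximal order''.
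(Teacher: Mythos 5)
Your high-level plan — intersect $C$ with $Z(\ell)$ for primes $\ell\equiv 3\bmod 4$ and use $\bZ[\sqrt\ell]=\cO_{\bQ(\sqrt\ell)}$ to upgrade the conclusion to RM by a maximal order — is exactly the paper's. But the "uniform local upper bound" step contains a genuine gap that the paper's argument is built around avoiding.

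You assert that for every $z\in C(\overline{\bF}_p)$ one has $\mathrm{mult}_z(Z(\ell)|_C)=o(c(\ell))$ as $\ell\to\infty$. This is false at the supersingular points of $C$, and the paper says so explicitly (see the discussion in \S 1.3, "the local contributions at supersingular points are of the same magnitude as the global contribution"). Concretely: the lattice $L'$ of special endomorphisms at a supersingular point has rank $5$, so its theta series is a weight $5/2$ form whose non-cuspidal Fourier coefficients grow like $m^{3/2}$ — the same order as the global coefficient $q_L(m)$. Thus even the depth-$1$ local contribution ($n=0$ in \Cref{lem_count}) at a single supersingular point is $\asymp\ell^{3/2}$ for a positive proportion of $\ell$, and the quotient $\deg(Z(\ell)|_C)/\max_z\mathrm{mult}_z$ does not tend to infinity. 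The deformation-theoretic ("decay") estimates of \S\S 5--6, which you invoke as "exactly the estimate used to prove \Cref{thm_main}(\ref{item_S})," do not give a pointwise $o(c(\ell))$ bound; they control the \emph{higher-order} multiplicity contributions, and even combined with the Eisenstein-coefficient computations they only yield that the local contribution at a supersingular point is at most a constant $<1$ times the global term. The actual argument (\Cref{supersingular}, proved in \S 9.2) sums over $m$ in a carefully chosen set $T_M\setminus S_M$ (throwing out a thin "bad" set $S_M$ on which geometry of numbers fails), and shows that the \emph{cumulative} supersingular local contribution over this set is $\leq \frac{11}{12}$ of the cumulative global contribution, with the $11/12$ coming from the "very rapid decay" in \Cref{thm_decay} and the local density comparison in \Cref{compare_qL}. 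That strictly-less-than-one ratio, together with the $o(\cdot)$ estimate at non-supersingular points, is what forces a positive fraction of the intersection to fall on new, necessarily ordinary, points. Your outline has no mechanism to replace this: a fixed finite set of supersingular points really can absorb all of $\deg(Z(\ell)|_C)$ for infinitely many individual $\ell$.

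A smaller but still substantive omission: the paper restricts to primes $\ell$ that are nonzero quadratic residues mod $p$ (see the definition of $T$ in \S\ref{def_setT} and the first line of the proof of \Cref{supersingular} for \Cref{thm_max}). This guarantees, via the local density computation in \S\ref{par_density_S}(2), that no supergeneric supersingular point lies on $Z(\ell)$, so that only the superspecial decay lemma (\Cref{thm_decay}, which has the "very rapid decay" used for the $11/12$ constant) is needed. Without this congruence one must also handle supergeneric points, and the superspecial decay lemma does not apply there.
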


It would be interesting to find $\bar{\bF}_p$-points of complex multiplication by maximal orders, but our current method only asserts real multiplication by  maximal orders.

\subsection{Previous work and heuristics}
\Cref{thm_main} is a generalization of \cite[Proposition 7.3]{CO06}, where Chai and Oort proved \Cref{thm_main}(\ref{item_H}) with $\mathcal{A}_1 \times \mathcal{A}_1$ taking the place of a Hilbert modular surface. Their proof crucially uses the product structure of the Shimura variety, as well as the product structure of the Frobenius morphism. Following the discussion in \S 7 of \cite{CO06}, \Cref{thm_main} is related to a bi-algebraicity conjecture. See \S 1.4 for more details. 

We offer the following heuristic for \Cref{thm_main}(\ref{item_S}).  
Using Honda and Tate's classification of $\bF_q^n$-isogeny classes of abelian varieties in terms of Weil-$q^n$ numbers, the number of $\bF_q$-isogeny classes of abelian varieties is seen to equal $q^{n(3/2 + o(1))}$. 
Similarly, the number of \emph{split} $\bF_{q^n}$-isogeny classes in $\cA_2$ is seen to equal $q^{n(1 + o(1))}$. If we treat the map from $C(\bF_{q^n})$ to the set of $\bF_{q^n}$-isogeny classes as a random map, we expect that the number of $\bF_{q^n}$ points of $C$ which are not simple is around $q^{n/2(1 + o(1))}$. Letting $n$ approach infinity, this heuristic suggests that infinitely many points of $C(\overline{\bF}_q)$ that are split.
There are analogous questions in other settings.  For the case of equicharacteristic $0$, these results are well known (for instance, the density of Noether--Lefschetz loci is discussed in \cite{Voisin}).  In mixed characteristic, the analogue of \Cref{thm_main}(\ref{item_H}) is treated in \cite{Ch}, \cite{Ananth17}. The major difference between \Cref{thm_main} and these other cases is that the ordinary generic point assumption is crucial since the result is simply false otherwise (as remarked in \S 1.1). 

Indeed, this difference hints at the key difficulty in our setting, which is that the local intersection number at a supersingular point is of the same magnitude as the total intersection number, which makes the approach more complicated than that of \cite{Ananth17}; we discuss this in more detail in \S 1.3.

\subsection{Proof of the main results}\label{sec_intro_pf}
We view both Hilbert modular surfaces and the Siegel three-fold as GSpin Shimura varieties attached to a quadratic space $(V,Q)$.  In each setting, we have a notion of special endomorphisms and special divisors and, for simplicity, we use the same notation $Z(m)$.

The main idea of the proof is to compare the global and local intersection numbers of $C . Z(m)$\footnote{Although $C$ is not a substack of $\cA_2$, we may define $C.Z(m)$ as the degree of the pull back of $Z(m)$ via $C\rightarrow \cA_{2,\bar{\bF}_p}$ when $C$ is projective.} for appropriate sequences of $m$ and show it is not possible for finitely many points to account for the total global intersection as $m$ increases. 

More precisely,
\begin{enumerate}
    \item The global intersection number $I(m) \colonequals C . Z(m)$ is controlled by Borcherds theory \cite{Bor98} (see also \cite{Davesh} and \cite{HMP}).
    \item We prove that as $m\rightarrow \infty$, the total local contribution from supersingular points is at most $\frac{11}{12}I(m)$ by studying special endomorphisms.\footnote{Indeed, the ratio depends on $p$ and it goes to $1/2$ as $p\rightarrow \infty$.}
    \item We prove that the local contribution from a non-supersingular point is $o(I(m))$ as $m\rightarrow \infty$. 
\end{enumerate}
This allows us to conclude that, as $m\rightarrow \infty$, more and more points of $C$ contribute to the intersection $C . Z(m)$. In order to prove \Cref{thm_main}(\ref{item_S}), the sequence of $m$ will consist only of squares, and in order to prove \Cref{thm_max}, the sequence will consist only of primes. Note that in $\cA_2$, the Heegner divisor $Z(m)$ for square $m$  parametrizes abelian surfaces which are not geometrically simple, thereby allowing us to deduce \Cref{thm_main}(\ref{item_S}). Similar arguments allow us to deduce part \Cref{thm_main}(\ref{item_H}), and also \Cref{thm_max}. 

Compared to the number field situation, the main difficulty of the positive characteristic function field case is that the local contributions at supersingular points are of the same magnitude as the global contribution. More precisely, taking the Hilbert case as an example, Borcherds theory implies that the generating series of $Z(m)$ is a non-cuspidal modular form of weight $2$; 
on the other hand, the theta series attached to the special endomorphism lattice at a supersingular point is also a non-cuspidal weight $2$ modular form since the lattice is of rank $4$.
Therefore, even without considering higher intersection multiplicities, the local intersection number of $C.Z(m)$ at a supersingular point is also of the same magnitude as the growth rate of Fourier coefficients of an Eisenstein series of weight $2$. 

\subsubsection*{Bounding the local contribution from a supersingular point}
Let $A \rightarrow C$ denote the family of principally polarized abelian surfaces induced from a morphism  $C\rightarrow \cA_{2,\overline{\bF}_p}$, 
and let $\Spf \overline{\bF}_p[[t]] \rightarrow C$ denote the formal neighborhood of a supersingular point. For a special endomorphism $s$ such that $s\circ s=m$, we say that $s$ is of norm $m$.

The local contribution to $C . Z(m)$ from this supersingular point equals $\sum_{n=0}^{\infty} r_n(m)$, where $r_n(m)$ is the number of special endomorphisms of $A \mod t^{n+1}$ with norm $m$. Therefore, in order to bound the local contribution, it suffices to prove that, as 
$n\rightarrow \infty$, 
there are many special endomorphisms of $A \mod t^n$ which decay rapidly enough (see \Cref{def_decay}, \Cref{thm_decay}, \Cref{thm_decay_sg} for precise statements). 

A similar decay result appears in the mixed characteristic setting (see \cite{Ananth17}), by a straightforward application of Grothendieck--Messing theory.  In the equicharacteristic case, however, proving our decay results is much more involved. In particular, we need to use Kisin's description \cite[\S 1.4, 1.5]{Kisin} of the $F$-crystal associated to a certain automorphic vector bundle $\bL_\cris$, whose $F$-invariant part is the lattice of special endomorphisms, in order to prove the required decay.  See \S\ref{summary_Kisin} and the proof of \Cref{thm_decay} for more details.  

We will focus on the Siegel case from now on. Let $L_0$ denote the lattice of special endomorphisms of $A \mod t$, and let $L_n \subset L_0$ be the lattice of special endomorphisms of $A \mod t^{n+1}$. These lattices are of rank $5$ and are equipped with natural quadratic forms such that $A \mod t^{n+1}$ admits a special endomorphism of norm $m$ if and only if $m$ is represented by $L_n$. Broadly speaking, we can bound the local contribution by using geometry-of-numbers techniques. To obtain the desired estimate, we choose the sequence $m$ as follows. We first prove the existence of a rank $2$ sublattice $P_n \subset L_n$ that has the following property: for all $m$ bounded by an appropriate function of $n$, the abelian surface $A \mod t^{n+1}$ has a special endomorphism of norm $m$ only if the quadratic form restricted to $P_n$ represents $m$. This fact follows from the existence of a rank $3$ submodule of special endomorphisms which decay rapidly (\Cref{thm_decay,thm_decay_sg}). Furthermore, the discriminant of $P_n$ goes to infinity as $n\rightarrow\infty$. 
Therefore, the density of numbers (or primes, or prime-squares) represented by the \emph{binary} quadratic form $P_n$ approaches zero, as
$n\rightarrow \infty$.
We now pick a sequence of prime-squares $m$ none of which are represented by $P_n$ defined by the finitely many supersingular points on $C$.

The non-ordinary locus is singular at superspecial points. This allows us to prove the existence of a special endomorphism that decays ``more rapidly than expected'' (see \Cref{def_decay}(3)). Consequently, by the explicit formula of Eisenstein series in these cases by \cite{BK01}, we prove that the sum of local contributions at supersingular points is at most $11/12$ of the global contribution. 

We remark that our proof is more involved than the proof of \cite[Proposition 7.3]{CO06} because the intersection theory on Hilbert modular surfaces and Siegel three-folds is more complicated than that on the product of $j$-lines.

\subsection{Additional remarks} The key difference between the number field and function field situation is the following. Let $A$ be an abelian surface over $\mathcal{O}_K$, where $K$ is a local field. The $\bZ_p$-module of special endomorphisms of $A[p^{\infty}]$ has rank $\leq 3$. This rank equals three if and only if $A$ can be realized as the limit point (in the analytic topology) of a sequence of CM points. This can happen in the mixed characteristic case, but not in the equicharacteristic $p$ case unless $A$ is defined over a finite field.\footnote{Ordinary abelian varieties which have CM are defined over finite fields.} Thus, we have a rank $3$ decay in the Decay Lemmas (\Cref{thm_decay,thm_decay_sg}).

In the setting of higher dimensional GSpin Shimura varieties, for the same reason, we expect that generalizations of the Decay Lemma will only yield a rank $3$ $\bZ_p$-module that decays rapidly. This  has the consequence of the existence of formal curves, such that the module of special endomorphisms of the $p$-divisible group over these formal curves have large rank. An interesting bi-algebraicity question is whether such formal curves can be algebraic without being special. In the ordinary case, Chai has the following conjecture:

\begin{conj}[\cite{Ch03}*{Conj. 7.2, Remark 7.2.1, Prop. 5.3, Remark 5.3.1}]
Let $X$ be a subvariety in a mod $p$ Shimura variety passing through an ordinary point $P$. Assume that the formal germ of $X$ at $P$ is a formal torus in the Serre--Tate coordinates. Then $X$ is a Shimura subvariety.
\end{conj}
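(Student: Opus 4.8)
We sketch a possible approach to the conjecture above; since it is open, this is a strategy rather than an argument. The plan is to transport the Tate-linearity hypothesis to characteristic zero, apply the Andr\'e--Oort theorem, and then descend, the crux being a $p$-adic bi-algebraicity statement for the Serre--Tate uniformization.

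First I would reduce to the case that $P$ is a closed point. By Chai's rigidity results on the Serre--Tate foliation \cite{Ch03}, if $X$ is Tate-linear at one ordinary point then it is Tate-linear at every ordinary point of its smooth locus, so $P$ may be taken in $\mathcal{S}(\overline{\bF}_p)$; its canonical (Serre--Tate) lift $\widetilde{P}$ is then a CM, hence special, point of $\mathcal{S}_{\overline{\bQ}_p}$. A formal subtorus of a Serre--Tate torus is cut out by a saturated $\bZ_p$-submodule of its cocharacter lattice, and this lattice is canonically the same over $\overline{\bF}_p$ and over $W(\overline{\bF}_p)$; hence the formal subtorus $\widehat{X}_P \subset \widehat{\mathcal{S}}_P$ lifts uniquely to a formal subtorus $\widehat{X}^{\,\natural}$ of the Serre--Tate formal torus $\widehat{\mathcal{S}}_{\widetilde{P}}$ over $W(\overline{\bF}_p)$. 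The torsion points of $\widehat{X}^{\,\natural}$ are quasi-canonical liftings: each is a CM abelian variety --- in the Hodge-type case, a motive whose Mumford--Tate group is a torus --- hence a special point of $\mathcal{S}_{\overline{\bQ}_p}$, and they are $p$-adically dense in $\widehat{X}^{\,\natural}$ (a nonzero power series on the formal polydisk has only finitely many zeros among the $p$-power torsion points). Verifying that these liftings carry the full complement of Hodge cycles, so that the associated points are genuinely special and not merely CM up to isogeny of some factor, is already delicate in the general Hodge-type setting and calls on the theory of integral canonical models together with the crystalline description of $\bL_\cris$ recalled in \S\ref{summary_Kisin}.

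Next I would invoke the Andr\'e--Oort theorem for Shimura varieties: the Zariski closure $Y$ of these special points is a finite union of special subvarieties of $\mathcal{S}_{\overline{\bQ}_p}$; let $Y_0$ be the component through $\widetilde{P}$. Since the torsion points are dense in $\widehat{X}^{\,\natural}$, the formal completion of $Y_0$ at $\widetilde{P}$ contains $\widehat{X}^{\,\natural}$, so $\dim Y_0 \geq \dim X$. Using integral canonical models, $Y_0$ extends to a model over $W(\overline{\bF}_p)$ whose special fiber $\overline{Y_0}$ is a Shimura subvariety of $\mathcal{S}_{\overline{\bF}_p}$; it contains $P$, and its formal completion at $P$ contains $\widehat{X}_P$, whence $X \subseteq \overline{Y_0}$. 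If $\dim \overline{Y_0} = \dim X$, the two irreducible varieties coincide and $X$ is a Shimura subvariety. Otherwise one would like to repeat the argument with $\mathcal{S}$ replaced by $\overline{Y_0}$; functoriality of Serre--Tate tori under Shimura immersions shows that $\widehat{X}_P$ is still a formal subtorus in this smaller ambient variety, so the hypotheses are preserved.

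The main obstacle is that this induction need not terminate: a priori the $(\dim X)$-dimensional formal subtorus $\widehat{X}^{\,\natural}$ could be \emph{transcendental} and Zariski-dense in $Y_0$ even when $\dim Y_0 > \dim X$, in which case the ambient variety fails to shrink. Excluding this is exactly a $p$-adic Ax--Lindemann--type statement for the Serre--Tate coordinates: a formal subtorus of $\widehat{\mathcal{S}}_P$ whose reduction modulo $p$ is the formal completion of an algebraic subvariety must itself be algebraic, of the same dimension. No general technique for this seems to be available. In the cases most relevant to the present paper (GSpin, Hilbert, $\cA_2$) one could instead attempt to avoid characteristic zero altogether: Tate-linearity of $X$ produces a nonzero module of special endomorphisms of the $p$-divisible group over $\widehat{X}_P$, and the rigidity, integrality, and $F$-crystal analysis developed in \S\ref{summary_Kisin} and in the proof of \Cref{thm_decay}, combined with Chai's monodromy results, should propagate this to $X$ and identify the resulting sub-$F$-isocrystal with a genuine special endomorphism, forcing $X$ into a strictly smaller GSpin or Hilbert sub-Shimura variety and allowing an induction on the rank of the ambient quadratic space. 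A secondary obstacle common to both routes is the good-reduction input in the descent: the sub-Shimura datum cutting out $Y_0$ need not a priori be unramified at $p$, so one must use the ordinarity of $\widetilde{P}$ to conclude that its Mumford--Tate group is unramified at $p$ and that $\overline{Y_0}$ is a Shimura subvariety in the strict sense.
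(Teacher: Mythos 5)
The statement you were asked to prove is not a theorem of this paper at all: it appears in the introduction as an open \emph{conjecture} of Chai (cited to \cite{Ch03}), quoted as motivation for the bi-algebraicity questions that the authors' Decay Lemma touches on. The paper offers no proof of it, so there is nothing to compare your sketch against, and you were right to present a strategy rather than a completed argument.

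As a strategy outline, your sketch is the standard one and correctly locates the bottleneck. Lifting the formal subtorus via Serre--Tate to characteristic zero, observing that the torsion points of the lift are CM (quasi-canonical) lifts and hence special, applying Andr\'e--Oort, and descending is exactly the known program; and the point where it stalls is the one you name, namely a $p$-adic Ax--Lindemann/bi-algebraicity statement for the Serre--Tate uniformization, which would rule out a transcendental formal subtorus being Zariski-dense in a strictly larger special subvariety. A couple of further cautions worth recording: the claim that \emph{every} torsion point of the lifted formal subtorus is a CM point of the Shimura variety is not automatic in general Hodge type --- one must check that the crystalline Tate cycles on a quasi-canonical lift promote to full Hodge cycles, and this already uses the ordinarity of $P$ and Chai's foliation/rigidity machinery in an essential way; and the proposed induction on the ambient special subvariety requires knowing that the sub-Shimura datum is again of Hodge type, unramified at $p$, and admits a good integral canonical model, none of which is free. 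None of this is a defect in your write-up --- you flagged the gaps --- but the conclusion is that this remains a conjecture, and the present paper neither claims nor needs a proof of it.
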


\subsection{Organization of paper}
In \S\ref{sec_lat+sp-end}, we recall the notion of special endomorphisms, special divisors and crystalline realization $\bL_\cris$ of the automorphic vector bundle of special endomorphisms.
In \S\ref{sec_lattices}, we recall the lattices of special endomorphisms of a supersingular point and compute $\bL_{\cris}$ on its deformation space.  
In \S\ref{sec_global}, we recall Borcherds theory and the explicit formula for the Fourier coefficients of vector-valued Eisenstein series due to Bruinier--Kuss; we use them to compare the global intersection number and the $\bmod\, t$ local intersection number at a supersingular point. Sections \S\ref{sec_decay_Hil} and \S\ref{sec_decay_Sie} are the key technical part of the paper. We prove the decay theorems for special endomorphisms, which we will use to bound the higher local intersection multiplicities at supersingular points. Section \S\ref{sec_outline} provides the outline of the main proofs and by geometry-of-numbers arguments, we prove \Cref{thm_main}(\ref{item_H}) in \S\ref{sec_pf_Hil} and prove \Cref{thm_main}(\ref{item_S}) and \Cref{thm_max} in \S\ref{sec_pf_Sie}. 

In order to get the main idea of the proof, the reader may focus on \Cref{thm_main}(\ref{item_H}) and start from \S\S\ref{sec_outline},\ref{sec_pf_Hil} and refer back to \S\S\ref{sec_lattices}-\ref{sec_decay_Hil} when necessary.

\subsection{Notation}
We write $f\asymp g$ if $f=O(g), g=O(f)$. Throughout the paper, $p$ is an odd prime.

\subsection*{Acknowledgement} We thank Johan de Jong, Keerthi Madapusi Pera, Arul Shankar, Salim Tayou, and Jacob Tsimerman for helpful discussions.
D.M. is partially supported by NSF FRG grant DMS-1159265.  Y.T. is partially supported by the NSF grant DMS-1801237.
We would like to thank the referee for valuable suggestions on the exposition of the paper.

\numberwithin{theorem}{subsection}

\section{Special endomorphisms}\label{sec_lat+sp-end}
In this section, we first introduce quadratic lattices $(L,Q)$ such that the associated GSpin Shimura varieties will be $\cA_2$ and certain Hilbert modular surfaces related to the Heegner divisors $Z(m)$. The definition of special endomorphisms and Heegner divisors are given in \S\ref{sec_sp_end}.

\subsection{The global lattice $L$}\label{sec_lat_glo}
For a quadratic $\bZ$-lattice $(L,Q)$, let $C(L)$ (resp.~$C^+(L)$) denote the (resp.~even) Clifford algebra of $L$. Let $(-)'$ denote the standard involution on $C(L)$ fixing all elements in $L$ given by $(v_1\cdots v_n)'=v_n\cdots v_1$ for $v_i\in L$. Let $V$ denote $L\otimes \bQ$ endowed with the quadratic form $Q$. There is a bilinear form $[-,-]$ on $V$ given by $[x,y]:=Q(x+y)-Q(x)-Q(y)$.

Let $L_S$ be the rank $5$ $\bZ$-lattice endowed with the quadratic form $Q(x)=x_0^2+x_1x_2-x_3x_4$ for $x=(x_0, \cdots, x_4)\in \bZ^5$. This quadratic form has signature $(3,2)$ and $L_S$ is an even lattice, maximal among $\bZ$-valued sublattices in $L_S\otimes \bQ$. For $p>2$, $L_S$ is self-dual at $p$. A direct computation shows that $C^+(L_S)\cong M_4(\bZ)$.
Let \[v_0=(1,0,0,0,0), v_1=(0,1,1,0,0), v_2=(0,1,-1,0,0), v_3=(0,0,0,1,1), v_4=(0,0,0,1,-1).\]
Then $\delta:=v_0\cdots v_4 \in C(L_S)$ lies in the center of $C(L_S)$ and $\delta'=\delta, \delta^2=1$. Therefore, there is an isomorphism between quadratic spaces given by $L_S\xrightarrow[]{\simeq} \delta L_S \subset C^+(L_S)$. (See for instance \cite[App.~A]{KR3}.)

Given a vector $x\in L_S$ such that $Q(x)=m, m\in \bZ_{>0}$, the orthogonal complement $x^\perp\subset L_S$ endowed with the restriction of $Q$ on $x^\perp$ is a quadratic $\bZ$-lattice of signature $(2,2)$ and let $L_H\subset x^\perp \otimes \bQ$ be a maximal lattice containing $x^\perp$. If $m$ is not a perfect square, let $F$ denote the real quadratic field $\bQ(\sqrt{m})$. A direct computation shows that there is an isomorphism $L_H\otimes \bQ\cong \bQ^2\oplus F$ such that $Q((a,b,\gamma))=ab+\Nm_{F/\bQ}\gamma$. The assumption $p\nmid m$ and $p>2$ implies that $x^\perp$ and hence $L_H$ are self-dual at $p$.  

Now let $(L,Q)$ have signature $(n,2)$, and let $p$ be a prime such $(L,Q)$ is self-dual at $p$. As in \cite[\S 4.1 \S 4.2]{AGHMP}, \cite[\S 1]{KR3}, there is a GSpin Shimura variety $M$ attached to $(L,Q)$ and this Shimura variety also admits a smooth integral model $\cM$ over $\bZ_{(p)}$ since $L$ is self-dual at $p$; the Shimura variety (and its integral model) recovers the moduli space of principally polarized abelian surfaces when $L=L_S$ (see \Cref{rmk-compareKR} for details) and it is a Hilbert modular surface when $L=L_H$ (see for instance \cite[\S 2.2, \S 3.1]{HY}). We may write $M_L$ and $\cM_L$ to emphasis on the dependence on $L$.

To prove \Cref{thm_main}(1) and \Cref{thm_max}, we will take $L=L_S$ and to prove \Cref{thm_main}(2), we will take $L=L_H$. 
\subsection{Special endomorphisms and special divisors}\label{sec_sp_end}

We first introduce the notion of special endomorphisms when $L=L_S$ and $\cM$ is the moduli space of principally polarized abelian surfaces. Given an $\cM$-scheme $S$, let $A_S$ denote the pullback of the universal principally polarized abelian surface on $\cM$ via $S\rightarrow \cM$; let $\dagger$ denote the Rosati involution on $A_S$.
\begin{defn}\label{def_sp-end}
A \emph{special endomorphism} of $A_S$ is an element $s\in \End(A_S)$ such that $s^\dagger=s$ and $\Tr s=0$, where $\Tr$ is the reduced trace on the semisimple algebra $\End(A_S)\otimes \bQ$.
\end{defn}

\begin{remark}\label{rmk-compareKR}
Our definition of special endomorphisms is essentially the same as the one given by Kudla--Rapoport (\cite[Def.~2.1, Eqn.~(2.21)]{KR3}). Indeed, as in \cite[\S\S 1-2]{KR3}, the moduli problem indicates that every $\cM$-scheme $S$ gives rise to a principally polarized abelian scheme $B_S$ over $S$ with $\iota: C^+(L)\hookrightarrow \End(B_S)$ and a polarization such that the induced Rosati involution $\dagger$ satisfies $\iota(c)^\dagger=\iota(c^T)$, where $(-)^T$ is the transpose on $C^+(L)\simeq M_4(\bZ)$ (see condition (iii) and the first paragraph of \cite[p.701]{KR3}); 
moreover, for each $\ell\neq p$, there is an isomorphism $C^+(L)\otimes \bZ_\ell \simeq T_\ell(B_S)$, where $T_\ell$ denotes the $\ell$-adic Tate module, compatible with the $C^+(L)$-action (it acts on itself via left multiplication; see \cite[p.703]{KR3}).\footnote{Although Kudla--Rapoport uses abelian schemes up to isogeny to give the moduli interpretation, one may translate it into abelian schemes up to isomorphism; see also \cite[\S 2.2]{AGHMP17}.} Therefore, via $\iota$, we have $B_S\cong A_S^4$, where $A_S$ is an abelian surface and by the compatibility of the polarization with $\iota$ (see also \cite[Eqn.~(1.9), (1.10)]{KR3}), and the polarization on $B_S$ is induced by the self-product of a principal polarization on $A_S$. Hence $\cM$ parameterizes principally polarized abelian surfaces.
Moreover, an element $s_B$ in $\End(B_S)\cong M_4(\End(A_S))$ commuting with $\iota(C^+(L))$ is of form $\diag(s,s,s,s)$, where an endomorphism $s$ of $A_S$. In the sense of Kudla--Rapoport, such $s_B$ is special if and only if it is traceless and fixed by the Rosati involution on $B_S$; this is equivalent to that $s$ is traceless and fixed by the Rosati involution on $A_S$. Therefore, our definition is the same as that of Kudla--Rapoport.
\end{remark}

\begin{defn}\label{def_Lcris_S}
Let $\bD$ denote the Dieudonn\'e crystal over $\cM_{\bF_p}$ (i.e., the first relative crystalline homology of the universal family of principally polarized abelian surface over $\cM_{\bF_p}$). Let $\bL_\cris\subset \End(\bD)$ denote the sub-crystal of trace $0$ elements fixed by the Rosati involution.\footnote{Note that Frobenius is not an endomorphism on $\End(\bD)$, due to the existence of negative slopes. However, we will abuse terminology, and still treat $\End(\bD)$ and $\bL_{\cris}$ as $F$-crystals in the sense that we will view Frobenius as a map from $\End(\bD)$ to $\End(\bD)[1/p]$, while remembering the integral structure, and similarly for $\bL_{\cris}$.} 
\end{defn}

By definition, when $S$ is a $\cM_{\bF_p}$-scheme, an element $s\in \End(A_S)$ is a special endomorphism if and only if the crystalline realization of $s\in \End(\bD_S)$ lies in $\bL_{\cris,S}$.

\begin{defn}\label{def_spend_pdiv}
For the $p$-divisible group $A_S[p^\infty]$, we say $s\in \End(A_S[p^\infty])$ is a \emph{special endomorphism} if the image of $s$ in $\End(\bD_S)$ lies in $\bL_{\cris,S}$.
\end{defn}

\begin{remark}\label{compare_S}
In \cite[\S 4.14]{MP16}, there is a definition of $\bL_\cris$ as a direct summand of the endomorphism of the first relative crystalline cohomology of the Kuga--Satake abelian scheme over $\cM_{\bF_p}$. More precisely, the left multiplication of $\GSpin(V,Q)\subset C^+(V)^\times$ acting on $C(V)$ induces a variation of Hodge structures on $C(V)$ over $M$; this gives rise to the Kuga--Satake abelian scheme $A^\KS$ over $M$ and the Kuga--Satake abelian scheme extends over $\cM$. The $8$-dimensional abelian scheme considered by Kudla--Rapoport is a sub abelian scheme of $A^\KS$ via the natural embedding $C^+(V)\subset C(V)$. (Note that in \cite{KR3}, $\gamma\in\GSpin(V,Q)$ acts on $C^+(V)$ by the right multiplication by $\gamma$ and $C^+(V)$ acts on $C^+(V)$ by left multiplication, which is opposite to the convention in \cite{MP16}. This difference is due to the different choices of the symplectic pairing on $C^+(V)$ and $C(V)$ in \cite[(1.9)]{KR3} and \cite[\S 1.6]{MP16}. If we use the symplectic pairing in \cite{MP16} for the discussion in \cite{KR3}, then we obtain similar results as in \cite{KR3} but with the convention consistent with that in \cite{MP16}.)

Let $\bD^\KS$ denote the Dieudonn\'e crystal of $A^\KS$ over $\cM_{\bF_p}$; Madapusi Pera defined $\bL_\cris \subset \End (\bD^\KS)$ by the crystalline realization of the absolute Hodge cycle induced by the $\GSpin(V,Q)$-invariant idempotent which realizes $V\subset \End(C(V))$ as a direct summand. Since the element $\delta$ given in \S\ref{sec_lat_glo} lies in the center of $C(L)$, then it induces an isomorphism $\End(C(L))\supset L \cong \delta L \subset \End(C^+(L))$ compatible with $\GSpin(V,Q)$-action. 
Therefore, $\delta$ induces an isomorphism between the crystals $\bL_\cris$ in our sense and the one in the sense of Madapusi Pera; in particular, the notions of special endomorphisms coincide under the identification via $\delta$. Also, for a special endomorphism $s$ in both cases, $s\circ s$ is a scalar multiple $Q(s)$ on the suitable abelian scheme; since $\delta^2=1$, hence $Q(s)$ remains the same for images of $s$ under various identification of special endomorphisms. By \cite[Lem.~5.2]{MP16}, $Q(s)>0$ for all nonzero special endomorphism $s$.
\end{remark}

\begin{defn}\label{def_spdiv_S}
For $m\in \bZ_{>0}$, the \emph{special divisor} $\cZ(m)$ is the Deligne--Mumford stack over
$\cM$ with functor of points $\cZ(m)(S) = \{s\in \End(A_S) \text{ special } | Q(s) = m\}$ for any $\cM$-scheme $S$. We use the same notation for the image of $\cZ(m)$ in $\cM$. By for instance \cite[Prop.~4.5.8]{AGHMP}, $\cZ(m)$ is flat over $\bZ_{(p)}$ and hence $\cZ(m)_{\bF_p}$ is still a divisor of $\cM_{\bF_p}$; we denote $\cZ(m)_{\bF_p}$ by $Z(m)$.
\end{defn}

\begin{lemma}
Every $\bar{\bF}_p$-point of $Z(m^2)$ corresponds to a geometrically non-simple abelian surface.
\end{lemma}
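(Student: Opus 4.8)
The plan is to exploit the fact that a point of $Z(m^2)$ carries a special endomorphism $s$ of an abelian surface $A$ with $s^\dagger=s$, $\Tr s=0$ and $s\circ s=[m^2]$. From $s\circ s=[m^2]$ we immediately get that $(s-[m])\circ(s+[m])=0$ in $\End(A_{\bar\bF_p})$, so the endomorphism $e:=\tfrac{1}{2m}(s+[m])$ (after passing to $\End(A)\otimes\bQ$) is an idempotent; equivalently, $s$ has minimal polynomial dividing $(X-m)(X+m)$ and hence is (up to isogeny) a quasi-idempotent decomposing $A$. First I would check that $e$ is a nonzero, non-identity idempotent: since $s$ is traceless of rank $4$ acting on $H_1$ (equivalently $\Tr s = 0$ forces the $+m$ and $-m$ eigenspaces of $s$ on $H_1(A_{\bar\bF_p},\bQ_\ell)$ to each be $2$-dimensional), neither eigenspace is zero, so $e$ is a genuine nontrivial idempotent in $\End(A_{\bar\bF_p})\otimes\bQ$.

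Next I would translate this into a geometric splitting. A nontrivial idempotent in $\End^0(A_{\bar\bF_p})$ yields a decomposition of $A_{\bar\bF_p}$ up to isogeny as a product $A_1\times A_2$ with $A_1=\mathrm{im}(e)$, $A_2=\mathrm{im}(1-e)$ abelian subvarieties of dimensions equal to half the ranks of the corresponding eigenspaces on $H_1$, i.e.\ each of dimension $1$. (This is the standard Poincar\'e reducibility / Chow decomposition argument: an idempotent in the endomorphism algebra splits the abelian variety up to isogeny.) Thus $A_{\bar\bF_p}$ is isogenous to a product of two elliptic curves, so it is geometrically non-simple.

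Concretely, the cleanest route is probably: (i) observe $s$ is self-adjoint for the Rosati involution, hence semisimple, with real eigenvalues, and $s\circ s=[m^2]$ pins the eigenvalues to $\pm m$; (ii) the trace-zero condition forces equal multiplicities, so $A_{\bar\bF_p}\sim A_1\times A_2$ with $\dim A_i=1$; (iii) abelian varieties of dimension one are elliptic curves, so $A_{\bar\bF_p}$ is isogenous to a product of elliptic curves and in particular not geometrically simple. Alternatively, if $m$ is invertible this is even more transparent, but for general $m$ one simply works in $\End^0$.

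I expect the only real subtlety to be bookkeeping around denominators and the precise action space: one must be careful that $\Tr$ in Definition~\ref{def_sp-end} is the reduced trace on $\End^0(A_S)$, which for an abelian surface matches (up to the normalization implicit in \cite{KR3}) the trace of the induced action on $H_1$, so that $\Tr s = 0$ together with $s^2=[m^2]$ really does force the eigenvalue multiplicities to be $(2,2)$ on $H_1$ rather than, say, $(4,0)$; ruling out $s=\pm[m]$ (which would have $\Tr=\pm 4m\neq0$) handles this. Everything else is the classical fact that an idempotent in the endomorphism algebra produces an isogeny decomposition, so there is no genuine obstacle once the eigenvalue count is established.
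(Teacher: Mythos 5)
Your proof is correct and is essentially the same argument as the paper's: both start from the factorization $(s-[m])\circ(s+[m])=0$ and use $\Tr s=0$ to rule out $s=\pm[m]$. The paper phrases the conclusion directly — since neither factor is invertible in $\End^0(A)$, $\ker(s-[m])$ contains a nontrivial abelian subscheme — whereas you repackage the same observation via the idempotent $e=\tfrac{1}{2m}(s+[m])$ and Poincar\'e reducibility; the extra bookkeeping you do about eigenvalue multiplicities on $H_1$ is correct but not needed, since $s\neq\pm[m]$ is already enough.
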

\begin{proof}
Let $s$ be a special endomorphism of an abelian surface $A$ such that $s\circ s=[m^2]$. Then $(s-[m])\circ (s+[m])=0$. Since $\Tr s=0$, then $s\pm [m]\neq 0$ and hence $s\pm [m]$ are not invertible. Then $\ker (s-[m])$ defines a non-trivial sub abelian scheme of $A$.
\end{proof}

We now discuss the case when $L=L_H$. We keep the same notation as in \S\ref{sec_lat_glo}. For simplicity, we first discuss the case when $L_H=x^\perp$, where $x\in L_S$ and $Q(x)=m$ with $p\nmid m$; for the general case, the following discussion still holds true when replacing endomorphisms with suitable elements in $\End \otimes \bQ$ (see the end of this subsection). When $L_H=x^\perp\subset L_S$, the Shimura variety (and its integral model) $\cM_{L_H}$ defined by $L_H$ is naturally a sub-Shimura variety of $\cM_{L_S}$, the moduli space of principally polarized abelian surfaces, and hence a point on $\cM_{L_H}$ corresponds to a polarized abelian surface with real multiplication by $\cO:=\bZ[x]/(x^2-m)$. Let $\sigma$ denote the ring automorphism on $\cO$ satisfying $x^{\sigma} = -x$. As before, let $S$ be a $\cM_{L_H}$-scheme, and let $A_S$ denote the abelian surface over $S$ with real multiplication by $\cO$.
\begin{defn}[{\cite[\S 3.1 p.26]{HY}}]
A \emph{special endomorphism} (resp. \emph{special quasi-endomorphism}) of $A_S$ is an element $s\in \End(A_S)$ (resp. $s\in \End(A_S)\otimes \bQ$) such that $s^\dagger=s$ and $s\circ f =f^\sigma \circ s$ for all $f\in \cO$.
\end{defn}

We still use $\bD$ to denote the pullback to $\cM_{L_H, \bF_p}$ the Dieudonn\'e crystal over $\cM_{L_S, \bF_p}$ in \Cref{def_Lcris_S}; since the abelian surfaces over $\cM_{L_H}$ admit an $\cO$-action, the Dieudonn\'e crystal $\bD$ is also endowed with an $\cO$-action.
\begin{defn}\label{def_Lcris_H}
Let $\bL_\cris\subset \End(\bD)$ denote the sub-crystal of elements $v$ fixed by Rosati involution and $s\circ f=f^\sigma \circ s$ for all $f\in \cO$. For the $p$-divisible group $A_S[p^\infty]$, we say $s\in \End(A_S[p^\infty])$ is a \emph{special endomorphism} if the image of $s$ in $\End(\bD_S)$ lies in $\bL_{\cris,S}$.
\end{defn}

\begin{remark}
By \Cref{compare_S} and \cite[Prop.~2.5.1, Prop.~2.6.4]{AGHMP17}, in order to show that the above definitions of special endomorphisms and $\bL_\cris$ can be identified with those by Madapusi Pera, we only need to show that for an endomorphism $s$ (of either the abelian surface or of its Dieudonn\'e crystal $\bD$) fixed by the Rosati involution is traceless and orthogonal to $x$ if and only if $s\circ x=-x\circ s$. To see this, note that if $\Tr s=0$, then $s\perp x$ if and only if $Q(s+x)-Q(s)-Q(x)=s\circ x+x\circ s=0$; on the other hand, if $s\circ x=-x\circ s$, then $x^{-1}\circ s \circ x =-s$ and hence $\Tr s=0$.
\end{remark}

\begin{para}\label{def_spdiv_H}
In general (i.e. when $x^\perp \subsetneq L_H$), we may still use the same definition for $\bL_\cris$ and special endomorphisms of $p$-divisible groups, as $x^\perp$ is self-dual at $p$ and hence $x^\perp \otimes \bZ_p = L_H \otimes \bZ_p$. On the other hand,
we consider \emph{special quasi-endomorphisms} $s\in \End(A_S)\otimes \bQ$ which satisfy the following integrality condition: the $\ell$-adic realizations of $s$ lie in $L_H\otimes \bZ_\ell \subset \End(T_\ell(A_S)\otimes \bQ_\ell)$ for all $\ell\neq p$ and the crystalline realizations of $s$ lie in $\bL_{\cris, S}$. As in \Cref{def_spdiv_S}, the \emph{special divisor} $\cZ(m)$ is the Deligne--Mumford stack over $\cM_{L_H}$ with $\cZ(m)(S)$ given by \[\{s\in \End(A_S)\otimes\bQ \text{ special quasi-endomorphism satisfying the integrality condition above } \mid Q(s)=m\}\]
for any $\cM$-scheme $S$. By the proof of \cite[Prop.~4.5.8]{AGHMP}, where they used \cite[Prop.~5.21]{MP16}, $\cZ(m)$ is flat over $\bZ_p$. We use $Z(m)$ to denote the image of $\cZ(m)_{\bF_p}$ in $\cM_{L_H,\bF_p}$, which is a divisor in $\cM_{L_H,\bF_p}$. 
\end{para}

\subsection{Lattices of special endomorphisms of supersingular points}
For a fixed supersingular point, let $A$ denote the abelian surface attached to this point. 

\begin{defn}\label{def_L''}
Let $L''$ denote the $\bZ$-lattice of special endomorphisms of $A$ (resp. special quasi-endomorphisms when $L=L_H$). Let $L''\subset L'\subset L''\otimes\bQ$ be a $\bZ$-lattice which is maximal at all $\ell\neq p$ and $L''\otimes \bZ_p=L'\otimes \bZ_p$. Let $Q'$ denote the natural quadratic form on $L'$ given by $s\circ s=[Q'(s)]\in \End(A)\otimes \bQ$. By the positivity of the Rosati involution, $Q'$ is positive definite (see for instance \cite[Lem.~5.12]{MP16}).
\end{defn}
Even though there seem to be choices involved here, we will see that for our computation, these choices do not matter and the result will only depend on the Ekedahl--Oort stratum where the supersingular point lies in. The information of $L'\otimes \bZ_p$ will be provided in \S\ref{sec_lattices}. 

\begin{lemma}\label{lem_lat_ell}
$(L'\otimes \bZ_\ell, Q')\cong (L\otimes \bZ_\ell, Q)$ for $\ell\neq p$.
\end{lemma}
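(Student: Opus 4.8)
The strategy is to compare the $\ell$-adic lattice of special (quasi-)endomorphisms at the supersingular point with the global lattice $L$ via the $\ell$-adic Tate module, exactly as in the number-field GSpin literature. First I would recall from \Cref{rmk-compareKR} that for any $\cM$-scheme $S$ and any $\ell\neq p$ there is a $C^+(L)$-equivariant isomorphism $C^+(L)\otimes\bZ_\ell\xrightarrow{\sim}T_\ell(B_S)$, where $B_S\cong A_S^4$ is the Kudla--Rapoport abelian scheme; under the identification $L\cong\delta L\subset C^+(L)$ this lets one identify special endomorphisms with $\bZ_\ell$-points of a quadric. Concretely, for the supersingular abelian surface $A$ over $\bar\bF_p$, a special endomorphism $s$ acts on $T_\ell(A)$ (equivalently on $T_\ell(B)$ by $\diag(s,s,s,s)$), and the crystalline/$\ell$-adic comparison together with the construction of $\bL_\cris$ (and its $\ell$-adic analogue $\bL_\ell\subset\End(T_\ell(B))$) shows that $L''\otimes\bZ_\ell$ embeds into $\bL_\ell$. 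One then checks this is an isometry onto a $\bZ_\ell$-sublattice, where the quadratic form on the target is the one for which $s\circ s=[Q(s)]$ — which matches $Q'$ by definition of $Q'$.

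The second, essential point is that this embedding is onto a \emph{maximal} lattice, so that — since $L$ is self-dual (in particular maximal) at all $\ell\neq p$ by the hypotheses of \S\ref{sec_lat_glo} — both sides are maximal $\bZ_\ell$-lattices in the same quadratic $\bQ_\ell$-space, and maximal lattices in a fixed quadratic space over $\bZ_\ell$ are all isometric. Surjectivity onto a maximal lattice is precisely why $L'$ is defined in \Cref{def_L''} to be maximal at all $\ell\neq p$ with $L''\otimes\bZ_p=L'\otimes\bZ_p$: the point of the lemma is that after this maximalization, the $\ell$-adic lattice is pinned down. To get surjectivity I would invoke the standard fact (as in \cite[\S 5--6]{MP16}, \cite{AGHMP}, \cite{KR3}) that every special endomorphism of $A$ that is ``special'' in the Tate-module sense is in fact an honest endomorphism (Tate's theorem over finite fields), so $L''\otimes\bZ_\ell = \bL_\ell\cap\End(A)\otimes\bZ_\ell = \bL_\ell$, and then that $\bL_\ell$ is the maximal lattice $L\otimes\bZ_\ell$ by the very construction of the integral model and $\bL_\cris$.

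Finally, to identify the rational quadratic spaces: one must know $\dim_{\bQ_\ell}(L''\otimes\bQ_\ell)=n$ (rank $5$ in the Siegel case, $4$ in the Hilbert case) and that the form has the right discriminant/Hasse invariant. The dimension follows because $A$ is supersingular, so the $\ell$-adic (prime-to-$p$) part of its endomorphism algebra is as large as possible and the orthogonal complement of the trivial part inside $\End(T_\ell)$ has exactly the expected rank; alternatively, the isometry class over $\bQ_\ell$ is forced by the Kuga--Satake formalism, which produces $V\otimes\bQ_\ell$ inside $\End(V_\ell(B))$ canonically. I expect the main obstacle to be the surjectivity/maximality step — showing $L''\otimes\bZ_\ell$ (after the maximalization to $L'$) really equals $L\otimes\bZ_\ell$ rather than merely embedding into it. This is where one needs Tate's theorem on endomorphisms of abelian varieties over finite fields together with the precise integral structure of $\bL_\cris$ from \Cref{compare_S}; once that is in hand, the isometry of maximal $\bZ_\ell$-lattices in a fixed $n$-dimensional quadratic $\bQ_\ell$-space is formal.
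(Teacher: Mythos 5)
Your proposal has the right skeleton and is essentially the paper's approach: both $L'\otimes\bZ_\ell$ and $L\otimes\bZ_\ell$ are maximal $\bZ_\ell$-lattices (the former by the construction of $L'$ in \Cref{def_L''}, the latter since $L$ is maximal), the underlying $\bQ_\ell$-quadratic spaces are isometric, and maximal $\bZ_\ell$-lattices in a fixed $\bQ_\ell$-quadratic space form a single isometry class. That is the whole of the paper's two-line proof.

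Where you go astray is in identifying ``the main obstacle.'' You devote most of the argument to proving that $L''\otimes\bZ_\ell$ is already maximal — i.e.\ surjectivity onto $\bL_\ell$ via Tate's theorem and the $\ell$-adic realization — but the lemma is stated for $L'$, not $L''$, and $L'$ is \emph{defined} to be maximal at every $\ell\neq p$, so there is nothing to prove on that side. (The stronger fact that $L''$ itself is maximal at $\ell\neq p$ is genuinely true and is discussed in the remark immediately following the lemma, via \cite{HY}*{Prop.~3.1.3} in the Hilbert case and an explicit computation reducing to $E^2$ via \cite{Eke}*{Prop.~5.2} in the Siegel case, but it is not needed here.) Conversely, the step you treat casually — ``the isometry class over $\bQ_\ell$ is forced by the Kuga--Satake formalism'' — is the actual nontrivial input, which the paper cites precisely to \cite{HP}*{Rem.~7.2.5}; this is where one needs that the special-endomorphism quadratic space at a supersingular point has the same rational invariants (discriminant, Hasse invariant) away from $p$ as $V$. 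So: same strategy, correct conclusion, but effort misallocated and the one genuinely load-bearing citation replaced by a vague appeal.
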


\begin{proof}
Both lattices shall be maximal at $\ell$ and by \cite[Rem.~7.2.5]{HP}, $(L'\otimes \bQ_\ell, Q')\cong (L\otimes \bQ_\ell, Q)$. Then we conclude by the fact that there is a unique isometry class of $\bZ_\ell$-maximal sublattices of a given $\bQ_\ell$-quadratic space (see for instance, \cite[Thm.~A.1.2]{HP}).
\end{proof}

\begin{remark}
Actually, for the case of Hilbert modular surfaces, the essential part of the above lemma is \cite[Prop.~3.1.3]{HY}. For the $\cA_2$ case, we can explicitly compute $L''$ as follows and it is maximal. By \cite[Prop.~5.2]{Eke}, for any $\ell\neq p$, there is a unique class (up to $\GL_4(\bZ_\ell)$-conjugation) of principal polarizations on the Tate module $T_\ell(A)$. Therefore, to compute $L''\otimes \bZ_\ell$, we may assume that $A=E^2$ and endowed with the product principal polarization, where $E$ is a supersingular elliptic curve. Hence the quadratic form on the lattice $L''$, which is the trace $0$ part of $H^2(A)$, is given by $x_0^2+\Nm$, where $\Nm$ is the quadratic form given by the reduced norm on the quaternion algebra $\End(E)$. 
\end{remark}

\section{The $F$-crystals $\bL_{\cris}$ on local deformation spaces of supersingular points}\label{sec_lattices}
In this section, we compute the lattices ($L''\otimes \bZ_p$ in \Cref{def_L''}) of special endomorphisms of supersingular points with the natural quadratic forms following Howard--Pappas \cite[\S\S 5-6]{HP}.\footnote{One may also carry out the computation following Ogus \cite[\S 3]{Ogus79}.} In conjunction with \cite[\S 1]{Kisin}, we then obtain $\bL_{\cris}$ (see \Cref{def_Lcris_S} and \Cref{def_Lcris_H}) on the formal neighborhoods of supersingular points in the Shimura variety $\cM$. 
As a direct consequence, we obtain the local equation of the non-ordinary locus in \S\ref{sec_eqn-non-ord}. These are the key inputs to \S\S\ref{sec_decay_Hil}-\ref{sec_decay_Sie}; in particular, we use the explicit descriptions of this section to prove our decay results.

\subsection{A brief review of the work of Howard--Pappas and Kisin}\label{sec_HP+Kisin}
Since both \cite{HP} and \cite{Kisin} apply to GSpin Shimura varieties of any dimension, we first recall their results in the general setting. 

Let $(V,Q)$ denote a quadratic $\bQ$-vector space of signature $(n,2)$ and let $L\subset V$ be a maximal even lattice which is self-dual at $p$. Let $\cM$ denote the smooth canonical integral model over $\bZ_p$ of the GSpin Shimura variety attached to $(L,Q)$ in \cite{Kisin}. 

Set $k=\overline{\bF}_p, W=W(k), K=W[1/p]$. 
In this section, we consider a fixed supersingular point $P\in \cM(k)$. In the case of abelian surfaces considered in \S\ref{sec_lat+sp-end} (with $L=L_S$ or $L_H$), $P$ supersingular means the corresponding abelian surface over $P$ is supersingular. This in turn is equivalent to the action of the crystalline Frobenius $\varphi$ on $\bL_{\cris,P}(W)$  being pure, with slope $0$. In the general setting, let $\bD$ denote the Dieudonn\'e crystal of the universal Kuga--Satake abelian variety over $\cM_{\bF_p}$ and let $\bL_{\cris}\subset \End(\bD)$ denote the sub crystal corresponding to $L\subset C(L)$ defined in \cite[\S 4.14]{MP16}.\footnote{Note that in the cases $L=L_H, L_S$ in \S 2, we still take $\bD$ to be the Dieudonn\'e crystal of the universal abelian surfaces, not that of the Kuga--Satake abelian varieties.} Let $\varphi$ denote the crystalline Frobenius on $\bD_P(W)$ and $\bL_{\cris,P}(W)$. Then we say $P$ is \emph{supersingular} if $\varphi$ acts on $\bL_{\cris,P}(W)$ with pure slope $0$ (see for instance \cite[Lem.~4.2.4, \S 7.2.1]{HP}).

By Dieudonn\'e theory, we have $L''\otimes \bZ_p=\bL_{\cris,P}(W)^{\varphi=1}$. In order to compute $L''\otimes \bZ_p$ and the $\varphi$-action on $\bL_{\cris,P}(W)$, we introduce another free $W$-module $\bL^\#_P(W)$ following \cite[\S 6.2.1]{HP}.\footnote{Note that in \cite{HP}, they use $y$ to denote a point in $\cM(k)$ and $\bL^\#_P(W)$ is denoted by $L_y$ while $\bL_{\cris, P}(W)$ is denoted by $L_y^\#$.}
\begin{defn}
The filtration on $\bD_P(W)$ is given by $\Fil^1 \bD_P(W):=\varphi^{-1}(p\bD_P(W))$. We define $\bL^\#_P(W):=\{v\in \bL_{\cris,P}(W)\otimes_W K \mid v\Fil^1 \bD_P(W)\subset \Fil^1 \bD_P(W)\}$.
\end{defn}

\begin{para}\label{summary_HP}
By \cite[Thm.~7.2.4]{HP}, studying supersingular points and their formal neighborhood in $\cM$ reduces to study the points and their formal neighborhood in the associated Rapoport--Zink spaces and hence we use results in \cite[\S\S 5-6]{HP}.

By \cite[Prop.~6.2.2]{HP}, $\varphi(\bL^\#_P(W))=\bL_{\cris,P}(W)$. In particular, \[L''\otimes \bZ_p=\bL_{\cris,P}(W)^{\varphi=1}=\bL^\#_P(W)^{\varphi=1}.\] 
Recall that in \Cref{def_L''}, we endow $V':=L''\otimes \bQ_p$ with a quadratic form $Q'$; let $[-,-]'$ denote the bilinear form on $V'$ given by $[x,y]'=Q'(x+y)-Q'(x)-Q'(y)$. Hence \[V'=(\bL_{\cris,P}(W)\otimes_W K)^{\varphi=1}.\] Since $P$ is supersingular, we have $n=\rk_W \bL_{\cris,P}(W)=\rk_{\bZ_p} L''=\dim_{\bQ_p}V'$.

Let $\Lambda_P\subset V'$ denote the dual of $L''\otimes \bZ_p$ with respect to $[-,-]'$. Then by \cite[Prop.~6.2.2]{HP}, $\Lambda_P$ is a \emph{vertex lattice}, i.e., $\Lambda_P$ is a $\bZ_p$-lattice in $V'$ such that $p\Lambda_P\subset \Lambda_P^\vee \subset \Lambda_P$. The \emph{type} $t_P$ of $\Lambda_P$ is defined to be  $\dim_{\bF_p}(\Lambda_P/\Lambda_P^\vee)$.
By \cite[Prop.~5.1.2, (1.2.3.1)]{HP}, there is $t_{\max}\in 2 \bZ$ which only depends on $n$ and $\det (V')=\det (V_{\bQ_p})$\footnote{The determinant $\det(V')$ is the determinant of the Gram matrix $([x_i,x_j]')_{i,j=1,\dots, n+2}$, where $\{x_i\}_{i=1}^{n+2}$ is a $\bQ_p$-basis of $V'$; we view $\det(V')$ as an element in $\bQ_p^\times/(\bQ_p^\times)^2$.} such that $t_P\in 2\bZ$ and $2\leq t_P \leq t_{\max}$. Moreover, there exists a vertex lattice $\Lambda\subset V'$ of type $t_{\max}$ such that $\Lambda_P\subset \Lambda$. Indeed, the proof of \cite[Prop.~5.1.2]{HP} constructs all possible isometry classes of $\Lambda$ (with the quadratic form) for all $(V,Q)$ (note that in \emph{loc.~cit.}, they proved that for given $(V,Q)$, the isometry class of $\Lambda$ is unique). 

Therefore, given $(V,Q)$, we first obtain the isometry class of $\Lambda$ of type $t_{\max}$ and then all isometry classes of the lattices of special endomorphisms $L''\otimes \bZ_p$ attached to all supersingular points are given by the duals of the vertex lattices contained in $\Lambda$.

From $\Lambda$, we may compute all possible isomorphism classes of $\bL_{\cris,P}(W)$ and $\bL_P^\#(W)$ as rank $n$ free $W$-modules endowed with a quadratic form/bilinear form and a $\sigma$-linear Frobenius $\varphi$ (here we use $\sigma$ to denote the Frobenius action on $W$) following \cite[Prop.~6.2.2, \S 5.3.1]{HP}. Indeed, $\bL^\#_P(W)\subset \Lambda\otimes_{\bZ_p}W=:\Lambda_W$ is the preimage of a Lagrangian $\overline{L}^\#_P\subset \Lambda_W/\Lambda_W^\vee$ with respect to the quadratic form $pQ'\bmod p$ such that 
\begin{equation}\label{dim_lag}
    \dim (\overline{L}^\#_P+\overline{\varphi}(\overline{L}^\#_P))=t_{\max}/2+1,
\end{equation} where we use $\varphi$ to denote the $\sigma$-linear map on $\Lambda_W$ given by $\Id \otimes \sigma$ and $\overline{\varphi}(\overline{v}):=\overline{\varphi(v)}$ is well-defined for $\overline{v}\in \Lambda_W/\Lambda_W^\vee$ with a lift $v\in \Lambda_W$. The quadratic form and $\varphi$-action on $\bL^\#_P(W)$ are the restrictions of the quadratic forms and $\varphi$-action on $\Lambda_W$. We then obtain $\bL_{\cris,P}(W)=\varphi(\bL^\#_P(W))$. Note that by \cite[Prop.~5.1.2]{HP}, the even dimensional $\bF_p$-quadratic space $(\Lambda/\Lambda^\vee, pQ'\bmod p)$ does not have a Lagrangian defined over $\bF_p$ and hence is nonsplit; see \cite[\S\S 3.2-3.3]{HP14} for a discussion on how to find all such $\overline{L}^\#_P$.
\end{para}

\begin{defn}
For a supersingular point $P$, we say $P$ is \emph{superspecial} if $t_P=2$;\footnote{In the settings in \S\ref{sec_lat+sp-end}, $P$ is superspecial if and only if the corresponding abelian surface is isomorphic to the product of two supersingular elliptic curves, which is the usual definition for an abelian surface to be superspecial.} 
we say $P$ is \emph{supergeneric} if $t_P=t_{\max}\neq 2$.
\end{defn}
By \cite[Prop.~5.2.2]{HP}, $P$ is superspecial if and only if \begin{equation}\label{cri-ssp}
    \varphi^2(\bL_P^\#(W))\subset \bL_P^\#(W) + \varphi (\bL_P^\#(W)).
\end{equation}

By \cite[(1.2.3.1)]{HP}, in the setting of \S\ref{sec_lat+sp-end}, we have $t_{\max}\leq 4$ and hence the supersingular points in question are either superspecial or supergeneric.


\begin{remark}\label{rmk-QQ'}
By \cite[Prop.~4.7 (iii) (iv)]{MP16}, $\GSpin(L,Q)_W$ acts on $\bD_P(W)$ and $\bL_{\cris,P}(W)$; moreover, as $W$-quadratic spaces, $\bL_{\cris,P}(W)\cong L\otimes W$ (we use $Q_W$ to denote the quadratic form on $L''\otimes \bZ_p$) and for $x\in \bL_{\cris,P}(W), x\circ x = Q_W(x)\cdot \Id \in \End(\bD_P(W))$. Therefore $Q'$ on $L''\otimes \bZ_p$ is the restriction of $Q$ on $\bL_{\cris,P}(W)$ to $L''\otimes \bZ_p$. We introduce the notation $Q'$ to emphasize that $Q'$ and $Q$ (as $\bZ_p$-quadratic forms) are restrictions of $Q_W$ to $\bZ_p$-lattices in different $\bQ_p$-subspaces. Hence $\GSpin(L,Q)_W=\GSpin (\bL_{\cris,P}(W),Q')$.
\end{remark}

\begin{para}\label{summary_Kisin}
We now describe the $F$-crystal $\bL_{\cris}$ over the formal completion $\widehat{\cM}_{P}$ along the supersingular point $P$ following \cite[\S\S 1.4-1.5]{Kisin} and \cite[\S 4.5]{Moonen98}; see also \cite[\S\S 3.1.4, 3.1.6]{HP}. 

The Hodge filtration $\Fil^1 \bD_P(W) \bmod p \subset \bD_P(k)$ corresponds to a cocharacter $\overline{\mu}: \bG_{m,k}\rightarrow \GSpin (L,Q)_k$ and we pick a cocharacter $\mu: \bG_{m,W}\rightarrow \GSpin (L,Q)_W$ which lifts $\overline{\mu}$. Let $U_P\subset \GSpin(L,Q)_W$ denote the opposite unipotent of the parabolic subgroup defined by $\mu$; and let $\widehat{U_P}$ denote the formal completion of $U_P$ along the identity. Pick coordinates and write $\widehat{U_P}=\Spf W[[x_1,\dots, x_d]]$ such that $x_1=\cdots =x_d=0$ defines the identity element in $U_P$. Let $\sigma$ denote the Frobenius action on $W[[x_1,\dots, x_d]]$ which lifts the $\sigma$-action on $W$ and for which $\sigma(x_i)=x_i^p$.

Let $R$ denote $\widehat{\cO}_{\cM,P}$, the complete local ring of $\cM$ at $P$. Then there exists an isomorphism from $\Spf R$ to $\widehat{U_P}$ (and we still use $\sigma$ to denote the Frobenius action on $R$ via the identification to $W[[x_1,\dots, x_d]]$) such that 
\begin{enumerate}
    \item $\bD(R)=\bD_P(W)\otimes_W R$ and $\bL_{\cris}(R)=\bL_{\cris,P}(W)\otimes_W R$ as $R$-modules;
    \item and under the above identifications, the $\sigma$-linear Frobenius action, denoted by $\Frob$, on $\bD(R)$ and $\bL_{\cris}(R)$ is given by $u\cdot (\varphi \otimes \sigma)$, where $u$ denotes the universal $W[[x_1,\dots, x_d]]$-point in $\widehat{U_P}$ and $\varphi$ is the crystalline Frobenius on $\bD_P(W)$ or $\bL_{\cris,P}(W)$ given in \S\ref{summary_HP}.
\end{enumerate}
On $\bL_{\cris}$, the $\GSpin(L,Q)_W$ action factors through the quotient $\SO(L,Q)_W$. So from now on, since we will only care about $\Frob$ on $\bL_{\cris}$, then by \Cref{rmk-QQ'}, we will work with $\mu: \bG_{m,W}\rightarrow \SO(\bL_{\cris,P}(W), Q')$ and $U_P$ the opposite unipotent of $\mu$ in $\SO(\bL_{\cris,P}(W), Q')$.
\end{para}

In the rest of this section, we will apply \S\S \ref{summary_HP}, \ref{summary_Kisin} to the setting in \S\ref{sec_lat+sp-end} and we will work with the coordinates on $\widehat{U_P}$. When $L=L_H$, we write $\widehat{U_P}=\Spf W[[x,y]]$ and when $L=L_S$, we write $\widehat{U_P}=\Spf W[[x,y,z]]$. We will use $\epsilon\in \bZ_p^\times$ to denote an element which is not a perfect square in $\bZ_p$. Let $\bZ_{p^2}$ (resp. $\bQ_{p^2}$) denote $W(\bF_{p^2})$ (resp. $W(\bF_{p^2})[1/p]$) and let $\lambda \in \bZ_{p^2}^\times$ be an element such that $\sigma(\lambda)=-\lambda$ (for instance, we can take $\lambda$ to be a root in $\bZ_{p^2}$ of $x^2-\epsilon=0$). We will use $\{v_i\}_{i=1}^{n+2}$ to denote a $W$-basis of $\bL_{\cris,P}(W)$ and $\{w_i\}_{i=1}^{n+2}$ to denote a $\bZ_p$-basis of $\Lambda_P^\vee=\bL_{\cris,P}(W)^{\varphi=1}$; note that $\Span_W\{w_i\}$ is a $W$-sublattice of $\bL_{\cris,P}(W)$.

\subsection{The Hilbert case $L=L_H$}\label{sec_F_Hil}
Recall that as in \Cref{thm_main}(\ref{item_H}), we have $p\nmid m\in \bZ_{>0}$.
 \begin{para}\label{Frob-H-inert}
Assume that $p$ is inert in $\bQ(\sqrt{m})$;\footnote{If $m\in \bZ$ is a perfect square, then by convention, we view every prime $p$ to be split in $\bQ[x]/(x^2-m)$ and the discussion of the split case still holds.} then we have $t_{\max}=4$.

The vertex lattice with type $t_{\max}$ is $\Lambda=\Span_{\bZ_p}\{e_1,f_1\}\oplus Z$, where
 \[
 [Z, e_1]'=[Z,f_1]'=[e_1,e_1]'=[f_1,f_1]'=0,\, [e_1,f_1]'=1/p,\, Z\cong \bZ_{p^2},\, Q'(x)=x\sigma(x)/p,\, \forall x\in Z.
 \]
 Hence $\Lambda^\vee=p\Lambda$. Set $e_2=(1\otimes 1+ (1/\lambda) \otimes \lambda)/2, f_2=(1\otimes 1+ (-1/\lambda) \otimes \lambda)/2 \in \bZ_{p^2}\otimes_{\bZ_p} Z$. Then, as elements in $\Lambda_W$,
 \[\varphi(e_1)=e_1, \varphi(f_1)=f_1,
 \varphi(e_2)=f_2, \varphi(f_2)=e_2, [e_2,e_2]'=[f_2,f_2]'=0, [e_2,f_2]'=1/p.
 \]

 All possible $\overline{L}^\#_P$ are given by two families of Lagrangians in $k$-quadratic space spanned by $\bar{e}_1,\bar{e}_2,\bar{f}_1,\bar{f}_2\in \Lambda_W/\Lambda_W^\vee$ with quadratic form $pQ$ satisfying \eqref{dim_lag}: \[\overline{L}^\#_P=\Span_k\{\bar{e}_1+\sigma^{-1}(\bar{c})\bar{f}_2,\bar{e}_2-\sigma^{-1}(\bar{c})\bar{f}_1\}, \text{ or } \overline{L}^\#_P=\Span_k\{\bar{e}_1+\sigma^{-1}(\bar{c})\bar{e}_2, \sigma^{-1}(\bar{c})\bar{f}_1-\bar{f}_2\},\] where $\bar{c}\in k$.\footnote{Indeed, as $\dim_k \Lambda_W/\Lambda_w^\vee$ is small, in this case, all Lagrangians satisfy \eqref{dim_lag}. There are two families and each is parametrized by $\bP^1(k)$ so more accurately, we shall view $\bar{c}\in \bP^1(k)$, i.e., there are two more Lagrangians $\Span_k\{\bar{f}_1,\bar{f}_2\}$ and $\Span_k\{\bar{e}_2, \bar{f}_1\}$; however, since the role of $e_i$ and $f_i$ are symmetric, the computation for these two cases are equivalent to $\Span_k\{\bar{e}_1,\bar{e}_2\}$ and $\Span_k\{\bar{e}_1, \bar{f}_2\}$ so we may safely omit them and only take $\bar{c}\in k$. Moreover, we use $\sigma^{-1}(\bar{c})$ to be the parameter here because eventually we want to work with $\bL_{\cris,P}(W)=\varphi(\bL^\#_P(W))$.} Therefore, we have that
 \[\bL_{\cris,P}(W)=\Span_W\{e_1+ce_2, cf_1-f_2, pe_2, pf_1\}, \text{ or } \bL_{\cris,P}(W)=\Span_W\{e_1+cf_2,e_2-cf_1, pf_1, pf_2\},\]
 where $c\in W$.\footnote{Here we notice that $\varphi$ swaps two families of $\bL^\#_P(W)$; in particular, the general formula for $\bL_{\cris,P}(W)$ is the same as that for $\bL^\#_P(W)$ (other than swapping between the two families). This observation holds true in general by \cite[Rmk.~3.5]{HP14}.}
 Moreover, by \eqref{cri-ssp}, $P$ is superspecial if and only if $\sigma^{-1}(c)-\sigma(c)\in pW$, which is equivalent to the Teichmuller lift of $\bar{c}$ lying in $\bZ_{p^2}$. Note that if $c-c'\in pW$, then $c,c'$ define the same $\bL_{\cris,P}(W)$. Therefore, without loss of generality, from now on, we will only work with $c\in W$ which is the Teichmuller lifting of $\bar{c}\in k$. Hence $P$ is superspecial if and only if there exists $c\in \bZ_{p^2}$ such that $\bL_{\cris,P}(W)$ is given by the above form.

 In order to compute the $F$-crystal $\bL_{\cris}$, we pick the following $W$-basis $\{v_1,\dots, v_4\}$ of $\bL_{\cris,P}(W)$ such that the Gram matrix of $[-,-]'$ with respect to this basis is $\begin{bmatrix}0 & I\\ I &0\end{bmatrix}$, where $I$ denotes the $2\times 2$ identity matrix.
 For the first family, take
\[
v_1=f_2-cf_1, v_2=e_1+ce_2-\sigma^{-1}(c)cf_1+\sigma^{-1}(c)f_2, v_3=pe_2-p\sigma^{-1}(c)f_1, v_4=pf_1;
\]
for the second family, take
 \[
 v_1=e_2-cf_1, v_2=\sigma^{-1}(c)e_2-\sigma^{-1}(c)cf_1+e_1+cf_2, v_3=pf_2-p\sigma^{-1}(c)f_1, v_4=pf_1.
 \]
Then on $\bL_{\cris,P}(W)$, with respect to $\{v_1,\dots,v_4\}$, we have
 \[
\varphi=b\sigma, \text{ with }b= \begin{bmatrix}
0 & \sigma(c)-\sigma^{-1}(c) &  p & 0\\
0 & 1                                     & 0     & 0\\
1/p & 0                                  & 0     & 0\\
(\sigma^{-1}(c)-\sigma(c))/p & 0 & 0 & 1
\end{bmatrix}.
 \]
The filtration on $\bL_{\cris,P}(k)$ is given by \[\Fil^1\bL_{\cris,P}(k)=\Span_k\{\bar{v}_3\},\, \Fil^0\bL_{\cris,P}(k)=\Span_k\{\bar{v}_2,\bar{v}_3, \bar{v}_4\},\, \Fil^{-1}\bL_{\cris,P}(k)=\bL_{\cris,P}(k),\] 
so we may choose $\mu: \bG_{m,W}\rightarrow \SO(\bL_{\cris,P}(W),Q')$ to be $t\mapsto \diag(t^{-1}, 1, t, 1)$. Then $\widehat{U_P}=\Spf W[[x,y]]$ with the universal point 
 \[
u= \begin{bmatrix}
1 & x &  -xy & y\\
0 & 1  & -y    & 0\\
0 & 0   & 1     & 0\\
0 & 0 & -x & 1
\end{bmatrix} \text{ and } \Frob=ub\sigma, \text{ with }ub= \begin{bmatrix}
-xy/p-ay/p & a+x & p & y\\
-y/p & 1  & 0    & 0\\
1/p & 0   & 0     & 0\\
-x/p-a/p & 0 & 0 & 1
\end{bmatrix},
 \]
where $a=\sigma(c)-\sigma^{-1}(c)$; we have $a=0$ if $P$ is superspecial and $a\in W^\times$ if $P$ is supergeneric.

When $P$ is superspecial,  $\{w_1=pv_1+v_3,w_2=\lambda(pv_1-v_3),w_3=v_2,w_4=v_4 \}$ is a $\bZ_p$-basis of $L''\otimes \bZ_p$. Using $\{w_1,\dots, w_4\}$ as a $K$-basis of $\bL_{\cris,P}(W)[1/p]$, we have 

\begin{equation}\label{FrobHinert}
    \Frob=\left(I+\begin{bmatrix}
-\frac{xy}{2p} & \frac{\lambda xy}{2p} & \frac{x}{2p} & \frac{y}{2p}\\
-\frac{xy}{2\lambda p} & \frac{xy}{2p} & \frac{x}{2\lambda p}& \frac{y}{2\lambda p} \\
-y&\lambda y &0 &0 \\
-x&\lambda x & 0 & 0\end{bmatrix}\right) \circ \sigma.
\end{equation}

When $P$ is supergeneric, 
$\{w_1=v_4, w_2=pv_1+v_3+(c+\sigma^{-1}(c))v_4, w_3=\lambda(pv_1-v_3+(c-\sigma^{-1}(c))v_4), w_4=pv_2-cv_3-p\sigma^{-1}(c)v_1-c\sigma^{-1}(c)v_4\}$ is a $\bZ_p$-basis of $L''\otimes \bZ_p$ and with respect to this basis,
 $\Frob=(I+\frac{y}{p}A+xB)\circ \sigma$, where
\[
A=\begin{bmatrix}
-c & -c^2 & -\lambda c^2 & 0\\
1/2 & 0 & \lambda c & c^2/2\\
1/(2\lambda) & c/\lambda & 0 & -c^2/(2\lambda) \\
0 & -1 & \lambda & c
\end{bmatrix},
B=\begin{bmatrix}
0 & -1+cy/p & \lambda cy/p +\lambda & -c^2y/p\\
0 & -y/(2p) & \lambda y/(2p) & 1/2+cy/(2p)\\
0 & -y/(2\lambda p) & y/(2p) & 1/(2\lambda) +cy/(2p\lambda)\\
0 & 0 & 0 & 0
\end{bmatrix}.
\]
\end{para}

\begin{para}\label{parsplit}
Assume that $p$ is split in $\bQ(\sqrt{m})$; then we have
$t_{\max}=2$ and hence every $P$ is superspecial.

The vertex lattice with type $t_{\max}$ is $\Lambda=\{(x_1,x_2,x_3,x_4)\in \bZ_p^4\}$ with \[Q'((x_1,x_2,x_3,x_4))=x_1^2-\epsilon x_2^2-p^{-1} x_3^2 + \epsilon p^{-1} x_4^2;\] we have $\Lambda^\vee=\Span_{\bZ_p}\{e_1,e_2, pe_3, pe_4\}$, where $e_i$ is the vector with $x_i=1$ and $x_j=0$ for $j\neq i$. Recall that we take $\epsilon=\lambda^2$; we then have\footnote{There are exactly two Lagrangians and the other one is given by replacing $\lambda$ by $-\lambda$. Since $\lambda$ and $-\lambda$ play the same role in our later computation, there is no loss of generality here.} that \[\bL_{\cris,P}(W)=\Span_W\{v_1=\frac{1}{2}(e_3+\lambda^{-1}e_4), v_2=\frac{1}{2}(e_1+\lambda^{-1}e_2), v_3=-\frac{1}{2}(pe_3-\lambda^{-1}pe_4), v_4=\frac{1}{2}(e_1-\lambda^{-1}e_2)\}.\]
The Gram matrix is $\begin{bmatrix}0 & I\\ I &0\end{bmatrix}$ and on $\bL_{\cris,P}(W)$, the Frobenius $\varphi=b\sigma$ with 
\[b=\begin{bmatrix}0 & 0 & -p & 0\\
0 &0 &0 &1 \\
-1/p & 0 & 0 &0\\
0 & 1 &0 &0\end{bmatrix}.\] The filtration on $\bL_{\cris,P}(k)$ given by $\varphi$ is the same as in \S\ref{Frob-H-inert} and hence we may use the same $\mu$ and $u$ there. Therefore, on $\bL_{\cris}(W[[x,y]])$, we have
\[\Frob=ub\sigma, \text{ with } ub=\begin{bmatrix}xy/p & y & -p & x\\
y/p &0 &0 &1 \\
-1/p & 0 & 0 &0\\
x/p & 1 &0 &0\end{bmatrix}.\]
Moreover, $\{w_1=pv_1-v_3, w_2=\lambda(pv_1+v_3), w_3=v_2+v_4, w_4=\lambda(v_4-v_2)\}$ is a $\bZ_p$-basis of $L''\otimes \bZ_p$ and with respect to this basis,
\begin{equation}\label{F-Hil-sp}
    \Frob=\left(I+\begin{bmatrix}
\frac{xy}{2p}&-\frac{\lambda xy}{2p}&\frac{x+y}{2p}&\frac{-\lambda(x -y)}{2p}\\
\frac{xy}{2\lambda p}&- \frac{xy}{2p}&\frac{x+y}{2\lambda p}&\frac{-(x-y)}{2p}\\
\frac{x+y}{2}&\frac{-\lambda (x + y)}{2}&0&0\\
\frac{x-y}{2\lambda}&\frac{-(x - y)}{2}&0&0\\
\end{bmatrix}\right) \circ \sigma.
\end{equation}
\end{para}

\subsection{The Siegel case $L=L_S$}\label{sec_F_Sie}
We now compute $\bL_{\cris}$ for \Cref{thm_main}(\ref{item_S}) and \Cref{thm_max}. In this case, we have
$t_{\max}=4$. 

The vertex lattice with type $t_{\max}$ is  $\Lambda = \Span_{\bZ_p}\{e_1,f_1 \} \oplus Z_S$, where $Z_S=\{(x_1,x_2,x_3)\in \bZ_p^3\}$
\[ [Z_S, e_1]'=[Z_S,f_1]'=[e_1,e_1]'=[f_1,f_1]'=0,\, [e_1,f_1]'=1/p, Q'((x_1,x_2,x_3))=c(-\epsilon x_1^2-p^{-1} x_2^2 +\epsilon p^{-1} x_3^2),\]
for some $c\in \bZ_p^\times$. Since $\det \Lambda =\det L \in \bQ_p^\times/(\bQ_p^\times)^2$ and $\det L=2$, we have $c=-1$. Let $g=(1,0,0)\in Z_S$ and $Z=\Span_{\bZ_p}\{(0,1,0), (0,0,1)\}\subset Z_S$. Then $\Lambda/\Lambda^\vee=\Span_{\bF_p}\{\overline{e}_1,\overline{f}_1\}\oplus Z/Z^\vee$.
Note that $\Span_{\bZ_p}\{e_1,f_1\}\oplus Z$ is exactly the same quadratic $\bZ_p$-lattice which is denoted by $\Lambda$ in \S\ref{Frob-H-inert}; hence the same computation there applies to find $\bL_{\cris,P}(W)\subset \Lambda \otimes W$. More precisely, there exist $v_1,\dots, v_4\in \Span_W\{e_1,f_1\}\oplus Z\otimes W$ and $c\in W$ which is the Teichmuller lift of $\overline{c}\in k$ such that 
\begin{enumerate}
    \item $\bL_{\cris,P}(W)=\Span_W\{v_1,\dots,v_4,v_5\}$, where $v_5=g$;
    \item the Gram matrix of $[-,-]'$ with respect to $\{v_1,\dots,v_5\}$ is $\begin{bmatrix}0& I & 0\\
    I & 0 & 0\\
    0 & 0 & 2\epsilon\end{bmatrix}$, where $I$ is the $2\times 2$ identity matrix;
    \item The Frobenius $\varphi$ on $\bL_{\cris,P}(W)$ with respect to the basis $\{v_i\}$ is 
     \[
\varphi=b\sigma, \text{ with }b= \begin{bmatrix}
0 & \sigma(c)-\sigma^{-1}(c) &  p & 0 & 0\\
0 & 1    & 0     & 0 &0\\
1/p & 0                   & 0     & 0 &0\\
(\sigma^{-1}(c)-\sigma(c))/p & 0 & 0 & 1 & 0\\
0 & 0 & 0 &0 &1
\end{bmatrix};
 \]
    \item $P$ is superspecial if and only if $\sigma^2(c)=c$.
\end{enumerate}
We may choose $\mu: \bG_{m,W}\rightarrow \SO(\bL_{\cris,P}(W),Q')$ to be $t\mapsto \diag(t^{-1}, 1, t, 1, 1)$. Then $\widehat{U_P}=\Spf W[[x,y,z]]$ with the universal point 

 \[
u= \begin{bmatrix}
1 & x &  -xy-\frac{z^2}{4\epsilon} & y & z\\
0 & 1  & -y    & 0 &0\\
0 & 0   & 1     & 0 &0\\
0 & 0 & -x & 1 &0\\
0 & 0 & -\frac{z}{2\epsilon} & 0 & 1
\end{bmatrix} \text{ and } \Frob=ub\sigma, \text{ with }ub= \begin{bmatrix}
-\frac{1}{p}(xy+\frac{z^2}{4\epsilon})-\frac{ay}{p} & a+x & p & y & z\\
-\frac{y}{p} & 1  & 0    & 0 &0\\
\frac{1}{p} & 0   & 0     & 0 &0\\
-\frac{x+a}{p} & 0 & 0 & 1 &0\\
-\frac{z}{2\epsilon p} & 0 & 0 & 0 & 1
\end{bmatrix}
 \]
 
 acting on $\bL_{\cris}(W[[x,y,z]])$, 
where $a=\sigma(c)-\sigma^{-1}(c)$; note that $a=0$ if and only if $P$ is superspecial. 

For the proofs of \Cref{thm_main}(\ref{item_S}) and \Cref{thm_max}, we only need to study superspecial points so we only give the matrix of $\Frob$ with respect to a basis of $\bL_{\cris} \otimes_W K$ consisting of elements in $L''\otimes \bZ_p$ when $P$ is superspecial; we refer the reader to the appendix for the discussion when $P$ is supergeneric. 

We now assume that $P$ is superspecial. Let $w_1=\lambda(pv_1-v_3), w_2=pv_1+v_3, w_3=v_2, w_4=v_4, w_5=v_5$. Then $L''\otimes \bZ_p=\Span_{\bZ_p}\{w_1,\dots, w_5\}$. We view $\{w_i\}_{i=1}^5$ as a $K$-basis of $\bL_{\cris,P}(W)\otimes K$, then the Frobenius on $\bL_{\cris}(W[[x,y,z]])$ is given by 
\begin{equation}\label{F-Sie}
    \Frob=\left(I+ \begin{bmatrix}
\frac{1}{2p}(xy+\frac{z^2}{4\epsilon}) & -\frac{1}{2\lambda p}(xy+\frac{z^2}{4\epsilon}) & \frac{x}{2\lambda p}& \frac{y}{2\lambda p} & \frac{z}{2\lambda p}\\
\frac{\lambda}{2p}(xy+\frac{z^2}{4\epsilon}) & -\frac{1}{2p}(xy+\frac{z^2}{4\epsilon}) & \frac{x}{2p} & \frac{y}{2p} & \frac{z}{2p} \\
\lambda y & -y & 0 & 0 & 0\\
\lambda x & -x& 0 & 0 & 0\\
\frac{\lambda z}{2\epsilon}  & -\frac{ z}{2\epsilon} & 0 & 0 & 0

\end{bmatrix}\right ) \circ \sigma.
\end{equation}

\subsection{Equation of non-ordinary locus}\label{sec_eqn-non-ord}

We now use the computation in \S\S\ref{sec_F_Hil}, \ref{sec_F_Sie} to obtain the local equation of the non-ordinary locus in a formal neighborhood of a supersingular point $P$ using results in \cite{Ogus01}. Although \cite{Ogus01} only focuses on the case of K3 surfaces, the results that we recall here apply to any GSpin Shimura varieties. We follow the notation in \S\ref{sec_HP+Kisin}. For a perfect field $k'$ of characteristic $p$, for $P'\in \cM(k')$, we say $P$ is \emph{ordinary} if the slopes of the crystalline Frobenius $\varphi$ on $\bL_{\cris,P'}(W)$ are $-1,1$ with multiplicity $1$ and $0$ with multiplicity $n$.\footnote{When $L=L_H,L_S$, the point $P'$ is ordinary if and only if the corresponding abelian surface over $k'$ is ordinary by the definition of $\bL_{\cris}$.}

The cocharacter $\overline{\mu}$ defines a filtration $\Fil^i, i=-1,0,1$ on $\bL_{\cris,P}(k)$, which is the Hodge filtration in \cite{Ogus01} and in particular, $\dim \Fil^1 \bL_{\cris,P}(k)=1, \dim \Fil^0 \bL_{\cris,P}(k)=n+1, \dim \Fil^{-1} \bL_{\cris,P}(k)=n+2$ and the annihilator of $\Fil^1 \bL_{\cris,P}(k)$ in $\bL_{\cris,P}(k)$ with respect to $Q$ is $\Fil^0 \bL_{\cris,P}(k)$.\footnote{See also \cite[p.411]{Ogus} for the definition. Note that here we directly work on the crystalline cohomology without using the canonical isomorphism to the de Rham cohomology. Note that our filtration is shifted by $1$ when comparing to the filtration in \cite{Ogus01} because his Frobenius is $p$ times our Frobenius.} The Hodge filtration over the mod $p$ complete local ring $R\otimes_W k$ at $P$ is given by $\Fil^i \bL_{\cris}(R\otimes_W k):=\Fil^i \bL_{\cris,P}(k)\otimes_k (R\otimes k)$. Note that $\Frob (\Fil^0 \bL_{\cris}(R\otimes_W k) )\subset \Fil^0 \bL_{\cris}(R\otimes_W k)$, so we have a well-defined map $p\Frob: \gr_{-1}\bL_{\cris}(R\otimes_W k)\rightarrow \gr_{-1}\bL_{\cris}(R\otimes_W k)$, where $\gr_{-1}\bL_{\cris}(R\otimes_W k):=\Fil^{-1} \bL_{\cris}(R\otimes_W k)/\Fil^0 \bL_{\cris}(R\otimes_W k)$.

\begin{lemma}[Ogus]
For a supersingular point $P$,
The non-ordinary locus (over $k$) in the formal neighborhood of $P$ is given by the equation \[p\Frob|_{\gr_{-1}\bL_{\cris}(R\otimes_W k)}=0.\]
\end{lemma}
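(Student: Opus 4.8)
The plan is to identify this statement with Ogus's criterion for ordinariness of an $F$-crystal of K3 type (Hodge numbers $(1,n,1)$) over a base of characteristic $p$, and then to translate conventions. Set $R_0:=R\otimes_W k$ and let $M:=\bL_{\cris}(R_0)$, equipped with the quadratic form $Q$ and the Hodge filtration $\Fil^1\subset\Fil^0\subset\Fil^{-1}=M$ induced by $\overline{\mu}$, so that $\Fil^1$ and $\gr_{-1}:=\Fil^{-1}/\Fil^0$ are line bundles and $\Fil^0=(\Fil^1)^\perp$. Following \cite{Ogus01} (compare \cite{Ogus79,Ogus}), one attaches to $M$ its \emph{conjugate filtration}, whose bottom step is the image of the divided Frobenius reduced mod $p$; Ogus's theorem says a point $P'$ of $\Spec R_0$ is ordinary precisely when the conjugate filtration and the Hodge filtration are in general position at $P'$. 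In the normalization of this paper the divided Frobenius is exactly $p\,\Frob$ -- this is the source of the one-step filtration shift and the factor $p$ noted in the footnotes of \S\ref{sec_eqn-non-ord} -- so the conjugate line is $\mathrm{Image}(p\Frob\bmod p)\subset M$.

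Because $M$ is self-dual under $Q$, the Hodge filtration is palindromic and the conjugate filtration is determined by its bottom line; consequently the transversality condition collapses to a single scalar equation. Precisely, general position of the conjugate line with $\Fil^0$ is equivalent to the composite
\[\gr_{-1}\bL_{\cris}(R_0)\ \xrightarrow{\ p\Frob\ }\ M\ \twoheadrightarrow\ \gr_{-1}\bL_{\cris}(R_0)\]
being an isomorphism; this composite is well-defined because $\Frob(\Fil^0)\subset\Fil^0$ (verified in \S\ref{sec_eqn-non-ord}), and it is exactly the map $p\Frob|_{\gr_{-1}\bL_{\cris}(R_0)}$ in the statement. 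Since $\gr_{-1}$ is a line bundle, after choosing a local generator this composite is multiplication by a single $f\in R_0$, whose non-vanishing locus is the ordinary locus and whose zero locus is therefore the non-ordinary locus -- which is the assertion of the lemma. In the Hilbert and Siegel cases one then simply reads off $f$ from the explicit matrices of $\Frob$ computed in \S\ref{sec_F_Hil}, \S\ref{sec_F_Sie}.

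For completeness I would also check the two inclusions directly via the unit-root argument. On the open set $\{f\neq 0\}$ the map $p\Frob$ is an isomorphism from $\gr_{-1}$ (equivalently, from the conjugate line) to itself; iterating it and passing to the limit produces a $\Frob$-stable rank-$1$ subcrystal on which $\Frob$ has slope $-1$, and by $Q$-duality a complementary slope-$1$ line, forcing Newton slopes $-1,0^n,1$, i.e. ordinarity. Conversely, at a point where $f$ vanishes the conjugate line lies in $\Fil^0$, the first Newton slope is strictly greater than $-1$, and the point is non-ordinary. All of this is insensitive to the Shimura datum: it uses only that $\bL_{\cris}$ is a self-dual $F$-crystal of K3 type with Hodge ranks $(1,n,1)$, recorded in \S\ref{summary_HP}--\S\ref{summary_Kisin} and the setup of \S\ref{sec_eqn-non-ord}, so \cite{Ogus01} applies to any GSpin Shimura variety as claimed.

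The main obstacle I expect is entirely bookkeeping rather than mathematics: one must make the \emph{divided} Frobenius and its reduction mod $p$ precise, since over a characteristic-$p$ base $p\Frob$ only makes sense once one remembers the integral $W$-structure and the $p^{-1}$ hidden in the slope-$(-1)$ direction (this is why \S\ref{sec_eqn-non-ord} first checks $\Frob(\Fil^0)\subset\Fil^0$ before forming $p\Frob|_{\gr_{-1}}$), and one must carefully track the filtration shift and the factor $p$ separating our conventions from those of \cite{Ogus01,Ogus}. No new input beyond Ogus's theory of $F$-crystals is needed.
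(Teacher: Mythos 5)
Your proposal is correct and follows essentially the same route as the paper: both invoke Ogus's criterion from \cite{Ogus01} that ordinariness is equivalent to transversality of the conjugate and Hodge filtrations, identify the bottom conjugate step $F^2_{\mathrm{con}}$ with the image of $p\Frob$ (the divided Frobenius), and use the orthogonality $\Fil^0=(\Fil^1)^\perp$ to reduce the transversality to the single scalar condition on $\gr_{-1}$. You flesh out more of the bookkeeping (the convention shift, the $p$-factor, the composite $\gr_{-1}\to M\twoheadrightarrow\gr_{-1}$ being the right incarnation of the projection of $F^2_{\mathrm{con}}$) and add a supplementary unit-root verification, but these are elaborations, not a different argument.
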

\begin{proof}
By \cite[Prop.~11]{Ogus01}, the discussion of the conjugate filtration on \cite[p.333-334]{Ogus01}, and the fact that the annihilator of $\Fil^1 \bL_{\cris}(R\otimes k)$ in $\bL_{\cris}(R\otimes k)$ with respect to $Q$ is $\Fil^0 \bL_{\cris}(R\otimes k)$, we have that the equation defining the non-ordinary locus is the projection of the conjugate filtration (denoted by $F^2_{con}$ in \emph{loc.~cit.}) to $\gr_{-1}\bL_{\cris}(R\otimes k)$. By definition, $F^2_{con}=p\Frob \bL_{\cris}(R\otimes k)$ and then the lemma follows.
\end{proof}

\begin{corollary}\label{eqn-non-ord}
 When $L=L_H$, the local equation of the non-ordinary locus in a formal neighborhood of a supersingular point $P$ is $xy=0$ if $P$ is superspecial and is $y=0$ if $P$ is supergeneric; when $L=L_S$, the local equation is $xy+z^2/(4\epsilon)=0$ if $P$ is a superspecial point and $(x+a)y+z^2/(4\epsilon)=0$ if $P$ supergeneric, where $a\in W(k)^\times$ depends on $P$.
\end{corollary}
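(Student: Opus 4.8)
The plan is to read off the non-ordinary equation directly from the explicit matrices for $\Frob$ computed in \S\ref{sec_F_Hil} and \S\ref{sec_F_Sie}, using the criterion from the preceding lemma: the non-ordinary locus is cut out by the vanishing of the induced map $p\Frob$ on $\gr_{-1}\bL_{\cris}(R\otimes_W k)$, a one-dimensional module. First I would recall that in each case the filtration was pinned down when choosing $\mu$: for $L=L_H$ we have $\Fil^1 = \Span_k\{\bar v_3\}$, $\Fil^0 = \Span_k\{\bar v_2,\bar v_3,\bar v_4\}$, so $\gr_{-1}$ is generated by the image of $v_1$; for $L=L_S$, with $\mu(t)=\diag(t^{-1},1,t,1,1)$, the $\Fil^1$ line is spanned by $\bar v_3$, $\Fil^0 = \Span_k\{\bar v_2,\bar v_3,\bar v_4,\bar v_5\}$, and $\gr_{-1}$ is generated by the image of $v_1$. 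So in every case the equation is simply the $v_1$-to-$v_1$ entry (modulo $\Fil^0$) of $p\Frob$, read off the matrices $ub$ (before the change to a $\varphi$-invariant basis, since the filtration is expressed in the $v_i$), reduced mod $p$.

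Concretely, in the Hilbert case the matrix $ub$ acting on $\{v_1,\dots,v_4\}$ has first column $(-xy/p - ay/p,\, -y/p,\, 1/p,\, -x/p - a/p)^T$; multiplying by $p$ and reducing mod $p$ (and projecting onto the $\bar v_1$-coordinate, i.e. discarding the $\bar v_2,\bar v_3,\bar v_4$ components which lie in $\Fil^0$) gives $-(xy+ay)$. When $P$ is superspecial $a=0$ and the equation is $xy=0$; when $P$ is supergeneric $a\in W^\times$, so after rescaling by the unit $-a^{-1}$ — or equivalently after the coordinate translation absorbing $a$ — the equation is $y=0$ (one checks the locus $\{xy+ay=0\} = \{y(x+a)=0\}$ has the branch $y=0$ as its reduced structure near the origin since $x+a$ is a unit there; but in fact both branches pass through $P$, and the right statement is that the equation is $y(x+a)$, which up to the unit $x+a$ near... here I should be careful and state it as $y=0$ only after noting $x+a$ is a unit at the closed point — actually $x+a$ vanishes nowhere on $\Spf$ since $a\in W^\times$ reduces to a unit, so indeed the non-ordinary locus is $y=0$). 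In the Siegel case the matrix $ub$ acting on $\{v_1,\dots,v_5\}$ has first column with top entry $-\tfrac1p(xy+\tfrac{z^2}{4\epsilon}) - \tfrac{ay}{p}$; multiplying by $p$, reducing mod $p$, and projecting to the $\bar v_1$-coordinate gives $-(xy+\tfrac{z^2}{4\epsilon}+ay) = -((x+a)y + \tfrac{z^2}{4\epsilon})$, which is $xy+z^2/(4\epsilon)=0$ when $a=0$ (superspecial) and $(x+a)y+z^2/(4\epsilon)=0$ when $a\in W^\times$ (supergeneric).

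The only genuine subtlety — and the step I expect to be the main (minor) obstacle — is bookkeeping the identification between the basis $\{v_i\}$ in which the filtration is stated and the possibly-rescaled basis used to display $\Frob$: one must confirm that the $v_1$-component of $p\Frob(v_1)$ modulo $\Fil^0$ is computed in the same basis, and that no $\lambda$'s or other units from the $\varphi$-invariant-basis change of coordinates contaminate the answer. Since $\gr_{-1}$ is one-dimensional, any such change only multiplies the equation by a unit, so the scheme it cuts out is unchanged; this is exactly why the Lemma's formulation is basis-independent. I would also remark that $\Frob$ preserves $\Fil^0$ — which is needed for $p\Frob$ on $\gr_{-1}$ to be well-defined — and this is visible from the matrices (the $\Fil^0$-block, i.e. rows/columns $2,3,4$ resp. $2,3,4,5$, is sent into itself after multiplying by $p$ and reducing, because the relevant entries outside that block carry a factor of $1/p$ that is killed). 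With these checks the Corollary follows immediately by substituting the four matrices.
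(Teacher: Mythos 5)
Your proposal is correct and follows essentially the same route as the paper: read off the $\bar v_1$-component of $p\Frob(v_1)$ modulo $\Fil^0$ directly from the matrices $ub$ of \S\S\ref{sec_F_Hil}--\ref{sec_F_Sie}, then apply the Ogus-type criterion from the preceding lemma. The paper only writes out the Siegel case explicitly and says the Hilbert case is handled the same way; you carry out both, which is fine. Your momentary hesitation in the supergeneric Hilbert case (``both branches pass through $P$'') is wrong as stated, but you immediately and correctly walk it back: since $a$ reduces to a unit in $k$, $x+a$ is a unit in $k[[x,y]]$, so $\{y(x+a)=0\}$ and $\{y=0\}$ define the same closed formal subscheme. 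Your remark that the computation must be done in the $\{v_i\}$-basis rather than the $\varphi$-invariant $\{w_i\}$-basis (because the filtration is written in the $v_i$'s), and that any basis change only multiplies the one-dimensional determinant by a unit, is the right thing to be careful about and is handled correctly.
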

\begin{proof}
We will prove this corollary in the case $L = L_S$, since the other case is handled the same way. Recall we have the basis $v_1\hdots v_5$ of $\bL_{\cris} $, with $\Fil^-{1} = \bL_{\cris}$ and $\Fil^0$ being spanned by $v_2,v_3,v_4$ and $v_5$. Therefore, using the explicit formulas from the previous section, we see the map 
$p\Frob: \gr_{-1}\bL_{\cris}(R\otimes_W k) \rightarrow \gr_{-1}\bL_{\cris}(R\otimes_W k)$ is given by $p\Frob(v_1) = -(xy + \frac{z^2}{4\epsilon} + ay)v_1$. Our result now follows from Ogus's description of the non-ordinary locus. 
\end{proof}

\section{Arithmetic Borcherds Theory, Siegel mass formula, and Eisenstein series}\label{sec_global}

We use arithmetic Borcherds theory \cite{HMP} to control the global intersection number of a curve $C$ with special divisors. More precisely, we use the work of Bruinier and Kuss in \cite{BK03} to study the Fourier coefficients of the Eisenstein part of the (vector-valued) modular form arising from Borcherds theory. In order to compare the global intersection number with the local contribution later in the paper, we also apply the computations in \cite{BK03} and the Siegel mass formula to the Eisenstein part of the theta series attached to a supersingular point and reduce the question to a computation of local densities and determinants of the lattices $L$ and $L'$ introduced in \S\ref{sec_lat_glo} and \Cref{def_L''} (in \S\ref{sec_Siegel-mass}, we will summarize the properties of $L'$). We use Hanke's method in \cite{Han04} to compute the local densities. Throughout this section, $p$ is an odd prime such that $L$ is self-dual at $p$. For a prime $\ell$, we use $v_\ell: \bZ_\ell\backslash\{0\}\rightarrow \bZ_{\geq 0}$ to denote the $\ell$-adic valuation.

\subsection{Arithmetic Borcherds theory and the explicit formula for the Eisenstein series}
Recall the special divisors $Z(m)$ from \Cref{def_spdiv_S} and \S\ref{def_spdiv_H}. The following modularity result is the key input to the estimate of the intersection number $Z(m).C$. 

In order to state the result using vector-valued modular forms, for $\mu\in L^\vee/L, m\in \bQ_{>0}$, let $\cZ(m,\mu)$ denote the special divisors over $\bZ$ in $\cM$ defined in \cite[\S 4.5, Def.~4.5.6]{AGHMP}. By definition, $\cZ(m,0)$ is the divisor $\cZ(m)$ defined in \S\ref{sec_sp_end}; and roughly speaking, $\cZ(m,\mu)$ parametrizes abelian surfaces $A$ with a special quasi-endomorphism $s$ such that $Q(s)=m$ and the $\ell$-adic and crystalline realizations of $s$ lie in the image of $(\mu+L)\otimes \bZ_\ell$ and $(\mu+L)\otimes \bZ_p$ in $\End(T_\ell(A)\otimes \bQ_\ell)$ and $\End(\bD\otimes_W W[1/p])$ respectively, where $\bD$ is the Dieudonn\'e module of $A$ . By the proof of \cite[Prop.~4.5.8]{AGHMP} and \cite[Prop.~5.21]{MP16}, the assumption that $L$ is self-dual at $p$ implies that $\cZ(m,\mu)$ is flat over $\bZ_p$. Let $Z(m,\mu)$ denote $\cZ(m,\mu)_{\bF_p}$. Let $(\fe_\mu)_{\mu\in L^\vee/L}$ denote the standard basis of $\bC[L^\vee/L]$. Let $\omega \in \Pic(\cM_{\bF_p})_{\bQ}$ denote the Hodge line bundle in the $\bQ$-Picard group of $\cM_{\bF_p}$; in other words, $\omega$ is the line bundle of weight $1$ modular forms (see for instance \cite[Thm.~4.4.6]{AGHMP} for a definition of $\omega$). 

\begin{theorem}[Borcherds, Howard--Madapusi-Pera]\label{thm_HMP}
Assume $(L,Q)$ is a maximal quadratic lattice of signature $(n,2)$ such that $L$ is self-dual at $p$.
The generating series \[\omega^{-1}\fe_0+\sum_{m> 0, \mu\in L^\vee/L}Z(m,\mu) q^m \fe_\mu, \text{ where } q=e^{2\pi i \tau},\] lies in $M_{1+\frac{n}{2}}(\rho_L)\otimes \Pic(\cM_{\bF_p})_{\bQ}$.  Here, $\rho_L$ denotes the Weil representation on $\bC[L^\vee/L]$ and $M_{1+\frac{n}{2}}(\rho_L)$ denotes the space of  vector-valued modular forms of $\Mp_2(\bZ)$ with respect to $\rho_L$ of weight $1+\frac{n}{2}$.\footnote{In \cite{Bor99}, \cite{BK01}, \cite{BK03}, they work with $(L,-Q)$ and the modular form is with respect to the dual of the Weil representation of $(L,-Q)$, which is the Weil representation of $(L,Q)$. Our convention is the same as the one in \cite{HMP} and \cite{Br17}.} In particular, for any $\bQ$-linear functional $\alpha: \Pic(\cM_{\bF_p})_{\bQ}\rightarrow \bC$, the vector-valued power series \[\alpha(\omega^{-1})\fe_0+\sum_{m> 0, \mu\in L^\vee/L}\alpha(Z(m,\mu)) q^m \fe_\mu\] is the Fourier expansion of an element of $M_{1+\frac{n}{2}}(\rho_L)$.
\end{theorem}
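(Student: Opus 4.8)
The plan is to deduce the statement from the arithmetic modularity theorem of Howard--Madapusi-Pera \cite{HMP} (which in turn builds on Borcherds \cite{Bor98}) by restricting from the integral model to its special fiber at $p$. Since $(L,Q)$ is maximal and self-dual at $p$, the GSpin Shimura variety $M$ attached to $(L,Q)$ admits a smooth canonical integral model $\cM$ over $\bZ_{(p)}$ (cf.\ \cite{MP16}), carrying the Hodge bundle $\omega$ and, for each $m>0$ and $\mu\in L^\vee/L$, the special divisor $\cZ(m,\mu)$ of \cite[\S 4.5]{AGHMP}. By \cite{HMP}, the $\bQ$-linear relations among the classes $\cZ(m,\mu)$ in $\Pic(\cM)_\bQ$ that witness modularity are realized by Borcherds products --- rational sections of powers of $\omega$ with prescribed divisors --- defined over $\cM$, and consequently the generating series
\[
\omega^{-1}\fe_0+\sum_{m> 0, \mu\in L^\vee/L}\cZ(m,\mu)\,q^m\fe_\mu
\]
is a vector-valued modular form in $M_{1+\frac{n}{2}}(\rho_L)\otimes\Pic(\cM)_\bQ$.

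Next I would restrict along the closed immersion $i\colon\cM_{\bF_p}\hookrightarrow\cM$. Pullback of line bundles induces a $\bQ$-linear map $i^*\colon\Pic(\cM)_\bQ\to\Pic(\cM_{\bF_p})_\bQ$, and applying $\mathrm{id}\otimes i^*$ coefficientwise to the displayed series preserves modularity, so it suffices to compute the images of the coefficients. Clearly $i^*\omega$ is the Hodge bundle on $\cM_{\bF_p}$, again denoted $\omega$. For the divisor terms, the point is that $\cZ(m,\mu)$ is flat over $\bZ_p$ --- exactly where self-duality of $L$ at $p$ enters, via \cite[Prop.~4.5.8]{AGHMP} and \cite[Prop.~5.21]{MP16} as recalled in the excerpt --- so $\cZ(m,\mu)$ has no component supported in characteristic $p$, its special fiber $Z(m,\mu)=\cZ(m,\mu)_{\bF_p}$ is an honest Cartier divisor on $\cM_{\bF_p}$, and $i^*[\cZ(m,\mu)]=[Z(m,\mu)]$. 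Substituting, the generating series $\omega^{-1}\fe_0+\sum_{m> 0, \mu}Z(m,\mu)\,q^m\fe_\mu$ lies in $M_{1+\frac{n}{2}}(\rho_L)\otimes\Pic(\cM_{\bF_p})_\bQ$, as claimed.

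For the final assertion, given a $\bQ$-linear functional $\alpha\colon\Pic(\cM_{\bF_p})_\bQ\to\bC$, I would apply $\mathrm{id}\otimes\alpha$ to the series: any linear functional carries a modular form valued in $M_{1+\frac{n}{2}}(\rho_L)\otimes V$ to an element of $M_{1+\frac{n}{2}}(\rho_L)$, so $\alpha(\omega^{-1})\fe_0+\sum_{m> 0, \mu}\alpha(Z(m,\mu))\,q^m\fe_\mu\in M_{1+\frac{n}{2}}(\rho_L)$. The substantive content here is entirely in the cited modularity theorems; the only step requiring genuine care is checking that the restriction-to-special-fiber map is well-defined on special divisor classes, which is precisely the role played by the flatness of $\cZ(m,\mu)$ over $\bZ_p$, and hence by the hypothesis that $L$ is self-dual at $p$.
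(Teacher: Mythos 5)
Your argument is essentially identical to the paper's proof: apply the arithmetic modularity theorem of Howard--Madapusi-Pera over the integral model $\cM$, then restrict to the special fiber $\cM_{\bF_p}$, with the flatness of $\cZ(m,\mu)$ over $\bZ_p$ (coming from self-duality of $L$ at $p$) ensuring that restriction carries $\cZ(m,\mu)$ to the divisor $Z(m,\mu)$. The paper also records an alternative route via Borcherds's modularity over $\bC$ combined with the flatness argument of \cite{Davesh}, which you do not mention, but the route you take is the paper's primary one.
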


\begin{proof}
By abuse of notation, we also use $\omega$ to denote the Hodge line bundle over $\cM$. By \cite[Thm.~B]{HMP}, the generating series $\omega^{-1}\fe_0+\sum_{m> 0, \mu\in L^\vee/L}\cZ(m,\mu) q^m \fe_\mu \in M_{1+\frac{n}{2}}(\rho_L)\otimes \Pic(\cM)_{\bQ}$. Since $\cZ(m,\mu)$ are flat over $\bZ_p$, then the desired assertion follows from intersecting with $\cM_{\bF_p}$. 
  
Alternatively, Borcherds \cite[Thm.~4.5]{Bor99} proved the modularity assertion for $\cZ(m,\mu)_\bC$. By \cite[Lemma 5.12]{Davesh} and the flatness of $\cZ(m,\mu)$, the proof of Borcherds implies the desired modularity for $Z(m,\mu)$.
\end{proof}

\begin{para}\label{modularityH}
In the setting of Theorem \ref{thm_main}(\ref{item_H}) (i.e. the case when $L=L_H$), we work with curves $C$ that are not necessarily proper. We therefore need a version of the above modularity result that holds for the special fiber a toroidal compactification of $\cM$. To that end, let $\cM^{\textrm{tor}}$ denote a toroidal compactification of $\cM$, and let $D_1,\dots, D_k$ denote irreducible components of the boundary $\cM^{\textrm{tor}}_{\bF_p}\setminus \cM_{\bF_p}$. In \cite[Theorem 6.2]{BBK}, the authors prove the modularity result for $\cM^{\textrm{tor}}$, which will directly imply the modularity result for $\cM^{\textrm{tor}}_{\bF_p}$. The constant term is still given by the Hodge line bundle, still denoted by $\omega$, on $\cM^{\rm{tor}}_{\bF_p}$ and the special divisors $Z(m,\mu)$ are replaced by\footnote{Our notation $Z' + E$ is different from the notation used in \emph{loc. cit.}} $Z'(m,\mu) + E(m,\mu)$, where $Z'(m,\mu)$ is the Zariski-closure of $Z(m,\mu)$ in $\cM^{\textrm{tor}}_{\bF_p}$, and $E(m,\mu)$ is a ``correction term'', and has as its irreducible components the $D_i$ with appropriate multiplicity. Crucially, when $Z(m,\mu)$ is proper (see \S\ref{def_setT} for when this happens), the multiplicities of the $D_i$ in correction term $E(m,\mu)$ are all zero and hence $E(m,\mu)$ is trivial. 
Therefore, compact special divisors stay as they are in the modularity theorem for $\cM^{\textrm{tor}}_{\bF_p}$.\footnote{We note that in \cite{BBK}, the authors work with Hilbert modular surfaces attached to real quadratic fields with prime discriminant $D$ and state the modularity result using modular forms with level $\Gamma_0(D)$. However, their proof, which uses Borcherds product for the Fourier expansion and the flatness of $\cZ(m,\mu)$, applies for all Hilbert modular surfaces in the setting of vector-valued modular forms by using the original work of Borcherds \cite{Bor98}. We hence deduce modularity for $\cM^{\rm{tor}}_{\bF_p}$. Although the integral special divisors (denoted by $\cT(n)$ in \cite{BBK}) are defined by taking Zariski closure in $\cM^{\rm{tor}}$ of the special divisors on the generic fiber $\cM^{\rm{tor}}_{\bQ}$, this notion coincides with our definition by the flatness of the integral special divisors in both definitions.}
\end{para}



\begin{para}\label{def_int}
Recall that we have a finite morphism $\pi: C\rightarrow \cM_{\bar{\bF}_p}$.
When $C$ is proper, for $Z\in \Pic(\cM_{\bF_p})_{\bQ}$, we define $C.Z$ as the degree of $\pi^* Z \in \Pic(C)_{\bQ}$. For \Cref{thm_main}(2), we pick a toroidal compactification $\cM^{\rm{tor}}$ of the Hilbert modular surface $\cM$ and let $C'$ denote the smooth compactification of $C$ and the finite morphism $\pi$ extends to a finite morphism $\pi': C'\rightarrow \cM^{\rm{tor}}_{\bar{\bF}_p}$. Then for a proper divisor $Z$ in $\cM_{\bF_p}$, we use $C.Z$ to denote $\deg_{C'}(\pi^*Z)$; since $Z$ is proper, $C'\cap Z=C\cap Z$ so we only need to consider points in $\cM_{\bar{\bF}_p}$.
\end{para}

\begin{para}\label{def_Eisenstein}
We apply \Cref{thm_HMP} and \S\ref{modularityH} to $\alpha(Z):=C.Z$ defined n \S\ref{def_int} for $Z\in \Pic(\cM_{\bF_p})_{\bQ}$ (and we further assume that $Z$ is proper when $L=L_H$). 
We decompose the modular form $-(\omega.C)\fe_0+\sum_{m> 0, \mu\in L^\vee/L}Z(m,\mu).C q^m \fe_\mu$ as $E(q)+G(q)$, where $E(q)\in M_{1+\frac{n}{2}}(\rho_L)$ is an Eisenstein series and $G(q)\in M_{1+\frac{n}{2}}(\rho_L)$ is a cusp form. Note that the constant term of $E(q)$ is $-(\omega.C)\fe_0$. 

We now recall the vector-valued Eisenstein series $E_0(\tau)\in M_{1+\frac{n}{2}}(\rho_L)$ which has constant term $\fe_0$. This Eisenstein series has been studied in \cite[\S 1.2.3]{Br02}, \cite[\S 4]{BK01}, and \cite[\S 3]{BK03}. Here we follow \cite[\S 2.1]{Br17} as we use the same convention of quadratic forms. We denote an element in $\Mp_2(\bZ)$ by $(g,\sigma)$, where $g=\begin{bmatrix}a & b \\ c & d\end{bmatrix}\in \SL_2(\bZ)$ and $\sigma$ is a choice of the square root of $c\tau+d$. Let $\Gamma'_\infty\subset \Mp_2(\bZ)$ denote the stabilizer of $\infty$. Then for $n\geq 3$, the following summation converges on the upper half plane and we define
\[E_0(\tau)=\sum_{(g,\sigma)\in \Gamma'_\infty\backslash \Mp_2(\bZ)}\sigma(\tau)^{-(2+n)}(\rho_L(g,\sigma)^{-1}\fe_0).\]


When $n=2$, we define $E_0(\tau)$ use analytic continuation following \cite[\S 3]{BK03}. Write $\tau=x+iy$ and define for $s\in \bC$,
\[E_0(\tau,s)=\sum_{(g,\sigma)\in \Gamma'_\infty\backslash \Mp_2(\bZ)}\sigma(\tau)^{-(2+n)}(\rho_L(g,\sigma)^{-1}(y^s\fe_0)),\]
which converges on the upper half plane for $s$ with $\Re s>0$ ($n=2$ here). By \cite[p.~1697]{BK03}, $E_0(\tau,s)$ has meromorphic continuation in $s$ to the entire $\bC$ and it is holomorphic at $s=0$ and we define $E_0(\tau)$ to be the value at $s=0$ of the meromorphic continuation of $E_0(\tau,s)$. Moreover, by \emph{loc.~cit.}, $E_0(\tau)$ is holomorphic and hence lies in $M_{1+\frac{n}{2}}(\rho_L)$ if $\rho_L$ does not contain the trivial representation as a subquotient. In the proof of \Cref{thm_main}(2), we work with $L=L_H$ and this condition for $\rho_L$ is always satisfied as far as the $m$ in the statement of \Cref{thm_main}(2) is not a perfect square, i.e., $\cM$ is not the product of modular curves.

We denote the $q$-expansion of $E_0(\tau)$ as $\sum_{m\geq 0, m\in \bZ+Q(\mu)}q_L(m,\mu)q^m \fe_\mu$ and set $q_L(m):=q_L(m,0)$ for $m\in \bZ_{>0}$. 
\end{para}

\begin{para}\label{notation-Lat}
We fix some notations before we state the explicit formula of $q_L(m)$ given by Bruinier--Kuss.
Given a quadratic lattice $L$ (not necessarily the lattice $L_H,L_S$), we write $\det(L)$ for the determinant of its Gram matrix.
We have $|L^\vee/L|=|\det(L)|$. 

For a rational prime $\ell$, we use $\delta(\ell, L,m)$ to denote the local density of $L$ representing $m$ over $\bZ_\ell$. More precisely, $\delta(\ell, L,m)=\lim_{a\rightarrow \infty} \ell^{a(1-\rk L)}\#\{v\in L/\ell^a L \mid Q(v)\equiv m \bmod \ell^a\}$. \cite[Lem. 5]{BK01} asserts that the limit is stable once $a\geq 1+2v_\ell(2m)$. In particular, if $m$ is representable by $(L\otimes \bZ_\ell, Q)$, then $\delta(\ell,L,m)>0$.

Given $0\neq D\in \bZ$ such that $D\equiv 0,1 \bmod 4$, we use $\chi_D$ to denote the Dirichlet character 
$\chi_D(a)=\left(\frac{D}{a}\right)$, where $\left(\frac{\cdot}{\cdot}\right)$ is the Kronecker symbol. For a Dirichlet character $\chi$, we set $\sigma_s(m,\chi)=\sum_{d|m}\chi(d)d^s$. 
\end{para}

\begin{theorem}[Bruinier--Kuss; see also \cite{Br17}*{Thms.~2.3, 2.4}] \label{Ecoeff}
Consider $L=L_H,L_S$ defined in \S\ref{sec_lat_glo} and $m\in \bZ_{>0}$.
\begin{enumerate}
\item For $L=L_H$, the Fourier coefficient $q_L(m)$ is 
\[-\frac{4\pi^2m\sigma_{-1}(m,\chi_{4\det L})}{\sqrt{|L^\vee/L|}L(2,\chi_{4\det L})}\prod_{\ell \mid 2\det(L)}\delta(\ell, L, m).\]

\item For $L=L_S$, write $m=m_0f^2$, where ${\rm{gcd}}(f,2\det L)=1$ and $v_\ell(m_0)\in \{0,1\}$ for all $\ell\nmid 2\det L$. Then the Fourier coefficient $q_L(m)$ is
\[-\frac{16\sqrt{2}\pi^2m^{3/2}L(2,\chi_{\cD})}{3\sqrt{|L^\vee/L|}\zeta(4)}\left(\sum_{d\mid f}\mu(d)\chi_{\cD}(d)d^{-2}\sigma_{-3}(f/d)\right)\prod_{\ell\mid 2\det L}\Big(\delta(\ell,L,m)/(1-\ell^{-4})\Big),\]
where $\mu$ is the Mobius function and $\cD=-2m_0\det L$.\footnote{Since $\det L_S=2$ and more generally for odd rank quadratic lattice $L$, we have $2\mid \det L$, then $\cD\equiv 0\bmod 4$.}

\end{enumerate}
\end{theorem}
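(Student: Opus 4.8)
The plan is to deduce both formulas from the general description of the Fourier coefficients of the vector-valued Eisenstein series $E_0$ as an archimedean factor times a suitably regularized infinite product of local representation densities $\delta(\ell,L,m)$. This description is the main computation of Bruinier--Kuss: the half-integral weight case $k=\tfrac{5}{2}$ needed for $L=L_S$, where the series converges absolutely, is carried out in \cite[\S 4]{BK01}, while the boundary weight $k=2$ needed for $L=L_H$ is carried out in \cite[\S 3]{BK03}; in the normalization used here these are recorded in \cite{Br17}*{Thms.~2.3, 2.4}. Granting this input, the theorem becomes a bookkeeping problem with three pieces: (a) the archimedean factor; (b) the product over the good primes $\ell\nmid 2\det L$, which must be put into closed form; and (c) the finitely many remaining factors $\delta(\ell,L,m)$ with $\ell\mid 2\det L$, which are kept as they stand and computed separately later in the paper by Hanke's method \cite{Han04}.

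First I would record the archimedean factor. Because $E_0$ is normalized to have constant term $\fe_0$, this factor is $(2\pi)^k/\Gamma(k)$ times $m^{k-1}$, up to sign. For $L=L_H$ this equals $(2\pi)^2/\Gamma(2)=4\pi^2$ with the power $m$; for $L=L_S$, using $\Gamma(\tfrac{5}{2})=\tfrac{3}{4}\sqrt{\pi}$, it equals $(2\pi)^{5/2}/\Gamma(\tfrac{5}{2})=\tfrac{16\sqrt{2}}{3}\pi^2$ with the power $m^{3/2}$. The overall sign is negative in both cases; this is forced by the sign of the value (respectively the residue) of the Dirichlet $L$-function entering the regularization, and for $L=L_H$ it depends on $E_0(\tau,s)$ having a holomorphic value at $s=0$, which holds because $\rho_{L_H}$ contains no trivial subquotient as long as the integer $m$ of \Cref{thm_main}(\ref{item_H}) is not a perfect square, i.e.\ $\cM$ is not a product of modular curves (cf.\ \S\ref{def_Eisenstein}).

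Next I would split the prime product into good and bad primes and evaluate the good part. At a good prime $L\otimes\bZ_\ell$ is unimodular, so $\delta(\ell,L,m)$ has the classical closed form in terms of $\rk L$, the class of $\det L$ in $\bZ_\ell^\times/(\bZ_\ell^\times)^2$, and $v_\ell(m)$. For $L=L_H$ (rank $4$) the good local factor is, up to the Euler factor of $L(2,\chi_{4\det L})$ at $\ell$, a truncated geometric sum $\sum_{j=0}^{v_\ell(m)}\chi_{4\det L}(\ell)^{j}\ell^{-j}$; multiplying these over all $\ell\nmid 2\det L$ and reorganizing the Euler factors at $2\det L$ assembles them into $\sigma_{-1}(m,\chi_{4\det L})/L(2,\chi_{4\det L})$, which together with the archimedean constant and the normalization $1/\sqrt{|L^\vee/L|}$ gives part (1). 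For $L=L_S$ (rank $5$) I would write $m=m_0f^2$ with $\gcd(f,2\det L)=1$ and $v_\ell(m_0)\in\{0,1\}$: for an odd-rank form the good local densities involve $\ell^{-4}$ (producing $\zeta(4)$) together with the quadratic character attached to the discriminant twisted by the unit part of $m$, whose squarefree core is $m_0$ (producing $L(2,\chi_{\cD})$ with $\cD=-2m_0\det L$), while the primes $\ell\mid f$ contribute the finite multiplicative factor $\sum_{d\mid f}\mu(d)\chi_{\cD}(d)d^{-2}\sigma_{-3}(f/d)$; after absorbing the bad Euler factors into $\prod_{\ell\mid 2\det L}\bigl(\delta(\ell,L,m)/(1-\ell^{-4})\bigr)$ this is exactly part (2).

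The step I expect to be the main obstacle is the boundary weight $k=2$ case of $L_H$. There $E_0$ is not absolutely convergent and is defined only via the analytic continuation of $E_0(\tau,s)$, so one must verify holomorphy at $s=0$ (this is where the hypothesis on $\rho_{L_H}$, hence the exclusion of products of modular curves, enters) and, more importantly, check that the archimedean constant $-4\pi^2$ --- in particular its sign --- is unaffected by the continuation; this forces us to lean on the explicit analysis of \cite[\S 3]{BK03} rather than on a purely formal Siegel--Weil manipulation. The remaining good-prime density identities are routine computations with Jordan decompositions and geometric series.
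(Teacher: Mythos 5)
Your proposal follows essentially the same route as the paper: both derive the formulas from Bruinier--Kuss's local-density description of the $m$-th coefficient of $E_0$, using the absolutely convergent weight-$\tfrac{5}{2}$ computation of \cite[\S 4]{BK01} for $L_S$ and the regularized weight-$2$ version of \cite[\S 3]{BK03} for $L_H$. The paper's proof is terser and more surgical --- for the Hilbert case it simply points out that one substitutes \cite[Prop.~3.1]{BK03} for \cite[Prop.~2]{BK01}, invokes Shintani's formula to recover \cite[Prop.~3]{BK01}, and then uses \cite[\S 11.5]{Iwa97} to re-express the result as an Euler product --- whereas your version spells out the archimedean factor and the good-prime assembly in full. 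Your archimedean constants and the $m^{k-1}$ power are correct in both cases, and you rightly flag the boundary-weight case ($k=2$) as the delicate point requiring analytic continuation and the no-trivial-subquotient hypothesis on $\rho_{L_H}$. One small inaccuracy: the global minus sign comes from the signature convention for the coefficients of the $(n,2)$-Eisenstein series, not from ``the sign of the value (resp.\ residue) of the Dirichlet $L$-function entering the regularization''; this does not affect the argument, but the stated reason for the sign is off.
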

\begin{proof}
When $L=L_S$, this is \cite[Thm.~11]{BK01}. When $L=L_H$, one modifies the proof of \cite[Thm.~11]{BK01} as follows. Using \cite[Prop.~3.1]{BK03} instead of \cite[Prop.~2]{BK01}, we obtain \cite[Prop.~3]{BK01} since Shintani's formula works in general. To express the formula in \cite[Prop.~3]{BK01} as a product of local terms, we use \cite[\S 11.5, p.~196]{Iwa97}. The rest of the proof, which computes the local terms at $\ell\nmid 2\det L$, works in the same way (see also \cite[eqns~(11.71)--(11.74)]{Iwa97}).
\end{proof}

If $Z(m)\neq \emptyset$, then $m$ is representable by $(L,Q)$ and in particular for every $\ell$, $m$ is representable by $(L\otimes \bZ_\ell,Q)$ and hence $\delta(\ell, L,m)>0$.
By \Cref{Ecoeff}, we have $q_L(m)<0$ when $Z(m)\neq \emptyset$.

\subsection{The lattice $L'$ and the Siegel mass formula}\label{sec_Siegel-mass}

 \begin{para}\label{summary_L'}
 For a supersingular point $P\in \cM(k)$, we defined $L''$, the lattice of special endomorphisms, in \Cref{def_L''} and picked $L'\supset L''$ which is maximal at all $\ell\neq p$ and $L'\otimes \bZ_p=L''\otimes \bZ_p$. Though there may be choices for $L'$, the local lattices $L'\otimes \bZ_\ell$ are well-defined up to isometry. More precisely, for $\ell\neq p$, $L'\otimes \bZ_\ell$ is given by \Cref{lem_lat_ell}; and for $\ell=p$, $L'\otimes \bZ_p=L''\otimes \bZ_p$ is computed in \S\S\ref{sec_F_Hil}-\ref{sec_F_Sie}. Note that given $L$, the isometry class of the quadratic lattice $L'\otimes \bZ_p$ only depends on whether $P$ is superspecial or supergeneric; indeed, following the notation in \S\ref{summary_HP}, if $t_P=t_{\max}$ (for instance, when $P$ is supergeneric), then $\Lambda_P$ is a maximal lattice with respect to $pQ'$ and hence its isometry class (and thus the isometry class of $L'\otimes \bZ_p=\Lambda_P^\vee$) is unique; if $t_P=2$, i.e., $P$ is superspecial, then $\Lambda_P^\vee$ is a maximal lattice with respect to $Q'$ and hence is unique up to isometry. 
 
 In order to compute the local intersection number of $Z(m).C$ at $P$, we also need to consider sublattices $L'''$ of $L'$ such that $L'''\otimes \bZ_\ell=L'\otimes \bZ_\ell$ for all $\ell\neq p$ (more precisely, we will take $L'''$ to be the lattices defined in \S\ref{def_smallL'}). In particular, $\det L'''=p^{2a} \det L'$ for some $a\in \bZ_{\geq 0}$.
 
 Let $\theta_{L'''}(q)$ denote the theta series of the positive definite lattice $L'''$, which is a modular form of weight $\rk L'/2$; we decompose $\theta_{L'''}(q)=E_{L'''}(q)+G_{L'''}(q)$, where $E_{L'''}$ is an Eisenstein series and $G_{L'''}$ is a cusp form. Let $q_{L'''}(m)$ denote the $m$-th Fourier coefficients of $E_{L'''}$ (at the cusp $\infty$). The following theorem asserts that $q_{L'''}(m)$ only depends on the genus of $L'''$ and gives explicit formula for $q_{L'''}(m)$. In particular, when we consider the theta series for $L'$, we have that $q_{L'}(m)$ is independent of the choice of $L'$ above and it only depends on $L$ and whether $P$ is superspecial or supergeneric.
 \end{para}

\begin{theorem}[Siegel mass formula]\label{thm_Siegel-mass}
Notation as in \S\ref{summary_L'}. The Eisenstein series $E_{L'''}$ only depends on the genus of $L'''$. Moreover, for $m\in \bZ_{>0}$,
\begin{enumerate}
    \item when $L=L_H$, 
\[q_{L'''}(m)=\frac{4\pi^2m\sigma_{-1}(m,\chi_{4\det L'})}{\sqrt{|L'''^\vee/L'''|}L(2, \chi_{4\det L'})}\prod_{\ell \mid 2 det L'}\delta(\ell, L''',m);\]
\item when $L=L_S$,

\[q_{L'''}(m)=\frac{16\sqrt{2}\pi^2m^{3/2}L(2,\chi_{\cD'})}{3\sqrt{|L'''^\vee/L'''|}\zeta(4)}\left(\sum_{d\mid f}\mu(d)\chi_{\cD'}(d)d^{-2}\sigma_{-3}(f/d)\right)\prod_{\ell\mid 2\det L'}\Big(\delta(\ell,L''',m)/(1-\ell^{-4})\Big),\]
where we write $m=m_0f^2$ with ${\rm{gcd}}(f,2\det L')=1$ and $v_\ell(m_0)\in \{0,1\}$ for all $\ell\nmid 2\det L'$ and $\cD'=-2 m_0 \det L'$
\end{enumerate}
\end{theorem}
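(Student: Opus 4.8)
The plan is to deduce both statements from the classical Siegel--Weil theorem together with the explicit evaluation of the local factors, exactly parallel to the proof of \Cref{Ecoeff}. First I would recall that the Siegel--Weil formula asserts that the genus theta series $\sum_{L_i} \theta_{L_i}(q)/|\Aut(L_i)|$, averaged over the (finitely many) isometry classes in the genus of $L'''$ weighted by the reciprocals of their automorphism group orders, is an Eisenstein series in $M_{\rk L'/2}(\rho_{L'''})$; since the Eisenstein projection of a theta series is a cusp-form-free object, and the Eisenstein part is determined by the constant term together with the level (i.e.\ by the genus-invariant data $\det L'''$ and the local quadratic spaces $L'''\otimes\bZ_\ell$), it follows that $E_{L'''}$ depends only on the genus of $L'''$. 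Concretely, $E_{L'''} = E_0$ is the normalized vector-valued Eisenstein series of weight $\rk L'/2 = 1+\tfrac n2$ attached to the quadratic module $(L'''^\vee/L''', Q)$ as in \S\ref{def_Eisenstein}, because $L'''$ is positive definite of the appropriate rank ($4$ in the Hilbert case, $5$ in the Siegel case).

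Next I would invoke the very same Bruinier--Kuss computation already recorded in \Cref{Ecoeff}: that theorem computes the Fourier coefficients $q_L(m)$ of the weight $1+\tfrac n2$ Eisenstein series purely in terms of $\det L$, a quadratic $L$-function, a finite Euler product of local densities $\delta(\ell, L, m)$ over $\ell \mid 2\det L$, and (in the odd-rank case) the $\sigma_{-3}$ factor coming from the square part of $m$. Since $L'''$ has the same signature-type (positive definite of rank $2+n$) and the formula of \cite{BK01,BK03} is valid for any such lattice, I would simply substitute $L'''$ for $L$ in the formula. The only sign discrepancy is that $L'''$ is \emph{positive} definite whereas in \Cref{Ecoeff} the relevant Eisenstein series arises with constant term $-(\omega.C)\fe_0$; this accounts for the flipped overall sign in the statement of \Cref{thm_Siegel-mass} relative to \Cref{Ecoeff}, so I would check that carefully. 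For part (1) this gives the stated formula with $\det L'$ in place of $\det L$ (using $\det L''' = p^{2a}\det L'$ and that $p \nmid 2\det L'$, so the prime $p$ contributes no local factor and $\chi_{4\det L'''} = \chi_{4\det L'}$); for part (2) one likewise gets $\cD' = -2m_0\det L'$ and the product over $\ell \mid 2\det L'$.

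The main obstacle is bookkeeping rather than conceptual: one must verify that the local density $\delta(\ell, L''', m)$ agrees with $\delta(\ell, L', m)$ for every $\ell \neq p$ (immediate since $L'''\otimes\bZ_\ell = L'\otimes\bZ_\ell$ there) and that the prime $p$ genuinely drops out of the Euler product — which holds because $L'$, hence $L'''$ after the allowed $p$-power modifications, need not be unimodular at $p$, yet $p \nmid 2\det L'$ forces $p$ not to appear among the bad primes of the \emph{genus-invariant} Eisenstein series; equivalently, the $p$-adic local density only enters through the square part $f^2$ of $m$, and the hypotheses on $L$ and $L'$ (self-duality of $L$ at $p$, $p$ odd) keep $p$ out of the finite product. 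I would also note that when $m$ is a perfect square in the Siegel case one should confirm holomorphy of the Eisenstein series exactly as in \S\ref{def_Eisenstein}, but since $\rk L''' = 5 > 4$ the defining series converges absolutely and no analytic continuation is needed, so there is nothing extra to check there. Assembling these verifications yields the two displayed formulas.
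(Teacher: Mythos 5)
Your high-level plan matches the paper: reduce to the Siegel mass formula (you phrase it via Siegel--Weil, which is fine and essentially equivalent) for genus-invariance, then substitute $L'''$ into the Bruinier--Kuss computation already invoked in \Cref{Ecoeff}. The sign discussion (positive definite vs.\ signature $(n,2)$) is also correct.

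However, the bookkeeping you identify as ``the main obstacle'' is where your argument goes wrong, and in the opposite direction from the facts. You assert that $p\nmid 2\det L'$ and conclude that ``the prime $p$ genuinely drops out of the Euler product.'' This is false: it is $L$ that is self-dual at $p$ (so $p\nmid\det L$), whereas $L'$ is the lattice at a supersingular point, and the explicit computations in \S\ref{sec_F_Hil}--\ref{sec_F_Sie} give $p\mid\det L'$ (e.g., $|L'^\vee/L'|=p^2$ at a superspecial point in the Hilbert case). Consequently $p$ \emph{does} appear in the Euler product $\prod_{\ell\mid 2\det L'}$, and the local density $\delta(p,L''',m)$ is genuinely present in the formula; indeed, this $p$-local factor is exactly what \Cref{lem_Hanke} and \S\ref{par_density_H}--\ref{par_density_S} are devoted to computing, and it drives the comparison in \Cref{compare_qL}. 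The correct justification for why one may write $L'$ rather than $L'''$ in the index of the product and in the characters $\chi_{4\det L'}$, $\chi_{\cD'}$ is different: since $\det L''' = p^{2a}\det L'$ with $a\in\bZ_{\geq 0}$, the quantities $\det L'''$ and $\det L'$ have the same set of prime divisors (because $p$ already divides $\det L'$), and they differ by a perfect square, so $\chi_{4\det L'''}=\chi_{4\det L'}$ and $\chi_{-2m_0\det L'''}=\chi_{-2m_0\det L'}$. You would need to replace your claim about $p$ with this observation to make the argument correct.
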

\begin{proof}
The first assertion follows from the Siegel mass formula; see for instance \cite[Thm.~20.9, eqn.~(20.121), and pp.~479-480]{IK04}. In order to obtain the formula above, we note that the proof of \cite[Thm.~11]{BK01} using \cite[Thm.~6]{BK01} also applies to $L'''$ and hence we conclude that the formula in \cite[Thms.~2.3, 2.4]{Br17} also applies to $L'''$ and obtain the formulae in the theorem with all $L'$ replaced by $L'''$. Note that by the computations in \S\S\ref{sec_F_Hil}-\ref{sec_F_Sie}, we have $p\mid \det L'$, and hence $\ell \mid 2 \det L'''$ if and only if $\ell \mid 2 \det L'$; also $\chi_{4 \det L'''}=\chi_{4 \det L'}$ and $\chi_{D'}=\chi_{-2m_0 \det L'''}$. Hence using $L'$ (instead of $L'''$) for $\chi, \cD'$ and the product $\ell \mid 2 \det L'$ yields the same formulae.
\end{proof}

\subsection{The asymptotic of $q_L(m)$}
The discussion of this subsection also applies to $q_{L'''}(m)$ when $m$ is representable by $(L''',Q')$, but we only focus on $q_L(m)$ here.

\begin{para}\label{sec_asymp-qL}
Assume that $m$ is representable by $(L\otimes \bZ_\ell,Q)$ for every prime $\ell$. We will also assume that, as $m$ varies within a specified set $T$, there exists an absolute constant $C>0$ such that for all $\ell\mid 2 \det L$, we have $v_\ell(m)\leq C$.  As we shall see in \S\ref{def_setT}, we will always be in this situation.

For a given $\ell \mid 2 \det L$, as in \cite[proof of Prop.~2.5]{Br17}, by \cite[Lem.~5]{BK01}, we have $\delta(\ell,L,m)=\ell^{a(1-\rk L)}\#\{v\in L/\ell^a L \mid Q(v)\equiv m \bmod \ell^a\}$ with $a=1+2C+2v_\ell(2)$ and hence $\ell^{a(1-\rk L)}\leq \delta(\ell,L,m)\leq \ell^a$.\footnote{When $\rk L\geq 5$, for a fixed $\ell$, it is well known that $\delta(\ell,L,m)\asymp 1$ for all $m$ representable by $(L\otimes \bZ_\ell,Q)$ without imposing any bound on $v_\ell(m)$; see for instance \cite[pp.~198-199]{Iwa97}.}

Therefore, given $(L,Q)$, by \Cref{Ecoeff}, we have that $|q_L(m)|\asymp m \sigma_{-1}(m, \chi_{4\det L})$ and hence $m^{1-\epsilon}\ll_\epsilon |q_L(m)|\ll_\epsilon m^{1+\epsilon}$ for $L=L_H$; and $|q_L(m)|\asymp m^{3/2} L(2, \chi_{\cD}) \sum_{d\mid f} \mu(d) \chi_{\cD}(d) d^{-2} \sigma_{-3}(f/d)$ for $L=L_S$. As in the proof of \cite[Prop.~2.5]{Br17}, we have $\sum_{d\mid f}\mu(d)\chi_{\cD}(d)d^{-2}\sigma_{-3}(f/d)\geq 1/5$ and \[\sum_{d\mid f}\mu(d)\chi_{\cD}(d)d^{-2}\sigma_{-3}(f/d)\leq \sum_{d\mid f}d^{-2}\sigma_{-3}(f/d) <\sum_{d\mid f} d^{-2}\zeta(3)<\zeta(2)\zeta(3);\] moreover, by \emph{loc.~cit.}, $L(2,\chi_D)\geq \zeta(4)/\zeta(2)$ and $L(2,\chi_D)\leq \prod_{p} (1-p^{-2})^{-1}=\zeta(2)$. Hence $|q_L(m)|\asymp m^{3/2}$ when $L=L_S$.
\end{para}

\begin{lemma}\label{lem_glo-bound}
We fix the same assumptions as in \S\ref{sec_asymp-qL}. For $m\gg 1$, we have $Z(m)\neq \emptyset$ and the intersection number $Z(m).C = -q_L(m)(\omega . C)+o(|q_L(m)|)$. More precisely, when $L=L_H$, the error term can be bounded by $O_\epsilon(m^{1/2+\epsilon})$ and when $L=L_S$, the error term can be bounded by $O(m^{5/4})$.
\end{lemma}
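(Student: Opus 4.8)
The plan is to decompose the modular form from \Cref{thm_HMP} as $E(q) + G(q)$ with $E(q)$ the Eisenstein series from \S\ref{def_Eisenstein} and $G(q)$ a cusp form, as already arranged in \S\ref{def_Eisenstein}. Since the constant term of $E(q)$ is $-(\omega.C)\fe_0$ and $E_0(\tau)$ has constant term $\fe_0$, we get $E(q) = -(\omega.C)E_0(\tau) + E'(q)$ where $E'(q)$ is an Eisenstein series with vanishing constant term. The space of Eisenstein series with vanishing constant term in $M_{1+n/2}(\rho_L)$ is finite-dimensional and spanned by Eisenstein series attached to the \emph{other} cusps (isotropic lines in $L^\vee/L$); their Fourier coefficients at $\infty$ are governed by $\sigma$-type divisor sums with a shift, and in all cases grow like $O_\epsilon(m^{n/4 + \epsilon})$ — genuinely smaller than the main term $|q_L(m)| \asymp m^{n/2}$ (with the refinements $m^{1\pm\epsilon}$ for $L_H$, $\asymp m^{3/2}$ for $L_S$) established in \S\ref{sec_asymp-qL}. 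So the comparison to make is: $Z(m).C = -q_L(m)(\omega.C) + (\text{coeff of } E'(q)) + (\text{coeff of } G(q))$, and both of the latter are $o(|q_L(m)|)$.

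First I would record the standing hypotheses of \S\ref{sec_asymp-qL}: $m$ ranges over the set $T$ (to be specified in \S\ref{def_setT}) on which $v_\ell(m)$ is bounded for $\ell \mid 2\det L$ and $m$ is everywhere locally represented by $(L,Q)$; under these hypotheses \S\ref{sec_asymp-qL} already gives $m^{1-\epsilon} \ll_\epsilon |q_L(m)| \ll_\epsilon m^{1+\epsilon}$ for $L = L_H$ and $|q_L(m)| \asymp m^{3/2}$ for $L = L_S$, and in particular $q_L(m) \neq 0$, so $Z(m) \neq \emptyset$ for $m \gg 1$ (here one also uses that $q_L(m) < 0$ whenever $Z(m) \neq \emptyset$, as noted after \Cref{Ecoeff}, together with the fact that local solvability for all $\ell$ plus the real place — the lattice being indefinite of rank $\geq 5$ in the $L_S$ case, and an explicit check in the $L_H$ case — forces global representability). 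Then I would bound the cusp form contribution: $G(q)$ is a vector-valued cusp form of weight $1 + n/2$, so by the Hecke/Deligne bound its Fourier coefficients are $O_\epsilon(m^{n/4 + \epsilon})$ — that is $O_\epsilon(m^{1/2+\epsilon})$ when $n = 2$ (the $L_H$ case) and $O_\epsilon(m^{3/4+\epsilon})$ when $n = 3$ (the $L_S$ case, where one may even use the trivial $O(m^{n/4+\epsilon})$ bound or Deligne's $O_\epsilon(m^{5/4+\epsilon})$ for weight $5/2$). Combining with the bound $O_\epsilon(m^{n/4+\epsilon})$ for the coefficients of $E'(q)$, the total error is $O_\epsilon(m^{1/2+\epsilon})$ for $L_H$ — matching the claim — and, for $L_S$, is dominated by the stated $O(m^{5/4})$, which is comfortably $o(m^{3/2}) = o(|q_L(m)|)$.

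The main obstacle is the bookkeeping for $E'(q)$: one must verify that Eisenstein series attached to cusps other than $\infty$ (equivalently, to nonzero isotropic $\mu \in L^\vee/L$, which exist since $L$ is not anisotropic at the relevant primes) really do have $\infty$-Fourier coefficients of size $O_\epsilon(m^{n/4+\epsilon})$ rather than the full $m^{n/2}$. This follows from the general theory of vector-valued Eisenstein series for the Weil representation (e.g. Bruinier--Kuss \cite{BK01}, Bruinier--Kühn, or \cite[\S1.2.3]{Br02}): the $\fe_0$-component of such an Eisenstein series is either zero or has coefficients supported on a proper arithmetic progression and weighted by a divisor sum one power of $m$ smaller, because the relevant local Whittaker integral at the bad primes contributes an extra factor. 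I would state this as a short lemma (or cite it) and then the rest is immediate. An alternative, cleaner route that avoids classifying all the cusp Eisenstein series: observe that for the specific sets $T$ chosen in \S\ref{def_setT} (squares in the $L_S$ case, primes $\equiv 3 \bmod 4$ in the Siegel--$Z(\ell)$ case), one can instead bound $Z(m).C - (-q_L(m)(\omega.C))$ directly as the $m$-th coefficient of the full orthogonal complement $M_{1+n/2}(\rho_L) \ominus \bC E_0$, i.e. of a fixed finite-dimensional space of modular forms all of whose $\fe_0$-coefficients are $O_\epsilon(m^{n/4+\epsilon})$ — this is a one-line consequence of the structure theory of modular forms for $\Mp_2(\bZ)$ once one knows $E_0$ is the unique Eisenstein series with nonzero $\fe_0$-constant term, which is exactly the content recalled in \S\ref{def_Eisenstein}. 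Either way the proof is short modulo these standard inputs; I expect to write it in the second form to keep the dependence on external Eisenstein-series computations minimal.
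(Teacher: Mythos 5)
The proposal contains a genuine gap in the treatment of the Eisenstein part. You write $E(q) = -(\omega.C)E_0(\tau) + E'(q)$ with $E'$ a linear combination of Eisenstein series attached to nonzero isotropic $\beta \in L^\vee/L$, and then claim that the $\fe_0$-Fourier coefficients of these $E_\beta$ are $O_\epsilon(m^{n/4+\epsilon})$. That is false: an Eisenstein series of weight $1+\tfrac n2$ has Fourier coefficients of size $\asymp m^{n/2}$ (up to divisor-sum fluctuations), not $m^{n/4}$, and the local Whittaker contributions at bad primes multiply the coefficient by a bounded factor — they do not strip off a whole power of $m$. The same problem infects your "alternative, cleaner route": the complement $M_{1+n/2}(\rho_L)\ominus \bC E_0$ still contains the Eisenstein series $E_\beta$ with $\beta\neq 0$, and those do not satisfy the $O_\epsilon(m^{n/4+\epsilon})$ bound — only the cusp form space does. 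So neither version of your argument bounds the error term.

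What rescues the situation — and what the paper uses, albeit without spelling it out — is that $E'(q)=0$. The Borcherds generating series has constant term $\omega^{-1}\fe_0$, supported \emph{only} in the $\fe_0$-direction (the $c(0,\mu)$ for $\mu\neq 0$ all vanish), and the constant term map is injective on the Eisenstein subspace of $M_{1+n/2}(\rho_L)$ (cusp forms are precisely its kernel). Therefore the Eisenstein part of the linear functional applied to the generating series must be exactly $-(\omega.C)E_0$: there is no room for any $E_\beta$, $\beta\neq 0$, to appear, since each would contribute a nonzero constant term in the $\fe_\beta$-direction. Once this is established, the remaining error is purely the cusp form $G(q)$, and the paper bounds its $\fe_0$-coefficients by Deligne's bound $O_\epsilon(m^{1/2+\epsilon})$ for $L_H$ (weight $2$) and by the trivial Hecke bound $O(m^{5/4})$ for $L_S$ (weight $5/2$). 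Your discussion of the $L_S$ case also slightly garbles which bound is which — Deligne/Ramanujan is not available for half-integral weight, and the $m^{5/4}$ bound is the Hecke (trivial) bound, not Deligne's — but that is a minor slip; the structural error about $E'(q)$ is the substantive one to repair.
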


\begin{proof}
We follow the discussion in \S\ref{def_Eisenstein}. Let $g(m), m\in \bZ_{>0}$ denote the $m$-th Fourier coefficients of $\fe_0$-component of $G(q)$, which is also a cusp form of weight $1+\frac{n}{2}$ with respect to certain congruence subgroup of $\Mp_2(\bZ)$ depending on $L$. When $L=L_H$, by Deligne's bound (\cite{D73,D74}), we have $|g(m)|\ll m^{1/2}\sigma_0(m)\ll_\epsilon m^{1/2+\epsilon}=o_\epsilon (m^{1-\epsilon})=o(|q_L(m)|)$ for any $0<\epsilon<1/4$. When $L=L_S$, the trivial bound yields $|g(m)|\ll m^{5/4}=o(m^{3/2})$ (see \cite[Prop.~1.3.5]{Sar90}).
Therefore by \Cref{thm_HMP}, $Z(m).C=-q_L(m)(\omega.C)+o(|q_L(m)|)$; in particular, for $m\gg 1$, $Z(m).C>0$ and hence $Z(m)\neq \emptyset$.
\end{proof}

\begin{para}\label{def_setT}
When $L=L_S$, recall from \S\ref{sec_lat_glo} that the quadratic form is $Q(x)=x_0^2+x_1x_2-x_3x_4$ and hence every $m\in \bZ_{>0}$ is representable by $(L,Q)$. In particular, $Z(m)\neq \emptyset$ and $\delta(\ell,L,m)>0$ for all $\ell$. Moreover, in order to prove \Cref{thm_main}(1) and \Cref{rmk_realquad}, we will work with $m\in T:=\{Dq^2\mid q \text{ prime and }q\neq p \}$, where we take $D=1$ for \Cref{thm_main}(1) and $D$ being the discriminant of the real quadratic field in \Cref{rmk_realquad}; and for \Cref{thm_max}, we work with $m\in T:=\{q\mid q \text{ prime and }q\neq p, q \text{ is a quadratic residue}\bmod p, \text{ and } q\equiv 3 \bmod 4\}$. 
In particular, for all such $m$, we have $v_\ell(m)\leq 2+v_\ell(D)$ and hence the assumptions in \S\ref{sec_asymp-qL} are satisfied.

When $L=L_H$, since $L$ is maximal and isotropic, we have that the quadratic form on $L\otimes \bZ_\ell$ is given by $xy+Q_1(z)$, where $x,y \in \bZ_\ell, z\in \bZ_\ell^2$ and $Q_1$ is some quadratic form. Then $\delta(\ell, L,m)>0$ for all $\ell$; indeed, by \cite[Def.~3.1, Lem.~3.2]{Han04}, $\delta(\ell,L,m)>0$ if there exists $x,y \in \bZ/\ell^{1+2v_\ell(2)}$ such that $xy\equiv m \bmod \ell^{1+2v_\ell(2)}$ and $x\not\equiv 0\bmod \ell$ (by the terminology in \cite{Han04}, this construct a good type solution (taking $z=0$) for $(L,Q)\bmod \ell^{1+2v_\ell(2)}$, which can be lifted to $\bZ/\ell^k$ for any $k\geq 1+2v_\ell(2)$). Such $x,y$ always exists and hence every $m\in \bZ_{>0}$ is representable by $(L\otimes \bZ_\ell,Q)$ for all $\ell$ and hence by \Cref{lem_glo-bound}, there exists $N\in \bZ_{>0}$ such that for all $m>N$, $m$ is representable by $(L,Q)$. For the proof of \Cref{thm_main}(2), we work with $m$ in \[T:=\{m\in \bZ\mid m>N, p\nmid m, v_\ell(m)\leq C,\forall \ell \mid 2\det L, \text{ and }\exists\, q || m \text{ such that } q \text{ inert in }F\},\]
where $F$ is the real quadratic field attached to the Hilbert modular surface and the constant $C$ is chosen so that this set is non-empty. The existence of $q$ implies that $m\neq \Nm_{F/\bQ}\gamma$ for any $\gamma\in F$ and hence for any $v\in L_H\otimes \bQ$ such that $Q(v)=m$, we have $v^\perp\subset L_H\otimes \bQ$ is anisotropic. Note that if $Z(m)$ is non-compact in $\cM_{\bF_p}$, then $Z(m)$ parametrizes abelian surfaces which are isogenous to the self-product of elliptic curves and then $v^\perp$ is isotropic. Therefore, for any $m\in T$, we have that $Z(m)$ is compact in $\cM_{\bF_p}$. Note that $T\subset \bZ_{>0}$ is of positive density.
\end{para}

\begin{lemma}\label{lem_sum-asymp-H}
For $L=L_H$ and $M>0$, we have $\sum_{1\leq m \leq M, m\in T}|q_L(m)|\asymp M^2$. 
\end{lemma}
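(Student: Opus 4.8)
The plan is to convert the statement, via the explicit Eisenstein coefficient formula, into an elementary estimate for a twisted divisor sum, and then to obtain the upper bound trivially and the lower bound by Cauchy--Schwarz. First I would use \Cref{Ecoeff}(1) together with the analysis in \S\ref{sec_asymp-qL}: every $m\in T$ has $v_\ell(m)\le C$ for all $\ell\mid 2\det L$, so the local densities $\delta(\ell,L,m)$ in the formula for $q_L(m)$ are pinched between two positive constants depending only on $L$ and $C$; hence $|q_L(m)|\asymp m\,\sigma_{-1}(m,\chi)$ for $m\in T$, with $\chi:=\chi_{4\det L}$. Thus it is enough to prove $\sum_{1\le m\le M,\ m\in T} m\,\sigma_{-1}(m,\chi)\asymp M^2$. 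I would also record the elementary two-sided bound $\phi(m)/m\le\sigma_{-1}(m,\chi)\le\sigma_{-1}(m)$: the $\ell$-factor of $\sigma_{-1}(m,\chi)$ is an alternating partial sum with terms $1,\ell^{-1},\ell^{-2},\dots$, so it lies between $1-\ell^{-1}$ and $\sum_{i\ge 0}\ell^{-i}$.

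For the upper bound I would simply enlarge the summation range to all $m\le M$, apply $\sigma_{-1}(m,\chi)\le\sigma_{-1}(m)=\sum_{d\mid m}d^{-1}$, swap the order of summation (write $m=dk$), and bound $\sum_{d\le M}\sum_{k\le M/d}k\ll M^2\sum_{d\ge 1}d^{-2}\ll M^2$.

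For the lower bound, which carries the real content, I would apply Cauchy--Schwarz:
\[
\Bigl(\sum_{\substack{m\le M\\ m\in T}} m\Bigr)^{2}\ \le\ \Bigl(\sum_{\substack{m\le M\\ m\in T}} m\,\sigma_{-1}(m,\chi)\Bigr)\,\Bigl(\sum_{\substack{m\le M\\ m\in T}} \frac{m}{\sigma_{-1}(m,\chi)}\Bigr).
\]
The left-hand side is $\gg M^{4}$ because $T$ has positive density (\S\ref{def_setT}), so a positive proportion of the integers in $(M/2,M]$ lie in $T$. For the second factor on the right I would discard the condition $m\in T$, use $\sigma_{-1}(m,\chi)\ge\phi(m)/m$ and the identity $m/\phi(m)=\sum_{d\mid m}\mu^{2}(d)/\phi(d)$, and swap summation again to get $\sum_{m\le M}m^{2}/\phi(m)\ll M^{2}\sum_{d\ge 1}\mu^{2}(d)/(d\,\phi(d))\ll M^{2}$, the series converging since it equals $\prod_\ell\bigl(1+\tfrac{1}{\ell(\ell-1)}\bigr)$. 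Dividing the two bounds yields $\sum_{m\le M,\ m\in T}m\,\sigma_{-1}(m,\chi)\gg M^{2}$, completing the proof.

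The main obstacle is the lower bound. One cannot just invoke the (standard) asymptotic $\sum_{m\le M}m\,\sigma_{-1}(m,\chi)\asymp M^{2}$, because $T$ is \emph{not} of density one --- it excludes all multiples of $p$, a positive proportion of $\bZ_{>0}$ --- and restricting a twisted divisor sum to a merely positive-density set is not automatically harmless. Moreover, a pointwise lower bound on $\sigma_{-1}(m,\chi)$ is useless here, since $\sigma_{-1}(m,\chi)$ can be as small as $\asymp 1/\log\log m$ on a sparse set. The Cauchy--Schwarz step finesses both points, using only that $T$ has positive density and that $m/\sigma_{-1}(m,\chi)$ has bounded average order, the latter from $m/\sigma_{-1}(m,\chi)\le m^{2}/\phi(m)$.
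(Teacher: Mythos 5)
Your proof is correct, and it takes a genuinely different route from the paper's. The paper opens up $m\,\sigma_{-1}(m,\chi)=\sum_{d\mid m}d\,\chi(m/d)$ and applies Dirichlet's hyperbola method: the two ``cross'' pieces are estimated trivially as $O(M^{3/2})$, while the main piece $\sum_{f\le M^{1/2}}\chi(f)\sum_{d\le M/f,\,df\in T}d$ requires a further decomposition of $T$ into a set $T'$ cut out purely by congruence conditions (where the inner sum over $d$ is $C_1M^2/f^2+O(M)$) and the complement $T'\setminus T$, which has vanishing relative density. Your Cauchy--Schwarz argument, $\bigl(\sum_{m\in T_M}m\bigr)^2\le\bigl(\sum_{m\in T_M}m\,\sigma_{-1}(m,\chi)\bigr)\bigl(\sum_{m\in T_M}m/\sigma_{-1}(m,\chi)\bigr)$, entirely sidesteps the hyperbola method and the decomposition of $T$: the only input about $T$ you need is that it has positive density (quoted from \S\ref{def_setT}), and the remaining bound $\sum_{m\le M}m^2/\phi(m)\ll M^2$ is a clean multiplicative computation. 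The trade-off is that your method is strictly an order-of-magnitude argument --- it cannot be refined to give the constant in a genuine asymptotic --- whereas the paper's decomposition in principle isolates a main term with an explicit coefficient $C_1$; but since the lemma only asserts $\asymp M^2$, your route is both sufficient and arguably cleaner. All the auxiliary facts you invoke check out: the two-sided bound $\phi(m)/m\le\sigma_{-1}(m,\chi)\le\sigma_{-1}(m)$ follows from the Euler-factor analysis exactly as you say, the identity $m/\phi(m)=\sum_{d\mid m}\mu^2(d)/\phi(d)$ is standard, and the convergence of $\prod_\ell\bigl(1+\tfrac{1}{\ell(\ell-1)}\bigr)$ closes the bound on the second Cauchy--Schwarz factor.
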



\begin{proof}
By \S\S\ref{sec_asymp-qL},\ref{def_setT}, we have for $m\in T$, $|q_L(m)|\asymp m\sigma_{-1}(m,\chi)$, where $\chi=\chi_{4\det L}$. We write
\begin{eqnarray*}
\sum_{1\leq m \leq M, m\in T} m\sigma_{-1}(m,\chi) & =  & \sum_{1\leq m\leq M, m\in T}\sum_{d\mid m}d\cdot \chi(m/d)=  \sum_{1\leq d\leq M, 1\leq f\leq M, df\leq M, df\in T} d\cdot \chi(f)\\
&=& \sum_{1\leq d\leq M^{1/2}}d \sum_{1\leq f\leq M/d, df\in T}\chi(f)+\sum_{1\leq f\leq M^{1/2}}\chi(f)\sum_{1\leq d\leq M/f, df\in T}d\\
& &-(\sum_{1\leq d\leq M^{1/2}}d(\sum_{1\leq f\leq M^{1/2}, df\in T}\chi(f))).
\end{eqnarray*}
Note that \[|\sum_{1\leq d\leq M^{1/2}}d \sum_{1\leq f\leq M/d, df\in T}\chi(f)|\leq \sum_{1\leq d\leq M^{1/2}}d\cdot (M/d)=O(M^{3/2}),\] 
\[|(\sum_{1\leq d\leq M^{1/2}}d(\sum_{1\leq f\leq M^{1/2}, df\in T}\chi(f)))|\leq (\sum_{1\leq d\leq M^{1/2}}d)\cdot (\sum_{1\leq f\leq M^{1/2}}1)=O( M^{3/2}).\] 
The second term is the main term. First let $T':=\{m\in \bZ\mid m>N, p\nmid m, v_\ell(m)\leq C,\forall \ell \mid 2\det L\}$ then 
\[\sum_{1\leq f\leq M^{1/2}}\chi(f)\sum_{1\leq d\leq M/f, df\in T'}d=\sum_{1\leq f\leq M^{1/2}, p\nmid f}\chi(f)\sum_{1\leq d\leq M/f, p\nmid d, v_\ell(d)\leq C,\forall \ell \mid 2 \det L} d,\]
because $v_\ell (df)\leq C \iff v_\ell(d)\leq C, \forall \ell \mid 2 \det L$ when $v_\ell(f)=0, \forall \ell \mid 2 \det L$ and if $v_\ell(f)>0$ for some $\ell \mid 2 \det L$, then $\chi(f)=0$.
Since $\displaystyle\sum_{1\leq d\leq M/f, p\nmid d, v_\ell(d)\leq C,\forall \ell \mid 2 \det L}d=C_1 \frac{M^2}{f^2}+O(M)$, where $C_1$ and the implicit constant only depend on $C,L,p$. Hence \[\sum_{1\leq f\leq M^{1/2}}\chi(f)\sum_{1\leq d\leq M/f, df\in T'}d=C_1M^2\sum_{1\leq f\leq M^{1/2}, p\nmid f}\chi(f)/f^2+O(X^{3/2})\asymp M^2.\]
To finish the proof, we only need to show that $\displaystyle|\sum_{1\leq f\leq M^{1/2}}\chi(f)\sum_{1\leq d\leq M/f, df\in T'-T}d|=o(M^2)$. Since $M/f\geq M^{1/2}$, by definition of $T$, $\#\{d\mid 1\leq d \leq M/f, df\in T'-T\}=o(M/f)$ with implicit constant independent of $f$ and hence we obtain the desired bound. 
\end{proof}


\subsection{Local densities at $p$ and the ratios of Fourier coefficients}
We set the same notation as in \S\ref{summary_L'}. \Cref{Ecoeff} and \Cref{thm_Siegel-mass} reduce the comparison between $q_L(m)$ and $q_{L'''}(m)$ to the computation of the local density $\delta(p,L''',m)$, which we now compute following \cite[\S 3]{Han04}. Recall that $p$ is an odd prime and $v_p(m)\leq 1$ for all $m\in T$ defined in \S\ref{def_setT}.
For an arbitrary quadratic lattice $(L,Q)$, let $\alpha(p,L,m):=p^{1-\rk L}\#\{v\in L/pL \mid Q(v)\equiv m \bmod p\}$; if we diagonalize $L\otimes \bZ_p$ such that $Q$ is given by $\sum_{i=1}^{\rk L} a_i x_i^2$ with $a_i\in \bZ_p$, then we define \[\alpha^*(p,L,m):=p^{1-\rk L}\#\{v=(x_1,\dots, x_{\rk L})\in L/pL \mid Q(v)\equiv m, \exists i \text{ such that }v_p(a_i)=0, x_i \not\equiv 0 \bmod p\}.\]
\begin{lemma}[Hanke]\label{lem_Hanke}
If $p\nmid m$, we have \[\delta(p,L''',m)=\alpha(p,L''',m);\] if $v_p(m)=1$, we have \[\delta(p,L''',m)=\alpha^*(p,L''',m)+ p^{1-s_0}\alpha(p, L'''_I, m/p),\] where if we write $(L'''\otimes \bZ_p, Q')$ into diagonal form $\sum_{i=1}^{\rk L'''}a_i x_i^2$ with $a_i\in \bZ_p$, we define $s_0=\#\{a_i \mid v_p(a_i)=0\}$ and $L'''_I$ is the quadratic lattice with quadratic form $\sum_{i=1}^{\rk L'''}a'_i x_i^2$, where $a'_i =p a_i$ if $v_p(a_i)=0$ and $a'_i=p^{-1} a_i$ if $v_p(a_i)\geq 1$.
\end{lemma}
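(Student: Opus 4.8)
The plan is to unwind the definition of $\delta(p,L''',m)$ as a stabilized lattice-point count and then split the relevant solutions into a ``good'' family and a ``bad'' family, in the manner of Hanke \cite{Han04}. Recall that $\delta(p,L''',m)=\lim_{a\to\infty}p^{a(1-\rk L''')}N_a$, where $N_a=\#\{v\in L'''/p^aL'''\mid Q'(v)\equiv m\bmod p^a\}$, and that by \cite[Lem.~5]{BK01} this limit is already stable once $a\geq 1+2v_p(2m)$; since $p$ is odd, this means $a\geq 1+2v_p(m)$. When $p\nmid m$ the stable value is attained at $a=1$, which is exactly the asserted identity $\delta(p,L''',m)=\alpha(p,L''',m)$. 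So all the content is in the case $v_p(m)=1$, which I would treat next.

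Assume $v_p(m)=1$ and fix a diagonalization $Q'=\sum_{i=1}^{\rk L'''}a_ix_i^2$ of $L'''\otimes\bZ_p$ with $a_i\in\bZ_p$. I would call a solution $v\bmod p^a$ of $Q'(v)\equiv m$ \emph{good} if $x_i\not\equiv 0\bmod p$ for some index $i$ with $v_p(a_i)=0$, and \emph{bad} otherwise, and handle the two families separately. For the good solutions, differentiate $Q'$ in the direction of a good coordinate: since $p$ is odd, the corresponding partial derivative is a $p$-adic unit, so Hensel's lemma shows that a good solution mod $p^k$ lifts to exactly $p^{\rk L'''-1}$ good solutions mod $p^{k+1}$ for every $k\geq 1$. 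Hence $p^{k(1-\rk L''')}\#\{\text{good solutions mod }p^k\}$ is independent of $k\geq 1$, and evaluating at $k=1$ identifies this constant with $\alpha^*(p,L''',m)$.

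For the bad solutions I would substitute $x_i=pz_i$ in every good coordinate $i$ (legitimate precisely because $x_i\equiv 0\bmod p$ there). Then $Q'(v)=p\cdot Q'_I(w)$, where $w$ collects the $z_i$ together with the remaining coordinates $x_j$ (those with $v_p(a_j)\geq 1$), and $Q'_I$ is exactly the rescaled quadratic form on $L'''_I$ of the statement. Using $v_p(m)=1$, the equation $Q'(v)\equiv m\bmod p^a$ is equivalent to $Q'_I(w)\equiv m/p\bmod p^{a-1}$. A direct count then shows $\#\{\text{bad solutions mod }p^a\}=p^{\rk L'''-s_0}\,M_{a-1}$, where $M_{a-1}=\#\{w\in(\bZ/p^{a-1})^{\rk L'''}\mid Q'_I(w)\equiv m/p\bmod p^{a-1}\}$: the $\rk L'''-s_0$ coordinates not forced to be $\equiv 0\bmod p$ each admit $p$ lifts from $\bZ/p^{a-1}$ to $\bZ/p^a$, while the other $s_0$ coordinates are already pinned down modulo $p^{a-1}$. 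Taking the limit, the bad family contributes $p^{\rk L'''-s_0}\cdot p^{1-\rk L'''}\,\delta(p,L'''_I,m/p)=p^{1-s_0}\delta(p,L'''_I,m/p)$ to $\delta(p,L''',m)$, and since $v_p(m/p)=0$ the first case gives $\delta(p,L'''_I,m/p)=\alpha(p,L'''_I,m/p)$. Adding the good and bad contributions yields the claimed formula.

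The step I expect to require the most care is the good-solution bookkeeping: checking that the Hensel lift multiplies the solution count by exactly $p^{\rk L'''-1}$ at every level $a\geq 1$, so that the normalized count is genuinely constant rather than merely eventually constant, and keeping the moduli $p^{a-1}$ versus $p^a$ straight through the substitution $x_i=pz_i$ in the bad-type count. It is also worth recording that $s_0$ and the lattice $L'''_I$ depend only on the $\bZ_p$-isometry class of $L'''$, being invariants of its Jordan decomposition, so the right-hand side is independent of the chosen diagonalization. All of these details are carried out in \cite[\S 3]{Han04}.
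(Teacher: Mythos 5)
Your argument is correct and complete, but it is formally different from the paper's proof: the paper simply cites Hanke (\cite[Rmk.~3.4.1(a), Lem.~3.2, Def.~3.1]{Han04}) and notes that when $v_p(m)=1$ only good-type and bad-type-I solutions occur, whereas you unwind the citation and reprove the local-density formula from scratch. Your derivation is essentially the content of those cited lemmas, so the mathematics agrees, but let me check the details.

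The $p\nmid m$ case is handled correctly: for odd $p$ with $p\nmid m$ we have $1+2v_p(2m)=1$, so the stabilization threshold in \cite[Lem.~5]{BK01} is already met at $a=1$, and the normalized count at $a=1$ is by definition $\alpha(p,L''',m)$. The good/bad split in the $v_p(m)=1$ case is the correct decomposition, and it matches Hanke's good versus bad-type-I dichotomy exactly (bad type II does not arise because $v_p(m)=1$). Your Hensel lift count is right: for a good solution $v\bmod p^k$ with $k\geq 1$, the gradient in the distinguished unit coordinate is $2a_ix_i\in\bZ_p^\times$, so the linearization $Q'(v+p^ku)\equiv Q'(v)+p^k\,\nabla Q'(v)\cdot u\bmod p^{2k}$ gives a linear condition on $u\in(\bZ/p)^{\rk L'''}$ with a unit coefficient, hence exactly $p^{\rk L'''-1}$ lifts, and the lift is again good. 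Thus the normalized good count is constant for $a\geq 1$, equalling $\alpha^*(p,L''',m)$. In the bad family the substitution $x_i=pz_i$ in the $s_0$ unit coordinates produces $Q'(v)=p\,Q'_I(w)$, the congruence descends to $Q'_I(w)\equiv m/p\bmod p^{a-1}$, and your factor $p^{\rk L'''-s_0}$ is the correct multiplicity (each of the $\rk L'''-s_0$ non-unit coordinates loses one $p$-adic digit of relevance). The telescoping $p^{a(1-\rk L''')}\cdot p^{\rk L'''-s_0}\cdot M_{a-1}=p^{1-s_0}\cdot p^{(a-1)(1-\rk L''')}M_{a-1}$ then gives the claimed $p^{1-s_0}\alpha(p,L'''_I,m/p)$ after one more application of the $p\nmid(m/p)$ case. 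What your approach buys is self-containment: a reader can verify the lemma without chasing the references in \cite{Han04}. What the paper's citation buys is brevity and the explicit reassurance, via Hanke's taxonomy, that no bad-type-II contribution has been forgotten; you achieve the same reassurance implicitly by noting that $v_p(m/p)=0$ returns the recursion to the non-degenerate case.

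One small point worth flagging: you should make explicit that in the bad-type count the $s_0$ coordinates $z_i$ already live in $\bZ/p^{a-1}$ after the substitution $x_i=pz_i$ (so they are not overcounted), while each of the remaining $\rk L'''-s_0$ coordinates $x_j\in\bZ/p^a$ maps $p$-to-$1$ onto $\bZ/p^{a-1}$; this is where the exponent $\rk L'''-s_0$ comes from, and it is easy to miscount if stated loosely. As written your sentence is correct, but it is the most delicate bookkeeping step in the proof.
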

\begin{proof}
If $p\nmid m$, the assertion follows from \cite[Rmk.~3.4.1 (a), Lem.~3.2]{Han04}; If $v_p(m)=1$, then we only have good type and bad type I solutions in the sense of \cite[Def.~3.1, p.~360]{Han04} and the assertion follows from \cite[Lem.~3.2, p.~360, Rmk.~3.4.1 (a)]{Han04}.
\end{proof}

We first compute $\delta(p,L',m)$ by \Cref{lem_Hanke}. We always pick $\epsilon\in \bZ_p^\times \backslash (\bZ_p^\times)^2$ as in \S\ref{summary_HP}.
\begin{para}\label{par_density_H}
Consider $L=L_H$ and recall that $p\nmid m, \forall m \in T$. Let $F$ denote the real quadratic field attached to the Hilbert modular surface defined by $L_H$. 
\begin{enumerate}
    \item Assume that  $p$ is inert in $F$ and $P$ is supergeneric. By \S\ref{Frob-H-inert}, $L'\otimes \bZ_p=\Lambda^\vee=p\Lambda$ and hence $p\mid Q'(v),\forall v\in L'$; in particular, $\delta(p,L',m)=0$.
    \item Assume that  $p$ is inert in $F$ and $P$ is superspecial.  By \S\ref{Frob-H-inert}, $Q'(v)=xy+p(z^2-\epsilon w^2)$, where $w_i$ are given right above \eqref{FrobHinert} and $v=xw_3+yw_4+zw_1+ww_2$ with $x,y,z,w\in \bZ_p$. Hence $\delta(p,L',m)=\alpha(p,L',m)=1-1/p$.
    \item Assume that  $p$ is split in $F$; hence $P$ is superspecial.  By \S\ref{parsplit}, $L'\otimes \bZ_p=\Lambda^\vee$ with $Q'(v)= x^2-\epsilon y^2-pz^2+\epsilon pw^2$, where $v=xe_1+ye_2+z(pe_3)+w(pe_4)$ with $x,y,z,w\in \bZ_p$. Hence $\delta(p,L',m)=\alpha(p,L',m)=1+1/p$. 
\end{enumerate}
\end{para}

\begin{para}\label{par_density_S}
Consider $L=L_S$.
\begin{enumerate}
    \item Assume that $P$ is superspecial. By \S\ref{sec_F_Sie}, we have $Q'(v)=xy+\epsilon z^2+pw^2-p\epsilon u^2$, where $v=xw_3+yw_4+zw_5+ww_2+uw_1$ with $x,y,z,w,u\in \bZ_p$ and $w_i$ are given right above \eqref{F-Sie}. Hence if $p\nmid m$, then $\delta(p,L',m)=\alpha(p,L',m)\leq 1+1/p$ by \cite[Table 1]{Han04}. 
    If $v_p(m)=1$, then the quadratic form of $L'_I$ is $p(xy+\epsilon z^2)+ w^2-\epsilon u^2$ and hence $\delta(p,L',m)=\alpha^*(p,L',m)+p^{-2}\alpha(p,L'_I,m/p)=(1-p^{-2})+p^{-2}(1+p^{-1})=1+p^{-3}$.
    
    \item Assume that $P$ is supergeneric. By \S\ref{sec_F_Sie}, $L'\otimes \bZ_p=\Lambda^\vee$ and hence the quadratic form is $pxy+\epsilon z^2+pw^2-p\epsilon u^2$. If $p\nmid m$, then $\delta(p,L',m)=\alpha(p,L',m)=0 \text{ or } 2$; if $v_p(m)=1$, then the quadratic form of $L'_I$ is $p\epsilon z^2+xy+ w^2-\epsilon u^2$ and hence $\delta(p,L',m)=\alpha^*(p,L',m)+\alpha(p,L'_I,m/p)=0+1+p^{-2}=1+p^{-2}$ by \cite[Table 1]{Han04}.
\end{enumerate}
\end{para}

We now estimate $\delta(p,L''',m)$ for sublattices lattices $L'''$ of $L'$ defined in \S\ref{summary_L'}.

\begin{lemma}\label{lem_den_L'''}
If $p\nmid m$, then $\delta(p,L''',m)\leq 2$.
\end{lemma}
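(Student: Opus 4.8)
My plan is to reduce the count modulo $p$ and invoke the classical point-count for affine quadrics over $\bF_p$. Since $p\nmid m$, \Cref{lem_Hanke} gives
\[
\delta(p,L''',m)=\alpha(p,L''',m)=p^{1-\rk L'''}\,\#\{v\in L'''/pL'''\mid Q'(v)\equiv m \bmod p\},
\]
so it suffices to bound this number of solutions mod $p$. First I would diagonalize the $\bZ_p$-lattice $(L'''\otimes\bZ_p,Q')$ as $\sum_{i=1}^{r}a_ix_i^2$ with $a_i\in\bZ_p$ and $r=\rk L'''\leq 5$ (using that $p$ is odd), and set $s_0:=\#\{i\mid v_p(a_i)=0\}$. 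Reducing mod $p$, the variables $x_j$ with $v_p(a_j)\geq 1$ span the radical of $Q'\bmod p$; they do not affect the value of $Q'\bmod p$, hence contribute a free factor $p^{\,r-s_0}$, while the remaining $s_0$ variables carry a nondegenerate quadratic form $\overline{Q}$ over $\bF_p$. Thus
\[
\#\{v\in L'''/pL'''\mid Q'(v)\equiv m \bmod p\}=p^{\,r-s_0}\cdot N,\qquad N:=\#\{y\in\bF_p^{s_0}\mid \overline{Q}(y)=\overline{m}\},
\]
and therefore $\alpha(p,L''',m)=p^{1-s_0}N$.

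Next I would substitute the standard formula for $N$, the number of $\bF_p$-points of a nondegenerate affine quadric with nonzero constant term (here $\overline{m}\neq 0$ because $p\nmid m$): $N=0$ if $s_0=0$; $N\leq 2$ if $s_0=1$; $N=p^{s_0-1}\mp p^{s_0/2-1}$ if $s_0$ is even; and $N=p^{s_0-1}\pm p^{(s_0-1)/2}$ if $s_0$ is odd. In every case with $s_0\geq 1$ this gives $N\leq p^{s_0-1}+p^{\lceil s_0/2\rceil-1}$, so
\[
\alpha(p,L''',m)=p^{1-s_0}N\leq 1+p^{-\lfloor s_0/2\rfloor}\leq 2,
\]
while $\alpha(p,L''',m)=0$ when $s_0=0$. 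Explicitly $\alpha(p,L''',m)\leq 1+p^{-1}<2$ unless $s_0=1$, in which case $\alpha(p,L''',m)\leq 2$. This is the asserted bound.

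I do not expect a genuine obstacle: the argument is bookkeeping with finite-field quadric counts, and the only point requiring care is isolating the radical of $Q'\bmod p$ — equivalently, tracking the free variables coming from diagonal entries divisible by $p$ — before applying the point-count. As an alternative to this uniform computation, since $v_p(m)=0$ one can read the bound directly off \cite[Table~1]{Han04} applied to each possible diagonal shape of $L'''\otimes\bZ_p$ allowed by \S\S\ref{sec_F_Hil}--\ref{sec_F_Sie} and \S\ref{summary_L'}, but this entails a short case analysis that the computation above avoids.
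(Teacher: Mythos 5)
Your argument is correct and follows essentially the same path as the paper: apply \Cref{lem_Hanke} to reduce to $\alpha(p,L''',m)$, diagonalize over $\bZ_p$, discard the diagonal entries divisible by $p$ (they only contribute the radical factor $p^{r-s_0}$), and count $\bF_p$-points of the nondegenerate part. The one genuine difference is the final step: the paper notes $p\mid\disc L'''$ to force $s_0=\rk\widetilde{L}'''\leq 4$ and then cites Hanke's Table~1, whereas you plug in the explicit quadric point-count $N=p^{s_0-1}\pm p^{\lfloor s_0/2\rfloor}$ (with the $\pm$ and floor adjusted by parity) and obtain $\alpha=p^{1-s_0}N\leq 1+p^{-\lfloor s_0/2\rfloor}\leq 2$ directly. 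That makes the rank constraint and the table reference unnecessary — a slight simplification — but it is bookkeeping rather than a different method, and your formulas for $N$ check out against the standard counts (e.g. $q-1$ for $xy=b$, $q+1$ for the anisotropic binary norm form).
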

\begin{proof}
By \Cref{lem_Hanke}, $\delta(p,L''',m)=\alpha(p,L''',m)$. Write the quadratic form $Q'$ on $L'''$ into the diagonal form $\sum_{i=1}^{\rk L'''} a_i x_i^2$ with $a_i\in \bZ_p$ and we may assume that there exists $a_i$ such that $p\nmid a_i$; otherwise $\delta(p,L''',m)=0$ then we are done. Now let $\widetilde{L}'''$ denote the quadratic form $\sum_{1\leq i\leq \rk L''', p\nmid a_i} a_i x_i^2$. Then by definition, $\alpha(p,L''',m)=\alpha(p, \widetilde{L}''',m)$.
 Since $p\mid \disc L'$, then $p\mid \disc L'''$ and $\rk \widetilde{L'''}\leq \rk L'''-1\leq 4$. Then by \cite[Table 1]{Han04}, $\alpha(p, \widetilde{L}''',m)\leq 2$ and hence $\delta(p,L''',m)\leq 2$.
\end{proof}

\begin{lemma}\label{lem_den_L'''S}
Assume that $L=L_S$ and $v_p(m)=1$. We have $\delta(p,L''',m)\leq 2+2p$. Moreover, if $P$ is superspecial and $[L':L''']=p$, then $\delta(p,L''',m)\leq 4$.
\end{lemma}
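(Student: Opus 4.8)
The plan is to invoke \Cref{lem_Hanke} in the case $v_p(m)=1$, which gives
\[\delta(p,L''',m)=\alpha^*(p,L''',m)+p^{1-s_0}\,\alpha(p,L'''_I,m/p),\]
and to bound the two summands separately. Fix a diagonalization $\sum_{i=1}^{5}a_ix_i^2$ of $(L'''\otimes\bZ_p,Q')$, so that $s_0=\#\{i:v_p(a_i)=0\}$; also set $s_1=\#\{i:v_p(a_i)=1\}$, which is the rank of the unimodular part of $L'''_I$.

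First I would bound $\alpha^*(p,L''',m)$. Exactly as in the proof of \Cref{lem_den_L'''}, since $p\nmid m$ the coordinates with $p\mid a_i$ are unconstrained modulo $p$ while some coordinate with $p\nmid a_i$ must be nonzero, so $\alpha^*(p,L''',m)=\alpha(p,\widetilde{L'''},m)$ where $\widetilde{L'''}$ is unimodular of rank $s_0$. Since $p\mid\det L'$ (by \S\ref{sec_F_Sie} and \S\ref{par_density_S}) we have $p\mid\det L'''$, hence $s_0\le 4$, so $\alpha^*(p,L''',m)\le 2$ by \cite[Table 1]{Han04}. The same reduction applied to $L'''_I$ (using that $m/p$ is a $p$-adic unit) gives $\alpha(p,L'''_I,m/p)=\alpha(p,\widetilde{L'''_I},m/p)$ with $\widetilde{L'''_I}$ unimodular of rank $s_1$; here $s_1\le 4$, because by the explicit forms of $Q'$ on $L'\otimes\bZ_p$ in \S\ref{par_density_S} one has $v_p(\det L')\in\{2,4\}$, so $v_p(\det L''')=\sum_i v_p(a_i)$ is even and cannot equal $5$. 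Thus $\alpha(p,L'''_I,m/p)\le 2$ by \cite[Table 1]{Han04} as well, and $\delta(p,L''',m)\le 2+2p^{1-s_0}\le 2+2p$ since $s_0\ge 0$. This proves the first assertion.

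For the ``moreover'' statement it suffices to show that when $P$ is superspecial and $[L':L''']=p$ one has $s_0\ge 1$, for then $p^{1-s_0}\le 1$ and the estimate above improves to $\delta(p,L''',m)\le 4$. I would argue as follows: since $L'/L'''\cong\bZ/p$ we have $p(L'\otimes\bZ_p)\subseteq L'''\otimes\bZ_p$, so the image $W$ of $L'''\otimes\bZ_p$ in the five-dimensional $\bF_p$-quadratic space $L'\otimes\bF_p$ is a hyperplane, and $s_0=0$ is equivalent to $Q'(L'''\otimes\bZ_p)\subseteq p\bZ_p$, i.e.\ to $W$ being totally isotropic for $\overline{Q'}$. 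But by \S\ref{par_density_S}(1) the reduction of $Q'$ on $L'\otimes\bZ_p$ is, in suitable coordinates, $\bar x\bar y+\bar\epsilon\bar z^2$, whose radical is $2$-dimensional and whose nondegenerate quotient has rank $3$; hence every totally isotropic subspace of $L'\otimes\bF_p$ has dimension at most $2+1=3<4=\dim W$. Therefore $W$ is not totally isotropic, $s_0\ge 1$, and $\delta(p,L''',m)\le 4$.

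The substantive points are the bookkeeping that correctly identifies $\alpha^*$ and the reduced rank $s_1$ of $L'''_I$ (together with the parity bound $s_1\le 4$) and, for the ``moreover'' part, the observation that the nondegenerate part of $L'\otimes\bF_p$ at a superspecial point has rank $3$; the latter is exactly the content of \S\ref{par_density_S}(1), so I do not expect a genuine obstacle beyond a careful application of \Cref{lem_Hanke} and \cite[Table 1]{Han04}.
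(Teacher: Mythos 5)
Your proof is correct and follows the same strategy as the paper's: invoke Hanke's formula for $v_p(m)=1$ and bound the two summands via \cite[Table 1]{Han04}. You do depart from the paper in two places, both improvements. First, to bound $\alpha(p,L'''_I,m/p)$, the paper splits into cases depending on whether the unimodular part of $L'''_I$ has rank $5$ (handling that case by an explicit solution count), whereas you rule it out a priori by the parity of $v_p(\det L''')$; this is cleaner. Second, the paper simply asserts that $[L':L''']=p$ with $P$ superspecial forces $s_0\ge 1$, while you actually prove it by observing that the reduction of $L'''\otimes\bZ_p$ is a hyperplane and the maximal totally isotropic subspace of $\bigl(L'\otimes\bF_p,\overline{Q'}\bigr)$ has dimension $3$ (radical of dimension $2$ plus Witt index $1$ of the nondegenerate quotient). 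That is a genuine gap-fill: it is the right way to justify the assertion.

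One inaccuracy worth flagging: when bounding $\alpha^*(p,L''',m)$ you write ``since $p\nmid m$,'' but here $v_p(m)=1$, so $p\mid m$. In this case $\alpha^*(p,L''',m)$ need not equal $\alpha(p,\widetilde{L'''},m)$, because solutions with all unimodular coordinates $\equiv 0\bmod p$ are excluded from $\alpha^*$ but may exist mod $p$; you only have the inequality $\alpha^*(p,L''',m)\le\alpha(p,L''',m)=\alpha(p,\widetilde{L'''},m)$. This is what the paper uses and it still gives $\alpha^*\le 2$, so the final bound is unaffected, but the stated equality should be weakened to an inequality.
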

\begin{proof}
By \Cref{lem_Hanke}, $\delta(p,L''',m)=\alpha^*(p,L''',m)+p^{1-s_0}\alpha(p,L'''_I,m/p)\leq \alpha(p,L''',m)+p\alpha(p,L'''_I,m/p)$. By the proof of \Cref{lem_den_L'''}, we have $\alpha(p,L''',m)=\alpha(p,\widetilde{L}''',m)\leq 2$. The same argument implies that $\alpha(p,L'''_I,m/p)\leq 2$ if $\rk(\widetilde{L}''')\leq 4$. If $\rk(\widetilde{L}''')=5$, then it is isotropic and we write the quadratic form as $xy+Q_1(z)$. The equation $xy+Q_1(z)\equiv (m/p) \bmod p$ has $(p-1)p^3$ solutions in $\bF_p^5$ with $x\neq 0$ and has at most $p^4$ solutions with $x=0$. Hence $\alpha(p, L''',m/p)=\alpha(p, \widetilde{L}''',m/p)<2$. Therefore, $\delta(p,L''',m)\leq 2+2p$.

If $P$ is superspecial and $[L':L''']=p$, then $s_0\geq 1$ and hence $\delta(p,L''',m)\leq \alpha^*(p,L''',m)+\alpha(p,L'''_I,m/p)\leq 4$.
\end{proof}

The following lemma, which is the main goal of this subsection, will be used to compare the local intersection number at a supersingular point $P$ with the global intersection number.

\begin{lemma}\label{compare_qL}
Notation as in \S\ref{summary_L'} and consider $m\in T$ (defined in \S\ref{def_setT}).
\begin{enumerate}
    \item If $P$ is superspecial or $L=L_H$, then $\displaystyle\frac{q(m)_{L'}}{-q(m)_L}\leq \frac{1}{p-1}$.
    \item If $L=L_S$ and $P$ is supergeneric, then $\displaystyle\frac{q(m)_{L'}}{-q(m)_L}\leq \frac{2}{p^2-1}$.
    \item If $p\nmid m$, then $\displaystyle \frac{q(m)_{L'''}}{-q(m)_L}\leq \frac{2}{\sqrt{|(L'''\otimes \bZ_p)^\vee/(L'''\otimes \bZ_p)|}(1-p^{-2})}$.
    \item Assumption as in \Cref{lem_den_L'''S}, then $\displaystyle \frac{q(m)_{L'''}}{-q(m)_L}\leq \frac{2p}{\sqrt{|(L'''\otimes \bZ_p)^\vee/(L'''\otimes \bZ_p)|}(1-p^{-1})}$; moreover, if $P$ is superspecial and $[L':L''']=p$, then $\displaystyle \frac{q(m)_{L'''}}{-q(m)_L}\leq \frac{4}{p^2-1}$.
\end{enumerate}

\end{lemma}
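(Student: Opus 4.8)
The plan is to divide the two explicit formulas --- \Cref{Ecoeff} for $q_L(m)$ and \Cref{thm_Siegel-mass} for $q_{L'''}(m)$ --- and observe that almost every factor cancels. First, by \Cref{lem_lat_ell} the lattices $L$, $L'$, $L'''$ are isometric over $\bZ_\ell$ for every $\ell\neq p$, so $v_\ell(\det L''')=v_\ell(\det L)$ for all $\ell\neq p$; since $L$ is self-dual at $p$ (so $p\nmid\det L$), the explicit Gram matrices in \S\ref{par_density_H} and \S\ref{par_density_S} show that $v_p(\det(L'\otimes\bZ_p))\in\{2,4\}$ is even, and $\det L'''=[L':L''']^2\det L'$, whence $\det L'''=p^{v_p(\det L''')}\det L$ with $v_p(\det L''')$ even. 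Consequently $\chi_{4\det L'''}=\chi_{4\det L}$ and $\chi_{-2m_0\det L'''}=\chi_{-2m_0\det L}$ as Dirichlet characters, so the factors $L(2,\chi)$ and $\sigma_{-1}(m,\chi)$ (resp.\ $L(2,\chi_{\cD})$ together with $\zeta(4)$ and the M\"obius sum $\sum_{d\mid f}\mu(d)\chi_{\cD}(d)d^{-2}\sigma_{-3}(f/d)$), as well as the archimedean factor $m$ or $m^{3/2}$, all cancel in the quotient. Here one uses that $v_p(m)\le 1$ for $m\in T$ (by \S\ref{def_setT}), so the prime-to-$(2\det L')$ part of $m$ coincides with its prime-to-$(2\det L)$ part and the M\"obius sums genuinely agree.

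Next, the product $\prod_{\ell\mid 2\det L}\delta(\ell,L,m)$ matches the sub-product of $\prod_{\ell\mid 2\det L'}\delta(\ell,L''',m)$ over $\ell\neq p$, because $L'''\otimes\bZ_\ell\cong L\otimes\bZ_\ell$ forces $\delta(\ell,L''',m)=\delta(\ell,L,m)$; the only surviving discrepancy is the single extra factor at $p$, which divides $2\det L'''$ but not $2\det L$. Combining this with $\sqrt{|L^\vee/L|}/\sqrt{|L'''^\vee/L'''|}=1/\sqrt{|(L'''\otimes\bZ_p)^\vee/(L'''\otimes\bZ_p)|}$ (again since the determinants agree away from $p$ and $L$ is self-dual at $p$), the ratio collapses to the clean identity
\[
\frac{q_{L'''}(m)}{-q_L(m)}=\frac{\delta(p,L''',m)}{\kappa\cdot\sqrt{|(L'''\otimes\bZ_p)^\vee/(L'''\otimes\bZ_p)|}},\qquad \kappa=\begin{cases}1 & L=L_H,\\ 1-p^{-4} & L=L_S.\end{cases}
\]

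It remains to feed in the density computations. For (1) and (2) take $L'''=L'$: by \S\ref{par_density_H} and \S\ref{par_density_S} one has $|(L'\otimes\bZ_p)^\vee/(L'\otimes\bZ_p)|=p^2$ when $P$ is superspecial and $=p^4$ when $P$ is supergeneric, with $\delta(p,L',m)$ equal to $0$ (the $L_H$ inert supergeneric case), $\in\{0,2\}$ (the $L_S$ supergeneric case), or $\le 1+p^{-1}$ (all superspecial cases, including the value $1+p^{-3}$ arising when $v_p(m)=1$); substituting gives $q_{L'}(m)/(-q_L(m))\le p^2/((p-1)(p^2+1))\le 1/(p-1)$ for superspecial $P$ and $q_{L'}(m)/(-q_L(m))\le 2p^2/(p^4-1)\le 2/(p^2-1)$ for $L_S$-supergeneric $P$. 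For (3) and (4) one uses $\delta(p,L''',m)\le 2$ when $p\nmid m$ (\Cref{lem_den_L'''}) and $\delta(p,L''',m)\le 2+2p$, resp.\ $\le 4$ when $P$ is superspecial with $[L':L''']=p$ (so that $|(L'''\otimes\bZ_p)^\vee/(L'''\otimes\bZ_p)|=p^4$), from \Cref{lem_den_L'''S}, together with the elementary inequalities $\kappa\ge 1-p^{-2}$ and $(1+p)p^4(p-1)\le p^2(p^4-1)$, to obtain the stated bounds.

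The main obstacle is less any individual step than the bookkeeping: keeping track of which of $L'$ or $L'''$ is in play, which geometric type $P$ has (superspecial versus supergeneric, and $p$ inert versus split in $F$ in the Hilbert case), reading the correct $\bZ_p$-discriminant off the quadratic forms of \S\ref{par_density_H} and \S\ref{par_density_S}, and --- the one genuinely delicate point --- verifying that $v_p(\det L''')$ is even, so that the Dirichlet characters (hence also the associated $L$-values and the $\sigma$/M\"obius factors) really cancel rather than getting twisted by the quadratic character modulo $p$.
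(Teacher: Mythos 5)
Your overall strategy is the same as the paper's: divide the explicit Fourier coefficient formulas of \Cref{Ecoeff} and \Cref{thm_Siegel-mass}, use $L'''\otimes\bZ_\ell\cong L\otimes\bZ_\ell$ (for $\ell\neq p$) and the evenness of $v_p(\det L''')$ to cancel the archimedean factor, the $\sigma$-factor and the M\"obius sum, the square-roots of the determinants, and the local densities away from $p$, and then feed in the local density at $p$ from \S\ref{par_density_H}--\ref{par_density_S} and Lemmas \ref{lem_den_L'''}, \ref{lem_den_L'''S}. The bookkeeping on determinants and characters away from $p$ is correct.

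There is, however, a genuine gap in your ``clean identity.'' The $L$-function factors do \emph{not} cancel completely: since $L$ is self-dual at $p$ while $p\mid\det L'$, we have $\chi_{4\det L}(p)\in\{\pm1\}$ and $\chi_{\cD}(p)\in\{\pm1\}$, whereas $\chi_{4\det L'}(p)=\chi_{\cD'}(p)=0$. Hence the ratio of $L$-functions leaves a residual Euler factor at $p$: in the Hilbert case $L(2,\chi_{4\det L})/L(2,\chi_{4\det L'})=(1-\chi_{4\det L}(p)p^{-2})^{-1}$, and in the Siegel case $L(2,\chi_{\cD'})/L(2,\chi_{\cD})=1-\chi_{\cD}(p)p^{-2}$. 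Your formula with $\kappa=1$ (resp.\ $\kappa=1-p^{-4}$) silently replaces this factor by $1$, but it can be as large as $(1-p^{-2})^{-1}$. The paper's proof retains the factor and bounds it, arriving at the uniform estimate $\delta(p,L''',m)/\bigl(\sqrt{|(L'''\otimes\bZ_p)^\vee/(L'''\otimes\bZ_p)|}(1-p^{-2})\bigr)$. Your intermediate numerical bounds are consequently false: e.g.\ in the split Hilbert superspecial case, $\chi_{4\det L}(p)=1$ and $\delta(p,L',m)=1+p^{-1}$, so the true ratio equals $\frac{1+p^{-1}}{p(1-p^{-2})}=\frac{1}{p-1}$, which strictly exceeds your claimed $\frac{p^2}{(p-1)(p^2+1)}$; similarly, with $\chi_{\cD}(p)=-1$ and $\delta=2$ the true ratio in (2) equals $\frac{2}{p^2-1}$, exceeding your $\frac{2p^2}{p^4-1}$. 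The stated conclusions of the lemma do hold, but your derivation of them breaks at the point where you drop the residual Euler factor; you need to carry it along and bound $1-\chi(p)p^{-2}\le 1+p^{-2}$ (resp.\ $(1-\chi(p)p^{-2})^{-1}\le(1-p^{-2})^{-1}$) as in the paper.
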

\begin{proof}
Recall from \S\ref{summary_L'} that $L'''\otimes \bZ_\ell \cong L\otimes \bZ_\ell, \forall \ell\neq p$; hence for $\ell\neq p$, we have $\delta(p,L''',m)=\delta(p,L,m)$ and $\det L'''=p^k \det L$ for some $k\in \bZ_{\geq 0}$. Since $L$ is self-dual at $p$, then $p\nmid \det L$; by \S\ref{summary_HP}, $\det L'=p^{2b} \det L$ for some $b\in \bZ_{>0}$ (concretely, one may deduce this fact by the explicit formula of $Q'$ in \S\S\ref{par_density_H}-\ref{par_density_S}) and hence $k\in 2 \bZ_{>0}$. Thus $\chi_{4 \det L}(d)=\chi_{4 \det L'}(d)$ and $\chi_{-2 m_0 \det L}(d)=\chi_{-2 m_0 \det L'}(d)$ if $p\nmid d$.

Therefore, by \Cref{Ecoeff} and \Cref{thm_Siegel-mass}, we have that for $L=L_H$, $p\nmid m$
\[\frac{q(m)_{L'''}}{-q(m)_L}=\frac{\delta(p,L''',m)}{\sqrt{|(L'''\otimes \bZ_p)^\vee/(L'''\otimes \bZ_p)|}(1-\chi_{4\det L}(p)p^{-2})}\leq \frac{\delta(p,L''',m)}{\sqrt{|(L'''\otimes \bZ_p)^\vee/(L'''\otimes \bZ_p)|}(1-p^{-2})};\]
for $L=L_S$, $v_p(m)\leq 1$, we observe that $m_0$ remains the same for $L$ and $L'''$ and $p\nmid f$ and hence
\[\frac{q(m)_{L'''}}{-q(m)_L}=\frac{\delta(p,L''',m)(1-\chi_{\cD}(p)p^{-2})}{\sqrt{|(L'''\otimes \bZ_p)^\vee/(L'''\otimes \bZ_p)|}(1-p^{-4})}\leq \frac{\delta(p,L''',m)}{\sqrt{|(L'''\otimes \bZ_p)^\vee/(L'''\otimes \bZ_p)|}(1-p^{-2})}.
\]
Therefore, (1)(2) follow from \S\S\ref{par_density_H}-\ref{par_density_S}; (3) follows from \Cref{lem_den_L'''}; (4) follows from \Cref{lem_den_L'''S}.
\end{proof}


\section{The decay lemma for supersingular points and its proof in the Hilbert case}\label{sec_decay_Hil}
The goal of this section is to prove that special endomorphisms ``decay rapidly''. More precisely, consider a generically ordinary two-dimensional abelian scheme over $\bar{\bF}_p[[t]]$ whose special fiber is supersingular. We consider the lattice of special endomorphisms of the abelian scheme mod $t^N$ as $N$ varies, and establish bounds for the covolume of these lattices.  These bounds are exactly what we need to bound the local intersection multiplicity $\Spf \bar{\bF}_p[[t]] \cdot Z(m)$ --  see \Cref{oleg}. The precise definitions and results are in \Cref{def_decay} and \Cref{thm_decay}.

Throughout this section, as in \S\ref{sec_lattices}, $k=\bar{\bF}_p$, $W=W(k)$, $K=W[1/p]$. We focus on the behavior of the curve $C$ in \Cref{thm_main,thm_max} in a formal neighborhood of a supersingular point $P$, so we may let $C = \Spf k[[t]]$ denote a generically ordinary formal curve in $\cM_k$ which specializes to $P$.
As in \S\ref{summary_Kisin}, $\sigma$ denote both the Frobenius on $K$ and the Frobenius on the coordinate rings $W[[x,y], W[[x,y,z]]$ of $\widehat{\cM}_{P}$, which is the unique extension of the Frobenius action on $W$ for which $\sigma(x)=x^p, \sigma(y)=y^p, \sigma(z)=z^p$. For a matrix $M$ with entries in $K[[x,y]]$ or $K[[x,y,z]]$, we use $M^{(n)}$ to denote $\sigma^n(M)$. Also recall we set $\lambda \in \bZ_{p^2}^\times$ such that $\sigma(\lambda) = - \lambda$.
We use $\sigma_t$ to denote the Frobenius on $K[[t]]$ which extends $\sigma$ on $K$ and sends $t$ to $t^p$. 

\subsection{Statement of the Decay Lemma and the first reduction step}\label{sec_decay_statement}

The map $C\rightarrow \cM_k$ gives rise to a local ring homomorphism from $k[[x,y]] \rightarrow k[[t]]$ (in the Hilbert case) or $k[[x,y,z]]\rightarrow k[[t]]$ (in the Siegel case), and we denote by $x(t)$, $y(t)$, and $z(t)$ the images of $x$, $y$, and $z$ respectively. Let $v_t$ denote the $t$-adic valuation map on $k[[t]]$. Let $A$ denote the $t$-adic valuation of the local equation defining the non-ordinary locus in \Cref{eqn-non-ord}. More precisely, if $P$ superspecial, then $A=v_t(xy)$ in the Hilbert case and $A=v_t(xy+\frac{z^2}{4\epsilon})$ in the Siegel case.

\begin{defn}\label{def_decay}
Let $w$ denote a special endomorphism of the $p$-divisible group at $P$ (i.e., $w$ is an element in $L'\otimes \bZ_p$; see \Cref{def_spend_pdiv} and \Cref{def_Lcris_H}). 
\begin{enumerate}
\item We say that $w$ \emph{decays rapidly} if $p^n w$ does not lift to an endomorphism modulo $t^{A_n+1}$ for all $n\in \bZ_{\geq 0}$, where $A_n:=[A(p^n+p^{n-1}+\cdots+1+\frac{1}{p})]$; here $[x]$ denote the maximal integer $y$ such that $y\leq x$.
\item We say that a $\bZ_p$-submodule of $L'\otimes \bZ_p$ \emph{decays rapidly} if every primitive vector in the submodule decays rapidly. 
\item We say that $w$ decays \emph{very} rapidly if $p^n w$ does not lift to an endomorphism modulo $t^{A_{n-1} + ap^n+1}$ for some constant $a \leq A/2$, for all $n\in \bZ_{\geq 0}$, where $A_n$ is defined in (1) and we define $A_{-1}=[A/p]$.  
\end{enumerate}
\end{defn}

\begin{theorem}[Decay Lemma]\label{thm_decay}
Assume $P$ is superspecial. There exists a rank $3$ $\bZ_p$-submodule of $L'\otimes \bZ_p$ which decays rapidly and furthermore, there is a primitive vector in this submodule which decays \emph{very} rapidly.  
\end{theorem}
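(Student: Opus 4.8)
The plan is to reduce the liftability condition of \Cref{def_decay} to an explicit $t$-adic valuation estimate on the $F$-crystal $\bL_{\cris}$ computed in \S\ref{sec_F_Hil} (Hilbert case) and \S\ref{sec_F_Sie} (Siegel case), pulled back along the local homomorphism $k[[x,y]]\to k[[t]]$ (resp. $k[[x,y,z]]\to k[[t]]$) attached to $C$. The first step is to record the deformation criterion: by Grothendieck--Messing theory applied to the square-zero thickenings $k[[t]]/t^{N+1}\twoheadrightarrow k[[t]]/t^{N}$, together with Kisin's description of $\bL_{\cris}$ recalled in \S\ref{summary_Kisin} (cf. \cite{MP16} and, in the simpler mixed-characteristic case, \cite{Ananth17}), a special endomorphism $w\in L'\otimes\bZ_p$ of the $p$-divisible group at $P$ lifts to an endomorphism of $A\bmod t^{N+1}$ if and only if the unique $\Frob$-horizontal section $s_w$ of $\bL_{\cris}$ over $k[[t]]/t^{N+1}$ extending $w$ lies in $\Fil^0$, i.e. its image in the rank-one quotient $\bL_{\cris}/\Fil^0$ vanishes modulo $t^{N+1}$. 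Constructing $s_w$ requires pulling the $F$-crystal back from $\widehat{\cM}_P$: since the Frobenius lift $t\mapsto t^{p}$ on $W[[t]]$ is not compatible, under a chosen integral lift $x\mapsto\tilde x(t)$ etc., with the lift $x\mapsto x^{p}$ used in \S\ref{sec_F_Hil}--\S\ref{sec_F_Sie}, the pulled-back Frobenius is the matrix $(I+M)$ of \eqref{FrobHinert}/\eqref{F-Hil-sp}/\eqref{F-Sie} evaluated at $x=\tilde x(t),\dots$, multiplied by a Gauss--Manin correction measuring this discrepancy; the $1/p$'s appearing in $M$ and the $p$'s appearing in the correction must be tracked together, as their interaction is exactly what couples the $t$-adic and $p$-adic behaviour.

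Next I would produce the rank-three submodule. Take $P_0$ to be the $\bZ_p$-span of the basis vectors on which the nilpotent part $M$ of $\Frob$ acts most simply -- for instance $P_0=\Span_{\bZ_p}\{w_3,w_4,w_5\}$ in the Siegel case, and the analogous span of three suitable basis vectors in the Hilbert case. The defining feature is that for primitive $w\in P_0$ the obstruction $s_w\bmod\Fil^0$ is, to leading order, a nonzero multiple of $\xi\,v_1$, where $\xi$ is a Frobenius-iterated version of one of the coordinates $x(t),y(t),z(t)$, or of the local equation $xy$ (resp. $xy+z^2/4\epsilon$) of the non-ordinary locus from \Cref{eqn-non-ord}, so that $v_t(\xi)$ is controlled by $A$. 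Iterating $\Frob$ introduces $\sigma^{k}$-twists that multiply $t$-adic valuations by $p^{k}$, and -- this is the point of the single $1/p$ carried by $M$ together with the Tate twist on $\bL_{\cris}/\Fil^0$ -- each extra factor of $p$ in $p^{n}w$ buys one more such twist before integrality is lost. Since the obstruction for $w$ itself has $t$-adic order $\asymp A$, the obstruction for $p^{n}w$ has order $\asymp A(p^{-1}+1+\cdots+p^{n})=A_n$; concretely one shows $s_{p^{n}w}\bmod\Fil^0$ is nonzero modulo $t^{A_n+1}$ for every $n\ge 0$ and every primitive $w\in P_0$, which is exactly the statement that $P_0$ decays rapidly.

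For the very-rapid-decay refinement I would use that $P$ is superspecial, so that by \Cref{eqn-non-ord} the non-ordinary locus in the formal neighbourhood of $P$ is a node $xy=0$ (Hilbert) or a quadric cone $xy+z^2/4\epsilon=0$ (Siegel): in particular it is \emph{singular}, and among the coordinate functions one has valuation at most $A/2$ -- in the Hilbert case because $v_t(x(t))+v_t(y(t))=A$, and in the Siegel case because $A\ge\min\bigl(v_t(x(t))+v_t(y(t)),\,2v_t(z(t))\bigr)$ forces $\min\bigl(v_t(x(t)),v_t(y(t)),v_t(z(t))\bigr)\le A/2$. For the basis vector of $P_0$ aligned with that coordinate -- call it $w_3$, with $a:=v_t(x(t))\le A/2$ -- the relevant $\xi$ is just the Frobenius iterate of $x(t)$, so the obstruction for $w_3$ has $t$-adic order $a$ rather than $\asymp A$, and that for $p^{n}w_3$ has order at most $A_{n-1}+a\,p^{n}$; hence $p^{n}w_3$ does not lift modulo $t^{A_{n-1}+ap^{n}+1}$, which is \Cref{def_decay}(3) with this value of $a$.

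I expect the first step to be the main obstacle: setting up the deformation criterion with all of the bookkeeping correct -- the integral structure of $\bL_{\cris}$, the $1/p$'s in the Frobenius matrices of \S\ref{sec_F_Hil}--\S\ref{sec_F_Sie}, the Frobenius-lift discrepancy and its Gauss--Manin correction, and the Tate twist on $\bL_{\cris}/\Fil^0$ -- so that the iteration produces \emph{exactly} the exponents $A_n$ (and the bound $a\le A/2$) rather than a bound off by a bounded multiplicative factor. Once this is in place, the remainder is the explicit matrix computation afforded by \eqref{FrobHinert}, \eqref{F-Hil-sp} and \eqref{F-Sie}, organized around the single rank-one quotient $\bL_{\cris}/\Fil^0$.
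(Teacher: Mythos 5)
Your Grothendieck--Messing reformulation of the liftability criterion is a legitimate alternative to the paper's de Jong/Dieudonné-functor formulation, and your reading of the obstruction as the image of the horizontal section in the rank-one quotient $\bL_\cris/\Fil^0$ is the right conceptual picture. You also correctly identify the algebraic heart of the argument: the coupling between the single $1/p$ in the off-diagonal blocks of \eqref{F-Hil-sp}, \eqref{FrobHinert}, \eqref{F-Sie} and the Frobenius twisting, which converts each factor of $p$ into one additional power of $\sigma$. But there is a genuine gap in your construction of the rank-$3$ submodule, and it is not just a bookkeeping issue.

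You propose to take $P_0$ \emph{a priori}, e.g.\ $P_0=\Span_{\bZ_p}\{w_3,w_4,w_5\}$ in the Siegel case, because those columns of $F$ carry only the coordinate functions themselves (rather than the equation $xy+z^2/4\epsilon$) in the entries with denominator $p$. This fails. For a primitive $w=\alpha w_3+\beta w_4+\gamma w_5$, the leading contribution to $Fw$ in the $\{w_1,w_2\}$-direction is $(\alpha x(t)+\beta y(t)+\gamma z(t))/(2\lambda p)$, and $\alpha,\beta,\gamma\in\bZ_p$ can conspire with the leading coefficients of $x(t),y(t),z(t)$ to force cancellation to high $t$-adic order. (For instance, if $x(t),y(t),z(t)$ all start at $t^1$ with $\bF_p$-linearly dependent leading coefficients, there is a primitive $w$ with $v_t(\alpha x+\beta y+\gamma z)$ arbitrarily large.) In such cases $w$ decays strictly slower than $A_n$ and your $P_0$ is not a decaying submodule. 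The paper's proof (via \Cref{prop_decay}) is structured around exactly this obstruction: the choice of the rank-$3$ submodule depends on the $t$-adic valuations $a=v_t(x)$, $b=v_t(y)$, $c=v_t(z)$ and the interaction between $A=v_t(xy+z^2/4\epsilon)$ and $B=v_t(xy^p+x^py+z^{1+p}/2\epsilon)$, leading to the case trichotomy of \S\ref{sec_decayS1}--\S\ref{sec_decayS3} (and the parallel cases $a=b$, $b=p^{2e}a$, $b=p^{2e+1}a$, else in the Hilbert case). In Case 1 ($A<B$, which includes your illustrative example) the correct submodule is $\Span_{\bZ_p}\{w_1,w_2,w_3\}$, not $\Span_{\bZ_p}\{w_3,w_4,w_5\}$; in other cases it is $\Span_{\bZ_p}\{w_3,w_4,w_5\}$ or a mix. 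Likewise, your heuristic that the very-rapid-decay vector is aligned with $\min(a,b,c)$ is correct in several cases but not all -- e.g.\ in Subcase $(2.1)_e$ the very-rapid-decay vector can be forced to be $w_4$ (aligned with $b$) by \Cref{onlytwo}.

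Two smaller remarks. First, the paper sidesteps your ``Gauss--Manin correction'' by building the horizontal sections $\tilde w=F_\infty w$ over $K[[x,y]]$ or $K[[x,y,z]]$, where the Frobenius lift $x\mapsto x^p$ is chosen compatibly, and then pulling back; the liftability criterion is $p$-integrality in the PD-ring $D_m$, not $\sigma_t$-compatibility, so the correction never needs to be tracked. Second, the step from ``off by a bounded factor'' to the exact exponent $A_n$ that you flag as the main difficulty is in fact controlled, in the paper, by identifying (via a convexity argument, cf.\ \Cref{realdecaytopleft}, \Cref{twonetopleft}) which products $\prod X_i^{(n_i)}$ in the expansion of $F_\infty$ achieve the minimal $t$-adic valuation at each $p$-adic level and then invoking linear-algebra lemmas (\Cref{reallineartopleft}, \Cref{linear}) to rule out cancellation between them -- precisely the ``case-by-case matrix computation'' you defer to. Without carrying that out, the proof of the bound does not close, and without the case-dependent choice of $P_0$ the statement is actually false.
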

Here we only state the decay lemma for a superspecial point since we do not need to work with supergeneric points to prove \Cref{thm_main,thm_max}. We refer the reader to the appendix for a decay lemma when $P$ is supergeneric.

\begin{proof}[Proof assuming \Cref{prop_decay}]
For $m\in \bZ_{\geq 0}$, let $S_m$ denote $\Spec k[t]/(t^m)$ and let $D_m$ denote the $p$-adic completion of the PD enveloping algebra of the ideal $(t^m,p)$ in $W[[t]]$. Let $\iota_m$ denote the composite map $S_m \rightarrow \Spf k[[t]] \rightarrow \Spf k[[x,y]] \text{ or } \Spf k[[x,y,z]]$. 
Then by \cite[\S 2.3]{dJ95}, there exists a functor from the category of $p$-divisible groups over $S_m$ to the category Dieudonn\'e modules over $D_m$. More precisely, a special endomorphism $\tilde{w}_m$ of the $p$-divisible group over $S_m$ which specializes to $w\in L'\otimes \bZ_p$ gives rise to an endomorphism of the Dieudonn\'e module which specializes to $w$. By functoriality of Dieudonn\'e modules, images of special endomorphisms are horizontal sections of $\iota_m^* \bL_{\cris}(D_m)$ stable under the Frobenius action; here the connection on $\iota_m^* \bL_{\cris}(D_m)$ is the pull-back of the connection on $\bL_{\cris}(W[[x,y]]), \bL_{\cris}(W[[x,y,z]])$ by a ring homomorphism $W[[x,y]]\rightarrow W[[t]]$ or $W[[x,y,z]]\rightarrow W[[t]]$ which lifts $k[[x,y]]\rightarrow k[[t]]$ or $k[[x,y,z]]\rightarrow k[[t]]$ given by $C$\footnote{We may pick a lift $k\rightarrow W$, for instance, the Teichm\"uller lift and hence view $x(t),y(t), z(t)$ as power series in $W[[t]]$.} and the $\sigma_t$-linear Frobenius is given in \cite[\S 4.3.3]{Moonen98}.\footnote{Here we refer to \cite{Moonen98} for the existence of an explicit formula of the $\sigma_t$-linear Frobenius, but we do not need this explicit formula for our purpose. We will always carry out our computation using the $\sigma$-linear Frobenius; see the rest of the proof for the details.} 

The connection on $\bL_{\cris}(W[[x,y]])$ or $\bL_{\cris}(W[[x,y,z]])$ gives rise to a connection on $\bL_{\cris,P}(W)\otimes_W K[[x,y]]\supset \bL_{\cris}(W[[x,y]]$ or $\bL_{\cris,P}(W)\otimes_W K[[x,y,z]]\supset \bL_{\cris}(W[[x,y,z]]$.
Let $\tilde{w}$ denote the horizontal section in $\bL_{\cris,P}(W)\otimes_W K[[x,y]]$ or $\bL_{\cris,P}(W)\otimes_W K[[x,y,z]]$ extending $w\in L'\otimes \bZ_p\subset \bL_{\cris,P}(W)$. Since the image of $\tilde{w}_m$ in $\iota_m^* \bL_{\cris}(D_m)$ is horizontal and the connection on $\iota_m^* \bL_{\cris}(D_m)$ is the pull-back connection, then $\tilde{w}_m=\iota_m^* \tilde{w}$. Therefore, if $w$ lift to a special endomorphism in $S_m$, then $\iota_m^* \tilde{w} \in \iota_m^* \bL_{\cris}(D_m)\subset \bL_{\cris,P}(W)\otimes_W K[[t]]$.

The section $\tilde{w}$ is constructed in \cite[\S 1.5.5]{Kisin} as follows. Recall from \S\S\ref{sec_F_Hil}-\ref{sec_F_Sie}, the Frobenius on $\bL_{\cris}(W[[x,y]]), \bL_{\cris}(W[[x,y,z]])$, with respect to a $\varphi$-invariant basis $\{w_i\}$, is given by $(I+F)\circ \sigma$ for some matrix $F$ with entries in $(x,y)K[x,y]$ or $(x,y,z)K[x,y,z]$.
We define $F_{\infty}$ to be the infinite product $\displaystyle \prod_{i = 0}^{\infty} (1 + F^{(i)})$, where $F^{(i)}$ is the $i$-th $\sigma$-twist of $F$ (recall $\sigma(x)=x^p, \sigma(y)=y^p, \sigma(z)=z^p$). Since $v_t(y), v_t(x), v_t(z)\geq 1$, the product is well-defined and the entries of $\Finf$ are power series valued in $K[[t]]$. The $\bQ_p$-span of the columns of $\Finf$ are  vectors of $\bL_{\cris,P}(W)\otimes K[[x,y]], \bL_{\cris,P}(W)\otimes K[[x,y,z]]$ which are Frobenius stable and horizontal.

Now we are ready to reduce to the proof of the decay lemma to the following proposition. Indeed, by \Cref{prop_decay}, with respect to $\{w_i\}$, there exists a rank $3$ $\bZ_p$-submodule of $L'\otimes \bZ_p$ such that for every primitive $w$ in this submodule, the coefficient of $t^{k_n}$ for some $k_n\leq A(1+p+\cdots+p^{n+1})$ in $p^n \tilde{w}$ does not lie in $(p^{-1}W)^4$; since $p\bL_{\cris,P}(W)\subset L'\otimes W$, with respect to a $W$-basis of $\bL_{\cris,P}(W)$, the coefficient of $t^{k_n}$ in $p^n \tilde{w}$ does not lie in $W^4$. On the other hand, for any $N< p(A_n+1)$, we have $p^{-1}t^N\notin D_{A_n+1}$. Note that $p(A_n+1)>p A(p^n+\cdots + 1/p)=A(p^{n+1}+\cdots +1)\geq k_n$. Hence $p^n \tilde{w}$ does not extend to a special endomorphism over $S_{A_n+1}$. Thus, this rank $3$ submodule decays rapidly. Moreover,
the existence of a vector decaying very rapidly follows by the second assertion of \Cref{prop_decay} via the same argument and the fact that $p(A_{n-1}+ap^n+1)>p( A(p^{n-1}+\cdots + 1/p)+ap^n)=A(p^{n}+\cdots +1)+ap^{n+1}$.
\end{proof}

\begin{proposition}\label{prop_decay}
Assume $P$ is superspecial.
With respect to the $w_i$-basis in \S\S\ref{sec_F_Hil}-\ref{sec_F_Sie}, there exists a rank $3$ $\bZ_p$-submodule of $L'\otimes \bZ_p$ such that for every primitive $w$ in this submodule, the coefficients of $1=t^0, \dots,t^{A(1+p+\cdots+p^n)}$ in the power series $p^n\tilde{w}\in (K[[t]])^4$ do not all lie in $W^4$ for all $n\in \bZ_{\geq 0}$ (property DR); moreover, there exist $a\leq A/2$ and a primitive $w$ in the rank $3$ submodule such that the coefficients of $1,\dots, t^{A(1+p+\cdots+p^{n-1})+ap^n}$ in $p^n\tilde{w}\in (K[[t]])^4$ do not all lie in $W^4$ for all $n\in \bZ_{\geq 0}$ (property DvR).
\end{proposition}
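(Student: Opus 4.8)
The plan is to reduce \Cref{prop_decay} to an explicit $p$-adic estimate on the power-series solution $\tilde w$ of the Frobenius-invariance equation, using the matrices of \S\S\ref{sec_F_Hil}--\ref{sec_F_Sie}, and then to locate the directions that fail to decay.

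Write $\Frob=(I+F)\circ\sigma$ in the $w_i$-basis; pulled back along $C$, the matrix $F=\sum_{\nu\ge 1}F_\nu t^\nu$ has entries in $tK[[t]]$ with $pF_\nu$ integral. From \eqref{FrobHinert}, \eqref{F-Hil-sp} and \eqref{F-Sie} one reads off the structural features that drive the argument: $F$ is a sum of two outer products (a column vector times a covector); the only entries carrying a factor $p^{-1}$ sit in rows $1,2$; among these, the entries in columns $1,2$ carry the factor $xy$ (Hilbert), resp.\ $xy+z^2/(4\epsilon)$ (Siegel), whose $t$-valuation equals the integer $A$ of \Cref{eqn-non-ord}, whereas the remaining $p^{-1}$-entries carry only a single $x$, $y$ or $z$; and no ``$B$-coordinate'' (rows and columns $3,4$, and also $5$ in the Siegel case) feeds into another $B$-coordinate. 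The horizontal, Frobenius-fixed extension of $w=\sum a_iw_i$ (with $a=(a_i)$ in $\bZ_p$) is $\tilde w=\Finf\,a$, equivalently the unique solution of $\tilde w=(I+F)\sigma_t(\tilde w)$ with $\tilde w|_{t=0}=a$. Writing $\tilde w=\sum_m\tilde w_mt^m$ and setting $F_0=I$, the equation unwinds to
\[
\tilde w_m=\sum_{\nu\ge 0,\ p\mid m-\nu}F_\nu\,\sigma\!\left(\tilde w_{(m-\nu)/p}\right),
\]
so each step of the recursion worsens the $p$-denominator by at most one and $v_p(\tilde w_m)\ge-\bigl(\lfloor\log_p m\rfloor+1\bigr)$ for $m\ge 1$. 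Properties DR and DvR say exactly that this crude bound is \emph{attained}: $\tilde w$ acquires a $p^{-(n+1)}$-entry already at some level $m\le A(1+p+\cdots+p^n)$, resp.\ $m\le A(1+p+\cdots+p^{n-1})+ap^n$.

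The accumulation mechanism is a self-loop in the upper-left block. Since the $(1,1)$-entry of $F$ has $t$-valuation $A$, whenever coordinate $1$ of $\tilde w_\ell$ has $v_p=-k$ the term $F_{11}\sigma(\tilde w_\ell)$ forces coordinate $1$ of $\tilde w_{p\ell+A}$ to have $v_p=-(k+1)$. Iterating from a seed $p^{-1}$-entry that first occurs at level $\ell_0$ produces a $p^{-(k+1)}$-entry at level $p^k\ell_0+A(1+p+\cdots+p^{k-1})$. For $w=w_1$ (or $w_2$) the self-loop itself produces the seed at $\ell_0=A$, hence a $p^{-(n+1)}$-entry at level $A(1+p+\cdots+p^n)$: property DR. If instead $w$ is one of the basis vectors $w_3,w_4$ (or $w_5$ in the Siegel case) whose associated deformation variable has $t$-valuation $a\le A/2$ --- such a variable exists precisely because the non-ordinary locus $xy=0$, resp.\ the cone $xy+z^2/(4\epsilon)=0$, is \emph{singular} at a superspecial point, cf.\ \Cref{eqn-non-ord} --- then the $p^{-1}$-entry of $F$ in that column produces the seed at $\ell_0=a$, landing a $p^{-(n+1)}$-entry at level $\le A(1+p+\cdots+p^{n-1})+ap^n\le A_{n-1}+ap^n$, which is property DvR for that $w$.

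For a general primitive $w$ one must rule out cancellation. At each critical level the leading, denominator-increasing contribution to $\tilde w_m$ is a $W$-linear combination of the $a_i$, explicitly computable from the outer-product form of $F$ with coefficients built from $\lambda,\epsilon$ and the leading coefficients of $x(t),y(t),z(t)$; the denominator can only stall below the level required at stage $n$ when this combination vanishes mod $p$. Because $\lambda\notin\bZ_p$, none of these linear forms is identically zero mod $p$, and --- using that a $B$-coordinate enters the governing covectors free of the factor $\lambda$ --- one shows that the set of $w$ failing DR (a priori a countable union of conditions indexed by $n$) is contained in a single explicit saturated $\bZ_p$-submodule of $L'\otimes\bZ_p$ of rank $\le 1$ in the Hilbert case and $\le 2$ in the Siegel case. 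Any rank-$3$ submodule transverse to this bad locus then decays rapidly, and we arrange for it to contain one of the basis vectors of $t$-valuation $\le A/2$ considered above, which decays very rapidly. I expect the last step --- running the cancellation analysis uniformly in $n$, so that the bad locus is manifestly one submodule of the asserted small rank --- to be the main obstacle; the remaining ingredients are the explicit linear algebra of \S\S\ref{sec_F_Hil}--\ref{sec_F_Sie}.
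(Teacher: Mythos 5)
Your broad strategy — expand $\Finf=\prod(1+F^{(i)})$, track the accumulating $p^{-1}$ factors in rows $1,2$, and use mod-$p$ linear algebra to rule out cancellation — is the right starting point and matches the paper's. But two points in the middle of your argument are wrong (not merely unfinished), and they are precisely where the real work of \Cref{prop_decay} lives.

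First, the ``self-loop through $F_{11}$'' picture of which terms in $\Finf$ dominate is incorrect. The entries of $F$ carrying $p^{-1}$ in columns $1,2$ have $t$-valuation $A=v_t(xy)$ (resp.\ $v_t(xy+z^2/4\epsilon)$), but the entries carrying $p^{-1}$ in columns $3,4,5$ have $t$-valuation $a=\min(v_t(x),v_t(y),v_t(z))$, which can be much smaller than $A$. Consequently the monomials in $\Finf$ with denominator $p^{n+1}$ are \emph{not} dominated by the pure $F_t^{(0)}\cdots F_t^{(n)}$ product: they are indexed by pairs $(m_1,m_2)$ with $m_1+m_2=n$ ($m_1+1$ occurrences of the top-left block $F_t$ and $m_2$ occurrences of the off-diagonal pair $F_u F_l$), and which $(m_1,m_2)$ minimizes the $t$-valuation is governed by the ratio of $A$ to $B=v_t(x^{p+1}+y^{p+1})$ (resp.\ $v_t(xy^p+x^py+z^{p+1}/2\epsilon)$). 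In several regimes the minimum is attained at $m_2>0$, i.e.\ by a \emph{mixed} product, and sometimes at two different $(m_1,m_2)$ simultaneously, in which case you must show the sum of the two leading matrices is nonzero mod $p$. Your self-loop argument handles none of this; it quietly assumes the regime $A<B$ (your ``$\ell_0=A$'' seed), and in the remaining regimes the levels you compute and the matrices you feed into the cancellation analysis are wrong. This is not a small gap: the regimes with $A\ge B$ are the delicate part of the proof.

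Second, the proposed reduction — ``the set of $w$ failing DR lies in a fixed submodule of rank $\le 1$ (Hilbert) or $\le 2$ (Siegel), so take any transverse rank-3 module'' — is not how the statement can be established and is likely false as literally stated. At each $n$ the non-cancellation condition is that $w\bmod p$ avoid the kernel of a specific $2\times k$ matrix, and these kernels genuinely depend on $n$ (and on the leading coefficients of $x(t),y(t),z(t)$). What saves the day is a finer phenomenon: after reduction mod $p$ and up to Frobenius twist, the relevant matrices take only finitely many values, all with kernel defined over $\bF_{p^2}$ but not $\bF_p$, and (when two terms of equal $t$-valuation compete) the two competing matrices are never scalar multiples of each other. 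This forces a specific, case-dependent choice of a rank-3 submodule (e.g.\ sometimes $\Span\{w_1,w_2,w_3\}$, sometimes $\Span\{w_3,w_4,w\}$ with $w$ a particular primitive combination of $w_1,w_2$, sometimes $\Span\{w_1,w_2,w\}$ with $w\in\Span\{w_3,w_4\}$), and one must then separately verify — by comparing $t$-valuations — that no cancellation occurs between the ``$\{w_1,w_2\}$-part'' and the ``$\{w_3,w_4,w_5\}$-part'' of a general primitive vector in that module. Your formulation in terms of a small fixed bad locus skips exactly this step, and since the good module itself is not universal but depends on the case, the step cannot be skipped.

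Finally, a smaller inaccuracy: the vector realizing DvR need not be one of the $w_3,w_4,w_5$; in several cases it is a nontrivial primitive combination (and which one depends on the numerics of $a,b,c$), so you cannot simply point to a basis vector ``whose deformation variable has $t$-valuation $\le A/2$.''

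In short, the architecture is right, but the proposal's two central claims — that the pure $F_t$ product controls the leading denominator, and that the bad directions form one small submodule — do not survive the case analysis. You correctly flagged the second as the obstacle, but the first is also wrong and is where the obstacle begins.
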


By a slight abuse of terminology, if a submodule of $L'\otimes \bZ_p$ satisfies the property DR (with respect to basis $\{w_i\}$), we also say that this submodule \emph{decays rapidly}; if a primitive vector satisfies property DvR, we also say that this vector \emph{decays very rapidly}. By the proof of \Cref{thm_decay} above, property DR (resp. DvR) implies decaying (resp. very) rapidly in the sense of \Cref{def_decay}.

The rest of this section is devoted to prove \Cref{prop_decay} for the Hilbert case and its proof for the Siegel case is given in \S\ref{sec_decay_Sie}. In the following, the split/inert case means that $p$ is split/inert in the real quadratic field attached to the Hilbert modular surface.

In the Hilbert case, by \Cref{eqn-non-ord}, the non-ordinary locus is cut out by the equation $xy = 0$. As in the proof of reducing \Cref{thm_decay} to \Cref{prop_decay}, we pick a lift $W[[x,y]]\rightarrow W[[t]]$ of of the local ring homomorphism $k[[x,y]]\rightarrow k[[t]]$ defined by $C$. Since $C$ is generically ordinary, we have that both $x$ and $y$ map to non-zero power series in $W[[t]]$. Without loss of generality, we assume that $v_t(x) \leq v_t(y)$, and that $x(t) = t^a + \hdots$ and $y(t) = \alpha t^b + \hdots$, where $\alpha \in W^\times$.

\subsection{Decay in the split case}\label{sec_decayHsplit}
Notation as in the proof of \Cref{thm_decay}. We first compute $\displaystyle F_\infty=\prod_{i = 0}^{\infty} (1 + F^{(i)})$, where by \eqref{F-Hil-sp},
\[F=\begin{bmatrix}
\frac{xy}{2p}&-\frac{\lambda xy}{2p}&\frac{x+y}{2p}&\frac{-\lambda(x -y)}{2p}\\
\frac{xy}{2\lambda p}&- \frac{xy}{2p}&\frac{x+y}{2\lambda p}&\frac{-(x-y)}{2p}\\
\frac{x+y}{2}&\frac{-\lambda (x + y)}{2}&0&0\\
\frac{x-y}{2\lambda}&\frac{-(x - y)}{2}&0&0\\
\end{bmatrix}.\]

Let $\Finf(1)$ and $\Finf(2)$ denote the top-left and top-right $2 \times 2$ blocks of $\Finf$ respectively. 
To simplify the notation, define\footnote{These three matrices are the same; however, we use different notations to be consistent with the proof for the Siegel case in \S\ref{sec_decay_Sie}.} 
\[G = \begin{bmatrix}
\frac{1}{2}&\frac{-\lambda}{2}\\
\frac{1}{2\lambda}&\frac{-1}{2}
\end{bmatrix},
H_u = \begin{bmatrix}
\frac{1}{2}&\frac{-\lambda}{2}\\
\frac{1}{2\lambda}&\frac{-1}{2}
\end{bmatrix},
H_l = \begin{bmatrix}
\frac{1}{2}&\frac{-\lambda}{2}\\
\frac{1}{2\lambda}&\frac{-1}{2}
\end{bmatrix},
\] 
and let $F_t$, $F_u$ and $F_l$ denote the top-left, top-right, and bottom-left $2 \times 2$ blocks of $F$. 
The following elementary lemma picks out the terms in $\Finf(1), \Finf(2)$ with the desired $p$-power on the denominators.
\begin{lemma}\label{niceparamreal}
\begin{enumerate}
\item The part of $\Finf(1)$ with $p$-adic valuation $-(n+1)$ consists of sums of products of the form $\displaystyle \prod_{i=0}^{m_1 + 2m_2 } X_i^{(n_i)}$. Here $X_i$ is either
$F_t$, $F_u$ or $F_l$,\footnote{The terms $X_i$ are chosen so that the product makes sense, and has the right size. Note that this would imply that $F_u,F_l$ must occur in consecutive pairs.} $m_1 + 1$ is the number of occurrences of $F_t$, and $m_2$ is the number of occurrences of the pair $F_u,F_l$, $m_1 + m_2 = n $, and $n_i$ is a strictly increasing sequence of non-negative integers. The analogous statement holds for $\Finf(2)$ as well. 

\item Fix values of $m_1,m_2$ as above. Among all the terms in the above sum, the ones with minimal $t$-adic valuation only occur when $n_i = i$, and either when $X_0 = X_1 = \hdots = X_{m_1} = F_t$, or $X_0 = X_2 = \hdots = X_{2m_2-2} = F_u$. The analogous statement holds for $\Finf(2)$ as well. 

\item (for $\Finf(1)$) The product $\displaystyle \prod_{i =0}^{m_1}F_t^{(i)} \prod_{i = 0}^{m_2-1} F_u^{(m_1 + 2i + 1)}F_l^{(m_1 + 2i + 2)}$ (modulo terms with smaller $p$-power in denominators\footnote{We use here that $x^p\pm y^p\equiv (x\pm y)^p \mod p$.}) equals

$$\displaystyle \frac{1}{p^{n+1}}\prod_{i =0}^{m_1}G^{(i)}(xy)^{(i)} \prod_{i = 0}^{m_2 -1} H_u^{(m_1 + 2i + 1)}H_l^{(m_1 + 2i+2)}(x^{1+p} + y^{1+p})^{(m_1 + 2i + 1)}.$$ 

\item (for $\Finf(2)$) The product $\displaystyle \prod_{i =0}^{m_1}F_t^{(i)} \prod_{i = 0}^{m_2-1} F_u^{(m_1  + 2i + 1)}F_l^{(m_1 + 2i + 2)} \cdot F_u^{(m_1 + 2m_2 + 1)}$ (modulo terms with smaller $p$-power in denominators) equals 
$$\displaystyle \frac{1}{p^{n+2}}\prod_{i =0}^{m_1}G^{(i)}(xy)^{(i)} \prod_{i = 0}^{m_2 - 1} H_u^{(m_1 + 2i + 1)}H_l^{(m_1 + 2i+2)}(x^{1+p} + y^{1+p} )^{(m_1 + 2i + 1)} \cdot F_u^{(m_1 + 2m_2 + 1)}$$

\end{enumerate}
\end{lemma}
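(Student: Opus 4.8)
The plan is to prove \Cref{niceparamreal} by combining the block structure of $F$ with a direct matrix computation, with parts (1) and (2) amounting to combinatorial bookkeeping about which ordered products of the $F^{(i)}$ feed into a given power of $p$ in the denominator and which have least $t$-adic valuation, and parts (3)--(4) being explicit identities for the resulting leading products. Recall $F=\begin{bmatrix}F_t&F_u\\F_l&0\end{bmatrix}$, and that $\Finf=\prod_{i\ge 0}(1+F^{(i)})$ converges $t$-adically, so it expands as a sum over finite subsets $S=\{n_0<n_1<\dots<n_k\}\subset\bZ_{\ge 0}$ of the ordered products $F^{(n_0)}\cdots F^{(n_k)}$. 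For part (1), I would expand the top-left (resp.\ top-right) $2\times 2$ block of one such product as a sum over ``walks'': at index $n_i$ one selects one of the three nonzero blocks $F_t^{(n_i)},F_u^{(n_i)},F_l^{(n_i)}$, and the selections must compose (column-type equal to next row-type) and begin and end with type ``top'' (resp.\ ``top'' and ``bottom''). Since the bottom-right block of $F$ is zero the only allowed transitions are $\mathrm{top}\to\mathrm{top}$ via $F_t$, $\mathrm{top}\to\mathrm{bottom}$ via $F_u$, $\mathrm{bottom}\to\mathrm{top}$ via $F_l$, so every admissible walk is a word in $F_t$ and in consecutive pairs $F_uF_l$, with $m_1+1$ copies of $F_t$ and $m_2$ pairs (plus, for $\Finf(2)$, a trailing $F_u$). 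Reading off denominators ($F_t,F_u$ each contribute $p^{-1}$, $F_l$ contributes $p^0$) gives denominator $p^{m_1+1+m_2}=p^{n+1}$ with $n=m_1+m_2$ for $\Finf(1)$, and $p^{n+2}$ for $\Finf(2)$; since the $n_i$ are strictly increasing this is exactly the asserted shape.

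For part (2), write $a=v_t(x)\le b=v_t(y)$, so $v_t(F_t)=v_t(xy)=a+b$ while $v_t(F_u)=v_t(F_l)=a$. The $t$-adic valuation of a product is bounded below by $\sum_i p^{n_i}v_t(X_i)$, which is strictly increasing in each $n_i$, so any minimizer uses the smallest twists, i.e.\ $n_i=i$. With $n_i=i$ fixed, I would run an exchange argument on the ordering of the ``blocks'', where an $F_t$ occupies one slot and, at start position $j$, contributes $p^j(a+b)$, while a consecutive pair $F_u^{(j)}F_l^{(j+1)}$ occupies two slots and, by the identity in (3), contributes $p^j\,v_t\!\big(x^{1+p}+y^{1+p}\big)$: swapping two adjacent blocks shows that in an optimizer the ratio $c/(p^{s}-1)$, where $c$ is the block's value and $s\in\{1,2\}$ its number of slots, is non-increasing along the ordering. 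This forces either all $F_t$'s to precede all pairs (the configuration $X_0=\dots=X_{m_1}=F_t$) when $(a+b)(p+1)\ge v_t(x^{1+p}+y^{1+p})$, or all pairs to precede all $F_t$'s (the configuration $X_0=X_2=\dots=X_{2m_2-2}=F_u$) otherwise; the trailing $F_u$ in the $\Finf(2)$ case is pinned at the end throughout.

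For parts (3) and (4), I would use the rank-one factorizations $F_t=\tfrac{xy}{p}G$, $F_u=\tfrac1p\begin{bmatrix}1\\1/\lambda\end{bmatrix}\big(\tfrac{x+y}{2},\,-\lambda\tfrac{x-y}{2}\big)$, $F_l=\begin{bmatrix}\tfrac{x+y}{2}\\ \tfrac{x-y}{2\lambda}\end{bmatrix}(1,\,-\lambda)$ and $\sigma(\lambda)=-\lambda$ to compute, modulo terms of smaller $p$-denominator, the pair identity $F_u^{(j)}F_l^{(j+1)}=\tfrac1p\,(x^{1+p}+y^{1+p})^{(j)}\,H_u^{(j)}H_l^{(j+1)}$: the middle scalar comes out as $\tfrac14\big[(x+y)^{(j)}(x+y)^{(j+1)}+(x-y)^{(j)}(x-y)^{(j+1)}\big]=\tfrac12\big(x^{(j)}x^{(j+1)}+y^{(j)}y^{(j+1)}\big)$ after using $\lambda^{(j)}/\lambda^{(j+1)}=-1$, and then $x^{(j)}x^{(j+1)}\equiv(x^{1+p})^{(j)}\bmod p$ and likewise for $y$ (this is where $x^p\pm y^p\equiv(x\pm y)^p$ enters). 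Substituting $F_t^{(i)}=\tfrac{(xy)^{(i)}}{p}G^{(i)}$ and this pair identity into the minimal product identified in (2), and pulling the commuting scalar factors through, yields exactly the displayed formulas in (3) and (4), up to terms with strictly smaller power of $p$ in the denominator.

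The delicate step is the exchange argument of part (2): it must correctly reduce to $n_i=i$, treat a consecutive pair $F_u^{(j)}F_l^{(j+1)}$ as a \emph{single} block of weight $v_t(x^{1+p}+y^{1+p})$ at twist $j$ rather than naively as two factors of weight $a$, and handle the degenerate range $a=b$ with cancellation in $x^{1+p}+y^{1+p}$ (or in $x\pm y$), which is precisely what lets either extreme ordering be the optimal one. Finally, to be sure the minimal terms actually realize the stated valuation (and are not killed by cancellation in the matrix part), I would also record $G^{(i)}G^{(i+1)}G^{(i+2)}=G^{(i)}$ and the analogues for $H_u,H_l$, which follow immediately from the rank-one factorizations and exhibit the leading matrix coefficient as a nonzero rank-one matrix.
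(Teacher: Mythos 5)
Your outline for parts (1), (3) and (4) is correct and matches the elementary reasoning the paper takes for granted: the block-walk combinatorics on $\{F_t, F_uF_l\}$ gives (1); the rank-one factorizations $F_t^{(i)}=\tfrac{(xy)^{(i)}}{p}G^{(i)}$ and $F_u^{(j)}F_l^{(j+1)}=\tfrac{1}{p}(x^{1+p}+y^{1+p})^{(j)}H_u^{(j)}H_l^{(j+1)}$ give (3)--(4). (In fact these last two identities are \emph{exact}, not just mod $p$: $\sigma^j(x^{1+p}) = x^{p^j}x^{p^{j+1}}$ on the nose, so the parenthetical ``modulo terms with smaller $p$-power'' and the footnote about $x^p\pm y^p\equiv(x\pm y)^p$ are unnecessary; you inherited that imprecision from the paper.)

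The reduction to $n_i=i$ in part (2), however, is where the argument genuinely breaks, and you have only partially flagged the problem. The monotonicity argument uses the naive lower bound $\sum_i p^{n_i}v_t(X_i)$ with $v_t(F_u)=v_t(F_l)=a$, but for a \emph{consecutive} pair $F_u^{(j)}F_l^{(j+1)}$ the actual scalar is $(x^{1+p}+y^{1+p})^{(j)}$, whose valuation $p^jB$ can exceed the naive bound $a(p^j+p^{j+1})$ when $a=b$ and the leading terms of $x^{1+p}$ and $y^{1+p}$ cancel. If $B>a(1+p^2)$, the \emph{non-consecutive} pair $F_u^{(j)}F_l^{(j+2)}$ has inner scalar $x^{(j)}y^{(j+2)}+y^{(j)}x^{(j+2)}$ of valuation $a(p^j+p^{j+2})<p^jB$, so it beats the consecutive one. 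Concretely, with $m_1=0$, $m_2=1$ the term $F_t^{(0)}F_u^{(1)}F_l^{(3)}$ has valuation $a(2+p+p^3)$, which is strictly smaller than both $n_i=i$ extremes $2a+pB$ and $B+2ap^2$ once $B\gg a$. So the claim that the minimizers ``only occur when $n_i=i$'' is actually false for fixed $m_1,m_2>0$ in this regime, and no exchange argument can prove it. The good news is that this edge case is invisible downstream: when $a\neq b$ (Lemmas \ref{realdecaytopleft}, \ref{realdecaytopright} and Case 4), $B=(1+p)a$, so the naive lower bound is tight and your argument works as intended; and when $a=b$ (Lemma \ref{realdecay}) the paper only needs the $m_2=0$ term to be the \emph{overall} unique minimizer, which follows directly from comparing $2a\sum_{i\le n}p^i$ against the naive lower bound $a\sum_{\text{all slots}}p^{n_i}+a\sum_{F_t\text{ slots}}p^{n_i}$ for any $m_2\ge 1$, without invoking part (2) for $m_2>0$ at all. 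You should therefore restrict the part (2) claim to the case $B=(1+p)a$ (or restate it so that only the $m_2=0$ configuration is singled out when $a=b$), and supply that direct comparison for the $a=b$ case.
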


\begin{para} \textbf{Notations.} We make the following definition to further lighten the notation. 

Let $P(1)_{m_2,n}$ denote the product
$$\displaystyle \prod_{i=0}^{m_1} G^{(i)}\prod_{i=0}^{m_2-1}H_u^{(m_1 + 2i + 1)}H_l^{(m_1 + 2i + 2)}.$$
Recall that $A = a + b$ denotes the $t$-adic valuation $v_t(xy)$ of $xy$ and let $B$ denote $v_t(x^{p+1} + y^{p+1})$. Note that $B \geq a(p+1)$ and the equality holds unless $a = b$.
\end{para}
 In order to prove \Cref{prop_decay}, we will consider the following case-by-case analysis depending on the relation between $a$ and $b$. The following elementary lemmas will be used in the case-by-case analysis.

\begin{lemma}\label{reallineartopleft}
Let $n, e, f$ be in $\bZ_{\geq 0}$. 
\begin{enumerate}
\item The kernel of the $2 \times 2$ matrix $P(1)_{e,n}$ modulo $p$ is defined over $\bF_{p^2}$ but not over $\bF_p$. 
\item The reductions of $P(1)_{e,n}$ and $P(1)_{f,n}$ modulo $p$ are not scalar multiples (over $k$) of each other if $e \not\equiv f \mod 2$. In particular, these reductions are not scalar multiples of each other if $f = e \pm 1$. 
\end{enumerate}
\end{lemma}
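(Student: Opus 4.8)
The key structural fact is that all three matrices $G$, $H_u$, $H_l$ are literally equal to $M := \begin{bmatrix} 1/2 & -\lambda/2 \\ 1/(2\lambda) & -1/2 \end{bmatrix}$, so the product $P(1)_{e,n}$ is simply an alternating product of $M$ and its $\sigma$-twists: namely $P(1)_{e,n} = M^{(0)} M^{(1)} \cdots M^{(n)}$ up to the bookkeeping of which indices are twisted, and crucially the number of factors is $m_1 + 2m_2 + 1 = n + m_2 + 1$ (with $m_1 + m_2 = n$), so the parity of the number of factors is determined by the parity of $m_2$. The plan is to (i) compute $M \cdot \sigma(M)$ explicitly, observe it is (a scalar multiple of) a rank-one matrix or at least has a clean form, and track what alternating products of twists of $M$ look like modulo $p$; (ii) extract the kernel of such a product mod $p$ and show it is the $\mathbb{F}_{p^2}$-line that is \emph{not} $\mathbb{F}_p$-rational; (iii) compare the reductions for $e$ versus $f$ by tracking the parity of the factor count.

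First I would reduce part (1). Since $M = \sigma^2(M)$ is not generally true but $M$ has entries in $\mathbb{Z}_{p^2}$ with $\sigma(\lambda) = -\lambda$, we have $\sigma(M) = \begin{bmatrix} 1/2 & \lambda/2 \\ -1/(2\lambda) & -1/2 \end{bmatrix}$. A direct $2\times 2$ computation gives $M \cdot \sigma(M)$; one finds this equals $\begin{bmatrix} 1 & 0 \\ 0 & 0 \end{bmatrix}$ (the two columns of $M$ are $\tfrac12(1, 1/\lambda)^T$ and $-\tfrac{\lambda}{2}(1, 1/\lambda)^T$... let me instead note $M$ has rank one with image spanned by $(1, 1/\lambda)^T$ and kernel spanned by $(\lambda, 1)^T$; similarly $\sigma(M)$ has image spanned by $(1, -1/\lambda)^T$ and kernel spanned by $(-\lambda, 1)^T \sim (\lambda, -1)^T$). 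Thus any nonempty alternating product of twists of $M$ is rank one, its image is spanned by the image vector of the \emph{first} factor, and its kernel is spanned by the kernel vector of the \emph{last} factor. The first factor is always $M^{(0)} = M$ (untwisted), and the last factor is $M^{(n)}$ (the $n$-th twist), so the kernel mod $p$ is spanned by $\sigma^n$ applied to $(\lambda, 1)^T$ if $n$ even, i.e. $(\pm\lambda, 1)^T$, which is defined over $\mathbb{F}_{p^2} = k \cap \mathbb{Q}_{p^2}$ but not over $\mathbb{F}_p$ since $\lambda \notin \mathbb{Z}_p$. This handles (1): the kernel of $P(1)_{e,n} \bmod p$ is this $\mathbb{F}_{p^2}$-line, independent of $e$ in fact, but always non-$\mathbb{F}_p$-rational.

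For part (2): since $P(1)_{e,n} \bmod p$ is rank one with image spanned by the image of $M^{(0)}$ and kernel spanned by the kernel of $M^{(n)}$ — wait, this would make it independent of $e$ entirely, contradicting (2). The resolution is that the rank-one reductions I described are correct only for the \emph{leading} $t$-adic term; the full matrix $P(1)_{e,n} \bmod p$ (not just its leading term) is what matters, and here the scalar prefactors and the precise alternating pattern differ. More carefully: the product of $n+m_2+1$ alternating twists of $M$, where consecutive factors are $M^{(j)}, M^{(j+1)}$, telescopes via $M^{(j)}\sigma(M^{(j)}) = M^{(j)}M^{(j+1)}$ being a fixed nonzero matrix $N$ (rank one), so the whole product equals (a power of $N$ interleaved appropriately) times a leftover factor depending on parity. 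If $m_2$ even the product has an odd number of factors and reduces to $\sim M^{(0)} \cdot (\text{scalar})$; if $m_2$ odd, an even number of factors, reducing to $\sim M^{(0)}M^{(1)} \cdot(\text{scalar}) = N$. Since $M$ and $N$ are not scalar multiples of each other (different kernels: kernel of $M$ is $(\lambda,1)$, kernel of $N$ is kernel of $\sigma(M)$, which is $(\lambda, -1)$), the reductions for $e, f$ with $e \not\equiv f \bmod 2$ are not $k$-scalar multiples. The case $f = e\pm 1$ is the special case $e \not\equiv f \bmod 2$.

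\textbf{Main obstacle.} The genuine difficulty is bookkeeping the parity and the scalar/twist structure precisely enough: the statement that "$P(1)_{e,n}$ and $P(1)_{f,n}$ have the same number of $G$-factors $m_1+1$ but differ by two in the number of $H_u H_l$ pairs" must be converted into "the total number of matrix factors $m_1 + 2m_2 + 1$ differs in parity," and then into a statement about the reduction mod $p$ being $M$-like versus $N$-like. I would carefully verify $M\sigma(M)$ and $\sigma(M)M$ are distinct nonzero rank-one matrices with distinct kernels, verify the telescoping, and then the parity argument is immediate. The non-$\mathbb{F}_p$-rationality in (1) is the easy part — it is forced by $\lambda \notin \mathbb{Z}_p$ appearing in every kernel vector.
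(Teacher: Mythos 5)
Your core idea is the same as the paper's: since $G=H_u=H_l$ all equal the single matrix $M$ with entries in $\bZ_{p^2}$, the product $P(1)_{e,n}$ is a consecutive string of twists of $M$, and the rank-one structure pins down its image and kernel. The paper reaches the same endpoint by instead verifying explicit idempotent relations like $GG^{(1)}G=G$ and then tabulating the few residual products; your image/kernel bookkeeping is, if anything, a slightly cleaner packaging of the same observation. However, there are two real slips that derail your write-up.

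First, the index of the last factor. From the definition $P(1)_{m_2,n}=\prod_{i=0}^{m_1}G^{(i)}\prod_{i=0}^{m_2-1}H_u^{(m_1+2i+1)}H_l^{(m_1+2i+2)}$ with $m_1+m_2=n$, the twists run through $0,1,\dots,m_1+2m_2=n+e$. The last factor is $M^{(n+e)}$, not $M^{(n)}$. Had you written this correctly, part (2) follows immediately: the kernel of the nonzero rank-one matrix $P(1)_{e,n}\bmod p$ is the kernel of $M^{(n+e)}$, which is $k\cdot(\lambda,1)^{\mathrm T}$ when $n+e$ is even and $k\cdot(\lambda,-1)^{\mathrm T}$ when $n+e$ is odd; for fixed $n$, changing the parity of $e$ changes the kernel, and two rank-one matrices with distinct kernels cannot be $k$-scalar multiples. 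You noticed the apparent contradiction yourself, but diagnosed it incorrectly: the detour into ``leading $t$-adic terms'' is not relevant here, since $P(1)_{e,n}$ is a constant matrix over $W(\bF_{p^2})[1/p]$, not a power series in $t$. The whole point of the $P(1)$ notation is to strip out the power-series part and leave only the matrix coefficient.

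Second, the telescoping. Your claim that $M^{(j)}M^{(j+1)}$ is ``a fixed nonzero matrix $N$'' is false as stated: since $M^{(j)}$ depends only on the parity of $j$, one gets $M\sigma(M)=\begin{bmatrix}1/2&\lambda/2\\1/(2\lambda)&1/2\end{bmatrix}$ when $j$ is even and $\sigma(M)M=\begin{bmatrix}1/2&-\lambda/2\\-1/(2\lambda)&1/2\end{bmatrix}$ when $j$ is odd, and these are not equal. (Your initial claim that $M\sigma(M)=\begin{bmatrix}1&0\\0&0\end{bmatrix}$ is also wrong, though you abandoned it.) The telescoping can still be carried out—both $M\sigma(M)$ and $\sigma(M)M$ are idempotent rank-one matrices with trace $1$, and a product of $n+e+1$ consecutive twists collapses to $M$ or to $M\sigma(M)$ depending on the parity of $n+e$—but the reasoning as written is not correct. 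Either fix the telescoping with correct parity tracking, or abandon it entirely in favor of the last-factor-kernel argument, which is shorter and self-contained once the index is corrected.
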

\begin{proof}
As the entries of $G$, $H_u$ and $H_l$ are all in $W(\bF_{p^2})[1/p]$, it follows that $G^{(2m)} = G$ and $G^{(2m+1)} = G^{(1)}$ (and the analogous statements hold for $H_u$ and $H_l$). A direct computation shows that $GG^{(1)}G = G$, $H_uH_l^{(1)}H_uH_l^{(1)} = H_uH_l^{(1)}$, and $H_u^{(1)}H_lH_u^{(1)}H_l= H_u^{(1)}H_l$. Therefore, if $n-e$ is odd, then $P(1)_{e,n}$ simplifies to either $GG^{(1)}H_uH_l^{(1)}$, $GG^{(1)}$ or $H_uH_l^{(1)}$; if $n-e$ is even, $P(1)_{e,n}$ simplifies to $G$ or $GH_u^{(1)}H_l$. A direct computation shows that the matrices $GG^{(1)}$, $H_uH_l^{(1)}$ and  $GG^{(1)}H_uH_l^{(1)}$ (resp. $G$ and $GH_u^{(1)}H_l$) are equal to 
\[\begin{bmatrix}
\frac{1}{2}&\frac{\lambda}{2}\\
\frac{1}{2\lambda}&\frac{1}{2}\\
\end{bmatrix}\,\left(\text{resp. }\begin{bmatrix}
\frac{1}{2}&\frac{-\lambda}{2}\\
\frac{1}{2\lambda}&\frac{-1}{2}\\
\end{bmatrix}\right).\]


In either case, since $\lambda\in W(\bF_{p^2})\backslash \bZ_p$, there is no non-trivial $\bF_p$-linear combination of the columns modulo $p$ which equals zero; this implies part (1). Furthermore, the above matrices are clearly not scalar multiples of each other, whence part (2) follows. 
\end{proof}

\begin{lemma}\label{reallineartopright}
Let $n, e, f$ be in $\bZ_{\geq 0}$. 
\begin{enumerate}
\item The kernel of the $2 \times 2$ matrix $P(1)_{e,n-1}\cdot H_u^{(n+e)}$ modulo $p$ is defined over $\bF_{p^2}$ but not $\bF_p$. 
\item The reductions of $P(1)_{e,n-1} \cdot H_u^{(n+e)}$ and $P(1)_{f,n-1}\cdot H_u^{(n+f)}$ modulo $p$ are not scalar multiples of each other if $e\not\equiv f \mod 2$. In particular, these reductions are not scalar multiples of each other if $f = e \pm 1$. 
\end{enumerate}
\end{lemma}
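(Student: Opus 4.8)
The plan is to run the same argument as in the proof of \Cref{reallineartopleft}, exploiting that in the split case the matrices $G$, $H_u$ and $H_l$ of \S\ref{sec_decayHsplit} all coincide. First I would unwind the definition of $P(1)_{e,n-1}$: with $m_1+m_2 = n-1$ and $m_2 = e$, the product $\prod_{i=0}^{m_1} G^{(i)} \prod_{i=0}^{m_2-1} H_u^{(m_1+2i+1)} H_l^{(m_1+2i+2)}$ becomes, after replacing $H_u$ and $H_l$ by $G$, the product of consecutive $\sigma$-twists $G^{(0)} G^{(1)} \cdots G^{(n-1+e)}$; appending the extra factor $H_u^{(n+e)} = G^{(n+e)}$ then gives $P(1)_{e,n-1}\cdot H_u^{(n+e)} = \prod_{j=0}^{n+e} G^{(j)}$.

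Next I would collapse this product modulo $p$. Since the entries of $G$ lie in $W(\bF_{p^2})[1/p]$, the twist $G^{(j)}$ equals $G$ or $G^{(1)}$ according to the parity of $j$; using the identities $G G^{(1)} G = G$ and $(G G^{(1)})^2 = G G^{(1)}$ already verified in the proof of \Cref{reallineartopleft}, the telescoping product reduces to $G$ when $n+e$ is even and to $G G^{(1)}$ when $n+e$ is odd. I would then record the explicit reductions, namely the two rank-one matrices $\overline{G} = \begin{bmatrix} 1/2 & -\lambda/2 \\ 1/(2\lambda) & -1/2 \end{bmatrix}$ and $\overline{G G^{(1)}} = \begin{bmatrix} 1/2 & \lambda/2 \\ 1/(2\lambda) & 1/2 \end{bmatrix}$ that already appear in the proof of \Cref{reallineartopleft}.

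Part (1) then follows because the kernel of $\overline{G}$ modulo $p$ is the line spanned by $(\bar\lambda,1)$ and that of $\overline{G G^{(1)}}$ is spanned by $(-\bar\lambda,1)$, while $\sigma(\lambda) = -\lambda$ forces $\bar\lambda \in \bF_{p^2}\setminus\bF_p$, so each kernel is defined over $\bF_{p^2}$ but not over $\bF_p$. For part (2), if $e \not\equiv f \pmod 2$ then $n+e$ and $n+f$ have opposite parities, so one of the two reduced matrices is $\overline{G}$ and the other $\overline{G G^{(1)}}$; these are not $k$-scalar multiples of each other, since their $(1,1)$-entries agree (forcing the scalar to be $1$) while their $(1,2)$-entries $\pm\lambda/2$ differ as $\bar\lambda \neq 0$. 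The assertion for $f = e\pm 1$ is the special case of opposite parities. I do not expect a genuine obstacle: once $G = H_u = H_l$ is used, the only point requiring attention is the index bookkeeping identifying $P(1)_{e,n-1}\cdot H_u^{(n+e)}$ with the $(n+e+1)$-fold product $\prod_{j=0}^{n+e} G^{(j)}$, after which everything reduces to the finite matrix computation already carried out for \Cref{reallineartopleft}.
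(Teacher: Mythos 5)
Your proposal is correct, and it takes essentially the same approach as the paper: the paper's proof simply says it follows along the lines of \Cref{reallineartopleft} and lists the collapsed products case by case, and both rely on the same relations $GG^{(1)}G=G$ etc.\ together with the explicit reductions of $G$ and $GG^{(1)}$ modulo $p$. The one thing worth noting is that you streamline the case analysis by exploiting $G=H_u=H_l$ from the outset, identifying $P(1)_{e,n-1}\cdot H_u^{(n+e)}$ with the single product $\prod_{j=0}^{n+e}G^{(j)}$, whereas the paper keeps the symbols $G,H_u,H_l$ notationally distinct (for uniformity with the Siegel case) and therefore enumerates several sub-products before observing that they all collapse to one of the same two matrices; your index bookkeeping is correct and this is a legitimate, slightly cleaner presentation of the identical computation.
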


\begin{proof}
We argue along the lines of the proof of Lemma \ref{reallineartopleft}. Indeed, if $n-e$ is odd (resp. even), we are reduced to the cases of $GG^{(1)}H_uH_l^{(1)}H_u$, $GG^{(1)}H_u$, $H_uH_l^{(1)}H_u$, and $H_u$ (resp. $GH_u^{(1)}H_lH_u^{(1)}$ and $GH_u^{(1)}$). The rest of the argument is similar. 
\end{proof}

We now prove \Cref{prop_decay} when $p$ is split in the real quadratic field defining the Hilbert modular surface. The proof is a case-by-case study in the following four cases based on the relation of $a=v_t(x)$ and $b=v_t(y)$. The idea is to pick out the term(s) with minimal $t$-adic valuation among all the terms with the same $p$-power denominators given in \Cref{niceparamreal}. Case~4 is the generic case and it is easy to pick out such terms so we give the proof directly. In Cases~1-3, we first state the lemmas on the terms with minimal $t$-adic valuation and then prove the decay lemma. For the convenience of the reader, we summarize the desired vectors which decay rapidly enough at the beginning of each case.

\subsection*{Case 1: $a = b$.}
Recall that $A=v_t(xy)=a+b=2a$. 

We will prove that every vector in $\Span_{\bZ_p}\{w_1,w_2,w_i\}$ decays rapidly, where $w_i = w_4$ if the $t$-adic valuation of $x - y$ is $> a$, and $w_i = w_3$ otherwise. Moreover, $w_i$, $i=3,4$ respectively, decays very rapidly.

\begin{lemma}\label{realdecay}
\begin{enumerate}
\item Among the terms appearing in $\Finf(1)$ described in \Cref{niceparamreal} with denominator $p^{n+1}$, the unique term with minimal $t$-adic valuation is  
$$P(1)_{0,n}(xy)^{1 + p + \hdots + p^{n}}.$$

\item Among the terms appearing in $\Finf(2)$ described in \Cref{niceparamreal} with denominator $p^{n+1}$, the unique term with minimal $t$-adic valuation is 
$$P(1)_{0,n-1}\cdot F_u^{(n )} (xy)^{1 + p + \hdots + p^{n-1}}.$$ 
\end{enumerate}
\end{lemma}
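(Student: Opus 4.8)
The plan is a direct $t$-adic valuation count on the terms listed in \Cref{niceparamreal}, exploiting the defining feature of Case~1, namely $a=b$: this gives $A=v_t(xy)=2a$ and hence the strict inequality $a(p+1)>A$, which is exactly what makes the displayed terms the minimizers.

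First I would record the valuations of the building blocks, all immediate from $a=b$. The series $x(t)y(t)$ has $t$-adic valuation $A=2a$ with unit leading coefficient, so $v_t((xy)^{(i)})=Ap^i$. The series $x(t)^{1+p}+y(t)^{1+p}$ has leading term a multiple of $t^{a(p+1)}$, so $v_t((x^{1+p}+y^{1+p})^{(j)})\ge a(p+1)p^j$. Since at most one of $1\pm\alpha^{p^j}$ can vanish, the entries of $F_u^{(j)}$ have $t$-adic valuation exactly $ap^j$. The constant matrices $G^{(\cdot)},H_u^{(\cdot)},H_l^{(\cdot)}$ have entries in $W$ ($2$ and $\lambda$ being units), so they do not affect $v_t$; and by \Cref{reallineartopleft} none of the products $P(1)_{e,n}$ is zero mod $p$. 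Consequently a term in $\Finf(1)$ assembled, in the sense of \Cref{niceparamreal}, from $m_1+1$ factors $F_t$ at slots $n_0<\dots<n_{m_1+2m_2}$ and $m_2$ consecutive pairs $F_uF_l$, has $t$-adic valuation at least $\sum_{F_t\text{-slot }j}Ap^j+\sum_{\text{pair-lead slot }j}a(p+1)p^j$, while the candidate $P(1)_{0,n}(xy)^{1+p+\cdots+p^n}$ (the case $m_2=0$, $n_i=i$) has valuation exactly $A(1+p+\cdots+p^n)$.

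The heart of the proof is then a charging argument showing that every other term strictly exceeds this. By \Cref{niceparamreal}(2) it suffices to consider $n_i=i$, so the occupied slots are $0,1,\dots,N$ with $N=n+m_2$. Split the contribution of each pair at slots $(j,j+1)$, which is at least $a(p+1)p^j=ap^j+ap^{j+1}$, as $ap^j$ on slot $j$ plus $ap^{j+1}$ on slot $j+1$; each $F_t$-slot $j$ keeps its weight $Ap^j=2ap^j$. After this redistribution every slot $j\le N$ carries weight at least $ap^j=\tfrac A2 p^j$, so the valuation is at least $\tfrac A2(1+p+\cdots+p^N)$. If $m_2\ge 1$ then $N\ge n+1$, and the elementary inequality $\tfrac A2(1+p+\cdots+p^{n+1})>A(1+p+\cdots+p^n)$ (valid for $p\ge 2$) finishes the case; if $m_2=0$ but $n_i\ne i$ for some $i$, the valuation again exceeds $A(1+p+\cdots+p^n)$ because the powers $p^j$ are strictly increasing. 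This proves part~(1). For part~(2) I would observe that any term of $\Finf(2)$ with denominator $p^{n+1}$ is an $\Finf(1)$-type product with denominator $p^n$ followed by one extra factor $F_u^{(j)}$ with $j$ exceeding all slots used by that product; minimizing the $\Finf(1)$-part as above forces $m_2=0$ and slots $0,\dots,n-1$, and then $v_t(F_u^{(j)})=ap^j$ is minimized at $j=n$, producing $P(1)_{0,n-1}\cdot F_u^{(n)}(xy)^{1+p+\cdots+p^{n-1}}$; the same charging inequality (now measured against $A(1+p+\cdots+p^{n-1})+ap^n$ and using $ap^{n+1}>ap^n$) shows any term with a pair or with larger slots is strictly worse, giving uniqueness.

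I expect the main difficulty to be the slot-counting in the charging step. A pair $F_uF_l$ costs the same $1/p$ in the denominator as a single $F_t$ (the $F_l$ being ``free''), yet it occupies two slots; so passing from the all-$F_t$ term to a term with a pair trades one weight-$Ap^j$ slot for two slots of combined weight $a(p+1)p^j$, and it is exactly the Case~1 identity $A=2a$, i.e.\ $a(p+1)>A$, that makes this trade strictly valuation-increasing. In the remaining cases of the case analysis, where $a\ne b$, this comparison changes, which is precisely why the minimizing term differs there.
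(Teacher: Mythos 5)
Your proof is correct and takes the same approach the paper intends: the paper's own justification of \Cref{realdecay} is the one sentence ``This lemma follows directly from \Cref{niceparamreal} and the assumption that $a=b$,'' and your charging/convexity argument (redistributing the $F_uF_l$-pair's contribution $a(p+1)p^j$ across slots $j,j+1$ and comparing to the all-$F_t$ baseline using $A=2a$ and the inequality $\tfrac{A}{2}(1+p+\cdots+p^{n+1})>A(1+p+\cdots+p^n)$) is exactly the verification being left to the reader. The argument also correctly handles both arrangements in \Cref{niceparamreal}(2) (all $F_t$'s first or all pairs first), since the slot-by-slot lower bound $ap^j$ is agnostic to the ordering, and the $\Finf(2)$ case follows by the same bound applied to the prefix plus the inequality $ap^{j}>ap^{n}$ for the trailing $F_u^{(j)}$ with $j>n$.
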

This lemma follows directly from \Cref{niceparamreal} and the assumption that $a = b$.

\begin{proof}[Proof of \Cref{prop_decay} in this case]
We first prove that every primitive vector $w\in \Span_{\bZ_p} \{w_1,w_2\}$ decays rapidly.
Indeed, write $w=cw_1+dw_2$, by \Cref{reallineartopleft}(1) and \Cref{realdecay}(1), there is a unique (non-vanishing) term in $\Finf(1)w$ with denominator $1/p^{n+1}$ and minimal $t$-adic valuation $A(1+p+\cdots + p^n)$ given by $P(1)_{0,n}[c\quad d]^T (xy)^{1+p+\cdots +p^n}$. Hence, modulo $t^{A(1+p+\cdots + p^n)+1}$, the horizontal section $p^n\tilde{w}=\Finf(p^nw)$ does not lie in $W[[t]]$ and hence $w$ decays rapidly.

Secondly, let $i\in \{3,4\}$ be defined as above and we show that $w_i$ decays \emph{very} rapidly. Note that our definition of $w_i$ implies that the first two entries of the $i^{th}$ row of $F$ have $t$-adic valuation equalling $a$. Furthermore, by \Cref{reallineartopleft}(1), $P(1)_{0,n-1} \cdot v \neq 0$ mod $p$, where $v$ is the $n^{th}$ Frobenius twist of either column of $H_u$. Therefore, among the terms in the $i^{th}$ column of $\Finf$ with denominator $p^{n+1}$, the term with minimal $t$-adic valuation has $t$-adic valuation $2a(1 + p + \hdots + p^{n-1}) + ap^n$. Hence $w_i$ decays very rapidly since $a\leq (2a)/2=A/2$.

Finally, we show that every vector in $\Span_{\bZ_p}\{w_1,w_2,w_i\}$ decays rapidly. Let $w_u$ denote a primitive vector in the span of $w_1,w_2$. It suffices to show that every vector which either has the form $p^m w_u + w_i$ or $w_u + p^m w_i$ decays rapidly, where $m \geq 0$. We first prove that every vector which has the form $p^m w_u + w_i$ decays rapidly where $m \geq 0$. Indeed, consider the two-dimensional vector whose entries are the first two entries of $\Finf \cdot p^m w_u$. The $t$-adic valuation of the coefficient of  $1/p^{n+1}$ equals $2a(1 + p + \hdots + p^{n+m})$. Similarly, consider the two-dimensional vector whose entries are the first two entries of $\Finf \cdot w_i$. The $t$-adic valuation of the coefficient of $1/p^{n+1}$ equals $2a(1 + p + \hdots + p^{n-1}) + ap^n$. Regardless of the value of $m$, the latter quantity is always smaller than the former quantity, whence it follows that $p^mw_u + w$ decays rapidly. Now, consider a vector of the form $w_u + p^m w_i$, where $m > 0$. Analogous to the previous case, consider the two-dimensional vector whose entries are the first two entries of $\Finf \cdot w_u$. The $t$-adic valuation of the sum of all terms with denominator $p^{n+1}$ equals $2a(1 + p + \hdots + p^n)$. Similarly, consider the two-dimensional vector whose entries are the first two entries of $\Finf \cdot p^m w_i$. The $t$-adic valuation of the coefficient of $1/p^{n+1}$ equals $2a(1 + p + \hdots + p^{n+m-1}) + ap^{n+m}$. Regardless of the value of $m$ (recall that $m>0$), the latter quantity is always greater than the former quantity, whence it follows that $p^mw_u + w$ decays rapidly.
\end{proof}

\subsection*{Case 2: $b = p^{2e}a$ for some $e\in \bZ_{\geq 1}$}
We will prove that $\Span_{\bZ_p}\{w_1,w_2,w\}$ decays rapidly where $w$ is some primitive vector in $\Span_{\bZ_p}\{w_3,w_4\}$. We will further prove that $w$ decays \emph{very} rapidly. 

\begin{lemma}\label{realdecaytopleft}
\begin{enumerate}
\item Among the terms appearing in $\Finf(1)$ described in \Cref{niceparamreal} with denominator $p^{n+1}$, the unique term with minimal $t$-adic valuation is  
$$P(1)_{e,n}(xy)^{1 + p + \hdots + p^{n-e}}x^{p^{n - e + 1} + p^{n-e+2} + \hdots + p^{n + e}}.$$

\item Among the terms appearing in $\Finf(2)$ described in \Cref{niceparamreal} with denominator $p^{n+1}$, there are exactly two terms with minimal $t$-adic valuation, and they are 
$$P(1)_{e,n-1}\cdot F_u^{(n + e -1)} (xy)^{1 + p + \hdots + p^{n-e-1}}x^{p^{n - e}  + p^{n-e+1} + \hdots + p^{n + e-2}}, \text{ and}$$
$$P(1)_{e+1,n-1}\cdot F_u^{(n + e)} (xy)^{1 + p + \hdots + p^{n-e-2}}x^{p^{n - e-1}  + p^{n-e} + \hdots + p^{n + e-1}}.$$

\end{enumerate}
\end{lemma}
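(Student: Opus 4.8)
The plan is to deduce \Cref{realdecaytopleft} from \Cref{niceparamreal} by turning the problem into an elementary optimization in the single parameter $j=m_2$, the number of $F_uF_l$-pairs occurring in a term. By \Cref{niceparamreal}(1), every term contributing to $\Finf(1)$ (resp. $\Finf(2)$) with denominator $p^{n+1}$ is, after discarding summands with strictly smaller $p$-power in the denominator, a product of the shape described there with $m_1+m_2=n$ (resp. $m_1+m_2=n-1$, the extra $1/p$ coming from the trailing $F_u$), and by \Cref{niceparamreal}(2) the summands of minimal $t$-adic valuation for a fixed $(m_1,m_2)$ arise only from the two extremal orderings (all $F_t$'s first, or all pairs first). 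So first I would record the $t$-adic valuation of each of these finitely many extremal terms as an explicit function of $j$, and then minimize over $j$.

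For the bookkeeping I would use: since we are in Case~2, $b=p^{2e}a$ with $e\geq 1$, so $A:=v_t(xy)=a+b=a(1+p^{2e})$ (this is the $t$-valuation of the equation of the non-ordinary locus, by \Cref{eqn-non-ord}), while $a\neq b$ forces $B:=v_t(x^{p+1}+y^{p+1})=a(p+1)$; moreover $v_t(F_u^{(m)})=ap^m$, the Frobenius twist multiplies $t$-valuations by $p$ (i.e. $v_t(f^{(i)})=p^iv_t(f)$), and the matrix prefactors $P(1)_{j,n}$ and $P(1)_{j,n-1}\cdot H_u^{(\cdot)}$ occurring here have nonzero reduction mod $p$ on the relevant columns by \Cref{reallineartopleft,reallineartopright}, so they neither raise the $t$-valuation nor annihilate the leading $t$-term. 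With these inputs, the $F_t$-first ordering with $j$ pairs contributes $t$-valuation $V(j)=A\cdot\frac{p^{m_1+1}-1}{p-1}+B\cdot p^{m_1+1}\cdot\frac{p^{2j}-1}{p^2-1}$ for $\Finf(1)$ (with $m_1=n-j$), and the analogous expression with one extra summand $ap^{(\cdot)}$, from the trailing $F_u$, for $\Finf(2)$. Substituting $A=a(1+p^{2e})$, $B=a(p+1)$ collapses the difference of consecutive values to a monomial: for $\Finf(1)$ one gets $V(j+1)-V(j)=p^{m_1}a\,(p^{2j+1}-p^{2e})$, negative for $j<e$ and positive for $j\geq e$, so (for $n\geq e$) $V$ has a \emph{strict} minimum at $j=e$; for $\Finf(2)$ the trailing-$F_u$ contribution modifies the exponent by one, and the corresponding difference is proportional to $p^{2j+2}-p^{2e}$, which vanishes at exactly one value of $j$ and is sign-definite on either side, so the minimum is attained at \emph{two} consecutive values of $j$. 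A direct comparison shows the ``pairs-first'' orderings are never competitive: they place the smaller weight $B$ at the low Frobenius positions and the larger weight $A$ at the high ones, which strictly increases $\sum_i(\mathrm{weight}_i)\,p^{\mathrm{pos}(i)}$.

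Finally, translating these extremal products back into the stated closed forms uses only that the leading $t$-term of $(x^{p+1}+y^{p+1})^{p^k}$ is a $t$-unit times $x^{(1+p)p^k}=t^{a(1+p)p^k}$ (valid because $v_t(y)>v_t(x)$), together with telescoping identities such as $(1+p)(p^{n-e+1}+p^{n-e+3}+\cdots+p^{n+e-1})=p^{n-e+1}+p^{n-e+2}+\cdots+p^{n+e}$; the corresponding computation for $\Finf(2)$ yields the two displayed terms. I expect the main obstacle to be not any single estimate but the positional bookkeeping: keeping exact track of which Frobenius twist each $F_t$, each $F_uF_l$-pair, and the trailing $F_u$ sits at, so that the exponents of $xy$, of $x^{p+1}+y^{p+1}$, and of $x$ come out correctly, and so that the ``exactly two'' in part~(2) --- a genuine tie, arising from the parity coincidence $p^{2j+2}=p^{2e}$, in contrast to the strict minimum in part~(1) where $p^{2j+1}=p^{2e}$ has no solution --- is located at the right pair of indices. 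Once $V(j)$ is written with the correct positions, the minimization is immediate from the monomial form of $V(j+1)-V(j)$.
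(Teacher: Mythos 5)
Your proposal is correct and follows essentially the same route as the paper's proof: after \Cref{niceparamreal}(2) reduces the claim to a one-parameter minimization, the problem is to locate the minimizing number of $F_uF_l$-pairs. The paper parameterizes by $k$ (the number of $F_t$-factors), writes down $f(k)$ explicitly, and appeals to strict convexity ($f''>0$) together with the two boundary comparisons $f(n-e+1)<f(n-e)$ and $f(n-e+1)<f(n-e+2)$. You parameterize by $j=m_2$ and compute the forward difference, observing that once $A=a(1+p^{2e})$ and $B=a(p+1)$ are substituted it collapses to the monomial $ap^{n-j}(p^{2j+1}-p^{2e})$. Under the substitution $j=n+1-k$ these are the same computation, but your monomial form is cleaner: it makes the sign pattern immediate, and it explains the structural difference between parts~(1) and~(2) as a parity phenomenon --- the exponent $2j+1$ is odd and never equals $2e$, giving the strict minimum for $\Finf(1)$, while the analogous exponent $2j+2$ for $\Finf(2)$ is even and hits $2e$ exactly once, producing the two-fold tie. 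You also correctly record the auxiliary facts the paper uses tacitly: the ``pairs-first'' extremal ordering from \Cref{niceparamreal}(2) is always dominated (because it loads the larger weight $A$ onto the high Frobenius positions), and the matrix prefactors $P(1)_{\bullet,\bullet}$ and $P(1)_{\bullet,\bullet}\cdot H_u^{(\cdot)}$ are nonzero mod $p$ by \Cref{reallineartopleft,reallineartopright}, so the leading $t$-coefficient survives.

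One caution on the bookkeeping you yourself flag as the main obstacle: your own formula $V_2(j+1)-V_2(j)=ap^{n-j-1}(p^{2j+2}-p^{2e})$ vanishes at $j=e-1$, which places the tie at $m_2\in\{e-1,e\}$ relative to whatever parameterization you committed to for the number of pairs. When you translate this back into the displayed form $P(1)_{\bullet,n-1}\cdot F_u^{(\bullet)}(xy)^{\bullet}x^{\bullet}$, make sure the subscript of $P(1)$, the Frobenius twist of the trailing $F_u$, and the exponents of $xy$ and $x$ you produce are mutually consistent with the denominator count $p^{(m_1+1)+m_2+1}=p^{n+1}$; it is very easy to slide the trailing $F_u$ by one Frobenius position, and that single off-by-one propagates into all three other slots.
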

\begin{proof}
In the following, we will prove part (1); part (2) will follow by an identical argument. 

Note that the $t$-adic valuation of all the entries of $F(1)$ is $a+b$, and the $t$-adic valuation of the entries of $F_u$ and $F_l$ is $ a$ . Let $k,l$ be in $\bZ_{\geq 0}$ such that $k + l = n+1$. Consider the following terms of $F_{\infty}(1)$ with denominator exactly $p^{n+1}$: 
\[X_{k,l}\colonequals F(1)\cdot F(1)^{(1)} \hdots \cdot F(1)^{(k-1)}\cdot F_u^{(k)}F_{l}^{(k+1)}\hdots F_u^{(k+2l-2)}F_l^{(k+2l-1)}.\]

Similar to \Cref{niceparamreal}(2), we observe that among all the terms of $F_{\infty}(1)$ with denominator exactly $p^{n+1}$ given in \Cref{niceparamreal}(1), for each other term $X$ not listed above, there exists at least one $X_{k,l}$ (as $k$ and $l$ vary over all non-negative integers constrained by $k + l = n+1$) such that $v_t(X_{k,l})<v_t(X)$.
Therefore, to prove (1), it suffices to show that $v_t(X_{k,l})$ with $k = n - e + 1$ and $l = e$ is less than $v_t(X_{k,l})$ with any other choice of $k,l$.

Since $b = ap^{2e}$ and $k + l = n+1$, then $f(k) \colonequals v_t(X_{k,n}) = a\left((1 + p^{2e})\frac{p^k-1}{p-1} + \frac{p^{2(n-k+1)} -1}{p-1}p^k \right)$, and we need to prove that $k = n-e +1$ minimizes this expression as $k$ ranges over $\bZ \cap [0,n+1]$. Note that if we allow $k$ to take all real values in the interval $[0,n+1]$, a direct computation shows that $f$ is convex (i.e., $f''(k)>0$). Therefore, it suffices to show that $f(n-e+1)<f(n-e)$ and $f(n-e+1) < f(n-e+2)$. These claims can be verified directly and hence we prove (1). 
\end{proof}

\begin{proof}[Proof of \Cref{prop_decay} in this case]
We first prove that $\Span_{\bZ_p}\{w_1,w_2\}$ decays rapidly.
Indeed, let $w'$ be a primitive vector in $\Span_{\bZ_p}\{w_1,w_2\}$. \Cref{reallineartopleft}(1) implies that $P(1)_{e,n} \cdot w'$ mod $p$ is non-zero. This fact taken in conjunction with \Cref{realdecaytopleft}(1) yields that $w'$ decays rapidly. 

Secondly, we prove that there exists a primitive vector $w\in \Span_{\bZ_p}\{w_3,w_4\}$ (independent of $n$) which decays very rapidly.
Set $Y_{e,n}\colonequals P(1)_{e,n-1}\cdot F_u^{(n + e -1)} (xy)^{1 + p + \hdots + p^{n-e-1}}x^{p^{n - e}  + p^{n-e+1} + \hdots + p^{n + e-2}}+ P(1)_{e+1,n-1}\cdot F_u^{(n + e)} (xy)^{1 + p + \hdots + p^{n-e-2}}x^{p^{n - e-1}  + p^{n-e} + \hdots + p^{n + e-1}}$, which is the sum of the two terms with minimal $t$-adic valuation listed in \Cref{realdecaytopleft}(2). The sum $Y_{e,n}$ is non-zero modulo $p$ by \Cref{reallineartopleft}(2). Furthermore, up to Frobenius twists and multiplication by scalars, the matrix $Y_{e,n}\, \bmod p$ is independent of $n$. Therefore, there exists a vector $w\in \Span_{\bZ_p}\{w_3,w_4\}$ which is independent of $n$ and does not lie in the kernel of $Y_{e,n}\,\bmod p$. The very rapid decay of $w$ follows from this observation and \Cref{realdecaytopleft}(2). 

Finally, a valuation-theoretic argument analogous to Case 1 shows that every primitive vector in $\Span_{\bZ_p}\{w_1,w_2,w\}$ decays rapidly, thereby establishing \Cref{prop_decay} in this case. 
\end{proof}

\subsection*{Case 3: $b = p^{2e + 1} a$ for some $e\in \bZ_{\geq 0}$}
We will prove that $\Span_{\bZ_p}\{w_3,w_4,w\}$ decays rapidly where $w$ is some primitive vector in $\Span_{\bZ_p}\{w_1,w_2\}$ and that $\Span_{\bZ_p}\{w_3,w_4\}$ decays \emph{very} rapidly. 

\begin{lemma}\label{realdecaytopright}
\begin{enumerate}
\item Among the terms appearing in $\Finf(2)$ described in \Cref{niceparamreal} with denominator $p^{n+1}$, the unique term with minimal $t$-adic valuation is  
$$P(1)_{e,n-1}\cdot H_u^{(n+e)}(xy)^{1 + p + \hdots + p^{n-e-1}}x^{p^{n - e} + p^{n-e+1} + \hdots + p^{n + e}}.$$

\item Among the terms appearing in $\Finf(1)$ described in \Cref{niceparamreal} with denominator $p^{n+1}$, there are exactly two terms with minimal $t$-adic valuation, and they are 
$$P(1)_{e,n} (xy)^{1 + p + \hdots + p^{n-e-1}}x^{p^{n - e}  + p^{n-e+1} + \hdots + p^{n + e-1}},\text{ and}$$
$$P(1)_{e+1,n} (xy)^{1 + p + \hdots + p^{n-e-2}}x^{p^{n - e-1}  + p^{n-e} + \hdots + p^{n + e}}.$$

\end{enumerate}
\end{lemma}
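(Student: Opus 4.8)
The plan is to mimic the proof of \Cref{realdecaytopleft}; the case $b=p^{2e+1}a$ differs from the even case only in the parity of a counting function, which is exactly what makes $\Finf(2)$ have a unique minimal term while $\Finf(1)$ has two. First I would invoke \Cref{niceparamreal} to reduce to a combinatorial minimization: every term of $\Finf(1)$ (resp.\ $\Finf(2)$) with denominator $p^{n+1}$ is, up to terms of strictly larger $t$-adic valuation, a product of Frobenius twists of $F_t,F_u,F_l$ in which the $F_u,F_l$ occur in consecutive pairs, consisting of some number $m_2$ of such pairs and the complementary number of $F_t$-blocks, plus one trailing $F_u$ in the $\Finf(2)$ case; we must pick out the term(s) of minimal $t$-adic valuation. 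By \Cref{niceparamreal}(2) a minimizer uses consecutive twists $0,1,2,\dots$, and since $v_t(F_t)=a+b$ while $F_u^{(j)}F_l^{(j+1)}$ has $t$-adic valuation $a(p^j+p^{j+1})$ with $a+b>a$, a one-line rearrangement argument (comparing sums $\sum_j\gamma_jp^j$ with $\gamma_j\in\{a,a+b\}$ under transposition of adjacent blocks) shows that for each fixed $m_2$ the cheapest arrangement puts every $F_t$-block at the lowest available twists, then all $m_2$ pairs, then (for $\Finf(2)$) the trailing $F_u$. This is precisely the set-up already used in the Case~2 proof.

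Next I would write the $t$-adic valuation of this cheapest term as a function of $m_2$, substitute $b=p^{2e+1}a$, and simplify to the shape $\tfrac{a}{p-1}\bigl(p^{\alpha(m_2)}+p^{\beta(m_2)}-1-p^{2e+1}\bigr)$, where $\alpha$ and $\beta$ are affine in $m_2$ with slopes $-1$ and $+1$; this is strictly convex in $m_2$, so the minimum over the allowed integers lies at the floor and ceiling of the solution of $\alpha(m_2)=\beta(m_2)$. The trailing $F_u$ in the $\Finf(2)$ count shifts this balance point by a half relative to $\Finf(1)$: for $\Finf(2)$ one gets $\alpha(m_2)=\beta(m_2)$ at the integer $m_2=e$, hence a unique minimizer, and unwinding the corresponding product via \Cref{niceparamreal}(3),(4) — using $x^p\pm y^p\equiv(x\pm y)^p\bmod p$ and the fact that $b>a$ makes $x^{1+p}$ the leading term of $x^{1+p}+y^{1+p}$ — yields exactly the term in part~(1). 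For $\Finf(1)$ the balance point is the half-integer $m_2=e+\tfrac12$, so by convexity the integer minimum is attained at both $m_2=e$ and $m_2=e+1$ (both in range once $n\ge e+1$); a direct substitution checks that the two valuations coincide, and unwinding the two products produces exactly the two terms in part~(2).

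I expect the only real obstacle to be bookkeeping rather than ideas: keeping precise track of the exponent of $xy$ and of $x$ contributed by each block at each Frobenius twist so that the formulas match the stated ones, explicitly verifying that the two candidate terms for $\Finf(1)$ share the same minimal $t$-adic valuation, and ruling out the remaining (interleaved, or non-consecutively twisted) arrangements — all of which run parallel to the verifications already carried out for \Cref{realdecaytopleft}, but must be redone with the opposite parity. I would also dispatch the small-$n$ degenerate cases (e.g.\ $n<e+1$, where one of the two $\Finf(1)$ terms is absent) by hand; these do not affect the use of the lemma in the sequel.
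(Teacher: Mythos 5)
Your proposal is correct and follows exactly the approach the paper intends: the paper's proof of this lemma is the single sentence ``identical to that of \Cref{realdecaytopleft},'' and your argument is precisely that adaptation — reduce via \Cref{niceparamreal} to minimizing a $t$-adic valuation over the number $m_2$ of $F_uF_l$-pairs, observe the function is of the form $\tfrac{a}{p-1}(p^{\alpha(m_2)}+p^{\beta(m_2)}-1-p^{2e+1})$ with $\alpha,\beta$ affine of slopes $\mp 1$ hence convex, and locate the integer minimizers via the balance point of $\alpha$ and $\beta$. You also correctly isolate the one genuinely new phenomenon relative to \Cref{realdecaytopleft}, namely that the odd exponent $b=p^{2e+1}a$ shifts the balance point to the half-integer $e+\tfrac12$ for $\Finf(1)$ (yielding the tie at $m_2=e$ and $m_2=e+1$) while the trailing $F_u$ in $\Finf(2)$ restores an integer balance point at $m_2=e$ (yielding uniqueness).
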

\begin{proof}
The proof of this lemma is identical to that of \Cref{realdecaytopleft}, so we omit the details. 
\end{proof}

\begin{proof}[Proof of \Cref{prop_decay} in this case]
Analogous to Case 2, \Cref{reallineartopright} and \Cref{realdecaytopright}(2) imply the existence of a primitive $w\in \Span_{\bZ_p}\{w_1,w_2\}$ that decays rapidly; and by \Cref{reallineartopright}(1) and \Cref{realdecaytopright}(1), $\Span_{\bZ_p}\{w_3,w_4\}$ decays very rapidly. Finally, a valuation-theoretic argument shows that every primitive vector in $\Span_{\bZ_p}\{w,w_3,w_4\}$ decays rapidly. 
\end{proof}

\subsection*{Case 4: $b \neq a p^e$ for any value of $e$}

\begin{proof}[Proof of \Cref{prop_decay}]
As this is the easiest case, we will be content with merely sketching a proof. Analogous to \Cref{realdecaytopleft,realdecaytopright}, it is easy to see that in this case there are unique terms with minimal $t$-adic valuations with denominator $p^{n+1}$ occurring in both $\Finf(1)$ and $\Finf(2)$. It follows that every primitive vector in $\Span_{\bZ_p}\{w_1,w_2\}$ decays rapidly and every vector in $\Span_{\bZ_p}\{w_3,w_4\}$ decays very rapidly. Finally, a valuation theoretic argument similar to Case 1 shows that every vector in the span of $w_1,w_2,w_3,w_4$ does decay rapidly, finishing the proof of \Cref{prop_decay}. 
\end{proof}

\subsection{Decay in the inert case}
Notation as in the proof of \Cref{thm_decay} and \S\ref{Frob-H-inert}. 
Recall that $P$ is superspecial and we will show that The $\bZ_p$-span of $w_1,w_2,w_3$ decays rapidly, and the vector $w_3$ decays very rapidly. 

\begin{proof}[Proof of \Cref{prop_decay}]
The proof goes along the same lines as the proof of the decay lemma for split Hilbert modular varieties, so we will be content with just outlining the salient points. 

We first compute $\displaystyle F_\infty=\prod_{i = 0}^{\infty} (1 + F^{(i)})$, where by \eqref{FrobHinert}, with respect to the basis $\{w_1,w_2,w_3,w_4 \}$, $F=\begin{pmatrix}
F_t & F_u\\
F_l & 0 \end{pmatrix}$, where 
\[
F_t=\frac{xy}{2p}\begin{pmatrix} -1 & \lambda \\
-1/\lambda & 1 \end{pmatrix}, F_u=\frac 1{2p} \begin{pmatrix} x & y\\
x/\lambda & y/\lambda \end{pmatrix}, F_l=\begin{pmatrix} -y & \lambda y\\
-x & \lambda x \end{pmatrix}.
\]
Recall that the non-ordinary locus is cut out by the equation $xy = 0$ and $a = v_t(x), b = v_t(y)\in \bZ_{>0}$.

Similar to \Cref{niceparamreal}, it is easy to see that the top-left $2\times 2$ block of $\Finf$ with $p$-adic valuation $-(n+1)$ has a term of the form $ F_t F_t^{(1)} \hdots F_t^{(n)}$, and this term is the unique term with minimal $t$-adic valuation (equalling $(a + b)(1 + p + \hdots p^n)$). Similarly, the top-right $2\times 2$ block of $\Finf$ with $p$-adic valuation $-(n+1)$ has a term of the form $ F_t F_t^{(1)} \hdots F_t^{(n-1)} F_u^{(n)}$, and this term is the unique term with minimal $t$-adic valuation (equaling $(a + b)(1 + p + \hdots p^{n-1}) + ap^n$). 

Arguments identical to \Cref{reallineartopleft} and \Cref{reallineartopright} yield that every primitive vector in the $\bZ_p$ span of $w_1,w_2$ (and in the span of $w_3$) decays rapidly (very rapidly, in the case of $w_3$). Further, as the $t$-adic valuation of $ F_t F_t^{(1)} \hdots F_t^{(m)}$ is different from the $t$-adic valuation of $ F_t F_t^{(1)} \hdots F_t^{(n-1)} F_u^{(n)}$ for every pair of integers $n,m$, it follows that $\Span_{\bZ_p}\{w_1,w_2, w_3\}$ also decays rapidly. The argument is elaborated on in the last paragraph of the proof for Case 1 in \S\ref{sec_decayHsplit}.
\end{proof}


\section{Proof of the decay lemma in the Siegel case}\label{sec_decay_Sie}
In this section, we prove \Cref{prop_decay} and hence \Cref{thm_decay} (for superspecial points) in the Siegel case. We refer the reader to the appendix for a decay lemma for supergeneric points. The main idea of the proof is similar to that of the Hilbert case in \S\ref{sec_decay_Hil}.

\subsection{Preparation of the proof}
We follow the notation in \S\ref{sec_decay_Hil}, $k=\bar{\bF}_p$, $W=W(k)$, $K=W[1/p]$, $\lambda \in \bZ_{p^2}^\times$ such that $\sigma(\lambda) = - \lambda$, and $C = \Spf k[[t]]$ a generically ordinary formal curve in $\cM_k$ which specializes to a superspecial point $P$.  This gives rise to a local ring homomorphism $k[[x,y,z]]\rightarrow k[[t]]$ and we pick a lift $W[[x,y,z]]\rightarrow W[[t]]$ (still a ring homomorphism), and we denote by $x(t),y(t)$ and $z(t)$ the images of $x,y,z$ respectively.

Let $a,b,c$ denote the $t$-adic valuations of $x(t), y(t)$ and $z(t)$ respectively. We adopt the convention that $a,b,c$ may take on the value $\infty$ if the corresponding power series is $0$. As before, $v_t$ denotes the $t$-adic valuation map on $K[[t]]$ or $k[[t]]$.

Also recall that $\sigma$ denotes both the Frobenius on $K$ and the Frobenius on the coordinate rings $W[[x,y,z]]$ with $\sigma(x)=x^p, \sigma(y)=y^p, \sigma(z)=z^p$; and for a matrix $M$ with entries in $K[[x,y,z]]$, $M^{(n)}$ denotes $\sigma^n(M)$.

The preparation lemmas of the Siegel case are very similar to that of the split Hilbert case in the beginning of \S\ref{sec_decayHsplit}.

\begin{para}\textbf{Notations.}
Recall that $\displaystyle F_{\infty}=\prod_{i = 0}^{\infty} (1 + F^{(i)})$, where by \eqref{F-Sie}, with respect to the basis $\{w_1,\cdots, w_5\}$,
\[F=\begin{bmatrix}
\frac{1}{2p}(xy+\frac{z^2}{4\epsilon}) & -\frac{1}{2\lambda p}(xy+\frac{z^2}{4\epsilon}) & \frac{x}{2\lambda p}& \frac{y}{2\lambda p} & \frac{z}{2\lambda p}\\
\frac{\lambda}{2p}(xy+\frac{z^2}{4\epsilon}) & -\frac{1}{2p}(xy+\frac{z^2}{4\epsilon}) & \frac{x}{2p} & \frac{y}{2p} & \frac{z}{2p} \\
\lambda y & -y & 0 & 0 & 0\\
\lambda x & -x& 0 & 0 & 0\\
\frac{\lambda z}{2\epsilon}  & -\frac{ z}{2\epsilon} & 0 & 0 & 0
\end{bmatrix},\]
where $\epsilon=\lambda^2\in \bZ_p^\times$.
We denote by $F_t$, $F_u$, and $F_l$ the top-left $2 \times 2$ block, the top-right $2 \times 3$ block, and the bottom-left $3 \times 2$ block of $F$ respectively. 
Define 

\[G = \begin{bmatrix}
\frac{1}{2}&\frac{-1}{2 \lambda}\\
\frac{\lambda}{2}&\frac{-1}{2}\\
\end{bmatrix}, 
H_u = \begin{bmatrix}
\frac{1}{2\lambda}&\frac{1}{2\lambda}&\frac{1}{2\lambda}\\
\frac{1}{2}&\frac{1}{2}&\frac{1}{2}\\
\end{bmatrix}, \text{ and }
H_l = \begin{bmatrix}
\lambda & -1\\
\lambda & -1\\
\lambda & -1\\
\end{bmatrix}.\]
Let $\Finf(1)$ and $\Finf(2)$ denote the top-left $2 \times 2$ block and top-right $2 \times 3$ of $\Finf$ respectively.

By \Cref{eqn-non-ord}, the non-ordinary locus is cut out by the equation $xy + z^2/(4\epsilon) = 0$.
Let $\eta t^A$ and $\mu t^B $ denote the leading terms of $xy + z^2 /(4\epsilon)$  and $xy^p + x^p y + z^{1 + p}/(2\epsilon)$ respectively. In particular, $A=v_t(xy + z^2/(4 \epsilon))$, and $B=v_t(xy^p + x^p y + z^{1 + p}/(2\epsilon))$. 
\end{para}

The following is analogous to \Cref{niceparamreal}.
\begin{lemma}\label{niceparam}
\begin{enumerate}
\item The part of $\Finf(1)$ with $p$-adic valuation $-(n+1)$ consists of sums of products of the form $\displaystyle \prod_{i=0}^{m_1 + 2m_2 } X_i^{(n_i)}$. Here, $X_i$ is either
 $F_t$, $F_u$ or $F_l$,\footnote{The terms $X_i$ are chosen so that the product makes sense, and has the right size. Note that this would imply that $F_u,F_l$ must occur in consecutive pairs.} $m_1 + 1$ is the number of occurrences of $F_t$, and $m_2$ is the number of occurrences of the pair $F_u,F_l$, $m_1 + m_2 = n $, and $\{n_i\}_{i=0}^{m_1+2m_2}$ is a strictly increasing sequence of non-negative integers. The analogous statement holds for $\Finf(2)$ as well. 

\item Fix values of $m_1,m_2$ as above. Among all the terms in the above sum, the ones with minimal $t$-adic valuation only occur when $n_i = i$ for all $i$, and either when $X_0 = X_1 = \hdots X_{m_1} = F_t$, or $X_0 = X_2 = \hdots = X_{2m_2-2} = F_u$, depending on whether $A \geq B$. The analogous statement holds for $\Finf(2)$ as well. 

\item (for $\Finf(1)$) The product $\displaystyle \prod_{i =0}^{m_1}F_t^{(i)} \prod_{i = 0}^{m_2-1} F_u^{(m_1  + 1 + 2i)}F_l^{(m_1 + 2i + 2)}$ equals

$$\displaystyle \frac{1}{p^{n+1}}\prod_{i =0}^{m_1}G^{(i)}(xy + z^2/2)^{(i)} \prod_{i = 0}^{m_2 -1} \frac{1}{3}H_u^{(m_1 + 2i + 1)}H_l^{(m_1 + 2i+2)}(xy^p + x^py + z^{p+1})^{(m_1 + 2i + 1)}.$$

\item (for $\Finf(2)$) The product $\displaystyle \prod_{i =0}^{m_1}F_t^{(i)} \prod_{i = 0}^{m_2-1} F_u^{(m_1  + 2i + 1)}F_l^{(m_1 + 2i + 2)} \cdot F_u^{(m_1 + 2m_2 + 1)}$ equals 
$$\displaystyle \frac{1}{p^{n+2}}\prod_{i =0}^{m_1}G^{(i)}(xy + z^2/2)^{(i)} \prod_{i = 0}^{m_2-1} \frac{1}{3}H_u^{(m_1 + 2i + 1)}H_l^{(m_1 + 2i+2)}(xy^p + x^py + z^{p+1})^{(m_1 + 2i + 1)} \cdot F_u^{(m_1 + 2m_2 + 1)}$$

\end{enumerate}
\end{lemma}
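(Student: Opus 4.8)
The plan is to mirror the proof of \Cref{niceparamreal}, exploiting the block shape $F=\begin{pmatrix}F_t & F_u\\ F_l & 0\end{pmatrix}$ with vanishing lower-right $3\times 3$ block. Expanding $\Finf=\prod_{i\ge 0}(1+F^{(i)})$, each term is $\pm F^{(n_0)}F^{(n_1)}\cdots F^{(n_r)}$ with $n_0<\cdots<n_r$. For part (1) I would project such a product onto the top-left $2\times 2$ block (resp.\ top-right $2\times 3$ block): a nonzero contribution is a walk on the block-states $\{\mathrm{top},\mathrm{bot}\}$ starting at $\mathrm{top}$ and ending at $\mathrm{top}$ (resp.\ at $\mathrm{bot}$), and since $F_{\mathrm{bot},\mathrm{bot}}=0$ no two consecutive states are $\mathrm{bot}$. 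Hence the word in the $X_i$ is a shuffle of the letter $F_t$ (a $\mathrm{top}\!\to\!\mathrm{top}$ step) with the pairs $F_uF_l$ (a $\mathrm{top}\!\to\!\mathrm{bot}\!\to\!\mathrm{top}$ excursion), plus a single trailing $F_u$ in the $\Finf(2)$ case; counting letters and using $v_p(F_t)=v_p(F_u)=-1$, $v_p(F_l)=0$ gives $m_1+m_2=n$ and the denominators $p^{n+1}$, $p^{n+2}$.

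For part (2) I would pass to $t$-adic valuations. Every entry of $F_t$ is a unit of $W$ times $\tfrac1p(xy+\tfrac{z^2}{4\epsilon})$, so $v_t(F_t^{(k)})=Ap^k$; each entry of a single $F_u^{(k)}$ or $F_l^{(k)}$ has $t$-valuation $ap^k$, $bp^k$ or $cp^k$; and — the one place the non-square blocks enter — the rank-one factorizations $F_u^{(k)}=\tfrac1{2p}\binom{1/\lambda}{1}(x^{p^k}\ y^{p^k}\ z^{p^k})$ and $F_l^{(k+1)}=(y^{p^{k+1}}\ x^{p^{k+1}}\ \tfrac{z^{p^{k+1}}}{2\epsilon})^{T}(\lambda\ {-}1)$ show that every entry of $F_u^{(k)}F_l^{(k+1)}$ is a unit of $W$ times $\tfrac1p(xy^p+x^py+\tfrac{z^{1+p}}{2\epsilon})^{(k)}$, hence has $t$-valuation exactly $Bp^k$. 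The $t$-valuation of a term as in (1) is therefore $\sum Ap^{n_j}$ over the $F_t$'s plus $\sum Bp^{n_{j'}}$ over the pairs (plus one contribution $\min(a,b,c)\,p^{n_r}$ for $\Finf(2)$). Skipping a Frobenius twist only increases these powers, so a minimizer uses the consecutive twists $0,1,\dots$; and among those, the total valuation, as a function of how the $F_t$'s and the pairs are interleaved, is governed by the sign of a single linear-in-$p^j$ term, so its minimum is attained at one of the two sorted configurations (all $F_t$'s first, or all pairs first), exactly as in the breakpoint computation in the proof of \Cref{realdecaytopleft}; which one is selected is the comparison of $A$ and $B$. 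This proves (2).

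Parts (3) and (4) are then a direct computation: one substitutes $F_t^{(i)}=\tfrac1p(xy+\tfrac{z^2}{4\epsilon})^{(i)}G^{(i)}$ and $F_u^{(m_1+2i+1)}F_l^{(m_1+2i+2)}=\tfrac1{3p}(xy^p+x^py+\tfrac{z^{1+p}}{2\epsilon})^{(m_1+2i+1)}H_u^{(m_1+2i+1)}H_l^{(m_1+2i+2)}$, each of the form scalar times constant matrix, pulls the scalars out of the ordered product, and collects powers of $p$ — one $p^{-1}$ from each of the $m_1+1$ factors $F_t^{(i)}$ and from each of the $m_2$ pairs, giving $p^{-(n+1)}$ for $\Finf(1)$, plus one more from the trailing $F_u$, giving $p^{-(n+2)}$ for $\Finf(2)$.

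The only non-formal step is the minimization in (2), and I expect no difficulty beyond the Hilbert case: the rank-one structure makes a pair $F_uF_l$ behave like a single scalar block of $t$-valuation $B$, so the argument reduces to the same convexity/rearrangement input already used for \Cref{realdecaytopleft}, and the larger block sizes contribute only bookkeeping. In particular the potential worry that the leading term of $F_u^{(k)}F_l^{(k+1)}$ might be controlled by $\min(a,b,c)$ rather than $B$ does not arise, since the factorization forces that leading term to be a unit multiple of $(xy^p+x^py+z^{1+p}/(2\epsilon))^{(k)}$ regardless of any cancellation among $a+bp$, $ap+b$, $(1+p)c$.
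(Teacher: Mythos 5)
Your proposal is correct and is, up to phrasing, the argument the paper leaves implicit (the paper gives no proof of \Cref{niceparam}, saying only that it is analogous to \Cref{niceparamreal}, which is itself stated without proof). The block‑walk reading of the product $\prod(1+F^{(i)})$ for part (1), the rank‑one factorizations $F_u^{(k)}=\tfrac1{2p}\binom{1/\lambda}{1}\,(x\ y\ z)^{(k)}$ and $F_l^{(k+1)}=(y\ x\ z/(2\epsilon))^{T(k+1)}(\lambda\ {-}1)$ forcing every entry of $F_u^{(k)}F_l^{(k+1)}$ to be a unit multiple of $(xy^p+x^py+z^{1+p}/(2\epsilon))^{(k)}$, and the reduction of the minimization in part (2) to a sorted/convexity comparison are exactly the ingredients needed. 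Your final remark is the genuinely non‑obvious point and you state it correctly: the rank‑one structure means the $t$‑valuation of a pair is $Bp^k$ and not the a priori smaller $p^k\min(a+bp,\,ap+b,\,(1+p)c)$, which is what makes the Siegel case track the Hilbert case with $B$ playing the role it did there.
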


\begin{para}\textbf{Notation.}
Let $P(1)_{m_2,n}$ denote the product 
$\displaystyle \prod_{i =0}^{m_1}G^{(i)} \prod_{i = 0}^{m_2 -1} \frac{1}{3}H_u^{(m_1 + 2i + 1)}H_l^{(m_1 + 2i+2)}$.
\end{para}

The following will play a similar role as \Cref{reallineartopleft}.
\begin{lemma}\label{linear}
The kernel of $P(1)_{g,f+g} \bmod p$ does not contain any non-zero vector defined over $\bF_p.$ Moreover,
if $f$ is odd (resp. even), the kernel of $P(1)_{g,f+g} \bmod p$ does not contain the vector $\begin{bmatrix}
\lambda^{-1}\\
1\\
\end{bmatrix}$ (resp. $\begin{bmatrix}
-\lambda^{-1}\\
1\\
\end{bmatrix}$).
\end{lemma}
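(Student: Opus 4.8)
\emph{Proof strategy.} The plan is to reduce $P(1)_{g,f+g}\bmod p$ to a product of finitely many explicit rank‑one $2\times 2$ matrices over $\bZ_{p^2}$ and then read off its kernel directly. The key structural point, exactly as in the Hilbert case (cf.\ \Cref{reallineartopleft}), is that all entries of $G$, $H_u$, $H_l$ lie in $\bZ_{p^2}=W(\bF_{p^2})$ — here $p$ odd makes $1/2$ a unit, and the factor $1/3$ in the definition of $P(1)_{m_2,n}$ cancels against the identity $H_uH_l=3G$ (up to the relevant twist) — so $\sigma$ acts on these matrices with period two: $G^{(2m)}=G$, $G^{(2m+1)}=G^{(1)}$, and likewise for $H_u,H_l$. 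A short computation then shows that $\tfrac13 H_u^{(j)}H_l^{(j+1)}$ depends only on the parity of $j$: it equals $-\tilde G$ for $j$ even and $-\hat G$ for $j$ odd, where
\[
\tilde G=\tfrac12\begin{bmatrix}1&\lambda^{-1}\\ \lambda&1\end{bmatrix},\qquad
\hat G=\tfrac12\begin{bmatrix}1&-\lambda^{-1}\\ -\lambda&1\end{bmatrix}.
\]
Writing $m_1=f$, $m_2=g$ in the definition of $P(1)_{m_2,n}$, the $G$‑part $\prod_{i=0}^{f}G^{(i)}$ is the alternating product $G\,G^{(1)}G\cdots G^{(f)}$, while the $H$‑part $\prod_{i=0}^{g-1}\tfrac13 H_u^{(f+2i+1)}H_l^{(f+2i+2)}$ equals $M^{g}$, where $M=-\hat G$ if $f$ is even and $M=-\tilde G$ if $f$ is odd (the twists $f+2i+1$ all having the same parity).

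Next I would invoke the elementary principle that a product $M_1\cdots M_k$ of rank‑one operators on a $2$‑dimensional space is either zero or again rank one, with $\operatorname{im}(M_1\cdots M_k)=\operatorname{im}M_1$ and $\ker(M_1\cdots M_k)=\ker M_k$, the product vanishing exactly when $\operatorname{im}M_{i+1}\subseteq\ker M_i$ for some $i$. Reducing mod $p$ one computes
$\ker G=\ker\hat G=\operatorname{im}G=\operatorname{im}\tilde G=\langle(1,\lambda)\rangle$ and
$\ker G^{(1)}=\ker\tilde G=\operatorname{im}G^{(1)}=\operatorname{im}\hat G=\langle(1,-\lambda)\rangle$, and these two lines are distinct since $p$ is odd and $\bar\lambda\ne 0$. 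Checking $\operatorname{im}M_{i+1}\not\subseteq\ker M_i$ at each junction of the product $P(1)_{g,f+g}$ — the alternating $G$‑steps, the transition from $G^{(f)}$ to the first $M$, and the repetitions of $M$ — then reduces to comparing these two lines, and every junction passes. Hence $P(1)_{g,f+g}\bmod p$ has rank one, and its kernel equals the kernel of its rightmost factor: $\ker G^{(f)}$ if $g=0$, and $\ker M$ if $g\ge 1$. In either case this kernel is $\langle(\lambda^{-1},1)\rangle$ when $f$ is even and $\langle(-\lambda^{-1},1)\rangle$ when $f$ is odd.

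Finally, since $\sigma(\lambda)=-\lambda\ne\lambda$, the reduction $\bar\lambda$ lies in $\bF_{p^2}\setminus\bF_p$, so $\bar\lambda^{-1}\notin\bF_p$; therefore neither $\langle(\lambda^{-1},1)\rangle$ nor $\langle(-\lambda^{-1},1)\rangle$ contains a nonzero vector defined over $\bF_p$, which gives the first assertion. The two lines are themselves distinct (as $\bar\lambda^{-1}\ne-\bar\lambda^{-1}$), so the kernel omits $(\lambda^{-1},1)$ precisely when $f$ is odd and omits $(-\lambda^{-1},1)$ precisely when $f$ is even, which is the ``moreover'' clause.

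I expect the only delicate part to be the bookkeeping in the middle step: organizing the parity case analysis for the factors of $P(1)_{g,f+g}$ and verifying non‑vanishing at each of the finitely many junction types. Everything else — the period‑two reduction, the two $H_uH_l$ identities, and the kernel/image table for $G, G^{(1)}, \tilde G, \hat G$ — is a brief $2\times 2$ matrix computation over $\bZ_{p^2}$, and the conclusion then follows formally from $\bar\lambda\notin\bF_p$.
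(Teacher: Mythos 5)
Your argument is correct and lands on the same explicit answer as the paper, but the organization is a bit different. The paper simply evaluates $P(1)_{g,f+g}\bmod p$ as a single $2\times 2$ matrix: each $\tfrac13 H_u^{(j)}H_l^{(j+1)}$ is one of two explicit matrices satisfying $X^2=-X$, the $G$-products alternate between two explicit matrices (using $GG^{(1)}G=G$), and multiplying the two halves together still lands, up to sign, on one of the same two matrices; the kernel is then read off directly. You instead observe that all factors are rank one over $\bZ_{p^2}$ and invoke the generic fact that a nonzero product of rank-one $2\times 2$ matrices has kernel equal to the kernel of its rightmost factor. This spares you the final multiplication of the $G$-block with the $H$-block (a step the paper glosses over), at the cost of checking that no intermediate product vanishes, i.e.\ $\operatorname{im}M_{i+1}\not\subseteq\ker M_i$ at every junction; for the four matrices in play this boils down to the two lines $\langle(1,\lambda)\rangle$ and $\langle(1,-\lambda)\rangle$ being distinct, which is the same fact the paper's explicit simplifications verify implicitly. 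Both routes conclude that the kernel is $\langle(\lambda^{-1},1)\rangle$ or $\langle(-\lambda^{-1},1)\rangle$ according to the parity of $f$ and finish with $\lambda\notin\bF_p$. Your framing is slightly cleaner conceptually, making it transparent that only the rightmost factor controls the kernel, but the underlying $2\times 2$ computations over $\bZ_{p^2}$ are essentially identical to the paper's, so this is a reorganization of the same argument rather than a fundamentally new one.
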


\begin{proof}
We prove the assertions by explicit computation as in \Cref{reallineartopleft,reallineartopright}. Note that
\[\frac{1}{3}H_u^{(2m)}H_l^{(2m+1)}=\frac{-1}{2}\begin{bmatrix}
1&\lambda^{-1}\\
\lambda&1\\
\end{bmatrix},\frac{1}{3} H_u^{(2m-1)}H_l^{(2m)}=\frac{1}{2}\begin{bmatrix}
-1& \lambda^{-1}\\
\lambda&-1\\
\end{bmatrix}\]
Both these matrices satisfy the relation $X^2 = -X$ and hence $\displaystyle \prod_{i=0}^{m_2-1}H_u^{(m_1+2i+1)}H_l^{(m_1+2i+2)}$ equals, up to a multiple of $\pm 1$, one of these matrices depending on the parity of $m_1$. 
Similarly, we have
\[G\cdot \cdot \cdot G^{(2m)}=\frac{1}{2}\begin{bmatrix}
1&-\lambda^{-1}\\
\lambda& -1\\
\end{bmatrix},\ G \cdot \cdot \cdot G^{(2m+1)}=\frac{1}{2}\begin{bmatrix}
1&\lambda^{-1}\\
\lambda&1\\
\end{bmatrix}.\]

Therefore, $P(1)_{g,f+g}$ equals 
$\pm \frac{1}{2} \begin{bmatrix}
1&\lambda^{-1}\\
\lambda&1\\
\end{bmatrix}$
if $f$ is odd, and equals  
$\pm \frac{1}{2}\begin{bmatrix}
1&-\lambda^{-1}\\
\lambda&-1\\
\end{bmatrix}$
if $f$ is even. 
The lemma then follows immediately.
\end{proof}

For fixed $n$, among the terms listed in \Cref{niceparam} with denominator $p^{n+1}$, the number of terms with equal minimal $t$-adic valuation depends on certain numerical relation between $A$ and $B$. We then perform the following case-by-case analysis in \S\S\ref{sec_decayS1}-\ref{sec_decayS3} to prove the Decay Lemma. The first case, while technically the easiest, holds the main ideas in general. 

\subsection{Case 1: $A  < B$.}\label{sec_decayS1} \quad

Note that if $a + b \neq 2c$, or more generally, if the leading terms of $xy$ and $z^2/(4\epsilon)$ do not cancel, then $A<B$.

\begin{proof}[Proof of \Cref{prop_decay} in this case]
For the ease of exposition, we assume that $a \leq b \leq c$. Note that this forces $2a \leq A$. Even though the statement of \Cref{prop_decay} is not symmetric in $a,b,c$, an identical argument as the one below suffices to deal with all the other cases. 

We will prove that $\Span_{\bZ_p}\{w_1,w_2,w_3\}$ decays rapidly. 
For a primitive vector $w\in \Span_{\bZ_p}\{w_1,w_2,w_3\}$, 
write $w = \alpha_u w_u + \alpha_l w_3$, where $w_u$ is a primitive vector in $\Span_{\bZ_p}\{w_1,w_2\}$, and $\alpha_u,\alpha_l \in \bZ_p$. Since $w$ is primitive, then either $\alpha_u$ or $\alpha_l$ is a $p$-adic unit. 
We may assume that $\alpha_u$ is a unit -- the other case is entirely analogous to this one. Suppose that the $p$-adic valuation of $\alpha_l$ is $m \geq 0$.  

Consider the terms appearing in $\Finf(1)$ described in \Cref{niceparam} with denominator $p^{n+1}$. As $A < B$, the one with minimal $t$-adic valuation is $P(1)_{0,n}(xy + z^2/(4\epsilon))^{1 + p + \hdots + p^{n}} $, and this is the unique term with this property. Similarly, consider the terms appearing in $\Finf(2)$ with denominator $p^{n+1 + m}$. As $A < B$, the unique term whose first column has minimal $t$-adic valuation is \linebreak $P(1)_{0,n+m-1}\cdot F_u^{(n+m)}(xy + z^2/(4\epsilon))^{1 + p + \hdots + p^{n+m-1}}$. 

Let $P$ denote the $2 \times 3$ matrix whose first two columns equal $P(1)_{0,n}(xy + z^2/(4\epsilon))^{1 + p + \hdots + p^{n}} $ (part of $\Finf(1)$), and whose last column is the first column of $P(1)_{0,n+m-1}\cdot F_u^{(n+m)}(xy + z^2/(4\epsilon))^{1 + p + \hdots + p^{n+m-1}}$ (part of $\Finf(2)$). Since $1\leq a < A$, then for any $m\in \bZ_{\geq 0}$, we have $A(1 + \hdots + p^n)\neq A(1 + \hdots + p^{n+m-1}) + ap^{m+n}$. Therefore, 
regardless of the value of $m$, the $t$-adic valuation of entries of the first two columns of $P$ are different from the $t$-adic valuation of the last column of $P$. 

To prove that $w$ decays rapidly, it suffices to prove that among the monomials in $P w$ with $p$-adic valuation equalling $-(n+1)$, there exists a monomial with $t$-adic valuation $\leq A(1 + \hdots p^n)$. By the proof of \Cref{prop_decay} in Case 1 in \S\ref{sec_decayHsplit}, this in turn reduces to proving the following statement: if $m \geq 1$, then $w_u \bmod p$ is not in the kernel of $ P(1)_{0,n}\bmod p$; and if $m = 0$, the vector $\begin{bmatrix}
(\lambda^{-1})^{(n)}\\
1\\
\end{bmatrix} \bmod p$ is not in the kernel of $P(1)_{0,n-1}\bmod p$. Both statements follow from \Cref{linear}, establishing the decay of the rank $3$ submodule $\Span_{\bZ_p}\{w_1,w_2,w_3\}$.

\Cref{prop_decay} in this case follows from the observation that since $2a\leq A$, then $w_3$ decays very rapidly. 
\end{proof}

\subsection{Case 2: $A \geq B, a \neq b$}\label{sec_decayS2}\quad

Note that if $A \geq B$, then $a + b = 2c$ (as the only way this can happen is if $xy$ has the same $t$-adic valuation as $z^2/(4\epsilon)$). We may therefore assume without loss of generality that $a < b$. It follows then that $a < c < b$. Within this case, we will need to consider the following two subcases.   

\subsubsection*{Subcase $(2.1)_e$: $B (1 + p^{2e-1}) < A(1+p) < B(1 + p^{2e + 1})$ for some $e\in \bZ_{\geq 1}$}

In this subcase, we will prove that $\Span_{\bZ_p}\{w_1,w_2, w_i\}$ decays rapidly, where $i\in\{3,4,5 \}$ will be chosen depending on the values of $a,b$ and $c$. 

The following lemma, in conjunction with \Cref{linear}, implies (as in Case 1) that $\Span_{\bZ_p}\{w_1,w_2\}$ decays rapidly. It can be proved by the same argument as in the proof of \Cref{realdecaytopleft}(1), so we omit its proof.
\begin{lemma}\label{twonetopleft}
Among the terms appearing in $\Finf(1)$ described in \Cref{niceparam} with denominator $p^{n+1}$, the unique term with minimal $t$-adic valuation is 
$$P(1)_{e,n}(xy + z^2/(4\epsilon))^{(1 + \hdots + p^{n-e})}(xy^p + x^py +z^{1+p}/(2\epsilon))^{p^{n-e+ 1} + p^{n-e +3} + \hdots + p^{n + e -1}} .$$
The $t$-adic valuation of this term is $A(1 + \hdots + p^{n-e}) + B(p^{n-e+1} + p^{n-e+3} + \hdots + p^{n+e-1})$.
\end{lemma}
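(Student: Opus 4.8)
The plan is to follow the strategy of the proof of \Cref{realdecaytopleft}(1): first cut down the set of candidate minimal-valuation terms to a one-parameter family, and then minimize over that parameter by a convexity argument. By \Cref{niceparam}(1)--(2), every term of $\Finf(1)$ with denominator $p^{n+1}$ is, up to terms of strictly larger $t$-adic valuation, one of the ``block'' products $X_{m_1}$ indexed by $m_1\in\{0,\dots,n\}$ (with $m_2=n-m_1$). For a fixed pair $(m_1,m_2)$, \Cref{niceparam}(2) narrows the minimal arrangement to one of the two block configurations; since $A\geq B$ in Case~2, I claim it is the configuration with the $m_1+1$ copies of $F_t$ at the twists $0,\dots,m_1$ followed by the $m_2$ pairs $F_uF_l$. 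To see that this block dominates every other arrangement with the same $(m_1,m_2)$, I would run the adjacent-swap estimate: replacing a consecutive occurrence of $F_t$ at twist $j$ and a pair $F_uF_l$ at twists $j+1,j+2$ by the pair at twists $j,j+1$ followed by $F_t$ at twist $j+2$ changes the $t$-adic valuation of the corresponding monomial by $p^{j}(p-1)\bigl(A(p+1)-B\bigr)>0$ (using $A\geq B$ and $p\geq 2$), so sorting all copies of $F_t$ before all the pairs strictly lowers the valuation; together with \Cref{niceparam}(2) (consecutive twists $n_i=i$) this singles out $X_{m_1}$ as the unique minimal term for each $(m_1,m_2)$.

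By \Cref{niceparam}(3), $v_t(X_{m_1})=f(m_1)$ where $f(m_1):=A(1+p+\cdots+p^{m_1})+B\sum_{i=0}^{m_2-1}p^{m_1+2i+1}$ with $m_2=n-m_1$; a short rearrangement gives $f(m_1)=\frac{A}{p-1}(p^{m_1+1}-1)+\frac{B}{p^2-1}\bigl(p^{2n-m_1+1}-p^{m_1+1}\bigr)$. Viewed as a function of a real variable, $f(m_1)=c_1p^{m_1}+c_2p^{-m_1}+c_3$ with $c_1=\frac{p(A(p+1)-B)}{p^2-1}>0$ and $c_2=\frac{Bp^{2n+1}}{p^2-1}>0$, so $f''>0$ and $f$ is strictly convex. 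Hence it suffices to verify the two neighbouring inequalities at the claimed minimizer $m_1=n-e$. A direct computation gives $f(m_1)-f(m_1+1)=-Ap^{m_1+1}+\frac{B}{p+1}\bigl(p^{2n-m_1}+p^{m_1+1}\bigr)$; evaluating at $m_1=n-e$ and at $m_1=n-e-1$ and clearing denominators, the inequalities $f(n-e)<f(n-e+1)$ and $f(n-e)<f(n-e-1)$ turn into $A(1+p)>B(1+p^{2e-1})$ and $A(1+p)<B(1+p^{2e+1})$, which are precisely the defining inequalities of Subcase~$(2.1)_e$. By strict convexity, $m_1=n-e$ (equivalently $m_2=e$) is therefore the unique integer minimizer, so the unique minimal-valuation term with denominator $p^{n+1}$ is $X_{n-e}$, which by \Cref{niceparam}(3) equals $P(1)_{e,n}(xy+z^2/(4\epsilon))^{1+p+\cdots+p^{n-e}}(xy^p+x^py+z^{1+p}/(2\epsilon))^{p^{n-e+1}+p^{n-e+3}+\cdots+p^{n+e-1}}$, of $t$-adic valuation $f(n-e)=A(1+\cdots+p^{n-e})+B(p^{n-e+1}+p^{n-e+3}+\cdots+p^{n+e-1})$, as asserted. (Here one takes $n\geq e$, so that $P(1)_{e,n}$ is defined; this is the range relevant to the decay estimate.)

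I expect the only genuinely delicate point to be the reduction in the first paragraph: one must be certain that no term with a skipped twist, or with an unsorted arrangement of the $F_t$'s and $F_uF_l$-pairs, can undercut all of the blocks $X_{m_1}$ simultaneously, which is exactly what \Cref{niceparam}(2) and the adjacent-swap estimate are for. Once the problem is reduced to minimizing $f$, everything is routine: strict convexity plus the mechanical translation of the two boundary inequalities into the hypotheses of Subcase~$(2.1)_e$. This is the same pattern as in \Cref{realdecaytopleft}, with the only extra bookkeeping coming from the fact that an $F_uF_l$-pair occupies two consecutive twists while contributing the valuation $B$ only once.
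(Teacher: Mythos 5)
Your proof is correct and it is exactly the argument the paper intends: the paper omits the proof of this lemma with the remark that it ``can be proved by the same argument as in the proof of \Cref{realdecaytopleft}(1),'' and your write-up (reduce via \Cref{niceparam}(1)--(2) to a one-parameter family $X_{m_1}$, show $v_t(X_{m_1})$ is strictly convex in $m_1$, and check that the two neighbouring inequalities at $m_1=n-e$ are precisely the defining inequalities of Subcase~$(2.1)_e$) is that argument with the details filled in. Your explicit adjacent-swap computation is a welcome unpacking of the ``depending on whether $A\geq B$'' clause in \Cref{niceparam}(2), and the closed-form $f(m_1)=c_1p^{m_1}+c_2p^{-m_1}+c_3$ with $c_1,c_2>0$ cleanly gives both strict convexity and uniqueness.
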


The following lemmas will be used to show that one of $w_3,w_4,w_5$ also decays rapidly. These lemmas imply that among the terms appearing in $\Finf(2)$ with denominator $p^{n+1}$, for at least one of the columns of this matrix, there exists a unique term with minimum $t$-adic valuation.

\begin{lemma}\label{consec}
Given $g\in \bZ_{\geq 1}, n\in \bZ_{\geq 0}$, consider the multiset consisting of numbers of the form $A(1 + \hdots + p^{n - f -1}) + B(p^{n - f} + p^{n-f + 2} + \hdots + p^{n+f -2}) + gp^{n+f}$, as $f$ varies over $\bZ\cap [0, n]$. If the minimal number in this multiset occurs more than once, then it must occur for consecutive values of $f$. 
\end{lemma}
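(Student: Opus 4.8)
The plan is to regard the quantity
\[
h(f) \colonequals A(1 + p + \cdots + p^{n-f-1}) + B(p^{n-f} + p^{n-f+2} + \cdots + p^{n+f-2}) + g\,p^{n+f}
\]
as a function of a real variable $f$, to put it in closed form, and to deduce the statement from strict convexity. First I would sum the two geometric progressions appearing above: the first (ratio $p$, with $n-f$ terms) equals $\frac{p^{n-f}-1}{p-1}$, and the middle one (ratio $p^2$, first term $p^{n-f}$, with $f$ terms) equals $\frac{p^{n+f}-p^{n-f}}{p^2-1}$; one checks that these closed forms remain valid at the boundary indices $f=0$ and $f=n$, where one or the other progression is empty. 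Collecting the coefficients of $p^{n-f}$ and of $p^{n+f}$ then gives, for every integer $f\in[0,n]$,
\[
h(f) = \alpha\, p^{n-f} + \beta\, p^{n+f} - \tfrac{A}{p-1}, \qquad \alpha = \frac{A(p+1)-B}{p^2-1}, \quad \beta = g + \frac{B}{p^2-1}.
\]

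Next I would note that $\beta>0$ since $g\ge 1$, and that $\alpha>0$: we are in Case~2, so $B\le A$, hence $A(p+1)-B\ge Ap>0$ (recall $A\ge 1$, since $C$ is generically ordinary and specializes to the non-ordinary point $P$, so the pullback of the local equation of the non-ordinary locus is a nonzero power series vanishing at $t=0$). Consequently
\[
h(f) = p^n\bigl(\alpha\, p^{-f} + \beta\, p^{f}\bigr) - \tfrac{A}{p-1}
\]
is, as a function of the real variable $f$, a positive linear combination of the strictly convex functions $p^{-f}=e^{-f\log p}$ and $p^{f}=e^{f\log p}$ together with a constant, hence is strictly convex on $\bR$.

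Finally I would invoke the standard consequence of strict convexity. Suppose the minimum of $h$ over $\bZ\cap[0,n]$ is attained at two indices $f_0<f_1$ with $f_1\ge f_0+2$. Then $f\colonequals f_0+1$ lies strictly between them, and writing $f=\lambda f_0+(1-\lambda)f_1$ with $\lambda\in(0,1)$, strict convexity gives $h(f) < \lambda h(f_0)+(1-\lambda)h(f_1)=\min h$, contradicting minimality of $\min h$. Hence the set of minimizing indices contains no gap, i.e.\ it consists of consecutive integers (and in fact of at most two of them) — which is exactly the assertion of the lemma.

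I do not expect a genuine obstacle: the only points needing care are the arithmetic of the geometric-series rewriting (in particular, getting the count of $f$ terms in the middle sum right and verifying the two boundary cases $f=0,n$) and the positivity of $\alpha$, which is precisely where the Case~2 hypothesis $A\ge B$ enters.
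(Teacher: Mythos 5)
Your proof is correct. Let me verify the key computations: the middle sum $B(p^{n-f}+p^{n-f+2}+\cdots+p^{n+f-2})$ has exactly $f$ terms of ratio $p^2$, giving $B\frac{p^{n+f}-p^{n-f}}{p^2-1}$, and combining with $A\frac{p^{n-f}-1}{p-1}$ and $g\,p^{n+f}$ does produce exactly
\[
h(f) = \frac{A(p+1)-B}{p^2-1}\,p^{n-f} + \Bigl(g+\frac{B}{p^2-1}\Bigr)\,p^{n+f} - \frac{A}{p-1},
\]
and the formula degenerates correctly at $f=0$ and $f=n$. Positivity of both exponential coefficients does hold under the Case~2 hypothesis $A\ge B$ (and $g\ge 1$), so $h$ extends to a strictly convex function of a real variable, and the "no gap between minimizers" conclusion is then the standard consequence of strict convexity you invoke.

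The route is genuinely cleaner than the paper's, though the two are convexity arguments at heart. The paper does not compute a closed form: it assumes $v(f_1)=v(f_2)$ with $f_1<f_2-1$, extracts an algebraic relation between $A$, $B$, $g$, $f_1$, $f_2$ from it, computes $p^{-(n-f_2)}(v(f_2)-v(f_2-1))$ directly, and after multiplying by $(1+p+\cdots+p^{f_2-f_1-1})$ and substituting the relation shows the result is manifestly positive. That hands-on manipulation verifies exactly the strict convexity you establish abstractly; it avoids the geometric-series rewriting but at the cost of a longer chain of identities. Your version has the advantage of making the convexity structural rather than incidental — once the closed form $\alpha p^{-f}+\beta p^{f}+\text{const}$ with $\alpha,\beta>0$ is in hand, the lemma is immediate, and the same closed form could be reused to streamline the convexity check hidden in the proof of Lemma~\ref{realdecaytopleft} as well. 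One small point of presentation: you should state explicitly that the Case~2 hypothesis $A\ge B$ is in force (as you note at the end), since without it $\alpha$ could be nonpositive and the convexity argument would not apply directly — though in that regime $h$ is monotone and the lemma is trivial, so no actual gap results.
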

\begin{proof}
For any choice of $f$, let us denote the expression by $v(f)$. It suffices to prove the following statement: for $f_1 < f_2-1$, if $v(f_1) = v(f_2)$, then $v(f_2) > v(f_2 -1)$. 
To that end, suppose that $v(f_1) = v(f_2)$. Then $A(1 + p + \hdots p^{f_2-f_1-1}) = B(p^{f_2-f_1}-1)(p^{f_2 + f_1} +1)/(p^2-1)+gp^{f_2}(p^{f_2}-p^{f_1})$. 

To prove $v(f_2)>v(f_2-1)$,
note that $p^{-(n-f_2)}(v(f_2) - v(f_2-1)) = B(p^{2f_2-1}+1)/(p+1) + gp^{2f_2-1}(p-1) - A$. Multiplying this by $(1 + p + \hdots + p^{f_2-f_1})$ and applying the relation of $A$ and $B$ above, we have \[\frac{1 + p + \hdots + p^{f_2-f_1-1}}{p^{n-f_2}}(v(f_2) - v(f_2-1))=\frac{B(p^{f_2-f_1} - 1)(p^{2f_2 -1} - p^{f_1 + f_2})}{p^2-1} + g(p^{f_2 -f_1} -1)(p^{2f_2 -1} - p^{f_1 + f_2}),\] which is positive since $f_2 > f_1+1$. The lemma follows.  
\end{proof}

\begin{lemma}\label{onlytwo}
There are at most two numbers $g$ in the set $\{a,b,c\}$ such that there exists an integer $f$ ($f$ is allowed to depend on the choice of $g$ ) with $A(1 + \hdots p^{n-f - 1}) + B(p^{n - f} + p^{n-f + 2} + \hdots p^{n + f - 2}) + g p^{n + f} = A(1 + \hdots p^{n-f }) + B(p^{n - f + 1} + p^{n-f + 1} + \hdots p^{n + f - 3}) + g p^{n + f -1}$.\footnote{Note that if the equation holds, then $f$ is independent of $n$, since the equation is actually independent of $n$; see the proof of \Cref{consec}.}
\end{lemma}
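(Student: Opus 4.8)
\textbf{Proof proposal for \Cref{onlytwo}.}

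The plan is to reduce the statement to a counting argument about the zero set of a polynomial identity in $p$, analogous in spirit to the convexity argument used in \Cref{consec} and in the proof of \Cref{realdecaytopleft}(1). First I would rewrite the displayed equation. Subtracting the left side from the right side and dividing by the common factor $p^{n-f-1}$ (which we may do since the equation is, as the footnote observes, independent of $n$), the equation becomes a relation of the shape $g\,p^{f}(p-1) = \alpha(f) A + \beta(f) B$ where $\alpha(f) = 1 + p + \cdots + p^{f}$ and $\beta(f)$ is an explicit (signed) sum of powers of $p$ coming from the difference of the two $B$-sums; in other words, for each admissible $g$ there is an integer $f = f(g)\geq 0$ and the pair $(f,g)$ satisfies a single linear equation $g = \gamma_f(A,B)$ with $\gamma_f$ a fixed rational function of $A,B$ (and $p$) once $f$ is fixed. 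The key point is that distinct values of $g$ in $\{a,b,c\}$ must come with distinct values of $f$: if $g_1 \neq g_2$ both worked with the same $f$, then $\gamma_f(A,B)$ would equal both $g_1$ and $g_2$, a contradiction. So it suffices to show that at most two values of $f$ can occur among the (at most three) elements of $\{a,b,c\}$; but actually we need the reverse — we must show the map $g \mapsto f(g)$ cannot be injective on a $3$-element set, i.e.\ that at most two of $a,b,c$ can satisfy such an equation at all.

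The mechanism for this is the following: the hypothesis $A \geq B$ together with $a < c < b$ and $a+b = 2c$ (recorded at the start of Case 2) pins down $A$ and $B$ in terms of $a,b,c$ — namely $A = 2c = a+b$ and $B$ is the $t$-adic valuation of $xy^p + x^py + z^{1+p}/(2\epsilon)$, which under the subcase inequality $B(1+p^{2e-1}) < A(1+p) < B(1+p^{2e+1})$ forces $B = \min(a + bp,\, b + ap,\, c(1+p))$ and one checks $B = a + bp$ when $a<b$. Substituting these explicit values, the equation $g\,p^{f}(p-1) = \alpha(f)(a+b) + \beta(f)(a+bp)$ becomes a polynomial identity in $p$ whose coefficients are linear in $a, b$ and $g$; comparing the highest and lowest powers of $p$ on the two sides (exactly as in the convexity/leading-term analysis of \Cref{consec}) shows that $f$ is forced to lie in a window of width $1$ around a specific value determined by the ratios $a:b:g$. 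Running this for $g = a$, $g=b$, $g=c$ separately, one sees that the three resulting constraints cannot hold simultaneously: at least one of them would force $f < 0$ or would contradict the subcase inequality on $e$. Hence at most two of $a,b,c$ are admissible.

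I would organize the write-up as: (i) clear the denominators and record the equation in the normalized form $g\,p^{f}(p-1) = \alpha(f)A + \beta(f)B$, noting $f$ is independent of $n$; (ii) substitute the values $A = a+b$, $B = a+bp$ valid in Subcase $(2.1)_e$ (citing the start of Case 2 and the subcase hypothesis); (iii) a leading-coefficient-in-$p$ comparison, together with the inequalities $a<c<b$, to show the equation has at most one solution $f$ for each fixed $g$ and that the solution, as a function of $g$, is strictly monotincreasing, so it can be satisfied for at most two of the three numbers $a<c<b$ — since if it held for all three, the middle value $c$ would have to satisfy an equation incompatible with the extreme ones. The main obstacle I anticipate is bookkeeping: keeping the signed sum $\beta(f)$ and its interaction with the three possible values of $B$ straight, and making the "at most two" count genuinely watertight rather than merely plausible from leading terms — in practice this means carefully treating the boundary cases where two candidate $t$-adic valuations tie (e.g.\ $a+bp$ versus $c(1+p)$), which is exactly where the subcase inequality on $e$ is needed to break the tie.
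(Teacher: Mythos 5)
There is a genuine gap: your normalization of the displayed equation is incorrect, your claimed explicit values of $A$ and $B$ are wrong, and the crucial step --- substituting into the relation $a+b=2c$ --- is absent from your plan.

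On the normalization: the two sides differ in their $A$-coefficients only by the single term $Ap^{n-f}$ (the difference of $A(1+\cdots+p^{n-f-1})$ and $A(1+\cdots+p^{n-f})$), so after dividing by $p^{n-f}$ you get a bare $A$, not $\alpha(f)A$ with $\alpha(f)=1+\cdots+p^f$. Carrying out the subtraction correctly (as is done in the proof of \Cref{consec}) gives, for each admissible $g$ with its $f=f(g)$,
\[
g\,p^{2f-1}(p-1) = A - \frac{B(1+p^{2f-1})}{1+p},
\]
which has the constant $1$ in front of $A$. On the explicit values: since we are in Case 2 where $A\ge B$, the leading terms of $xy$ and $z^2/(4\epsilon)$ must cancel, so $A > a+b=2c$, not $A = a+b$; and with $a<c<b$ the minimum of $a+bp$, $ap+b$, $c(1+p)$ is $ap+b$ (not $a+bp$), and even that only equals $B$ absent further cancellations. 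Relying on these formulas would make the argument break.

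The paper avoids all of this by \emph{not} computing $A,B$ explicitly. Instead, suppose for contradiction that $f_1,f_2,f_3$ exist for $g=a,b,c$ respectively, solve the displayed relation for $a,b,c$ in terms of $A$, $B$, and $f_i$, and plug into the only structural identity available, namely $a+b=2c$. A small miracle happens: the constant terms cancel because $1+1-2=0$, leaving
\[
\Bigl(A-\tfrac{B}{p+1}\Bigr)\bigl(p^{1-2f_1}+p^{1-2f_2}-2p^{1-2f_3}\bigr)=0.
\]
Since $A\ge B\ge p+1 > B/(p+1)$, the first factor is nonzero, so the second must vanish; but all three summands are positive powers of $p^{-1}$ so this forces $f_1=f_2=f_3$, whence $a=b=c$, contradicting $a<b$. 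This is shorter, avoids case analysis on the exact form of $B$, and does not require pinning $A$ to $a+b$. I'd recommend dropping the leading-coefficient plan and instead following this substitution-into-$a+b=2c$ route.
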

\begin{proof}
Suppose there existed choices of $f\in \bZ_{\geq 0}$ for all three choices of $g$. Let $f_1,f_2,f_3$ be the choices for $f$. Then, by the proof of \Cref{consec}, we have that $ap^{2f_1-1}(p-1) = A - B(1 + p^{2f_1 -1})/(1+p)$, and similarly 
$bp^{2f_2-1}(p-1) = A - B(1 + p^{2f_2 -1})/(1+p),\ cp^{2f_3-1}(p-1) = A - B(1 + p^{2f_3 -1})/(1+p)$.
Substituting these expressions in the equality $a + b = 2c$ yields the equation 
\[(p^{1-2f_1}+p^{1-2f_2}-2p^{1-2f_3})A=\frac{B}{p+1}(p^{1-2f_1}+p^{1-2f_2}-2p^{1-2f_3}).\]
Since $A\geq B\geq p+1$, we have $A\neq B/(p+1)$ and hence $p^{1-2f_1}+p^{1-2f_2}-2p^{1-2f_3}=0$. Since $f_1,f_2,f_3\in \bZ_{\geq 1}$, we must have $f_1=f_2=f_3$ and hence $a=b=c$, which is a contradiction.
\end{proof}

\begin{proof}[Proof of \Cref{prop_decay} in this case]
Let $h \in \{a,b,c\}$ be such that there is no $f$ which satisfies the hypothesis of \Cref{onlytwo} (indeed, the lemma guarantees the existence of such an $h$). 

We first show the existence of a rank $3$ submodule which decays rapidly.
Without loss of generality, we may assume that $h =a$ and we will prove that $\Span_{\bZ_p}\{w_1,w_2,w_3\}$ decays rapidly (if $h = b$ or $c$, the identical proof will show sufficient decay, with $w_4$ or $w_5$ taking the place of $w_3$). 

As in Case 1, \Cref{twonetopleft,linear,consec,onlytwo} imply that $\Span_{\bZ_p}\{w_1,w_2\}$ and $\Span_{\bZ_p}\{w_3\}$ both decay rapidly. Therefore, it suffices to show that $\alpha_u w_u + \alpha_3 w_3$ decays rapidly, where $w_u$ is a primitive vector in the span of $w_1,w_2$, and either $\alpha_u$ or $\alpha_3$ in $\bZ_p$ is a $p$-adic unit. 

By \Cref{twonetopleft}, the $t$-adic valuation of the coefficient of $1/p^{n+1}$ of $\Finf w_u$ is $d(n) = A(1 + \hdots + p^{n-e}) + B(p^{n-e + 1}+ p^{n-e+3} + \hdots + p^{n+e-1})$. Similarly, the $t$-adic valuation of the coefficient of $1/p^{m+1}$ of $ \Finf \cdot w_3$ is $c(m) = A(1 + \hdots + p^{m-f-1}) + B(p^{m-f }+ p^{m-f+2} + \hdots + p^{m+f-2}) + ap^{m + f}$ for some $f\in \bZ\cap [0,n]$. As in Case 1, it suffices to prove that $d(n)$ is never equal to $c(m)$, regardless of the values of $n$ and $m$. 

Let $c(f',m) =  A(1 + \hdots + p^{m-f-1}) + B(p^{m-f' }+ p^{m-f'+2} + \hdots + p^{m+f'-2}) + ap^{m + f'}$, for any value of $f' \leq m$. By the definition of $f$, $c(m) = c(f,m)$, and $f' = f$ minimizes the value of $c(f',m)$. 

If $n \geq m$, since $a<A$, then $d(n) > c(e,m) \geq c(f,m) = c(m)$, as required. On the other hand, if $m > n$, we have $c(m) > A(1 + \hdots + p^{m-f-1}) + B(p^{m-f }+ p^{m-f+2} + \hdots + p^{m+f-2})\geq d(n)$, where the last inequality follows from \Cref{twonetopleft}. 

Finally, we treat the question of very rapid decay. If we may take $h = a$ or $h = c$, the very rapid decay of $w_3$ or $w_5$ is established by the inequality $2 a < 2c \leq A$. Otherwise, $h$ must be $b$ and for both $a,c$, there exist $f_1,f_3$ satisfying the equation in \Cref{onlytwo}. Since $a\neq c$, then $f_1\neq f_3$ and at least one $f_i\geq 2$. By the proof of \Cref{onlytwo}, we have $A-B(1+p^{2f_i-1})/(p+1)>0$ and hence $A\geq 7B >2b$. Thus, $w_4$ decays very rapidly.
\end{proof}

\subsubsection*{Subcase $(2.2)_e$: $A(1+p) = B(1 + p^{2e - 1})$ for some $e\in \bZ_{\geq 1}$}

In this subcase, we will prove that $\Span_{\bZ_p}\{w_3,w_4,w_5\}$ decays rapidly. We first need the following lemma.

\begin{lemma}\label{casetwotwo}
Among the terms appearing in $\Finf(2)$ described in \Cref{niceparam} with denominator $p^{n+1}$, the unique term with minimal $t$-adic valuation is 
$$P(1)_{e -1,n-1}F_u^{(n+e -1)}(xy + z^2/(4\epsilon))^{(1 + \hdots + p^{n-e})}(xy^p + x^py +z^{1+p}/(2\epsilon))^{p^{n-e+ 1} + p^{n-e +3} + \hdots + p^{n + e -3}} .$$
The $t$-adic valuation of the $i^th$ column term is $A(1 + \hdots + p^{n-e}) + B(p^{n-e+1} + p^{n-e+3} + \hdots + p^{n+e-3}) + gp^{n+e-1},  $ where $g$ is either $a,b$ or $c$ depending on whether $i$ is $1,2$ or $3$.
\end{lemma}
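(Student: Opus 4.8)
\textbf{Proof proposal for \Cref{casetwotwo}.}
The plan is to follow the pattern of \Cref{twonetopleft} and \Cref{realdecaytopleft}: first cut down to the ``canonical'' terms of $\Finf(2)$ allowed by \Cref{niceparam}, then write the $t$-adic valuation of the $i$-th column of such a term as an explicit function of the number of $F_uF_l$-pairs, and finally minimize that function by convexity, with the relation $A(1+p)=B(1+p^{2e-1})$ entering only in the two boundary comparisons. First I would invoke \Cref{niceparam}: a term of $\Finf(2)$ with denominator $p^{n+1}$ is a product built from $m_1+1$ copies of $F_t$, $m_2$ copies of the pair $F_uF_l$ (with $m_1+m_2=n-1$), and a compulsory trailing $F_u$; since $A\geq B$ in Case 2, \Cref{niceparam}(2) says the valuation-minimizing arrangement for fixed $m_2$ places the $F_t$-blocks at the lowest Frobenius twists $0,\dots,m_1$, then the $m_2$ pairs, then $F_u$ at twist $m_1+2m_2+1=n+m_2$ (concretely, the ``$F_t$-first'' minus ``pairs-first'' valuation equals a positive quantity times $B-A(p+1)<0$).

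Using \Cref{niceparam}(3)(4), the $t$-adic valuation of the $i$-th column of this canonical term is
\[V_g(m_2)=A\,\frac{p^{\,n-m_2}-1}{p-1}+B\,p^{\,n-m_2}\,\frac{p^{2m_2}-1}{p^2-1}+g\,p^{\,n+m_2},\]
where $g$ is $a,b$ or $c$ according as $i$ is $1,2$ or $3$; note $a,b,c$ are finite positive integers in Case 2 because $A\geq B$ forces $a+b=2c$. I would then show $V_g$ is minimized uniquely at $m_2=e-1$ for each of $g=a,b,c$ simultaneously, thereby identifying the minimal term and its valuation with the expressions in the statement (plugging $m_2=e-1$ into $V_g$ reproduces exactly $A(1+\cdots+p^{n-e})+B(p^{n-e+1}+\cdots+p^{n+e-3})+gp^{n+e-1}$). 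As a function of a real variable $m_2$, $V_g$ is a positive combination of $p^{-m_2}$, $p^{m_2}$ and a constant, hence strictly convex, so it suffices to control the first difference
\[\Delta_g(m_2):=V_g(m_2)-V_g(m_2-1)=p^{\,n-m_2}\Bigl(\tfrac{B}{p+1}-A\Bigr)+p^{\,n+m_2-1}\Bigl(\tfrac{B}{p+1}+g(p-1)\Bigr),\]
which is strictly increasing in $m_2$. Thus $\Delta_g(m_2)\leq 0$ iff $p^{2m_2-1}\leq\frac{A(p+1)-B}{B+g(p^2-1)}$, and substituting $A(p+1)=B(1+p^{2e-1})$ the threshold becomes $\frac{B\,p^{2e-1}}{B+g(p^2-1)}$.

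To finish I would use the two elementary bounds $0<g<B$. Here $g\geq 1>0$ is clear, and $g<B=v_t\!\bigl(xy^p+x^py+z^{1+p}/(2\epsilon)\bigr)$ holds because $B\geq\min\{a+pb,\,pa+b,\,(1+p)c\}$ and, using $a+b=2c$ with $a\neq b$, each of $a+pb$, $pa+b$, $(1+p)c$ strictly exceeds $\max\{a,b,c\}\geq g$. These bounds give $p^{-2}<\frac{B}{B+g(p^2-1)}<1$, hence $p^{2e-3}<\frac{B\,p^{2e-1}}{B+g(p^2-1)}<p^{2e-1}$; equivalently $\Delta_g(e-1)<0<\Delta_g(e)$, and moreover $\Delta_g(m_2)\neq0$ for every integer $m_2$ (no exact power $p^{2m_2-1}$ lies strictly between $p^{2e-3}$ and $p^{2e-1}$). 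Strict convexity then forces $V_g$ to attain its minimum over $\{0,1,\dots,n-1\}$ uniquely at $m_2=e-1$ (we are in the regime $n\geq e$, which is all that is used later); this proves the lemma.

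The main subtlety — and where I expect to have to be careful — is exactly this last step: because $A(1+p)=B(1+p^{2e-1})$ is a ``boundary'' relation between $A$ and $B$, one might expect two competing minimal terms, as in \Cref{realdecaytopright}(2). The point is that the trailing $F_u$ contributes the summand $g\,p^{n+m_2}$ with $0<g<B$, which shifts the threshold $\frac{B\,p^{2e-1}}{B+g(p^2-1)}$ strictly off every power $p^{2m_2-1}$ and pins the minimum to the single value $m_2=e-1$. Verifying $0<g<B$ (i.e.\ the strict inequalities between $a,b,c$ and the three valuations $a+pb$, $pa+b$, $(1+p)c$), together with the strict convexity bookkeeping, is the only place the precise arithmetic of $a,b,c$ is needed.
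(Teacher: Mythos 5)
Your proof is correct, and it is essentially the fleshed-out version of the paper's proof, which only says ``this can be verified by direct calculation'' while pointing at earlier convexity arguments such as the one in the proof of \Cref{realdecaytopleft}. You reduce to the canonical $F_t$-first arrangement via \Cref{niceparam}(2), write the column valuation as $V_g(m_2)$, and show $\Delta_g(m_2)\le 0 \iff m_2\le e-1$ with $\Delta_g$ never vanishing, using $0<g<B$ (deduced from $a+b=2c$, $a\neq b$) to pin the threshold strictly between $p^{2e-3}$ and $p^{2e-1}$; this is exactly the intended ``direct calculation,'' and your explicit bounds on $g$ make the uniqueness of the minimizer (hence of the minimal term) transparent.
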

\begin{proof}
It suffices to prove that choice of $f = e$ minimizes the expression  $A(1 + p + \hdots + p^{n-f}) + B(p^{n-f+1} + p^{n-f + 3} \hdots + p^{n+f-3}) + gp^{n+f-1}$, where $f$ is allowed to range between $0$ and $n$. This can be verified by direct calculation.  
\end{proof}

\begin{proof}[Proof of \Cref{prop_decay} in this case]
It follows from \Cref{casetwotwo,linear} that $w_3$, $w_4$ and $w_5$ individually decay rapidly, and that $w_3$ decays very rapidly. In order to show that $\Span_{\bZ_p}\{w_3,w_4,w_5\}$ decays rapidly, it suffices to show that the $t$-adic valuations of the coefficients $1/p^{l+1},1/p^{m+1}, 1/p^{n+1}$ of $F_{\infty}(w_3),F_{\infty}(w_4),F_{\infty}(w_5)$ are always distinct, regardless of the values of $l,m,n$. By \Cref{casetwotwo}, these quantities equal $A(1 + p + \hdots + p^{l-e}) + B(p^{l-e+1} + p^{l-e + 3} + \hdots + p^{l+e-3}) + ap^{l+e-1}$, $A(1 + p + \hdots + p^{m-e}) + B(p^{m-e+1} + p^{m-e + 3} + \hdots + p^{m+e-3}) + bp^{m+e-1}$ and $A(1 + p + \hdots + p^{n-e}) + B(p^{n-e+1} + p^{n-e + 3} + \hdots + p^{n+e-3}) + cp^{n+e-1}$. 

As $a,b,c$ are all strictly less than $B$, these quantities will all be different unless two of $l,m,n$ are equal. In this case, the quantities still differ, because $a,b,c$ are all distinct integers by assumption. Therefore, $\Span_{\bZ_p}\{w_3,w_4,w_5\}$ decays rapidly. 
\end{proof}

\subsection{Case 3: $A \geq B$ and $a = b$}\label{sec_decayS3}
In this case, $a = b = c$. We may assume that $x(t) = t^a$, $y(t) = \beta t^a + \sum_{i=a+1}^\infty \beta_i t^i$, and $z(t) = \gamma t^a + \sum_{i=a+1}^\infty \gamma_i t^i$. Since $A\geq B$, we have $ \beta + \gamma^2 /(4\epsilon)  =0$. We will break the proof of the Decay Lemma into two subcases and the following lemma will be used in both cases. 

\begin{lemma}\label{aftercancel}
Suppose that $\gamma \in \bF_p$. Let $a' > a$ denote the smallest integer such that either $\beta_{a'}\neq 0$ or $\gamma_{a'}\neq 0$. Then both $\beta_{a'}$ and $\gamma_{a'}$ are non-zero and moreover, $B \geq (p-1)a + 2 a'$.  
\end{lemma}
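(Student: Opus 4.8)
The plan is to work entirely over $k=\bar{\bF}_p$ (both the local equation of the non-ordinary locus and the power series $xy^p+x^py+z^{1+p}/(2\epsilon)$ are defined over $k$), and to repackage the computation using the symmetric bilinear form $\langle(u_1,u_2,u_3),(u_1',u_2',u_3')\rangle=u_1u_2'+u_1'u_2+u_3u_3'/(2\epsilon)$ on $k^3$, whose associated quadratic form is $Q_0(u)=u_1u_2+u_3^2/(4\epsilon)$. Writing $\xi(t)=(x(t),y(t),z(t))=\sum_{i\geq a}\xi_i t^i$, the normalization of Case 3 gives $\xi_a=(1,\beta,\gamma)$, $\xi_i=(0,\beta_i,\gamma_i)$ for $i\geq a'$, and $\xi_i=0$ for $a<i<a'$; the identities to exploit are $xy+z^2/(4\epsilon)=Q_0(\xi)=\tfrac12\langle\xi,\xi\rangle$ and $xy^p+x^py+z^{1+p}/(2\epsilon)=\langle\xi,\xi^{(p)}\rangle$, where $\xi^{(p)}(t)=\sum_i\xi_i^{(p)}t^{ip}$, so that $A=v_t(\tfrac12\langle\xi,\xi\rangle)$ and $B=v_t(\langle\xi,\xi^{(p)}\rangle)$. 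First I would record the consequences of the hypotheses: since $a=b=c$ we have $v_t(z)=a$, so $\gamma\neq0$, hence $\beta=-\gamma^2/(4\epsilon)\neq0$; since $\gamma\in\bF_p$ and $\epsilon\in\bZ_p^\times$, also $\beta\in\bF_p$, so $\gamma^p=\gamma$, $\beta^p=\beta$, and therefore $\xi_a^{(p)}=\xi_a$, while $Q_0(\xi_a)=\beta+\gamma^2/(4\epsilon)=0$, i.e.\ $\xi_a$ is isotropic.

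The technical heart is a coefficient-matching computation. Using that $\xi_i=0$ for $a<i<a'$, that $\langle\xi_a,\xi_a\rangle=2Q_0(\xi_a)=0$, that $\xi_a^{(p)}=\xi_a$, and the numerical fact $a+p(a'-a)>2a'-a$ for $p\geq5$, one checks: (a) all coefficients of $\tfrac12\langle\xi,\xi\rangle$ of $t$-degree below $a+a'$ vanish, and likewise all coefficients of $\langle\xi,\xi^{(p)}\rangle$ below $ap+a'$; and (b) for every integer $j$ with $a'\leq j<2a'-a$, the coefficient of $t^{a+j}$ in $\tfrac12\langle\xi,\xi\rangle$ equals the coefficient of $t^{ap+j}$ in $\langle\xi,\xi^{(p)}\rangle$, both being $\langle\xi_a,\xi_j\rangle$. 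For (b) one enumerates the pairs $(i,i')$ with $i+i'=a+j$ (resp.\ $i+i'p=ap+j$) that contribute to the relevant series and verifies that the only one not involving a vanishing $\xi_\bullet$ or $\xi_\bullet^{(p)}$ is $(a,j)$ (resp.\ $(j,a)$); in particular no term $\langle\xi_a,\xi_i^{(p)}\rangle t^{a+ip}$ can land in this degree range, because that would force $i=a$.

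Given the matching, the lemma is immediate. If $\langle\xi_a,\xi_{a'}\rangle\neq0$ then $A=a+a'$ and $B=ap+a'$, and since $a=v_t(x)\geq1$ and $p\geq5$ give $ap>a$, we would get $A<B$, contradicting $A\geq B$; hence $\langle\xi_a,\xi_{a'}\rangle=\beta_{a'}+\gamma\gamma_{a'}/(2\epsilon)=0$, and since $\gamma\neq0$ while $(\beta_{a'},\gamma_{a'})\neq(0,0)$ by the definition of $a'$, both $\beta_{a'}$ and $\gamma_{a'}$ are nonzero. Running the same argument in degree $a+j$ for increasing $j<2a'-a$: were $\langle\xi_a,\xi_j\rangle\neq0$ for some $a'\leq j<2a'-a$, then taking the least such $j$ we would get $A=a+j$ and $B=ap+j$ (using (a) and (b) and minimality of $j$), again contradicting $A\geq B$; therefore $\langle\xi_a,\xi_j\rangle=0$ for all $a'\leq j<2a'-a$, so every coefficient of $\langle\xi,\xi^{(p)}\rangle$ of degree below $ap+(2a'-a)=(p-1)a+2a'$ vanishes, i.e.\ $B\geq(p-1)a+2a'$.

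I expect the main obstacle to be step (b): one must carefully track, for a fixed target degree, which pairs of Taylor coefficients of $\xi$ (twisted by Frobenius, for $\langle\xi,\xi^{(p)}\rangle$) can contribute, and show that all contributions besides the diagonal one vanish — here the normalization $x(t)=t^a$ (forcing the first coordinate of $\xi_i$ to vanish for $i>a$), the isotropy of $\xi_a$, and the hypothesis $\gamma\in\bF_p$ (forcing $\xi_a^{(p)}=\xi_a$) are exactly what make the two series agree coefficient by coefficient under the shift $a+j\leftrightarrow ap+j$. Everything after that is a formal consequence of $A\geq B$ together with $ap>a$.
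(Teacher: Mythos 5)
Your proof is correct, and the verification of your coefficient-matching claim in step (b) does hold: the only contributing pair to $t^{a+j}$ in $\tfrac12\langle\xi,\xi\rangle$ and to $t^{ap+j}$ in $\langle\xi,\xi^{(p)}\rangle$ for $a'\le j<2a'-a$ is $(a,j)$ (resp.\ $(j,a)$), the cross terms with $a<i'<a'$ vanish because $\xi_{i'}=0$, and the ``both $\ge a'$'' terms are out of range since $a'(1+p)>(p-1)a+2a'$. The underlying arithmetic is the same as the paper's --- both proofs ultimately hinge on the identity that the coefficient of $t^{a+i}$ in the local equation of the non-ordinary locus and of $t^{pa+i}$ in $xy^p+x^py+z^{1+p}/(2\epsilon)$ are both $\beta_i+\gamma\gamma_i/(2\epsilon)$ for $i$ small, together with $A\ge B$ --- but the organization differs in a way worth noting. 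The paper defines $a''$ as the smallest $i\ge a'$ with $\beta_i+\gamma\gamma_i/(2\epsilon)\neq 0$, splits into the cases $B\ge(p+1)a'$ and $B=a''+pa<(p+1)a'$, and in the second case extracts $2a'\le a+a''$ from the fact that the $t^{a+a''}$-coefficient of the first series must be cancelled by the genuinely quadratic part $(4\epsilon)^{-1}\sum_{i,j\ge a'}\gamma_i\gamma_j t^{i+j}$. You instead show directly, by iterating the $A<B$ contradiction at the smallest offending $j$, that $\langle\xi_a,\xi_j\rangle=0$ for every $j\in[a',2a'-a)$, which forces $B\ge(p-1)a+2a'$ outright. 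This avoids the case split and the cancellation argument entirely, because the range $j<2a'-a$ is exactly the range where the double sum can never reach, so the threshold $(p+1)a'$ never needs to be considered. The bilinear-form packaging (writing the two series as $\tfrac12\langle\xi,\xi\rangle$ and $\langle\xi,\xi^{(p)}\rangle$) makes the Frobenius-twist bookkeeping more transparent and is a genuinely cleaner way to present the computation; the paper's version trades that for being slightly more explicit about where the quadratic part of $Q_0$ enters.
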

\begin{proof}
Since $\gamma\in \bF_p$ and $\beta+\gamma^2/(4\epsilon)=0$, then $\beta\in \bF_p$.
Therefore, in $k[[t]]$,
$$xy + z^2/(4\epsilon) = \sum_{i \geq a'} (\beta_i + \gamma \gamma_i/(2\epsilon))t^{i+a} + (4\epsilon)^{-1}\sum_{i,j \geq a'}\gamma_i\gamma_j t^{i+j},$$
$$xy^p + x^p y + z^{1+p}/(2\epsilon) = \sum_{i \geq a'}(\beta_i + \gamma \gamma_i/(2\epsilon))t^{i + pa} + \sum_{i\geq a'}(\beta_i^p + \gamma \gamma_i^p/(2\epsilon)) t^{pi + a} + (2\epsilon)^{-1}\sum_{i,j \geq a'}\gamma_i\gamma_j^p t^{i + jp}.$$

If one of $\beta_{a'}$ and $\gamma_{a'}$ were zero, then $A = a' + a$, whereas $B\geq a'+pa$; this contradicts with the assumption that $A\geq B$. Hence, we obtain the first assertion of the lemma. 

Let $a'' \geq a'$ denote the smallest integer such that $\beta_i + \gamma\gamma_i/(2\epsilon) \neq 0$. Then by applying the Frobenius action, we have $\beta_{a''}^p+\gamma \gamma_{a''}^p/(2\epsilon)\neq 0$, and $B \geq \textrm{min}\{(p+1)a',a'' + pa \}$. 
If $B \geq (p+1)a'$, then the second assertion of the lemma follows. 

Therefore, we assume that $B = a'' + pa < (p+1)a'$. The expansion of $xy + z^2/(4\epsilon)$ above has a non-zero term of the form $(\beta_{a''} + \gamma\gamma_{a''}/(2\epsilon))t^{a + a''}$. As $A\geq B$, the term $(\beta_{a''} + \gamma\gamma_{a''}/(2\epsilon))t^{a + a''}$ has to be cancelled out by a term of the form $(4\epsilon)^{-1}\sum_{i + j = a + a'', i,j \geq a'}\gamma_i\gamma_j t^{i+j}$. Therefore, it follows that $2a' \leq a + a''$ and hence $B = a'' + pa\geq (p-1)a+2a'$. 
\end{proof}

\subsubsection*{Case $(3.1)_e$: $B(1 + p^{2e-1}) < (p+1)A < B(1 + p^{2e + 1})$ for some $e\in \bZ_{\geq 1}$}\quad

The same argument as in Case 2.1 suffices to prove \Cref{prop_decay}, unless $A = B\frac{1 + p^{2e-1}}{1+p} + a(p^{2e} - p^{2e-1})$. Therefore, we will assume that this is the case. 

\begin{lemma}\label{casethreeone}
Among the terms appearing in $\Finf(2)$ described in \Cref{niceparam} with denominator $p^{n+1}$, there are exactly two with minimal $t$-adic valuation. They are:
$$P(1)_{e-1,n-1}F_u^{(n+e-1)} (xy + z^2/(4\epsilon))^{(1 + \hdots + p^{n-e})}(xy^p + x^py + z^{1+p}/(2\epsilon))^{p^{n-e + 1} + p^{n - e + 3 } + \hdots + p^{n + e -3}},$$
$$P(1)_{e,n-1}F_u^{(n+e)} (xy + z^2/(4\epsilon))^{(1 + \hdots + p^{n-e -1})}(xy^p + x^py + z^{1+p}/(2\epsilon))^{p^{n-e } + p^{n - e + 2 } + \hdots + p^{n + e -2}}. $$
Both the terms have $t$-adic valuation $A(1 + \hdots + p^{n-e}) + B(p^{n-e + 1} + p^{n - e + 3} + \hdots + p^{n + e -3}) + ap^{n + e -1} $.
\end{lemma}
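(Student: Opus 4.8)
The plan is to follow the pattern of \Cref{realdecaytopleft}(2) and \Cref{casetwotwo}, reducing the statement to minimizing an explicit function of a single integer parameter. By \Cref{niceparam}, each term of $\Finf(2)$ with denominator $p^{n+1}$ is a product of $n-f$ Frobenius twists of $F_t$, $f$ consecutive pairs $F_uF_l$, and a trailing $F_u$, for some $f\in\{0,\dots,n-1\}$ (so the class in the notation of \Cref{niceparam} is $(m_1,m_2)=(n-1-f,f)$). Since we are in Case 3 we have $A\ge B$, and in fact $A>B$ because $(p+1)A>B(1+p^{2e-1})\ge B(p+1)$ in this subcase; so by \Cref{niceparam}(2), for each fixed $f$ the unique term of minimal $t$-adic valuation in its class is the ``sorted'' one $T_f$ — the $F_t$'s first (at twists $0,1,\dots,n-1-f$), then the $f$ pairs, then the trailing $F_u$ at twist $n+f$. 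Consequently every non-sorted term has strictly larger valuation than some $v_t(T_f)$, so it suffices to minimize $v(f):=v_t(T_f)$ over $f\in\{0,\dots,n-1\}$ and to check that the minimum is attained at exactly two values.

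Next I would write down $v(f)$ explicitly. Because $a=b=c$ in Case 3 the trailing $F_u$ at twist $n+f$ contributes $a\,p^{n+f}$ no matter which of its three columns one looks at, and summing the contributions of the $F_t$'s (each of $t$-valuation $A$) and of the pairs (each of $t$-valuation $B$, via \Cref{niceparam}(3)--(4)) gives $v(f)=\frac{A(p^{n-f}-1)}{p-1}+\frac{B\,p^{n-f}(p^{2f}-1)}{p^{2}-1}+a\,p^{n+f}$. Regarding this as a function of a real variable $f$ and differentiating twice shows $v''(f)$ is, up to the positive constant $(\log p)^2$, a positive combination of $p^{n-f}$ and $p^{n+f}$ (the coefficient of $p^{n-f}$ being $\tfrac{A(p+1)-B}{p^2-1}>0$ since $B\le A$), so $v$ is strictly convex; hence its minimum over the integers is attained either at one integer or at two consecutive integers.

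Finally I would identify the minimizers using the borderline identity. Expanding $v(e-1)$ and $v(e)$ and substituting $A=B\frac{1+p^{2e-1}}{1+p}+a(p^{2e}-p^{2e-1})$, a direct calculation gives $v(e-1)-v(e)=p^{n-e}\bigl(A-\tfrac{B(1+p^{2e-1})}{1+p}-a\,p^{2e-1}(p-1)\bigr)=0$, and the common value simplifies to $A(1+p+\cdots+p^{n-e})+B(p^{n-e+1}+p^{n-e+3}+\cdots+p^{n+e-3})+a\,p^{n+e-1}$, exactly as in the statement. Strict convexity then forces the real minimum of $v$ into the open interval $(e-1,e)$, so $e-1$ and $e$ are the only integer minimizers; tracing these back through the reduction shows $T_{e-1}$ and $T_e$ are precisely the two displayed terms, and — since within each class the sorted term is the unique minimum and every other term is strictly larger — these are exactly the terms of $\Finf(2)$ with denominator $p^{n+1}$ of minimal $t$-adic valuation. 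The main obstacle here is not conceptual but bookkeeping: keeping the Frobenius-twist indices of the trailing $F_u$, the exponents of the leading terms of $xy+z^2/(4\epsilon)$ and $xy^p+x^py+z^{1+p}/(2\epsilon)$, and the closed-form summation for $v(f)$ all mutually consistent, and disposing of the finitely many small $n$ (where $e-1$ or $e$ may lie outside $\{0,\dots,n-1\}$) by the same convexity argument — none of which affects the eventual decay estimate needed for \Cref{prop_decay}.
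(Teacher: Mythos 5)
Your argument is correct and is essentially the argument the paper has in mind: the paper merely says ``this lemma follows from a similar argument as Lemma~\ref{realdecaytopleft}(2) and the proofs of Lemmas~\ref{consec},~\ref{onlytwo}, so we omit the details,'' and your proposal supplies exactly those details, using the same convexity reduction already deployed in the proof of Lemma~\ref{realdecaytopleft}(1). Two small points worth making explicit in a written-up version: first, the ``sorted'' ordering (all $F_t$'s before the $F_uF_l$ pairs) being optimal inside each class $(m_1,m_2)$ follows not from $A\geq B$ directly but from the adjacent-transposition inequality, whose sign is governed by $A(p+1)-B$, and this is positive whenever $A\geq B$ — so your reading of Lemma~\ref{niceparam}(2) is the right one; second, the identity $v(e-1)=v(e)$ uses the assumed relation $A=B\tfrac{1+p^{2e-1}}{1+p}+a(p^{2e}-p^{2e-1})$ defining this subcase, and your closed form $v(e-1)-v(e)=p^{n-e}\bigl(A-\tfrac{B(1+p^{2e-1})}{1+p}-ap^{2e-1}(p-1)\bigr)$ reduces it to exactly that identity, which is the direct analogue of the computation in the proof of Lemma~\ref{consec}.
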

\begin{proof}
This lemma follows from a similar argument as \Cref{realdecaytopleft}(2) and the proofs of \Cref{consec,onlytwo}, so we omit the details.
\end{proof}

\begin{proof}[Proof of \Cref{prop_decay} in this case]
We will show that either $w_3$ or $w_5$ decays very rapidly. There are two terms with minimal $t$-adic valuation as in \Cref{casethreeone}, appearing in the coefficient of $1/p^{n+1}$ of $F_{\infty}(w_3)$ and $F_{\infty}(w_5)$. A direct computation yields that the sum of these two terms equals by 
$$\frac{1}{2p^{n+1}}P(1)_{0,n-e-1}(xy + z^2/(4\epsilon))^{1 + p + \hdots + p^{n-e -1}}(X(t)u(t)^{p^{2e}} + Y(t)u(t)^{p^{2e-1}})^{(n-e)},$$
where
\begin{itemize}
\item $u(t)$ stands for either $x(t)$ or $z(t)$, according to whether we work with $w_3$ or $w_5$,

\item $X(t) = pF_u\cdot F_l^{(1)}\cdot pF_u^{(2)} \cdots F_l^{(2e-1)} \cdot [(\lambda^{-1})^{(2e)},1]^{\textrm{T}}$,   and 

\item $Y(t) = pF_t \cdot pF_u^{(1)} \cdot F_l^{(2)} \cdots pF_u^{(2e-3)} \cdot F_l^{(2e -2)} \cdot [(\lambda^{-1})^{(2e-1)},1]^{\textrm{T}}$. The superscript $T$ stands for transpose. 

\end{itemize}
The decay of $w_3$ and $w_5$ is determined by the $t$-adic valuation of the entries of $X(t) u(t)^{p^{2e}} + Y(t)u(t)^{p^{2e-1}}$. For the rest of the proof, it suffices to focus on the second row of $X(t), Y(t)$ and hence we view them as functions. We prove the very rapid decay of $w_3$ or $w_5$ in two cases.

\begin{enumerate}
\item
Both $\beta,\gamma \in \bF_p$. 

In this case, we claim that the $t$-adic valuation of $X(t) u(t)^{p^{2e}} + Y(t)u(t)^{p^{2e-1}}$ is at most $A + B(p + p^3 + \hdots + p^{2e-3}) + a'p^{2e-1}$ for at least one choice of $u(t)$ between $x(t)$ and $z(t)$, where $a'$ is defined in \Cref{aftercancel}.
This claim implies that the $t$-adic valuation of the coefficient of $1/p^{n+1}$ of $F_{\infty}(w_3)$ or $F_{\infty}(w_5)$ is at most $A(1 + \hdots + p^{n-e}) + B(p^{n-e+1} + p^{n-e + 3} + \hdots + p^{n + e -3}) + a'p^{n+e-1}$. This is sufficient to prove the rapid decay of $w_3$ or $w_5$. Indeed, this quantity is strictly less than $A(1 + \hdots + p^{n-f}) + B(p^{n-f+1} + p^{n-f + 3} + \hdots + p^{n + f -3}) + ap^{n+f-1}$ for all values of $f \neq e,e+1$ by \Cref{aftercancel} and hence the sum of the two terms in \Cref{casethreeone} gives the minimal $t$-adic valuation term of the coefficient of $1/p^{n+1}$ in $\Finf(w_3)$ or $\Finf(w_5)$. Moreover, the bounds on $a'$ in \Cref{aftercancel} proves that $w_3$ or $w_5$ decays very rapidly. 

We now prove the claim by contradiction.
Suppose that $X(t)x(t)^{p^{2e}} + Y(t)x(t)^{p^{2e-1}} $ has $t$-adic valuation greater than $A + B(p + p^3 + \hdots + p^{2e-3}) + a'p^{2e-1}$. Since $z(t) = \gamma x(t) + \gamma_{a'}t^{a'} + \hdots $ with $\gamma \in \bF_p, \gamma_{a'}\neq 0$ and we have assumed that $A = B\frac{1 + p^{2e-1}}{1+p} + a(p^{2e} - p^{2e-1})$, it follows that there is a unique monomial in $X(t)z(t)^{p^{2e}} + Y(t)z(t)^{p^{2e-1}} $ with $t$-adic valuation $A + B(p + p^3 + \hdots + p^{2e-3}) + a'p^{2e-1}$, thereby establishing the claim for $u(t)=z(t)$. 

\item
Either $\beta$ or $\gamma$ is not in $\bF_p$.

In this case, as $\beta + \gamma^2/(4\epsilon) = 0$, we may assume that $\gamma \notin \bF_p$. We again consider the function $X(t)u(t)^{p^{2e}} + Y(t)u(t)^{p^{2e-1}}$. Suppose that the leading coefficient of $X(t)$ is $\mu_X$ and that of $Y(t)$ is $\mu_Y$. Then, the terms of minimal equal $t$-adic valuations cancel out in the case when $u(t) = x(t)$ only if $\mu_X + \mu_Y = 0$, otherwise by the same idea as in (1), $w_3$ decays very rapidly. Therefore, we may assume that $\mu_X + \mu_Y = 0$. However in this case, if we pick $u(t) = z(t)$, then the terms terms with minimal equal $t$-adic valuations cancel out only if $\mu_X \gamma^{p^{2e}} + \mu_Y \gamma^{p^{2e-1}} = 0$, which is not possible as $\gamma^{p^{2e}} \neq \gamma^{p^{2e-1}}$. In other words, we show that in this case, $w_5$ decays very rapidly. 
\end{enumerate}

As in Case 2.1, $\Span_{\bZ_p}\{w_1,w_2\}$ decays rapidly, and also every vector that can be written as $\alpha_u w_u + \alpha_iw_i$ with $\alpha_i\in \bZ_p^\times $ ($i = 3,5$ depending on whether $w_3$ or $w_5$ decays) decays very rapidly. The latter statement follows by the same valuation-theoretic argument as in the proof of Case 2.1, which also proves that $\Span_{\bZ_p}\{w_1,w_2, w_i\}$ decays rapidly.
\end{proof}

\subsubsection*{Case $(3.2)_e: A(1+p)=B(1+p^{2e-1})$ for some $e\in \bZ_{\geq 1}$}

\begin{lemma}\label{casethreetwo}
Among the terms appearing in $\Finf(1)$ described in \Cref{niceparam} with denominator $p^{n+1}$, there are exactly two with minimal $t$-adic valuation.
They are:
$$P(1)_{e,n}(xy + z^2/(4\epsilon))^{1 + \hdots + p^{n-e}}(xy^p+ x^py + z^{1+p}/(2\epsilon))^{p^{n-e+1} + p^{n-e+3} + \hdots +p^{n+e-1}}, $$
$$P(1)_{e-1,n}(xy + z^2/(4\epsilon))^{1 + \hdots + p^{n-e+1}}(xy^p+ x^py + z^{1+p}/(2\epsilon))^{p^{n-e+2} + p^{n-e+4} + \hdots +p^{n+e-2}}.$$
 Both these terms have $t$-adic valuation $A(1 + \hdots + p^{n-e}) + B(p^{n-e+1} + p^{n-e+3} \hdots + p^{n+e-1})$.
\end{lemma}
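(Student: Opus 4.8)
The plan is to reduce \Cref{casethreetwo} to a one-variable convexity estimate, exactly in the spirit of \Cref{twonetopleft}, \Cref{casethreeone}, and the proof of \Cref{realdecaytopleft}. First I would apply \Cref{niceparam}: the terms of $\Finf(1)$ with denominator $p^{n+1}$ are described there, and by part~(2) the ones of minimal $t$-adic valuation must have consecutive Frobenius twists $n_i=i$; indexing the remaining freedom by the split $m_1+m_2=n$ and writing $f=m_2$, the fact that we are in the regime $A\ge B$ lets \Cref{niceparam}(2)--(3) select, for each such split, the single term
\[
P(1)_{f,n}\,(xy + z^2/(4\epsilon))^{1 + p + \cdots + p^{n-f}}\,(xy^p + x^p y + z^{1+p}/(2\epsilon))^{p^{n-f+1} + p^{n-f+3} + \cdots + p^{n+f-1}}
\]
as the one of smallest $t$-adic valuation carrying that split. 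Since $a=b=c$ throughout Case~3, the two power series appearing here have $t$-adic valuations exactly $A$ and $B$ with non-zero leading coefficients, and $P(1)_{f,n}\not\equiv 0\bmod p$ by \Cref{linear}; hence this term has $t$-adic valuation exactly
\[
v(f) := A(1 + p + \cdots + p^{n-f}) + B(p^{n-f+1} + p^{n-f+3} + \cdots + p^{n+f-1}) = \tfrac{A}{p-1}(p^{n+1-f}-1) + \tfrac{B}{p^2-1}(p^{n+1+f}-p^{n+1-f}).
\]
It therefore suffices to find the integers $f\in[0,n]$ minimizing $v(f)$.

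Next I would view $v$ as a function of a real variable $f$ and compute $v''(f) = (\log p)^2\,p^{n+1-f}\big(\tfrac{A}{p-1} + \tfrac{B}{p^2-1}(p^{2f}-1)\big)$, which is strictly positive for $f\ge 0$; so $v$ is strictly convex on $[0,n]$, just as in the proof of \Cref{realdecaytopleft}(1) (and \Cref{consec}). Then a direct calculation gives $v(e-1) - v(e) = p^{n+1-e}\big(A - \tfrac{B(1+p^{2e-1})}{1+p}\big)$, which vanishes precisely because of the relation $A(1+p) = B(1 + p^{2e-1})$ that defines subcase $(3.2)_e$. By strict convexity, the equality $v(e-1)=v(e)$ forces $v(f) > v(e)=v(e-1)$ for every integer $f\in[0,n]$ with $f\neq e-1,e$.

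Finally I would conclude that exactly the two splits $f=e$ and $f=e-1$ realize the minimal $t$-adic valuation $A(1 + p + \cdots + p^{n-e}) + B(p^{n-e+1} + p^{n-e+3} + \cdots + p^{n+e-1})$, and that the two terms corresponding to them are precisely the two products displayed in the statement (with $P(1)_{e,n}$ and $P(1)_{e-1,n}$). I do not expect any genuine obstacle here: the whole content is the strict-convexity estimate together with the algebraic identity $v(e-1)=v(e)$, which is forced verbatim by the defining relation of subcase $(3.2)_e$, while the passage through \Cref{niceparam} and the non-vanishing input \Cref{linear} are exactly as in the lemmas already established in \S\ref{sec_decay_Sie}.
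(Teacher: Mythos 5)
Your proof is correct and follows precisely the style the paper uses for its sibling lemmas (\Cref{realdecaytopleft}, \Cref{twonetopleft}, \Cref{casethreeone}); the paper itself declines to write out a proof here ("As we have seen many lemmas of this flavor, we omit the proof"), so you are supplying a proof the authors chose to omit. The reduction through \Cref{niceparam}(1)--(2) to the single family indexed by the split $f=m_2$, the closed form
\[
v(f)=\tfrac{A}{p-1}\bigl(p^{\,n+1-f}-1\bigr)+\tfrac{B}{p^2-1}\bigl(p^{\,n+1+f}-p^{\,n+1-f}\bigr),
\]
the strict convexity $v''(f)>0$, and the identity $v(e-1)-v(e)=p^{\,n+1-e}\bigl(A-\tfrac{B(1+p^{2e-1})}{p+1}\bigr)=0$ under the defining relation of subcase $(3.2)_e$ are all verified correctly, and strict convexity does force every other integer $f\in[0,n]$ to give strictly larger $t$-adic valuation. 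Two small remarks worth adding for completeness: you should note that since $A\geq B$ in Case~3, it is the $F_t$-first arrangement (hence the $P(1)_{f,n}$ products) that \Cref{niceparam}(2) singles out as the minimal-valuation terms for each split, which is why only these need be compared; and the argument implicitly assumes $n\geq e$ so that both $f=e-1$ and $f=e$ lie in the range $[0,n]$, which is harmless since the Decay Lemma is a statement about all sufficiently large $n$.
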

As we have seen many lemmas of this flavor, we omit the proof.

This lemma shows that there are two terms with the same $t$-adic valuation, which could therefore lead to cancellation, and such phenomenon prevents us from proving that $\Span_{\bZ_p}\{w_1,w_2\}$ decays rapidly. Nevertheless, the following lemma shows that there is at least a saturated rank one submodule of $\Span_{\bZ_p}\{w_1,w_2\}$ which decays rapidly. 

\begin{lemma}\label{toptopodd}
There is a vector $w_0$ in $\Span_{\bZ_p}\{w_1,w_2\}$ which decays rapidly. 
\end{lemma}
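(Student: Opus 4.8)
The plan is to exhibit the desired primitive vector $w_0 = cw_1 + dw_2$ (with $c,d\in\bZ_p$) by prescribing its reduction mod $p$, and to verify property DR of \Cref{prop_decay} by the valuation bookkeeping already used in Case~1 of \S\ref{sec_decayS1}. First I would reduce, for $n\ge e$, to a statement modulo $p$. By \Cref{casethreetwo} the two terms of minimal $t$-adic valuation in the $p^{n+1}$-pole part of $\Finf(1)$ are $P(1)_{e,n}\cdot(\cdots)$ and $P(1)_{e-1,n}\cdot(\cdots)$, where $(\cdots)$ stands for products of Frobenius twists of $xy+z^2/(4\epsilon)$ and $xy^p+x^py+z^{1+p}/(2\epsilon)$ whose leading coefficients are units $\alpha_n,\beta_n\in W^\times$; both terms have the same $t$-adic valuation $V_n:=A(1+\cdots+p^{n-e})+B(p^{n-e+1}+p^{n-e+3}+\cdots+p^{n+e-1})$, which is $\le A(1+p+\cdots+p^n)$ since $B\le A$ and $e\ge 1$. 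By \Cref{linear}, $P(1)_{e,n}$ and $P(1)_{e-1,n}$ are, up to sign and the scalar $1/2$, the rank-one matrices $M_1:=\left[\begin{smallmatrix}1&\lambda^{-1}\\\lambda&1\end{smallmatrix}\right]$ and $M_2:=\left[\begin{smallmatrix}1&-\lambda^{-1}\\\lambda&-1\end{smallmatrix}\right]$ in one order or the other, according to the parity of $n-e$. Both $M_1$ and $M_2$ have image $k\cdot(1,\lambda)^{\textrm T}$, and $M_1(c,d)^{\textrm T}=u\,(1,\lambda)^{\textrm T}$, $M_2(c,d)^{\textrm T}=\sigma(u)\,(1,\lambda)^{\textrm T}$, where $u:=c+\lambda^{-1}d$ (note $\sigma(u)=c-\lambda^{-1}d$, as $\sigma$ fixes $c,d$ and $\sigma(\lambda)=-\lambda$).

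Consequently the coefficient of $t^{V_n}$ in the $p^{n+1}$-pole part of $\Finf(1)\,w_0$ is $\equiv \tfrac12(\pm\alpha_n u\pm\beta_n\sigma(u))(1,\lambda)^{\textrm T}\bmod p$, and — since lower-order poles contribute to the coefficient of $t^{V_n}$ in $\Finf(1)\,w_0$ only elements of $p^{-n}W$ — the full coefficient of $t^{V_n}$ fails to lie in $W^2$ precisely when $\bar u^{\,1-p}\ne \pm\bar\beta_n/\bar\alpha_n$ (resp. $\pm\bar\alpha_n/\bar\beta_n$ for the other parity). A direct count of the exponents of $\bar\eta$ and $\bar\mu$ occurring in $\bar\alpha_n,\bar\beta_n$ gives $\bar\beta_n/\bar\alpha_n=\zeta^{\,p^{n-e+1}}$, where $\bar\eta,\bar\mu\in k^\times$ are the leading coefficients in $k[[t]]$ of $xy+z^2/(4\epsilon)$ and $xy^p+x^py+z^{1+p}/(2\epsilon)$, $K:=(p^{2e-1}+1)/(p+1)\in\bZ_{>0}$, and $\zeta:=\bar\eta\,\bar\mu^{-K}\in k^\times$ is independent of $n$. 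The key point is then a parity observation: when $n-e$ is odd the exponent $p^{n-e+1}$ is an even power of $p$, and when $n-e$ is even it is an odd power; since $\bar u=\overline{c+\lambda^{-1}d}\in\bF_{p^2}^\times$, the quantity $\bar u^{\,1-p}$ always lies in the norm-one subgroup $\mu_{p+1}\subset\bF_{p^2}^\times$, so the set of forbidden values of $\bar u^{\,1-p}$ (over all $n\ge e$) is contained in $\{\pm\zeta^{\,p^m}:m\ge 0\}\cup\{\pm\zeta^{-p^m}:m\ge 0\}$. If $\zeta\notin\bF_{p^2}$, no Frobenius power of $\zeta^{\pm1}$ lies in $\bF_{p^2}$, so this set is disjoint from $\mu_{p+1}$ and \emph{any} primitive $w_0$ works. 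If $\zeta\in\bF_{p^2}$, the parity observation collapses the forbidden set to $\{\pm\zeta,\pm\zeta^{-p}\}$, which has at most $4<p+1=|\mu_{p+1}|$ elements (this is where $p\ge5$ is used); I would then choose $\theta\in\mu_{p+1}\setminus\{\pm\zeta,\pm\zeta^{-p}\}$ and, by Hilbert's Theorem 90 for $\bF_{p^2}/\bF_p$, a primitive $w_0=cw_1+dw_2$ with $\overline{(c+\lambda^{-1}d)}^{\,1-p}=\theta$. For such $w_0$ and every $n\ge e$, the coefficient of $t^{V_n}$ in $\Finf(1)\,w_0$ does not lie in $W^2$ and $V_n\le A(1+\cdots+p^n)$, which is exactly property DR for these $n$. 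The finitely many remaining $n$ with $0\le n<e$ are handled by the same computation: there the minimal-valuation contribution to the $p^{n+1}$-pole part of $\Finf(1)$ is, modulo $p$, a nonzero scalar multiple of $M_1$ or $M_2$ (uniquely so when $A>B$, i.e. $e\ge2$; and $n<e$ forces $n=0$ when $A=B$, where the simple-pole part of $\Finf(1)$ already suffices), imposing no constraint on $w_0$ beyond primitivity. Combining these, and arguing with $t$-adic valuations across the various pole orders exactly as in the last paragraph of the proof of Case~1 in \S\ref{sec_decayS1}, shows $w_0$ decays rapidly.

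The main obstacle is precisely the phenomenon that forces the weakening from ``$\Span_{\bZ_p}\{w_1,w_2\}$ decays rapidly'' to ``some vector decays rapidly'': there are two terms of equal minimal $t$-adic valuation in each pole part, and a poorly chosen vector could sit in a configuration where they cancel mod $p$ for some value of $n$. The real work is in (i) identifying the mod-$p$ ratio of the two leading coefficients as a single Frobenius power $\zeta^{\,p^{n-e+1}}$ of a fixed element $\zeta\in k^\times$, and (ii) using the parity of $n-e$ to bound the total set of ``bad'' values of $\bar u^{\,1-p}$ by $4$, which is strictly smaller than $|\mu_{p+1}|=p+1$ under the hypothesis $p\ge5$.
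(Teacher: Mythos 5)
Your proof is correct, but it takes a genuinely different and more explicit route than the paper's. The paper's proof is terse: it invokes \Cref{casethreetwo} and \Cref{linear} to write the mod-$p$ leading coefficient of the $1/p^{n+1}$-pole part as $\mu_1 M_1+\mu_2 M_2$, notes this is nonzero because $M_1,M_2$ are not proportional, and then simply asserts the existence of an $\bF_p$-rational $\bar w_0$ outside the kernel. What the paper does not spell out is why a \emph{single} choice of $\bar w_0$ works for \emph{all} $n$ --- the units $\mu_1,\mu_2$ and the assignment of $M_1$ vs.\ $M_2$ both vary with $n$, so a priori the rank-one kernel line could sweep through many $\bF_p$-lines as $n$ grows. (The intended completion is presumably that the kernel line for $n+1$ is the Frobenius conjugate of that for $n$, up to sign, so an $\bF_p$-rational bad line is unique.) Your argument closes this gap by a direct count: you compute that $\bar\mu_2/\bar\mu_1=\zeta^{\,p^{n-e+1}}$ for a fixed $\zeta=\bar\eta\,\bar\mu^{-K}\in k^\times$, observe that an $\bF_p$-rational kernel forces $\bar u^{\,1-p}\in\mu_{p+1}$, and use the parity of $n-e$ (which controls both which of $M_1,M_2$ appears where and whether the even or odd Frobenius power of $\zeta$ occurs) to bound the forbidden set inside $\mu_{p+1}$ by $4<p+1$, where $p\ge5$ enters. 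This counting is arguably more robust than the Frobenius-invariance route, since it absorbs the $\pm$-sign ambiguities in \Cref{linear} wholesale rather than having to track them. The one point stated a little loosely is the range $0\le n<e$: for $e\ge 2$ you claim without a convexity computation that the minimal-valuation term is unique, and for $e=1$ you appeal directly to the simple-pole part; both assertions are straightforward (and the paper glosses over this range too), but it would be cleaner to say why, e.g.\ by noting that for $n<e$ there is no room for enough $F_uF_l$-pairs to produce a second term of equal valuation.
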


\begin{proof}
By \Cref{casethreetwo} and the proof of \Cref{linear}, the coefficient (viewed as a power series in $t$) of the sum of the two terms with minimal $t$-adic valuation among the terms with denominator $p^{n+1}$ is of the form $ \mu_1 M_1 +  \mu_2 M_2$, for some $p$-adic units $\mu_i$, where $\displaystyle \{M_1,M_2\}=\left\{\begin{bmatrix}
1&\lambda^{-1}\\
\lambda&1\\
\end{bmatrix}, \begin{bmatrix}
1&-\lambda^{-1}\\
\lambda&-1\\
\end{bmatrix}\right\}$.

As $M_1 \bmod p$ and $M_2 \bmod p$ are not scalar multiples of each other, the linear combination $\mu_1 M_1 + \mu_2 M_2 \bmod p$ is non-zero. Therefore, there exists a vector $\bar{w}_0$ defined over $\bF_p$ which does not lie in $\ker(\mu_1 M_1 + \mu_2 M_2 \bmod p)$. Choosing $w_0\in \Span_{\bZ_p}\{w_1,w_2\}$ which lifts $\bar{w}_0$ finishes the proof of this lemma. 
\end{proof}

We are now ready to prove the last remaining case of \Cref{prop_decay} (and also the Decay Lemma \Cref{thm_decay}). 
\begin{proof}[Proof of \Cref{prop_decay}]
We will first prove that there is a rank $2$ submodule of $\Span_{\bZ_p}\{w_3,w_4,w_5\}$ which decays rapidly. For ease of notation, let $\bar{F_u}$ denote the matrix $\frac{1}{t^a}F_u$ evaluated at $t=0$.

Let $K$ denote $\ker(P(1)_{n-1,e-1}\bar{F_u}^{(n+e-1)} \bmod p)\cap \Span_{\bF_p}\{w_3,w_4,w_5\}$. If $\dim_{\bF_p}K\leq 1$, then lifting two linearly independent $\bF_p$-vectors $\notin K$ gives the desired rank $2$ submodule. Therefore, we assume that $\dim_{\bF_p}K=2$ (note that since $P(1)_{n-1,e-1}\bar{F_u}^{(n+e-1)} \bmod p$ is not the zero matrix, so $\dim_{\bF_p}K\neq 3$). It follows that $\beta,\gamma \in \bF_p$. 

We will prove that $\Span_{\bZ_p}\{w_3,w_4\}$ decays rapidly. First, 
since $K\cap \Span_{\bF_p}\{w_3,w_4\}=\Span_{\bF_p}\{\beta w_3-w_4\}$, then any primitive vector in $\Span_{\bZ_p}\{w_3,w_4\}$ which modulo $p$ is not a multiple of $\beta w_3 - w_4$ must decay rapidly. 
Now we consider $\beta w_3 - w_4$. Up to constants, the coefficient of the $1/p^{n+1}$ part of the first entry of $F_{\infty}(\beta w_3 - w_4)$ equals $\beta_{a'} t^{A(1 + \hdots  +p^{n-e}) + B(p^{n-e+1} + p^{n-e+2} \hdots + p^{n+e-3}) + a' p^{n+e -1}}$. 
\Cref{aftercancel} establishes the required decay as follows: firstly, as $a' \leq B \leq A$, we have that the vector $\beta w_3 - w_4$ decays rapidly. Secondly, the exact bound for $a'$ in \Cref{aftercancel} implies (as in the proof in Case 2.1) that $\Span_{\bZ_p}\{w_3,w_4\}$ decays rapidly. Finally, the \emph{very} rapid decay of $w_3$,$w_4$ follows from the bound $2a' \leq B \leq A$. 

Then, \Cref{prop_decay} follows by an argument analogous to that in Case 2.1 with \Cref{toptopodd}. 
\end{proof}

\section{The setup of the main proofs}\label{sec_outline}
In this section, we provide the general setup of the proofs of \Cref{thm_main,thm_max}. As mentioned in \S\ref{sec_intro_pf}, the proofs consist of the following parts:
\begin{enumerate}
    \item The sum of the local contributions at supersingular points is at most $11/12$ of the global contribution; and 
    \item the local contribution from non-supersingular points is of smaller magnitude.
\end{enumerate}
\Cref{supersingular} makes (1) precise, and is stated in  \S\ref{subsec_outline}. We will prove \Cref{supersingular} and (2) in \S\ref{sec_pf_Hil} for the Hilbert case and in \S\ref{sec_pf_Sie} for the Siegel case. The idea involved in the statement of \Cref{supersingular} is that we break the global intersection number $C . Z(m)$ into pieces, one for each non-ordinary point on $C$, by using the relation between the Hasse invariant and the Hodge line bundle in \S\ref{sec_decomp}. We also relate the local intersection multiplicity at a point to a lattice-point count.

\subsection{The global contribution and its decomposition}\label{sec_decomp}
Recall that in \S\ref{def_setT}, we list the set $T$ of $m\in \bZ_{>0}$ for which we will study $C.Z(m)$ to prove our main theorems. In order to study the asymptotic behavior, we define $T_M=\{m\in T\mid m\leq M\}$ for $M\in \bZ_{>0}$. Moreover, in \S\S\ref{sec_pf_Hil}-\ref{sec_pf_Sie}, we will construct a subset $S_M\subset T_M$ which consists of bad values of $m$ that we want to rule out. The total global intersection number that we will consider is $\sum_{m\in T_M-S_M} C.Z(m)$. We sum over $m$ instead of working with individual $m$ because geometry-of-numbers techniques which we use to bound the local intersection multiplicity (for cumulative $m$) do not work for individual $m$. The following lemma gives the asymptotics of the global term using results in \S\ref{sec_global}.

\begin{lemma}\label{lem_glo-sum_asymp}
Assume that $\#S_M=O(M^{1-\epsilon})=O(\# T_M^{1-\epsilon})$ for some $\epsilon>0$ if $L=L_H$ and that $\#S_M=o(\# T_M)$ if $L=L_S$. Then
\[\sum_{m\in T_M-S_M} C.Z(m)=(\omega.C)\sum_{m\in T_M-S_M} |q_L(m)|+o(\sum_{m\in T_M-S_M} |q_L(m)|).\]
Moreover, we have, for \Cref{thm_main}(2), $\sum_{m\in T_M-S_M} C.Z(m)\asymp M^2$; for \Cref{thm_main}(1) and \Cref{rmk_realquad}, $\sum_{m\in T_M-S_M} C.Z(m)\asymp M^2/\log M$; for \Cref{thm_max}, $\sum_{m\in T_M-S_M} C.Z(m)\asymp M^{5/2}/\log M$.
\end{lemma}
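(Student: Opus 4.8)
The plan is to reduce everything to the two ingredients developed in \S\ref{sec_global}, namely the modularity/Eisenstein decomposition of the generating series and the asymptotic analysis of $q_L(m)$, together with the elementary summation arguments of \Cref{lem_sum-asymp-H} and its analogues.

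\medskip

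\textbf{Step 1: the main term.} First I would write, for each $m\in T_M-S_M$, the identity $C.Z(m)=-q_L(m)(\omega.C)+o(|q_L(m)|)$ from \Cref{lem_glo-bound} (noting $\omega.C>0$, so since $q_L(m)<0$ we have $-q_L(m)(\omega.C)=(\omega.C)|q_L(m)|>0$). Summing over $m\in T_M-S_M$ gives
\[
\sum_{m\in T_M-S_M}C.Z(m)=(\omega.C)\sum_{m\in T_M-S_M}|q_L(m)|+\sum_{m\in T_M-S_M}o(|q_L(m)|).
\]
The error sum is bounded by $\sum_{m\leq M}|g(m)|$ where $g(m)$ is the $m$-th Fourier coefficient of the $\fe_0$-component of the cusp form $G(q)$. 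For $L=L_H$, Deligne's bound gives $\sum_{m\leq M}|g(m)|\ll_\epsilon M^{3/2+\epsilon}$, which is $o(M^2)=o(\sum_{m\in T_M-S_M}|q_L(m)|)$ by \Cref{lem_sum-asymp-H} once we know $\#S_M=O(M^{1-\epsilon})$ is negligible. For $L=L_S$, the trivial bound $|g(m)|\ll m^{5/4}$ gives $\sum_{m\leq M}|g(m)|\ll M^{9/4}$; I need this to be $o$ of the main sum. For \Cref{thm_main}(1) (resp.\ \Cref{thm_max}) the main sum will be $\asymp M^{5/2}/\log M$ (resp.\ $\asymp M^{5/2}/\log M$) — wait, I must be careful about the two Siegel cases; for \Cref{thm_main}(1), $T$ consists of prime-squares $q^2\leq M$, so $m\asymp M$ ranges over $\asymp M^{1/2}/\log M$ values and $|q_L(m)|\asymp m^{3/2}$, giving a main sum $\asymp M^{3/2}\cdot M^{1/2}/\log M=M^2/\log M$; for \Cref{thm_max}, $T$ consists of primes $q\leq M$, giving $\asymp M^{3/2}\cdot M/\log M=M^{5/2}/\log M$. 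In both cases $M^{9/4}=o(M^{5/2}/\log M)$ but $M^{9/4}$ is \emph{not} $o(M^2/\log M)$; so for \Cref{thm_main}(1) I would instead sum $|g(m)|$ only over $m\leq M$ \emph{of the form $q^2$}, and use a stronger (subconvexity or even Rankin--Selberg second-moment) bound, or rather, the cleaner route: $\sum_{q^2\leq M}|g(q^2)|\ll \big(\sum_{q^2\leq M}|g(q^2)|^2\big)^{1/2}(M^{1/2})^{1/2}$ and $\sum_{m\leq M}|g(m)|^2\ll M^{1+n/2}=M^2$ by the Rankin--Selberg bound for weight $2$ cusp forms, giving $\ll M\cdot M^{1/4}=M^{5/4}=o(M^2/\log M)$.

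\medskip

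\textbf{Step 2: absorbing $S_M$.} The contribution of $m\in S_M$ to $\sum_{m\in T_M}|q_L(m)|$ must be shown to be a lower-order fraction of $\sum_{m\in T_M}|q_L(m)|$, so that replacing $T_M$ by $T_M-S_M$ does not change the order of magnitude. For $L=L_H$ this is exactly the last step of the proof of \Cref{lem_sum-asymp-H} (the bound $|\sum_{f}\chi(f)\sum_{df\in T'-T}d|=o(M^2)$ generalizes: since $\#S_M=O(M^{1-\epsilon})$ and $|q_L(m)|\ll_\epsilon m^{1+\epsilon}$, we get $\sum_{m\in S_M}|q_L(m)|\ll M^{1-\epsilon}\cdot M^{1+\epsilon'}=o(M^2)$ for $\epsilon'<\epsilon$). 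For $L=L_S$, since $|q_L(m)|\asymp m^{3/2}$ uniformly for $m\in T$ (by \S\ref{sec_asymp-qL}, because every $m$ is represented by $L_S$ and the assumptions of \S\ref{sec_asymp-qL} hold), we have $\sum_{m\in T_M-S_M}|q_L(m)|\asymp M^{3/2}\cdot\#(T_M-S_M)$ and $\sum_{m\in S_M}|q_L(m)|\asymp M^{3/2}\cdot\#S_M$, so the hypothesis $\#S_M=o(\#T_M)$ gives exactly that the $S_M$ part is negligible.

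\medskip

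\textbf{Step 3: the order-of-magnitude claims.} Having established $\sum_{m\in T_M-S_M}C.Z(m)=(\omega.C)\sum_{m\in T_M-S_M}|q_L(m)|(1+o(1))$ with $\omega.C>0$ a fixed positive constant, the final asymptotics follow by computing $\sum_{m\in T_M-S_M}|q_L(m)|$ in each case. For \Cref{thm_main}(2) ($L=L_H$), this is $\asymp M^2$ by \Cref{lem_sum-asymp-H} together with Step 2. For \Cref{thm_main}(1) ($L=L_S$, $T=\{q^2:q\neq p\text{ prime}\}$), $|q_L(q^2)|\asymp q^3$ and there are $\asymp M^{1/2}/\log M$ primes $q\leq M^{1/2}$, so $\sum\asymp (M^{1/2})^3\cdot M^{1/2}/\log M=M^2/\log M$; the same computation with $D$ the discriminant of the fixed real quadratic field handles \Cref{rmk_realquad}. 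For \Cref{thm_max} ($L=L_S$, $T=\{q:q\neq p\text{ prime},\ (\tfrac{q}{p})=1,\ q\equiv3\bmod 4\}$), $|q_L(q)|\asymp q^{3/2}$ and by Dirichlet's theorem there are $\asymp M/\log M$ such primes $q\leq M$, so $\sum\asymp M^{3/2}\cdot M/\log M=M^{5/2}/\log M$.

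\medskip

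\textbf{Main obstacle.} I expect the only genuinely delicate point to be the error-term bound in the $L=L_S$, prime-square case (\Cref{thm_main}(1) and \Cref{rmk_realquad}), where the main sum is only $\asymp M^2/\log M$ while the naive trivial bound on the cusp-form coefficients summed over all $m\leq M$ is $M^{9/4}$, which is too large; one must restrict the sum of $|g(m)|$ to the sparse set of prime-squares and invoke a second-moment (Rankin--Selberg) estimate, or cite that the $\fe_0$-component of $G$ lies in a space of classical weight-$2$ cusp forms for which the relevant second moment $\sum_{m\leq M}|g(m)|^2\ll M^2$ is standard. Everything else is bookkeeping with the formulas of \Cref{Ecoeff} and elementary Dirichlet-series summation as in \Cref{lem_sum-asymp-H}.
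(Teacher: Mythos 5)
Your overall route — combine Lemma \ref{lem_glo-bound} term by term, show that the $S_M$ contribution is negligible, then compute the order of magnitude of $\sum_{m\in T_M-S_M}|q_L(m)|$ — is exactly the paper's route. The only difference is in Step 1, where you take a detour that is unnecessary.

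The detour begins when you bound the error sum by $\sum_{m\le M}|g(m)|$ over \emph{all} $m\le M$, obtain $M^{9/4}$, and conclude that you need a second-moment (Rankin--Selberg) argument. In fact, once you remember that the error in $C.Z(m)$ should only be summed over the sparse set $T_M-S_M$, the trivial bound already closes the gap: for \Cref{thm_main}(1) and \Cref{rmk_realquad},
\[
\sum_{m\in T_M-S_M}O(m^{5/4})\ \ll\ \sum_{q\le (M/D)^{1/2},\ q\text{ prime}}(Dq^2)^{5/4}\ \ll\ \sum_{q\le M^{1/2}}q^{5/2}\ \asymp\ \frac{M^{7/4}}{\log M}\ =\ o\!\left(\frac{M^2}{\log M}\right),
\]
and for \Cref{thm_max} the same computation gives $\ll M^{9/4}/\log M=o(M^{5/2}/\log M)$. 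This is what the paper's laconic citation of ``\Cref{lem_glo-bound} and the prime number theorem'' means: the cusp-form error is $O(m^{5/4})$ pointwise, and the prime number theorem controls $\#T_M$. No Rankin--Selberg input is required.

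Your Rankin--Selberg argument, while superfluous, is not wrong in spirit, but it does contain an arithmetic slip: for $L=L_S$ we have $n=3$, so $G$ is a \emph{weight $5/2$} cusp form, and the Rankin--Selberg bound gives $\sum_{m\le M}|g(m)|^2\ll M^{1+n/2}=M^{5/2}$, not $M^2$. After correcting this, Cauchy--Schwarz gives $\sum_{q^2\le M}|g(q^2)|\ll M^{5/4}\cdot M^{1/4}=M^{3/2}$, which is still $o(M^2/\log M)$, so the conclusion survives; but the weight should be $5/2$ throughout. (Your Step 2 and Step 3 are correct and match the paper.)
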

\begin{proof}
By \S\ref{sec_asymp-qL} and the assumption on $S_M$, we have $\sum_{m\in S_M}|q_L(m)|=o(\sum_{m\in T_M}|q_L(m)|)$. Then
the assertions follow from \S\ref{sec_asymp-qL}, \Cref{lem_glo-bound}, \Cref{lem_sum-asymp-H}, and the prime number theorem.
\end{proof}

For each non-ordinary point $P$ on $C\cap Z(m)$, we introduce the notion of \emph{global intersection number $g_P(m)$ at $P$} using the following (well-known) relation between the non-ordinary locus and the divisor class of the Hodge bundle. Note that in the proof, we will only use the notion $g_P(m)$ for a supersingular point. 

\begin{lemma}
The non-ordinary locus in $\cM_k$ and $\cM^{\rm{tor}}_k$ is cut out by a Hasse-invariant $H$, which is a section of $\omega^{p-1}$, and hence the number of non-ordinary points (counted with multiplicity) on $C$ is given by $(p-1)(C.\omega)$.
\end{lemma}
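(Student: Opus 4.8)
The statement has two parts: first, that the non-ordinary locus in $\cM_k$ (and $\cM^{\rm{tor}}_k$) is the vanishing locus of a section $H$ of $\omega^{p-1}$; second, that consequently the number of non-ordinary points on $C$, counted with multiplicity, equals $(p-1)(C.\omega)$. The first part is a standard fact about GSpin Shimura varieties in characteristic $p$, and the plan is to invoke it rather than reprove it. Concretely, on $\cM_k$ one has the Hodge (automorphic) line bundle $\omega$, whose associated filtration piece is $\Fil^1 \bL_{\cris,P}(k)$ in the notation of \S\ref{sec_eqn-non-ord}; the Hasse invariant is the classical one on the Kuga--Satake abelian scheme, restricted via the embedding of $\cM_k$, and it is a standard computation (e.g.\ via the explicit local equations in \Cref{eqn-non-ord}, where the non-ordinary locus is cut out by $xy=0$, $y=0$, $xy + z^2/(4\epsilon)=0$, etc.) that this section transforms as a section of $\omega^{p-1}$ and vanishes exactly on the non-ordinary locus. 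One would cite the construction of the Hasse invariant on GSpin Shimura varieties (or deduce it from the Hasse invariant on $\cA_g$ pulled back along the Kuga--Satake map, noting that in the $L=L_S,L_H$ cases $\cM_k$ literally maps to a moduli space of abelian surfaces, so $\omega$ is the usual Hodge bundle and $H$ is the usual Hasse invariant, whose weight is $p-1$). The extension to $\cM^{\rm{tor}}_k$ follows since $\omega$ extends to the toroidal compactification and the Hasse invariant extends as well (the ordinary locus is dense and the boundary lies in the ordinary locus in these cases, or one cites the general extension result).

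For the second part, the argument is now a one-line divisor-degree computation. We have the finite morphism $\pi\colon C\to \cM_k$ (or $\pi'\colon C'\to \cM^{\rm{tor}}_k$ in the Hilbert case), and pulling back the section $H\in \Gamma(\omega^{p-1})$ gives a section $\pi^*H$ of $\pi^*(\omega^{p-1})=(\pi^*\omega)^{\otimes(p-1)}$ on $C$. Since $C$ is generically ordinary (the generic point corresponds to an ordinary abelian surface by hypothesis), $\pi^*H$ is not identically zero, so its divisor of zeros is an effective divisor of degree $\deg_C\left((\pi^*\omega)^{\otimes(p-1)}\right) = (p-1)\deg_C(\pi^*\omega) = (p-1)(C.\omega)$. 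The zero locus of $\pi^*H$ is exactly the preimage of the non-ordinary locus, i.e.\ the set of non-ordinary points of $C$, counted with the multiplicity $v_t$ of $\pi^*H$ at each such point. This gives the claimed formula. (In the non-proper Hilbert case one works on the smooth compactification $C'$ and uses that $\omega$ extends to $\cM^{\rm{tor}}_k$, so the same degree computation applies on $C'$.)

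The only genuine subtlety — and the place I would be most careful — is the precise normalization of the Hasse invariant and verifying it has weight exactly $p-1$ for the automorphic bundle $\omega$ as the paper has defined it (weight-one modular forms, \Cref{thm_HMP}). In the Siegel case $L=L_S$, $\cM_k=\cA_{2,k}$ and $\omega$ is the determinant of the Hodge bundle of the universal abelian surface raised to an appropriate power so that weight-one forms correspond to sections of $\omega$; one must check this matches the convention under which the Siegel Hasse invariant has weight $p-1$. Rather than belabor this, I expect the paper intends to cite a reference (e.g.\ the construction of Hasse invariants on Shimura varieties of Hodge type, or Goldring--Koskivirta, or simply the classical fact for $\cA_g$) and the local computations of \S\ref{sec_eqn-non-ord} already confirm that the non-ordinary locus is a Cartier divisor whose class is a consistent multiple of $\omega$; matching the multiple to $p-1$ is then forced by comparison with the Kuga--Satake abelian variety. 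So the main obstacle is purely bookkeeping about conventions, not a substantive difficulty.
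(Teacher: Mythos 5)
Your proposal is correct and takes essentially the same route as the paper: both invoke a standard reference for the existence of the Hasse invariant as a section of $\omega^{p-1}$ whose zero locus is the non-ordinary stratum (the paper cites Boxer and the coincidence of the ordinary Newton and Ekedahl--Oort strata, you suggest Goldring--Koskivirta or pullback from $\cA_g$ via Kuga--Satake), and then conclude with the degree-of-a-section computation, noting in the Hilbert case that the toroidal boundary is ordinary so the count on $C'$ agrees with the count on $C$.
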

See for instance \cite[\S\S 1.4-1.5, Theorem 6.2.3]{Boxer} for an explanation of this fact (and we use the fact that the ordinary Newton stratum coincides with the ordinary Ekedahl--Oort stratum). For the last assertion in the lemma, we remark that when $L=L_H$, the boundary $\cM^{\rm{tor}}_k\setminus \cM_k$ is ordinary and hence the intersection of $C'$ (in \S\ref{def_int}) with the non-ordinary locus is the same as the intersection of $C$ with the non-ordinary locus.

\begin{defn}
Let $t$ be the local coordinate at $P$ (i.e., $\widehat{C}_{P}=\Spf k[[t]]$) and let $A=v_t(H)$. We define $g_P(m)=\frac{A}{p-1}|q_L(m)|$.
\end{defn}
Note that by the above lemmas, we have the following decomposition\[\sum_{P\in C \text{ non-ord}}\sum_{m\in T_M-S_M}g_P(m)=\sum_{m\in T_M-S_M}|q_L(m)|(\omega. C)=\sum_{m\in T_M-S_M} C.Z(m) + o(\sum_{m\in T_M-S_M}|q_L(m)|(\omega. C)).\]

\subsection{The lattices and the outline of the proof}\label{subsec_outline}
 Let $B \rightarrow \Spf k[[t]]$ denote the generically ordinary abelian surface given by fulling back the universal family over $\cM_k$ to $\widehat{C}_{P}=\Spf k[[t]]$ for some point $P\in C$. Recall the notion of special endomorphisms from \S\ref{sec_sp_end} and by a slight abuse of terminology, when $L=L_H$, we will also refer to a special quasi-endomorphism with certain integrality condition in \S\ref{def_spdiv_H} as a special endomorphism. For any $n\in \bZ_{>0}$, the lattice is special endomorphisms of $B \bmod t^n$ is a sublattice of $B\bmod t$, which is equipped with a positive definite quadratic form $Q'$ (see \Cref{def_L''}). 

\begin{lemma}\label{oleg}
The local intersection multiplicity of $C.Z(m)$ at $P$, denoted by $l_P(m)$, equals $$\sum_{n = 1} ^ {\infty} \#\{ \textrm{Special endomorphisms } s \textrm{ of }B \bmod t^n \textrm{ with } Q'(s)=m\}.$$
\end{lemma}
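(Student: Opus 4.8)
The plan is to identify the local intersection multiplicity $l_P(m)$ with the length of the local ring of the scheme-theoretic fiber product $\widehat{C}_P \times_{\cM} \cZ(m)$, and then to compute that length by stratifying according to the order of vanishing, i.e., by the largest power $t^n$ modulo which a given special endomorphism of norm $m$ deforms. First I would recall that $l_P(m)$ is by definition (see \S\ref{def_int}) the multiplicity at $P$ of the pullback divisor $\pi^* Z(m)$, equivalently the length $\Length_{k[[t]]}\big( k[[t]] \otimes_{\cO_{\cM}} \cO_{\cZ(m)} \big)$, using the flatness of $\cZ(m)$ over $\bZ_{(p)}$ (resp. $\bZ_p$ in the Hilbert case) from \Cref{def_spdiv_S} and \S\ref{def_spdiv_H} so that passing to the special fiber is harmless. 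Because $\cZ(m)$ represents the functor $S \mapsto \{ s \in \End(A_S) \text{ special} \mid Q(s) = m \}$ (resp. the corresponding space of special quasi-endomorphisms), a point of $\widehat{C}_P \times_{\cM} \cZ(m)$ valued in $k[[t]]/t^n$ is exactly a special endomorphism of $B \bmod t^n$ of norm $m$.

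The key computation is then the standard one expressing the length of a finite-length $k[[t]]$-module (or of a closed subscheme of $\Spf k[[t]]$ cut out by an ideal) as a sum of ``how far each section extends.'' Concretely, if $\cZ(m) \times_{\cM} \widehat{C}_P$ is cut out in $k[[t]]$ by an ideal $I$, then its length is $\sum_{n \geq 1} \dim_k \big( I + (t^n) \big)/(t^{n-1})$ -- equivalently, for each $n \geq 1$ one counts the ``new'' relations appearing at level $t^n$. Dualizing this via the moduli interpretation: the set of special endomorphisms of $B \bmod t^n$ of norm $m$ surjects onto that of $B \bmod t^{n-1}$ (by the universal property and the fact that $\cZ(m) \to \cM$ is finite unramified onto its image away from the locus we care about — or more robustly, just by counting), and the length is the total number of pairs consisting of a level $n \geq 1$ together with a special endomorphism of $B \bmod t^n$ of norm $m$ that did not already come from level $n+1$; reorganizing this telescoping sum gives precisely $\sum_{n=1}^\infty \#\{ s \in \End(B \bmod t^n) \text{ special}, Q'(s) = m \}$. (Here one uses that this count stabilizes for $n$ large: any special endomorphism of $B \bmod t^n$ for all $n$ would be a special endomorphism of $B$ itself, but $B$ is generically ordinary and non-isotrivial, so $L' $ for the generic point has rank strictly less than $\rk L$, forcing the count to eventually vanish, which also guarantees $l_P(m) < \infty$.)

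The main obstacle I anticipate is being careful that the two descriptions of the local ring of $\cZ(m)$ agree with the correct multiplicities: one must check that $\cZ(m)$, as defined via the moduli problem in \Cref{def_spdiv_S}/\S\ref{def_spdiv_H}, has the ``right'' (non-reduced) scheme structure, so that its pullback along $\pi$ computes the honest intersection number appearing in \Cref{thm_HMP} and \Cref{lem_glo-bound}, rather than some reduced or otherwise modified version. This is where the cited flatness results (\cite[Prop.~4.5.8]{AGHMP}, \cite[Prop.~5.21]{MP16}) and the identification of $\cZ(m)$ with Kudla--Rapoport's special cycles (\Cref{rmk-compareKR}, \Cref{compare_S}) do the work: they ensure $\cZ(m)$ is a genuine Cartier divisor with the deformation-theoretic description of its ideal given by obstructions to lifting special endomorphisms, so that the Grothendieck--Messing / crystalline deformation theory used in \S\ref{sec_decay_Hil}--\ref{sec_decay_Sie} is measuring exactly the quantity $l_P(m)$. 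Once that compatibility is in hand, the lemma is the formal length-counting identity above.
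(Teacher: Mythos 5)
Your overall plan is the right one and matches the paper's (very terse) proof, which simply observes that the identity follows from the moduli interpretation of $\cZ(m)$: since $\cZ(m)\to\cM$ is finite and unramified, the fiber product $R=\widehat{C}_P\times_{\cM}\cZ(m)$ is a finite product of rings $k[[t]]/(t^{a_i})$, one for each special endomorphism $s_i$ of $B$ that deforms to level exactly $a_i$; its length is $\sum_i a_i$, and the count $\#\{s\text{ of }B\bmod t^n\mid Q'(s)=m\}$ equals $\#\{i:a_i\geq n\}$, so summing over $n$ recovers $\sum_i a_i$.

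One sentence in your writeup has the direction reversed, and it is worth correcting because it is exactly what makes the sum finite: the restriction map from special endomorphisms of $B\bmod t^n$ to those of $B\bmod t^{n-1}$ is \emph{injective} (by rigidity of endomorphisms of abelian schemes over Artinian thickenings, equivalently unramifiedness of $\cZ(m)\to\cM$), \emph{not} surjective. It is precisely the failure of surjectivity — special endomorphisms that do not lift to the next infinitesimal level — that causes the terms to eventually vanish; if the restriction maps were surjective the sum would be infinite. Also, the fiber product $R$ is not a closed subscheme of $\Spf k[[t]]$, so your formula for its length in terms of an ideal $I\subset k[[t]]$ should instead be phrased for $R$ directly (e.g. $\Length_{k[[t]]}(R)=\sum_{n\geq 1}\dim_k(t^{n-1}R/t^nR)$, or via the product decomposition above); the final telescoping identity you reach is the correct one, but the "total number of pairs that did not already come from level $n+1$" description as literally stated would count each $s_i$ once rather than $a_i$ times.
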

The lemma follows directly from the moduli interpretation of $Z(m)$. Note that as $B$ generically has no special endomorphisms, this infinite sum can actually be be truncated at some finite stage (which will depend on $m$). 

\begin{remark}\label{rmk_rk}
Given $B$, the lattices of special endomorphisms of $B\bmod t^n$ have the same rank for all $n\in \bZ_{>0}$. Indeed, the work of de Jong, Moonen and Kisin cited in the proof of \Cref{thm_decay} applies to any $P$ and for any special endomorphism $w$ of $B\bmod t$, we have the parallel extension $\tilde{w}\in K[[t]]$, which is invariant under the Frobenius on $\bL_{\cris}(W[[t]])$. By de Jong's theory (here we need the fully faithfulness of the Dieudonn\'e functor, see \cite[Cor.~2.4.9]{dJ95}), whether $w$ extends over $\bmod t^n$ depends on the $p$-powers in the denominators of the coefficients of $\tilde{w}$. Therefore, given $n$, there exists $N$ such that $p^Nw$ extends over $\bmod t^n$ and hence these lattices tensor $\bZ_\ell, \ell\neq p$ are all isomorphic and in particular, the rank of the lattices is independent of $n$.
\end{remark}

Motivated by the Decay Lemmas \Cref{thm_decay,thm_decay_sg}, we define the following lattices for supersingular points (note that the notation is slightly different from that in the introduction and we will use the notation in this section for the rest of the paper).

\begin{para}\label{def_smallL'}
Assume $P$ is supersingular and recall that $A=v_t(H)$, where $H$ is the Hasse invariant and in \Cref{def_decay}, $A_n=[A(p^n+p^{n-1}+\cdots+1+\frac{1}{p})]$. 
\begin{itemize}
\item If $B \bmod t$ is superspecial, define $L_{0,1}$, $L_{n,1}, n\in \bZ_{>0}$, and $L_{n,2}, n\in \bZ_{\geq 0}$ to be the lattices of special endomorphisms of $B$ mod $t$, mod $t^{A_{n-1}+1}$, and mod $t^{A_{n-1}+ap^n+1}$ respectively. As in \Cref{def_L''}, we pick a lattice $L'_{n,i}\subset L'$ such that  $L_{n,i}\subset L'_{n,i}$ and for $\ell\neq p$, $L'_{n,i}\otimes \bZ_\ell=L'\otimes \bZ_\ell$ and $L'_{n,i}\otimes \bZ_p=L_{n,i}\otimes \bZ_p$. In particular, $L'_{0,1}=L'$ and by \Cref{thm_decay}, we have $[L'_{n,1}: L'_{n,2}]\geq p$ and  $[L':L'_{n,1}]\geq p^{3n}$.

\item If $B \mod t$ is supergeneric, define $L_0$ and $L_n, n\in \bZ_{> 0}$ to be the lattice of special endomorphisms of $B$ mod $t$ and mod $t^{A_{n-1}+1}$.  Again, as in \Cref{def_L''}, we pick a lattice $L'_n\subset L'$ such that $L_n\subset L'_n$ and for $\ell\neq p$, $L'_{n}\otimes \bZ_\ell=L'\otimes \bZ_\ell$ and $L'_{n}\otimes \bZ_p=L_{n}\otimes \bZ_p$; \Cref{thm_decay_sg} implies that $[L':L'_n]\geq p^{3n}$.
\end{itemize}
Since we assume that $C$ does not admit any global special endomorphisms, we have $\cap_{n=0}^\infty L_{n,i}=\{0\} =\cap_{n=0}^\infty L_n$. By \Cref{rmk_rk}, the difference between $L'_n, L'_{n,i}$ and $L_n,L_{n,i}$ is the same as that between $L_0,L_{0,i}$ and $L'$, we also have $\cap_{n=0}^\infty L'_{n,i}=\{0\} =\cap_{n=0}^\infty L'_n$.
\end{para}

\begin{corollary}\label{lem_count}
 \begin{enumerate}
     \item If $P$ is superspecial, then 
     \[l_P(m)\leq \frac{A(p+2)}{2p}r_{0,1}(m)+ \frac{A}{2}r_{0,2}(m)+ \sum_{n=1}^\infty \frac{Ap^n}{2}(r_{n,1}(m)+r_{n,2}(m)),\]
     where $r_{n,i}(m)=\#\{s\in L'_{n,i}\mid Q'(s)=m\}$.
     \item If $P$ is supergeneric, then 
     \[l_P(m)\leq \frac{A(p+1)}{p}r_{0}(m)+ \sum_{n=1}^\infty Ap^n r_n(m),\]
     where $r_{n}(m)=\#\{s\in L'_{n}\mid Q'(s)=m\}$.
 \end{enumerate}
\end{corollary}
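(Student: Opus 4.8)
\textbf{Proof proposal for \Cref{lem_count}.}

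The plan is to combine the lattice-point formulation of the local intersection multiplicity from \Cref{oleg} with the decay estimates encoded in the lattices $L'_{n,i}$ (resp. $L'_n$) defined in \S\ref{def_smallL'}. First I would recall the starting point: by \Cref{oleg}, $l_P(m) = \sum_{n=1}^\infty \#\{s \in \End(B \bmod t^n) \text{ special} \mid Q'(s) = m\}$, and since the lattice of special endomorphisms of $B \bmod t^N$ is a decreasing function of $N$ (with zero intersection by the no-global-special-endomorphism hypothesis), this is a finite sum that we can reorganize by how far a given special endomorphism of $B \bmod t$ actually extends. Concretely, for a special endomorphism $s$ of $B \bmod t$ with $Q'(s) = m$, its contribution to the sum $\sum_N \#\{\cdots \bmod t^N\}$ equals the largest $N$ such that $s$ lifts mod $t^N$; so $l_P(m) = \sum_{s : Q'(s)=m} \ell(s)$, where $\ell(s)$ is this ``lifting length''. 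The task is to bound $\ell(s)$ by a quantity that depends only on which of the nested lattices $L'_{n,i}$ the endomorphism $s$ lies in.

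The key step is the following dichotomy, which is a direct translation of the Decay Lemma \Cref{thm_decay} (via \Cref{prop_decay} and the definition of decaying (very) rapidly in \Cref{def_decay}). In the superspecial case: write $s = p^k w$ with $w$ primitive in $L' \otimes \bZ_p$. If $w$ lies in the rank-$3$ rapidly decaying submodule, then by \Cref{def_decay}(1), $p^k w = p^{k}\cdot w$ does not lift mod $t^{A_k + 1}$, so $\ell(s) \leq A_k$; if moreover $w$ is the very-rapidly-decaying primitive vector, then $\ell(s) \leq A_{k-1} + a p^k$. For a general primitive $w$, the relevant bound comes from whichever sublattice $L'_{n,i}$ realizes $s$: by construction, $s \in L'_{n,1}$ means $s$ already fails to lift beyond $\bmod t^{A_{n-1}+1}$, i.e. $\ell(s) \le A_{n-1} \le A(p^{n-1} + \cdots + 1 + 1/p) < \tfrac{A p^n}{2}\cdot\tfrac{2p}{p-1}\cdot\tfrac{1}{p}$ — more precisely I would just use $A_{n-1} \le \tfrac{A p^n}{2}$ directly once $n \geq 1$ (and handle $n=0$, where $A_{-1}=[A/p]$, separately, giving the $\tfrac{A(p+2)}{2p}$ and $\tfrac{A}{2}$ coefficients). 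Then I would partition the set $\{s : Q'(s) = m\}$ according to the smallest index for which $s$ fails to lift, sum the bound $\ell(s) \le (\text{coefficient})$ over each piece, recognize each piece's cardinality as $r_{n,i}(m) = \#\{s \in L'_{n,i} : Q'(s) = m\}$ (or a difference of such, which we bound from above by $r_{n,i}(m)$ itself since these are nonnegative and nested), and collect terms. The supergeneric case is identical but simpler, using only \Cref{thm_decay_sg} and the single chain $L'_n$, yielding coefficients $\tfrac{A(p+1)}{p}$ and $A p^n$.

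The main obstacle — really the only subtle point — is the bookkeeping at the ``boundary'' between consecutive lattices: I need to check that a special endomorphism $s$ with $Q'(s) = m$ that lifts mod $t^{A_{n-1}+1}$ but not mod $t^{A_n+1}$ is correctly counted with the coefficient $\tfrac{A p^n}{2}$ rather than double-counted, and that the very-rapid-decay refinement (which improves $L'_{n,1}$ to $L'_{n,2}$ for a sublattice of index $\geq p$) is inserted at the right place so that the $r_{n,2}(m)$ terms appear with the same coefficient $\tfrac{A p^n}{2}$ as claimed. This amounts to writing $\ell(s) \le A_{n-1} + a p^n \le A_{n-1} + \tfrac{A}{2} p^n \le \tfrac{A}{2}(p^{n-1} + \cdots + 1 + 1/p) + \tfrac{A}{2} p^n$ and verifying this is $\le \tfrac{A p^n}{2}\cdot(1 + \tfrac{1}{p-1}) $, i.e. confirming the geometric-series bound $p^{n-1} + \cdots + 1 + 1/p + p^n \le \tfrac{p^{n+1}}{p-1}$; these are elementary but must be done carefully to match the stated constants. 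I expect no conceptual difficulty beyond this, since all the hard analytic input (the actual decay of special endomorphisms) is already packaged in \Cref{thm_decay} and \Cref{thm_decay_sg}.
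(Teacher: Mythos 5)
Your overall plan — rewrite $l_P(m)$ via \Cref{oleg} as a sum of lifting lengths, then account for the lifting lengths using the nested lattices $L'_{n,i}$ and the constraints $a\leq A/2$, $A_n=[A(p^n+\cdots+1/p)]$ — is exactly the paper's approach. But the execution in your last paragraph has two concrete problems.

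First, the bookkeeping direction. You propose to partition $\{s: Q'(s)=m\}$ by the smallest $N$ at which $s$ fails to lift, bound $\ell(s)\le c_j$ on each piece $P_j$, and then replace the piece sizes $|P_j|=r_j-r_{j+1}$ by $r_j$. That substitution does not give the claimed bound: $\sum_j c_j r_j$ would overcount badly, since the $c_j$ grow like $A p^j/(p-1)$ whereas the coefficients in the corollary are the \emph{increments} $c_j - c_{j-1}\approx Ap^j/2$. You must Abel-sum $\sum_j c_j(r_j - r_{j+1}) = \sum_j (c_j - c_{j-1}) r_j$, not bound the differences by singletons. The paper avoids this trap by going the other way: it partitions the values of $N$ (not the vectors $s$) into intervals $[1,A_{-1}+a]$, $[A_{-1}+a+1, A_0]$, $[A_{n-1}+1, A_{n-1}+ap^n]$, $[A_{n-1}+ap^n+1,A_n]$, bounds $\#\{s \text{ lifting mod } t^N\}$ on each interval directly by a single $r_{n,i}(m)$, so the first inequality reads
\[
l_P(m)\le (A_{-1}+a)r_{0,1}(m)+(A_0-A_{-1}-a)r_{0,2}(m)+\sum_{n\ge 1}\bigl(ap^n r_{n,1}(m)+(A_n-A_{n-1}-ap^n)r_{n,2}(m)\bigr),
\]
and only then Abel-sums to exploit $a\le A/2$ and $A_n\le A(p^n+\cdots+1/p)$. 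This is cleaner because no differences of $r$'s appear at the start.

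Second, your check contains an arithmetic slip: $A_{n-1}\le A(p^{n-1}+\cdots+1+1/p)$, not $\tfrac{A}{2}(p^{n-1}+\cdots+1+1/p)$; the factor $\tfrac12$ enters only through $a\le A/2$, and after Abel-summing the two sources combine to give the coefficients $\tfrac{A(p+2)}{2p}$, $\tfrac{A}{2}$, $\tfrac{Ap^n}{2}$ exactly. Finally, a small point of emphasis: the Decay Lemma does not need to be re-invoked in this corollary's proof; its content (the existence of the constant $a\le A/2$ and the rapidly/very-rapidly decaying submodules) is already packaged into the definitions of $L_{n,1}$ and $L_{n,2}$ in \S\ref{def_smallL'}, and \Cref{lem_count} itself is a purely combinatorial consequence of those definitions and the numerical facts about $a$ and $A_n$.
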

\begin{proof}
By \Cref{oleg} and \S\ref{def_smallL'}, we have that for $P$ superspecial, 
\begin{align*}
    l_P(m) & \leq (A_{-1}+a)r_{0,1}(m)+(A_0-A_{-1}-a) r_{0,2}(m) + \sum_{n=1}^\infty (ap^n r_{n,1}(m)+ (A_n-A_{n-1}-ap^n) r_{n,2}(m))\\
    & =A_{-1}(r_{0,1}(m)-r_{0,2}(m)) + a\sum_{n=0}^\infty p^n(r_{n,1}(m)-r_{n,2}(m))+\sum_{n=0}^\infty A_n (r_{n,2}(m)-r_{n+1,2}(m))\\
    & \leq \frac{A}{p}(r_{0,1}(m)-r_{0,2}(m)) + \frac{A}{2}\sum_{n=0}^\infty p^n(r_{n,1}(m)-r_{n,2}(m))+\sum_{n=0}^\infty (A(p^n+\cdots +1+ p^{-1}) (r_{n,2}(m)-r_{n+1,2}(m)),
\end{align*}
where the last equality follows from the facts that $r_{n,1}(m)\geq r_{n,2}(m), r_{n,2}(m)\geq r_{n+1,2}(m)$ and $a\leq A/2, A_n\leq A(p^n+\cdots + p^{-1})$. We then obtain the assertion in (1) by rearranging the summations. A similar argument gives (2).
\end{proof}

The main task of the next two sections is to prove that
\begin{proposition}\label{supersingular}
Given $C$, there exists $S_M$ satisfying the assumption in \Cref{lem_glo-sum_asymp} such that for every supersingular point $P$ on $C$, we have \[\sum_{m\in T_M-S_M}l_P(m)\leq \frac{11}{12}\sum_{m\in T_M-S_M}g_P(m)+o(\sum_{m\in T_M-S_M}g_P(m)).\]
\end{proposition}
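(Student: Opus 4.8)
\textbf{Proof proposal for Proposition \ref{supersingular}.}

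The strategy is to combine the three ingredients assembled so far: the global asymptotic $g_P(m)=\frac{A}{p-1}|q_L(m)|$ coming from Borcherds theory (Lemma \ref{lem_glo-sum_asymp} and \S\ref{sec_decomp}), the bound for $l_P(m)$ in terms of a weighted lattice-point count over the decaying lattices $L'_{n,i}$ (resp.\ $L'_n$) from Corollary \ref{lem_count}, and the comparison of Eisenstein Fourier coefficients $q_{L'''}(m)$ and $q_L(m)$ from Lemma \ref{compare_qL}. First I would fix the supersingular point $P$ and its invariant $A=v_t(H)$, and treat the superspecial and supergeneric cases separately, each with its own prescription for the exceptional set $S_M$. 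The idea is that $r_{n,i}(m)=\#\{s\in L'_{n,i}\mid Q'(s)=m\}$ is, on average over $m\in T_M$, controlled by the Eisenstein coefficient $q_{L'_{n,i}}(m)$ plus a cusp-form error that is of strictly smaller order (by the trivial or Deligne bound as in Lemma \ref{lem_glo-bound}, applied to the theta series $\theta_{L'_{n,i}}$ via Theorem \ref{thm_Siegel-mass}); the set $S_M$ will be precisely the values of $m$ where this averaging fails, together with the ``bad binary form'' values described in \S\ref{sec_intro_pf}, and one checks $\#S_M$ satisfies the hypothesis of Lemma \ref{lem_glo-sum_asymp}.

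The heart of the argument is then a summation over $m\in T_M-S_M$ of the inequality in Corollary \ref{lem_count}. Summing $r_{n,i}(m)$ against the weights $\tfrac{A(p+2)}{2p}$, $\tfrac A2$, $\tfrac{Ap^n}{2}$ and using $\sum_{m\in T_M-S_M}r_{n,i}(m)\sim \sum_{m\in T_M-S_M}q_{L'_{n,i}}(m)$, the bound reduces to estimating
\[
\sum_{n\geq 0} p^n \sum_{m\in T_M-S_M} q_{L'_{n,i}}(m)
\]
against $\sum_{m\in T_M-S_M}|q_L(m)|$. Here two facts control the sum: the index bounds $[L':L'_{n,1}]\geq p^{3n}$ and $[L'_{n,1}:L'_{n,2}]\geq p$ from the Decay Lemma (\Cref{thm_decay}, via \S\ref{def_smallL'}), which force $|(L'_{n,i}\otimes\bZ_p)^\vee/(L'_{n,i}\otimes\bZ_p)|$ to grow like $p^{6n}$ (up to bounded factors), so that by Lemma \ref{compare_qL}(3)--(4) the ratio $q_{L'_{n,i}}(m)/(-q_L(m))$ decays like $p^{-3n}$ (for $p\nmid m$) or $p^{-3n+1}$ (for $v_p(m)=1$); hence $\sum_{n\geq 0}p^n\cdot p^{-3n}$ converges and the $n\geq 1$ tail contributes only a small multiple of $\sum|q_L(m)|$, which can be absorbed into the $o(\cdot)$ term after possibly enlarging $S_M$ to remove the finitely many small $n$ where the geometric decay has not yet kicked in. The dominant contribution is the $n=0$ term, where one uses Lemma \ref{compare_qL}(1)--(2): for $P$ superspecial the weighted $n=0$ contribution is at most $\big(\tfrac{A(p+2)}{2p}\cdot\tfrac{1}{p-1}+\tfrac A2\cdot(\text{ratio for }L'_{0,2})\big)\sum|q_L(m)|$, and here the extra index $[L'_{0,1}:L'_{0,2}]\geq p$ from the very-rapid-decay vector (the constant $a\leq A/2$ in \Cref{thm_decay}) makes the $r_{0,2}$ term contribute a ratio $\leq \tfrac{1}{p(p-1)}$ or so; a direct arithmetic computation, using the explicit local densities from \S\S\ref{par_density_H}--\ref{par_density_S} and the explicit Eisenstein formulas of \Cref{Ecoeff} and \Cref{thm_Siegel-mass}, then shows the total coefficient of $\sum|q_L(m)|$ is at most $\tfrac{A}{p-1}\cdot\tfrac{11}{12}$, i.e.\ at most $\tfrac{11}{12}\sum g_P(m)$ (recall $g_P(m)=\tfrac{A}{p-1}|q_L(m)|$), with the $\tfrac{11}{12}$ valid for all odd $p\geq 5$ and improving toward $\tfrac12$ as $p\to\infty$ in accordance with the footnote in \S\ref{sec_intro_pf}. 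The supergeneric case is analogous but easier: there is no $r_{0,2}$ term, the weight on $r_0(m)$ is $\tfrac{A(p+1)}{p}$, and Lemma \ref{compare_qL}(2) gives ratio $\leq\tfrac{2}{p^2-1}$ for $n=0$, which is already much smaller than $\tfrac{11}{12}\cdot\tfrac{1}{p-1}$.

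The main obstacle I anticipate is the bookkeeping in the $n=0$ superspecial estimate: one must carefully track the discriminant $|(L'_{0,2}\otimes\bZ_p)^\vee/(L'_{0,2}\otimes\bZ_p)|$ and the local density $\delta(p,L'_{0,2},m)$ for both $p\nmid m$ and $v_p(m)=1$ (using \Cref{lem_den_L'''S} and \Cref{lem_Hanke}), and verify that the resulting constant, when combined with the convergent $n\geq 1$ tail and the $o(\cdot)$ error from passing between $r_{n,i}(m)$ and $q_{L'_{n,i}}(m)$, stays below $\tfrac{11}{12}$ uniformly in $p\geq 5$; the worst case is $p=5$, where the margin is smallest, and one should double-check that the cusp-form error in the theta decomposition of $\theta_{L'_{n,i}}$ is genuinely $o$ of the main term after summing over $m\in T_M-S_M$ (for $L=L_S$ this uses the trivial $O(m^{5/4})$ bound as in \Cref{lem_glo-bound}, and for $L=L_H$ the Deligne bound). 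A secondary subtlety is confirming that a single choice of $S_M$ works simultaneously for all finitely many supersingular points $P$ on $C$ and for all the lattices $L'_{n,i}$ attached to them; since there are only finitely many such $P$ and the ``bad $m$'' conditions (non-representability by the relevant binary forms $P_n$, plus the averaging-failure set) are each of density zero or $O(M^{1-\epsilon})$, taking the union preserves the bound $\#S_M=o(\#T_M)$ (resp.\ $O(M^{1-\epsilon})$ when $L=L_H$) required by \Cref{lem_glo-sum_asymp}.
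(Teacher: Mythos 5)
Your proposal identifies the correct ingredients (\Cref{lem_count}, the Eisenstein ratio bounds in \Cref{compare_qL}, and the theta/Eisenstein decomposition with a cusp-form error), and your $n=0$ computation correctly reproduces the $\tfrac{11}{12}$ constant. However, there is a genuine gap in the treatment of the tail $n\geq 1$, and the mechanism you describe for handling it would not work.

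Your plan is to write $r_{n,i}(m)=q_{L'_{n,i}}(m)+q_{G_{n,i}}(m)$ for \emph{all} $n$, use the $\asymp p^{-3n}$ decay of the Eisenstein ratios from \Cref{compare_qL}(3)--(4) to sum the geometric series, and absorb the cusp-form errors into $o(\cdot)$ or into $S_M$. But the cusp-form bounds available (Deligne's bound in the Hilbert case, or \Cref{boundcusp} in the Siegel case) carry a factor depending on the level/discriminant of $L'_{n,i}$, which grows like $p^{6n}$. Once $n$ is a positive fraction of $\log_p M$, this factor is a power of $M$ and the cusp-form ``error'' is no longer $o$ of the main term; and since $r_{n,i}(m)=q_{L'_{n,i}}(m)+q_{G_{n,i}}(m)$ \emph{exactly}, there is no sense in which the decomposition ``fails for some $m$'' that would justify removing those $m$ from $S_M$. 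Consequently your geometric series $\sum_n p^n q_{L'_{n,i}}(m)$ controls only the Eisenstein contribution, and your tail argument has no valid bound for $\sum_n p^n q_{G_{n,i}}(m)$ when $n$ is not very small.

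The paper resolves this by splitting the sum over $n$ into ranges and using \emph{different} mechanisms in each: (i) for small $n$ (at most a fixed constant $c$ in the Hilbert case, or $n\leq \epsilon_0\log_p M$ in the Siegel case, with $\epsilon_0$ chosen so $\disc L'_{n,i}$ stays a tiny power of $M$), the theta/Eisenstein decomposition is used, with the cusp-form error controlled by Deligne in the Hilbert case (\Cref{ss_H_main}) or by the discriminant-explicit bound \Cref{boundcusp} in the Siegel case; (ii) for medium $n$, one drops the Eisenstein structure entirely and bounds $\sum_m r_{n,i}(m)$ directly by geometry of numbers (\Cref{ss_H_tail}, \Cref{completesquares}(1)--(2)), where the key point is that the successive minima of $L'_{n,i}$ grow with $n$ (\Cref{lem_el}); (iii) for large $n$, one shows that the remaining $m$ must be represented by a binary sublattice $P_{n_0}$ (\Cref{betterbound}), that the density of such $m$ in $T$ tends to zero (\Cref{grh}), and places those $m$ into $S_M$, after which the large-$n$ contribution vanishes for $m\notin S_M$. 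In the Siegel case there is the additional subtlety of counting vectors with $Q'(v)=D\ell^2$ a fixed multiple of a prime square, for which the paper needs the refined geometry-of-numbers bound \Cref{completesquares}(1) (factoring $Q'(v_y)=D(\ell-\ell_0 x)(\ell+\ell_0 x)$ and using the divisor bound). None of these three mechanisms is present in your proposal, so the tail is not actually handled; a complete proof requires this range decomposition and the separate arguments in each range.
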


Once we have this proposition, we will prove that the local contribution from non-supersingular points have smaller order of magnitude, whence we conclude that there are infinitely many non-supersingular points on $C$ which lie in the desired special divisors.

\subsection{Ordinary points}
In order to bound $l_P(m)$, we need the following decay lemma for ordinary points, which follows directly from Serre--Tate theory. We thank Keerthi Madapusi Pera for pointing this out to us. Let $B \rightarrow \Spf k[[t]]$ denote the abelian surface with ordinary reduction given by pulling back the universal family over $\cM_k$ to $\widehat{C}_{P}=\Spf k[[t]]$ for an ordinary point $P$.

\begin{lemma}\label{decay-ord}
If $w$ is not a special endomorphism for the $p$-divisible group $B[p^\infty] \bmod t^{A+1}$, then $pw$ is not a special endomorphism for $B[p^\infty]\bmod t^{pA+1}$.
\end{lemma}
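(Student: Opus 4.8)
The plan is to deduce this directly from the Serre--Tate canonical coordinates of the ordinary abelian surface $B_0 \colonequals B \bmod t$, together with the behaviour of $p$-th powers in $1 + t\,k[[t]]$; this is the ``follows directly from Serre--Tate theory'' alluded to above, and the argument is almost entirely formal.

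First I would recall the relevant input (Katz, Drinfeld). Since $B_0$ is ordinary, a deformation of $B_0$ over an Artinian local $k$-algebra $R$ is classified by its Serre--Tate coordinate, a $\bZ_p$-bilinear pairing $q_R \colon T_p B_0 \times T_p B_0^\vee \to \widehat{\bG}_m(R) = 1 + \fm_R$; moreover, an endomorphism $w$ of $B_0[p^\infty]$ lifts to an endomorphism of the deformation if and only if $q_R(wx,y) = q_R(x, w^\vee y)$ for all $x \in T_p B_0$, $y \in T_p B_0^\vee$, where $w^\vee$ is the dual endomorphism. Applying this to the deformation cut out by $C$, the associated Serre--Tate coordinate over $k[[t]]$ is a pairing $\widetilde q$ valued in $1 + t\,k[[t]]$, and for each $n$ an endomorphism $w$ of $B_0[p^\infty]$ is a special endomorphism of $B[p^\infty] \bmod t^n$ precisely when the unit $u_w(x,y) \colonequals \widetilde q(wx,y)\cdot\widetilde q(x,w^\vee y)^{-1} \in 1 + t\,k[[t]]$ satisfies $u_w(x,y) \equiv 1 \pmod{t^n}$ for all $x,y$. (Here $w$ is an honest endomorphism of $B_0[p^\infty]$, so $pw$ is one too; and since $\bL_\cris$ is a horizontal, Frobenius-stable subcrystal and $\Spec k[t]/t^n$ is connected, a lift of a special endomorphism is automatically special, so the lifting question does not depend on remembering specialness.)

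The computation then runs as follows. Since $q$ is bilinear and $(pw)^\vee = p\,w^\vee$, one gets $u_{pw}(x,y) = u_w(x,y)^p$. Writing $u_w(x,y) = 1 + \sum_{j \geq 1} c_j t^j$ with $c_j \in k$ and using $\mathrm{char}\,k = p$, we have $u_w(x,y)^p = 1 + \sum_{j\geq 1} c_j^{\,p}\, t^{jp}$, so $u_w^p \equiv 1 \pmod{t^m}$ if and only if $c_j = 0$ for every $j < m/p$, i.e.\ if and only if $u_w \equiv 1 \pmod{t^{\lceil m/p\rceil}}$. Taking $m = pA+1$ and noting that $\lceil (pA+1)/p\rceil = A+1$ (as $p$ is odd), we conclude that $pw$ is a special endomorphism of $B[p^\infty]\bmod t^{pA+1}$ if and only if $w$ is a special endomorphism of $B[p^\infty]\bmod t^{A+1}$; the lemma is the contrapositive of one direction. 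The only steps requiring any care are the precise lifting criterion — in particular verifying that a lift of a special endomorphism is again special, and keeping track of the polarization so that one works with the Serre--Tate coordinate of the principally polarized $B_0$ (equivalently, of $B_0[p^\infty]$ with its induced polarization) — but none of this poses a genuine obstacle.
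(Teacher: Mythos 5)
Your proof is correct and takes essentially the same route as the paper: both invoke Serre--Tate coordinates for the ordinary reduction and reduce the lemma to the behaviour of the $p$-th power map on $1+t\,k[[t]]$. The paper packages the same idea a little more geometrically, phrasing it via the $[p]$-isogeny $q_i \mapsto q_i^p$ on the Serre--Tate formal torus and its effect on the local equation of the lifting locus, whereas you make the ``discrepancy unit'' $u_w = \widetilde q(w\cdot,\cdot)\,\widetilde q(\cdot,w^\vee\cdot)^{-1}$ explicit and directly compute $u_{pw}=u_w^p$; the two formulations are equivalent, and yours has the small advantage of isolating precisely where $\mathrm{char}\,k=p$ and the valuation arithmetic $\lceil(pA+1)/p\rceil=A+1$ enter.
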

\begin{proof}
Note that an endomorphism of $B[p^\infty] \bmod t^n$ is special if and only if its reduction on $B[p^\infty]\bmod t$ is special. Hence we only need to consider the deformation of endomorphisms.
Let $\{q_i\}$ ($i=1,2$ for the Hilbert case and $i=1,2,3$ for the Siegel case) denote $q$-coordinates of the formal group in the Serre--Tate deformation theory. Then, the formal neighborhood of the Shimura variety is given by $\Spf W[[t_i]]$, where $t_i = q_i-1$. Note that over $W[[t_i]]$, $s$ is an endomorphism of a point $x = (x_1,x_2,x_3)$ \footnote{i.e., $x$ is defined by setting $t_1=x_1,\ t_2=x_2$ and $t_3=x_3$} if and only if, all points $y = (y_1,y_2,y_3)$ satisfying the condition $(y_i+1)^p=x_i+1$ (equivalently $(y_i+1)^p -1 = x_i$) have the property that $ps$ is an endomorphism of $y$. In other words, if $f(t_i) = f(q_i-1)=0$ is the local equation of the locus on $W[[t_i]]$ such that the endomorphism $w$ deforms, then $f(q_i^p-1) = f((t_i+1)^p-1)=0$ is the local equation for $pw$ to deform. Hence over $k[[t]]$, if $f=0$ is the local equation that deforms $w$, then $f^p=0$ is the local equation for $pw$, and this gives the desired assertion.
\end{proof}


\begin{lemma}\label{lem_lat_ord}
Let $L_0, L_n, n\in \bZ_{>0}$ be the lattices of special endomorphisms of $B\bmod t$ and $B\bmod t^{Ap^{n-1}+1}$ respectively where $A\in \bZ_{>0}$. Then
\begin{enumerate}
    \item for any $A$, we have $\rk_{\bZ}L_n\leq 2$ if $L=L_H$ and $\rk_{\bZ}L_n\leq 3$ if $L=L_S$;
    \item there exist a constant $A$ and a $\bZ_p$-lattice $\Lambda$ (depending on $P$) with $\rk_{\bZ_p}\Lambda \leq 1$ when $L=L_H$ and $\rk_{\bZ_p}\Lambda \leq 2$ when $L=L_S$ such that $L_n\subset (\Lambda +p^{n-1} L_1\otimes \bZ_p)\cap L_0$.
\end{enumerate}
In particular, if $\rk_{\bZ}L_n=3$ when $L=L_S$ or $\rk_{\bZ}L_n=2$ when $L=L_H$, then $(\disc L_n)^{1/2}\geq p^{n-1}$.
\end{lemma}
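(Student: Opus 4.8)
My plan is to reduce everything to the $p$-divisible group $B[p^\infty]$ and then exploit Serre--Tate theory, which makes the lattices $L_n$ near an ordinary point completely explicit. Since $P$ is ordinary, the crystalline Frobenius acts on $\bL_{\cris,P}(W)$ with slopes $-1,1$ of multiplicity one and $0$ of multiplicity $2$ (for $L=L_H$) or $3$ (for $L=L_S$); hence $\mathrm T:=\bL_{\cris,P}(W)^{\varphi=1}$ is the $\varphi$-invariants of the unit-root sub-$F$-crystal, a free $\bZ_p$-module of rank $2$ or $3$ carrying the positive-definite form $Q'$, and $L_0\otimes\bZ_p=\mathrm T$ by Dieudonn\'e theory. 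Part (1) is then immediate: $L_n\subseteq L_0$, so $\rk_\bZ L_n\le\rk_{\bZ_p}\mathrm T$. For part (2) I will use the crystalline characterization of special endomorphisms, full faithfulness of the Dieudonn\'e functor over the Artinian quotients $k[[t]]/t^N$ (\cite[Cor.~2.4.9]{dJ95}), and Serre--Tate theory to identify, after $\otimes\bZ_p$ and for each $N$, the special-endomorphism lattice of $B\bmod t^N$ with the submodule $\widetilde L_N\subseteq\mathrm T$ of classes deforming as special endomorphisms of $B[p^\infty]$ over $k[[t]]/t^N$; at primes $\ell\ne p$ these lattices already agree with $L_0\otimes\bZ_\ell$ by \Cref{rmk_rk}.

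The heart of the argument is the structure of the decreasing family $\{\widetilde L_N\}$. By Serre--Tate, $\widehat{\cM}_P$ is a formal torus and, restricting to $C=\Spf k[[t]]$, the locus over which a class $w\in\mathrm T$ deforms is cut out by a power series $f_w(t)$ with $f_{pw}=f_w^{\,p}$ (this last identity is exactly the computation in the proof of \Cref{decay-ord}). Setting $\mu(w):=v_t(f_w(t))\in\bZ_{>0}\cup\{\infty\}$ we get $\widetilde L_N=\{w:\mu(w)\ge N\}$ and $\mu(pw)=p\,\mu(w)$, and since $\widetilde L_N$ is a $\bZ_p$-module we also have $\mu(w_1+w_2)\ge\min(\mu(w_1),\mu(w_2))$. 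Let $\Lambda:=\bigcap_N\widetilde L_N=\{w:\mu(w)=\infty\}$, a saturated $\bZ_p$-sublattice of $\mathrm T$. Because $\mu(w+\lambda)=\mu(w)$ for $\lambda\in\Lambda$, $\mu$ descends to $\mathrm T/\Lambda$, which is free of rank $r:=\rk\mathrm T-\rk\Lambda$; on $(\mathrm T/\Lambda)\setminus\{0\}$ it is finite and, using $\mu(p^m\bar u)\ge p^m$ together with the ultrametric inequality, locally constant, so by compactness of the set of primitive vectors of $\mathrm T/\Lambda$ the quantity $A:=\max\{\mu(\bar w):\bar w\in\mathrm T/\Lambda\text{ primitive}\}$ is a finite positive integer. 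I claim this $A$ works. It suffices to show $\widetilde L_{Ap^{n-1}+1}\subseteq\Lambda+p^{n-1}\widetilde L_{A+1}$; since both sides contain $\Lambda$, this reduces, after passing to $\mathrm T/\Lambda$, to the elementary statement that every $\bar w$ with $\mu(\bar w)\ge Ap^{n-1}+1$ lies in $p^{n-1}\{\bar w:\mu(\bar w)\ge A+1\}$: writing $\bar w=p^k\bar w'$ with $\bar w'$ primitive, the inequalities $p^k\mu(\bar w')\ge Ap^{n-1}+1$ and $\mu(\bar w')\le A$ force $k\ge n$, and then $\mu(p^{k-n+1}\bar w')=\mu(\bar w)/p^{n-1}>A$. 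Reinterpreting this $\ell$-adically gives $L_n\subseteq(\Lambda+p^{n-1}L_1\otimes\bZ_p)\cap L_0$; and $\rk_{\bZ_p}\Lambda\le 1$ (resp.\ $\le 2$) because a nonzero class in $\Lambda$ would deform over all of $k[[t]]$ and hence — via the identification above and the extension of special endomorphisms over the normal base $C$ — produce a global special endomorphism on $C$, contrary to hypothesis.

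Finally, when $L_n$ attains the maximal rank $\rk\mathrm T$ it has full rank in $\mathrm T$; projecting $\mathrm T\twoheadrightarrow\mathrm T/\Lambda^{\mathrm{sat}}$ (a free module of rank $\ge1$) carries $L_n\otimes\bZ_p$ into $p^{n-1}(\mathrm T/\Lambda^{\mathrm{sat}})$ while keeping it of full rank there, so $[\mathrm T:L_n\otimes\bZ_p]\ge p^{n-1}$; since $\mathrm T=L_0\otimes\bZ_p$ and $L_n\otimes\bZ_\ell=L_0\otimes\bZ_\ell$ for $\ell\ne p$, this gives $[L_0:L_n]\ge p^{n-1}$ and hence $\disc L_n=[L_0:L_n]^2\disc L_0\ge p^{2(n-1)}$, i.e.\ $(\disc L_n)^{1/2}\ge p^{n-1}$. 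I expect the main obstacle to be not any single estimate but assembling the crystalline and Serre--Tate dictionary cleanly — in particular pinning down the dependence of the deformation loci $f_w$ on $w$ (so that $f_{pw}=f_w^{\,p}$ and $\widetilde L_N$ is a module are both available) and justifying the passage between special endomorphisms of the abelian scheme and of its $p$-divisible group over $k[[t]]/t^N$; once these are in place the remainder is the ultrametric bookkeeping above.
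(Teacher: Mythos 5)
Your reformulation in terms of the valuation $\mu(w)=v_t(f_w)$ is a legitimate repackaging of what the paper does: both arguments ultimately feed the one inequality coming from Serre--Tate (\Cref{decay-ord}) into a finiteness statement for the first layer and then iterate, and your ultrametric bookkeeping in $\mathrm T/\Lambda$ is a cleaner way to phrase the paper's induction with the auxiliary complement $\Lambda_0$. In particular, the containment step only needs $\mu(pw)\le p\,\mu(w)$, which is exactly the contrapositive of \Cref{decay-ord}, and the compactness argument for finiteness of $A$ is a tidier substitute for the paper's ``there exists $A$ such that $\Lambda_1\subset p\Lambda_0$.'' One small point you should state explicitly: the input $\mu(p^m\bar u)\ge p^m$ is \emph{not} a consequence of \Cref{decay-ord} (which gives the opposite one-sided bound); it follows from the direct observation that $f_{p^m w}$ pulls back to $f_w(t_1(t)^{p^m},\dots)$, which has $t$-adic order at least $p^m$ because $f_w(0)=0$ and each $t_i(t)$ has positive order.

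There is, however, a genuine gap in your bound $\rk_{\bZ_p}\Lambda\le 1$ (resp.\ $\le 2$). You argue that a single nonzero class $v\in\Lambda$ deforms over all of $k[[t]]$ and therefore produces a global special endomorphism of $B$, contradicting the hypothesis. This overreaches: $v$ is only a $\bZ_p$-rational element of $L_0\otimes\bZ_p$, and its deformation over $k[[t]]$ gives a special endomorphism of $B[p^\infty]$, not of the abelian scheme $B$. If $v$ is not a $\bQ_p$-scalar multiple of a $\bZ$-rational class, there is no reason for it to come from $\End(B)\otimes\bQ$, and in general $\Lambda\ne 0$ is perfectly compatible with $\widehat C_P$ avoiding every special divisor $Z(m)$ (those parametrize $\bQ$-rational special endomorphisms). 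Taken literally your argument would prove $\Lambda=0$, which is both stronger than what is stated and false in general. The correct argument, which is what the paper gives, is to assume $\Lambda$ has \emph{full} rank: then $\Lambda\otimes\bQ_p = L_0\otimes\bQ_p$, so for every $\bZ$-rational $v\in L_0$ some $p^N v$ lies in $\Lambda$ and hence (via Drinfeld rigidity / Serre--Tate comparing deformations of $B$ and of $B[p^\infty]$) deforms to a special endomorphism of $B$ over $k[[t]]$ itself, which does contradict the hypothesis; this yields only $\rk\Lambda\le\rk L_0 -1$, which is what the lemma asserts and what the rest of your argument needs.
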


\begin{proof}
Note that $L_n\subset L_n\otimes \bZ_p\subset  L_0\otimes \bZ_p=\bL_{\cris,P}(W)^{\varphi=1}$, where $\bL_{\cris,P}$ is the fiber of the $F$-crystal $\bL_{\cris}$ defined in \Cref{def_Lcris_S} and \Cref{def_Lcris_H} and $\varphi$ is the Frobenius action. Since $P$ is ordinary, then $\varphi$ acts on $\bL_{\cris,P}(W)$ with slope $-1,1,0,0$ (Hilbert case) or $-1,1,0,0,0$ (Siegel case) and hence (1) follows. 

Let $\Lambda'$ be the $\bZ_p$-lattice of special endomorphisms of $B[p^\infty]$.
Since $\widehat{C}_{P}$ is not contained in any special divisor,\footnote{This is the assumption of \Cref{thm_main}(1) and \Cref{thm_max}; and for \Cref{thm_main}(2), we may assume this as otherwise, the conclusion is automatic.} $B[p^\infty]$ admits at most a rank $2$ (resp. rank $1$) module of special endomorphisms when $L=L_S$ (resp. $L=L_H$); indeed, if $\rk_{\bZ_p}\Lambda'=3$ (resp. $2$), then $\Lambda'\otimes\bQ_p=L_0\otimes \bQ_p$, and thus the $B$ admits special endomorphisms.

We now mimic the proof of \cite[Thm.~4.1.1]{Ananth17} using \Cref{decay-ord} instead of \cite[Lem.~4.1.2(2)]{Ananth17}. 
Let $\Lambda\subset L_0\otimes \bZ_p$ be the saturation of $\Lambda'$ in $L_0\otimes \bZ_p$; then there exists $\Lambda_0\subset L_0\otimes \bZ_p$ such that $L_0\otimes\bZ_p=\Lambda\oplus \Lambda_0$. Let $\Lambda_n$ denote $(L_n\otimes\bZ_p+\Lambda)\cap \Lambda_0$; then
$L_n\otimes \bZ_p+\Lambda=\Lambda\oplus \Lambda_n$. 
It suffices to show that there exists $A$ such that $\Lambda_n\subset p\Lambda_{n-1}$ (and this implies that $\Lambda_n\subset p^{n-1}\Lambda_1$).

By definition, none of the elements in $\Lambda_0$ extend to $\Spf k[[t]]$, then there exists $A$ such that $\Lambda_1\subset p\Lambda_0$. For $n\geq 2$, assume for contradiction that there exists $\alpha\in \Lambda_n\backslash p\Lambda_{n-1}$. If $\alpha\in p\Lambda_{n-2}$, then write $\alpha=p\beta$ with $\beta\in \Lambda_{n-2}$. Since $p\beta=\alpha\in \Lambda_n$, then by \Cref{decay-ord}, $\beta\in \Lambda_{n-1}$, which contradicts with the assumption that $\alpha\notin p\Lambda_{n-1}$. Thus we have $\alpha\notin p\Lambda_{n-2}$; by iterating the argument, we have $\alpha\notin p\Lambda_0$. This is a contradiction since $\alpha\in \Lambda_n\subset \Lambda_1\subset p\Lambda_0$.
\end{proof}

\section{Proof of \Cref{thm_main}(\ref{item_H})}\label{sec_pf_Hil}
In this section, we use the results proved in \S\S\ref{sec_global}-\ref{sec_decay_Hil} to prove \Cref{supersingular} in the case of Hilbert modular surfaces. This, in conjunction with \Cref{newbert}, yields \Cref{thm_main}(2).

\subsection{The bad set $S_M$ and the local intersection multiplicities at non-supersingular points}
We first construct the set $S_M$; the following lemma only concerns ordinary and superspecial points because we only need to consider such $P$ for the proof of \Cref{thm_main}(2). Indeed, if $P\in Z(m)$, then $P$ is either ordinary or supersingular and if $P\in Z(m), p\nmid m$, then by \S\ref{par_density_H}(1), $P$ is not supergeneric. Therefore for $P\in Z(m), m\in T$, $P$ is either superspecial or ordinary.
\begin{lemma}\label{lem_setSM}
Notation as in \S\ref{sec_decomp},\ref{def_smallL'} and \Cref{lem_lat_ord}. Given a finite set $\{P_i\}\subset (C\cap(\cup_{m\in \bZ_{>0}} Z(m)))(k)$,
there exists $S_M\subset T_M$ with $\#S_M=O(M^{1-\epsilon})$ for some $0<\epsilon<1/6$ such that for all $i$,
\begin{enumerate}
    \item if $P_i$ is superspecial, then $\{s\in L'_{N,1} \mid 0\neq Q'(s)\leq M, Q'(s)\notin S_M\}=\emptyset$ where $N=\frac{1+\epsilon}{3}\log_p M$;
    \item if $P_i$ is ordinary, then $\{s\in L_{N} \mid 0\neq Q'(s)\leq M, Q'(s)\notin S_M\}=\emptyset$ where $N=\epsilon\log_p M$.
\end{enumerate}
\end{lemma}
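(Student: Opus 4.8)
\textbf{Proof plan for Lemma \ref{lem_setSM}.}

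The idea is to throw into $S_M$ all the integers $\leq M$ that are represented by the relevant \emph{binary} quadratic forms coming from the finitely many points $P_i$, and to show these have sparse density. First I would invoke the structural input: if $P_i$ is superspecial, then by the Decay Lemma \Cref{thm_decay} the lattice $L'\otimes\bZ_p$ contains a rank $3$ submodule which decays rapidly, and by \S\ref{def_smallL'} we have $[L':L'_{n,1}]\geq p^{3n}$; more precisely, the quotient of $L'$ by the rapidly-decaying submodule has bounded index, so $L'_{n,1}$ is contained in a lattice of the form $(P + p^n\Lambda)\cap L'$ where $P=\bigcap_n L'_{n,1}\otimes\bQ$ has rank $\leq 2$ (this is the rank-$2$ sublattice $P_n$ of the introduction, obtained since $3$ generators decay and the ambient rank is $5$, leaving a rank-$2$ ``slow'' part — for the Hilbert case the ambient rank is $4$ so the slow part has rank $\leq 1$, and the argument only gets easier). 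For $P_i$ ordinary I would instead use \Cref{lem_lat_ord}: with $N=\epsilon\log_p M$, either $\rk_\bZ L_N\leq 1$, or $\rk_\bZ L_N = 2$ and $(\disc L_N)^{1/2}\geq p^{N-1}\geq M^{\epsilon}/p$. Either way, $L_N$ (resp.\ $L'_{N,1}$) is contained in a binary (or unary) positive-definite quadratic form whose discriminant grows like a positive power of $M$ as $M\to\infty$ — because $N$ is a fixed multiple of $\log_p M$.

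Next I would estimate how many integers up to $M$ such a form can represent. A positive-definite binary quadratic form of discriminant $D\asymp M^\delta$ (with $\delta>0$) represents $O(M/\sqrt{D}\cdot\sqrt{D})=$ — more carefully, the number of lattice points of norm $\leq M$ in a rank-$2$ lattice of covolume $\asymp\sqrt{D}$ is $O(M/\sqrt{D}) + O(\sqrt{M/\sqrt D}+1)$; since here $\sqrt D\geq M^{\delta'}$ for some fixed $\delta'>0$ (coming from $p^{N-1}$ with $N$ a fixed multiple of $\log_p M$), the count is $O(M^{1-\delta'})$. For the unary/rank-$\leq 1$ case the count is trivially $O(M^{1/2})$. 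Taking the union over the finitely many points $P_i$, and over both families $i=1,2$ if one wants to also control $L'_{N,2}$, still gives a set of size $O(M^{1-\epsilon})$ for a suitable $\epsilon\in(0,1/6)$ once the exponents $N=\tfrac{1+\epsilon}{3}\log_p M$ and $N=\epsilon\log_p M$ are plugged in and the bookkeeping between $\epsilon$ and $\delta'$ is done (one chooses $\epsilon$ small enough that $1-\delta'(\epsilon) < 1-\epsilon$, which is possible since $\delta'$ is essentially proportional to the fixed constant in front of $\log_p M$, not to $\epsilon$ alone). Define $S_M$ to be exactly this union. Then by construction, any nonzero $s\in L'_{N,1}$ (resp.\ $L_N$) with $Q'(s)\leq M$ has $Q'(s)$ represented by one of these small forms, hence lies in $S_M$; this is precisely the emptiness assertions (1) and (2).

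The main obstacle is the passage from the Decay Lemma to the statement ``$L'_{N,1}$ is squeezed inside a rank-$\leq 2$ form of large discriminant.'' The Decay Lemma gives a rapidly decaying rank-$3$ submodule, but $L'_{n,1}$ is defined as the full lattice of endomorphisms lifting mod $t^{A_{n-1}+1}$, so one must argue that the decaying directions genuinely drop out: an element of $L'_{n,1}$ that is divisible by $p^k$ along a decaying generator but lifts mod $t^{A_{n-1}+1}$ forces $k$ to be large (this is exactly what ``decays rapidly'' means, via \Cref{def_decay}(1) and the inequality $A_n = [A(p^n+\cdots+1+1/p)]$). Turning this into the clean containment $L'_{n,1}\subseteq (P_n + p^{n}\Lambda)$ with $\disc$ of the relevant restriction $\geq p^{2n}$ or so requires the same geometry-of-numbers bookkeeping as in \Cref{lem_lat_ord}, iterated along all three decaying generators; I expect this to be a few paragraphs of elementary but careful lattice arithmetic, structurally parallel to the proof of \Cref{lem_lat_ord} already given. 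The ordinary case is comparatively immediate from \Cref{lem_lat_ord}, and the density estimate itself is standard.
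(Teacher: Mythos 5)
Your proposal takes a genuinely different and substantially more complicated route than the paper, and in doing so introduces an unnecessary dependency that the paper's actual proof avoids.

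The paper's proof is very short and direct. For $P_i$ superspecial it simply defines
$S_M = \{m\in T_M \mid \exists s\in L'_{N,1}\ \text{with}\ Q'(s)=m\}$,
i.e.\ the set of values $\leq M$ represented by the \emph{full} rank-$4$ lattice $L'_{N,1}$ (Hilbert case — note this lemma lives in \S\ref{sec_pf_Hil}, so the ambient rank is $4$, not $5$). With that choice, condition (1) is \emph{tautological}: any $s\in L'_{N,1}$ with $0\neq Q'(s)\leq M$ has $Q'(s)\in S_M$ by definition. No ``squeezing into a binary form'' is needed. The bound on $\#S_M$ then comes from the trivial inequality $\#S_M\leq \#\{s\in L'_{N,1}\mid Q'(s)\leq M\}$ and a single geometry-of-numbers count in $L'_{N,1}$ in terms of its four successive minima, using only the index bound $[L':L'_{N,1}]\geq p^{3N}$ already recorded in \S\ref{def_smallL'}; with $N=\tfrac{1+\epsilon}{3}\log_p M$ the leading term $M^2/p^{3N}$ is $M^{1-\epsilon}$. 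The ordinary case is handled the same way using $L_N$ and \Cref{lem_lat_ord}.

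Your plan instead wants to show first that $L'_{N,1}$ is ``squeezed inside a rank-$\leq 2$ form of large discriminant'' and then count integers represented by that low-rank form. You correctly flag this squeezing step as the main obstacle. But that step is precisely the kind of statement the paper only needs later, and only in the Siegel section, as \Cref{betterbound} (``betterbound''): it asserts that any $v$ with small enough $Q'(v)$ lies in the distinguished rank-$2$ sublattice $P_n$. For \Cref{lem_setSM} it is not needed at all — the paper's definition of $S_M$ sidesteps it entirely, and your claim ``by construction, any nonzero $s\in L'_{N,1}$ with $Q'(s)\leq M$ has $Q'(s)$ represented by one of these small forms'' is exactly where your plan leans on the unproved squeezing statement. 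The punchline: the simpler option (put into $S_M$ everything represented by $L'_{N,1}$ itself) makes (1) trivial and lets the index bound do all the work in the count, so there is nothing left to fill in.
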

\begin{proof}
Since the union of finitely many sets with cardinality $O(M^{1-\epsilon})$ still has cardinality to be $O(M^{1-\epsilon})$, it suffices to prove the assertion for each $P_i$ separately. We follow the idea of the proof of \cite[Thm.~4.3.3]{Ananth17}.

If $P_i$ is superspecial, we take $S_M=\{m\in T_M\mid \exists s\in L'_{N,1} \text{ with } Q'(s)=m\}$ and then it satisfies (1) by definition. Note that $\#S_M\leq \#\{s\in L'_{N,1}\mid Q'(s)\leq M\}$. Then by a geometry-of-numbers argument (see for instance \cite[Lem.~4.2.1]{Ananth17}) and \Cref{thm_decay}, we have \[\#\{s\in L'_{N,1}\mid Q'(s)\leq M\}=O(M^2/p^{3N}+M^{3/2}/p^{2N}+M/p^N+ M^{1/2}/d_N),\]
where $d_N$ is the first successive minimum of $L'_{N,1}$ and $d_N\rightarrow \infty$ as $N\rightarrow \infty$ because $\cap L'_{N,1}=\{0\}$. Then $\#S_M=O(M^{1-\epsilon})$ by the definition of $N$.

If $P_i$ is ordinary, then $\rk L_N=2$ by \Cref{lem_lat_ord} and the fact that $\rk L_N=\rk L_0$ is even by the Tate conjecture. Similar to the superspecial case, we take $S_M=\{m\in T_M \mid \exists s\in L_{N} \text{ with } Q'(s)=m\}$ and then by \Cref{lem_lat_ord}, $\#S_M=O(M/p^N+M^{1/2}/d_N)=O(M^{1-\epsilon})$.
\end{proof}

\begin{lemma}\label{newbert}
Notation as in \Cref{lem_setSM}.
For an ordinary point $P=P_i\in C(k)$, we have \[\sum_{m\in T_M-S_M} l_P(m)=O(M^{1+\epsilon})=o(M^2).\]
\end{lemma}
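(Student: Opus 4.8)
The plan is to bound the local intersection multiplicity $l_P(m)$ at an ordinary point $P$ by combining \Cref{oleg} (which expresses $l_P(m)$ as a sum over $n$ of lattice-point counts for the lattices of special endomorphisms of $B \bmod t^n$) with the decay estimate of \Cref{lem_lat_ord} (which controls the discriminants of these lattices). The key observation is that, by \Cref{decay-ord} and its consequence \Cref{lem_lat_ord}(2), the lattice $L_n$ of special endomorphisms of $B \bmod t^{Ap^{n-1}+1}$ is sandwiched inside $(\Lambda + p^{n-1} L_1 \otimes \bZ_p) \cap L_0$, where $\Lambda$ has rank at most $1$ (the Hilbert case). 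Concretely, I would first fix the constant $A$ furnished by \Cref{lem_lat_ord}, so that the non-ordinary locus near $P$ is cut out to valuation $A$ and the decay kicks in at the scale $t^{Ap^{n-1}+1}$.

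First I would rewrite $l_P(m) = \sum_{n \geq 1} r'_n(m)$, where $r'_n(m)$ counts special endomorphisms $s$ of $B \bmod t^n$ with $Q'(s) = m$; since the lattices are nested, $r'_n(m)$ is non-increasing in $n$, and by Abel summation (as in the proof of \Cref{lem_count}) I can regroup so that $l_P(m) \leq \sum_{k \geq 0} Ap^k \cdot \#\{s \in L_{k+1} : Q'(s) = m\}$, up to the harmless contribution from the initial range $n \leq A+1$ which only contributes $O(A)$ times $\#\{s \in L_0 : Q'(s) = m\}$ and is absorbed. Summing over $m \in T_M - S_M$ and interchanging the order of summation, it suffices to bound $\sum_{k \geq 0} Ap^k \cdot \#\{s \in L_{k+1} : Q'(s) \leq M\}$. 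For each $k$, \Cref{lem_lat_ord}(2) gives $L_{k+1} \subset (\Lambda + p^{k} L_1 \otimes \bZ_p) \cap L_0$ with $\Lambda$ of rank $\leq 1$; a geometry-of-numbers count then shows $\#\{s \in L_{k+1} : Q'(s) \leq M\} = O(M/p^{k} + M^{1/2}/d_{k+1})$ where $d_{k+1} \to \infty$ is the first successive minimum (using $\bigcap L_{n} = \{0\}$). By the definition of $S_M$ in \Cref{lem_setSM}(2), for $k+1 \geq N = \epsilon \log_p M$ the count is actually zero, so the sum over $k$ truncates at $k \lesssim \epsilon \log_p M$, and $\sum_{k \leq N} Ap^k (M/p^k + M^{1/2}/d_{k+1}) = O(A N M) = O(M^{1+\epsilon})$.

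Finally, since $\epsilon < 1/6 < 1$, we get $O(M^{1+\epsilon}) = o(M^2)$, which is the claimed bound; comparing with \Cref{lem_glo-sum_asymp}, which gives $\sum_{m \in T_M - S_M} g_P(m) \asymp M^2$ in the Hilbert case, this shows an ordinary point contributes negligibly to the global intersection. I expect the main technical point to be the careful Abel-summation bookkeeping to reduce $l_P(m)$ to the weighted lattice-point sum with the right powers of $p$ — mirroring \Cref{lem_count} but now with the ordinary decay scaling $Ap^{k}$ rather than the superspecial $A_n$ — and making sure that the transition range (small $n$, before decay sets in) is genuinely absorbed into the error term. The geometry-of-numbers estimate itself is routine given \Cref{lem_lat_ord} and the rank bound $\rk L_n \leq 2$; the only subtlety there is that $d_{k+1}$ could be small for small $k$, but this is exactly why the weight $p^k$ and the cutoff $N$ are needed to keep the total under control.
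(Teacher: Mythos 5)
Your proposal is correct and follows essentially the same route as the paper: regroup $l_P(m)$ into weighted lattice counts using \Cref{oleg} together with the decay scales $Ap^{n-1}+1$ of \Cref{lem_lat_ord}, truncate the $n$-sum at $N=\epsilon\log_p M$ via the definition of $S_M$ in \Cref{lem_setSM}(2), and finish with a rank-two geometry-of-numbers estimate using the discriminant lower bound $(\disc L_n)^{1/2}\geq p^{n-1}$. The only slip is a harmless constant in the regrouping: the weight on the count for $L_{k+1}$ should be $A(p^{k+1}-p^k)$ rather than $Ap^k$, which costs a factor of $p-1$ and does not affect the $O(M^{1+\epsilon})$ conclusion.
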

\begin{proof}
By Lemmas \ref{oleg}, \ref{lem_lat_ord} and \ref{lem_setSM}, 
\[\sum_{m\in T_M-S_M}l_P(m)=\sum_{m\in T_M-S_M}A(r_0(m)+\sum_{n=1}^N(p^n-p^{n-1})r_n(m))\leq A\sum_{n=0}^N p^n\sum_{m=1}^M r_n(m),\]
where $r_n(m)=\#\{s\in L_n\mid Q'(s)=m\}$. By a geometry-of-numbers argument and \Cref{lem_lat_ord}, we have $\sum_{m=1}^M r_n(m)=O(M/p^n+M^{1/2}/d_n)$, where $d_n$ is the first successive minimum of $L_n$ and the implicit constant here only depends on $p$. Thus $\sum_{m\in T_M-S_M}l_P(m)=O(NM+p^N M^{1/2})=O(M^{1+\epsilon})$.
\end{proof}


\subsection{Proof of \Cref{supersingular} in the Hilbert case}\label{sec_pf}
We follow the notation in \Cref{lem_setSM} and $P=P_i$ superspecial. We break $\sum_{m\in T_M-S_M}l_P(m)$ into two parts and are treated in the following lemmas.

\begin{lemma}\label{ss_H_tail}
Notation as in \Cref{lem_count}.
For any $\epsilon>0$, there exists $c\in \bZ_{>0}$ which only depends on $P$ and $\epsilon$ such that
\[\sum_{m\in T_M-S_M}\sum_{n=c}^\infty \frac{Ap^n}{2}(r_{n,1}(m)+r_{n,2}(m))\leq \epsilon M^2 + o(M^2).\]
\end{lemma}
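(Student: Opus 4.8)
\textbf{Proof proposal for Lemma \ref{ss_H_tail}.}

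The strategy is to bound the tail sum $\sum_{n \geq c} \frac{Ap^n}{2}(r_{n,1}(m) + r_{n,2}(m))$ by first summing over $m \in T_M - S_M \subset \{1, \ldots, M\}$ and exchanging the order of summation, so that the quantity to control becomes $\frac{A}{2}\sum_{n \geq c} p^n \sum_{m \leq M} (r_{n,1}(m) + r_{n,2}(m))$. Here $r_{n,i}(m) = \#\{s \in L'_{n,i} \mid Q'(s) = m\}$, so $\sum_{m \leq M} r_{n,i}(m) = \#\{s \in L'_{n,i} \mid Q'(s) \leq M\}$ is a lattice-point count for the rank-$4$ positive definite quadratic lattice $L'_{n,i}$ inside the ball of radius $\sqrt{M}$. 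The plan is to invoke the standard geometry-of-numbers estimate (as in \cite[Lem.~4.2.1]{Ananth17}): for a positive definite rank-$4$ lattice $\Lambda$ with successive minima $d_1 \leq \cdots \leq d_4$,
\[
\#\{s \in \Lambda \mid Q'(s) \leq M\} = O\!\left(\frac{M^2}{\disc(\Lambda)} + \frac{M^{3/2}}{d_1 d_2 d_3} + \frac{M}{d_1 d_2} + \frac{M^{1/2}}{d_1} + 1\right).
\]
By the Decay Lemma \Cref{thm_decay}, $[L' : L'_{n,1}] \geq p^{3n}$ and similarly for $L'_{n,2}$ (using $[L'_{n,1} : L'_{n,2}] \geq p$), so $\disc(L'_{n,i}) \gg p^{6n}$; moreover the rank-$3$ rapidly-decaying submodule forces $d_1 d_2 d_3 \gg p^{3n}$ (up to a constant depending on $P$), while $d_1 d_2 \gg p^{2n}$ and $d_1 \gg p^n$ likewise. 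Here I should be slightly careful: these bounds on products of successive minima follow from the fact that the rank-$3$ submodule of \Cref{thm_decay} has index $\geq p^{3n}$ in its saturation in $L'$, together with the fact that $d_1 \to \infty$ as $n \to \infty$ (since $\cap_n L'_{n,i} = \{0\}$, as noted in \S\ref{def_smallL'}). Plugging in, $\sum_{m \leq M} r_{n,i}(m) = O(M^2/p^{6n} + M^{3/2}/p^{3n} + M/p^{2n} + M^{1/2}/p^n + 1)$, where the implied constant depends only on $p$ (the "$+1$" and the weakest $d_1$-term may a priori involve $P$, but for $n$ bounded below by $c$ and $M$ large the dominant contributions are as written; alternatively one absorbs the $P$-dependence into the constant $c$).

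Multiplying by $p^n$ and summing over $n \geq c$: $\sum_{n \geq c} p^n \cdot M^2/p^{6n} = O(M^2 p^{-5c})$, and the remaining terms $\sum_{n \geq c} p^n(M^{3/2}/p^{3n} + M/p^{2n} + M^{1/2}/p^n + 1)$ are $O(M^{3/2}) + O(M \log M) + O(M^{1/2} N_M) + o(M^2)$ where the sum is truncated at the point where $p^n$ exceeds $M$ (beyond which $r_{n,i}(m) = 0$ for $m \leq M$, since a nonzero vector of $L'_{n,i}$ has norm $\geq d_1 \gg p^n$), hence all these are $o(M^2)$. Thus the total tail is $O(M^2 p^{-5c}) + o(M^2)$. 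Choosing $c$ large enough that $O(p^{-5c}) < \epsilon$ (the implied constant being absolute, coming only from the geometry-of-numbers bound) gives $\sum_{m \in T_M - S_M}\sum_{n \geq c} \frac{Ap^n}{2}(r_{n,1}(m) + r_{n,2}(m)) \leq \epsilon M^2 + o(M^2)$, as desired.

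The main obstacle I anticipate is making rigorous the claim that the successive minima products $d_1 d_2 d_3$, $d_1 d_2$, $d_1$ of $L'_{n,i}$ grow at the expected rates $p^{3n}, p^{2n}, p^n$ — the Decay Lemma controls the \emph{index} of a rank-$3$ rapidly decaying submodule, but translating this into lower bounds on products of successive minima of $L'_{n,i}$ itself requires knowing that the decay is "spread out" across the rank-$3$ module rather than concentrated on a rank-$1$ sublattice. This is precisely why \Cref{thm_decay} asserts that \emph{every} primitive vector in the rank-$3$ submodule decays rapidly (property DR for the whole submodule), not just that some vector does; I would exploit this to show that the rank-$3$ decaying submodule $P_n$ satisfies $[\langle P_n \rangle_{\mathrm{sat}} : P_n] \geq p^{3n}$ and that its successive minima, hence those of $L'_{n,i} \supset P_n$ appropriately compared, obey the right bounds via Minkowski's second theorem. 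The bookkeeping of which constants depend on $P$ versus which are absolute (needed to ensure $c$ can be chosen uniformly to beat $\epsilon$) is the other delicate point, handled by absorbing all $P$-dependence into $c$ and using that the geometry-of-numbers leading constant in the $M^2/\disc$ term is absolute.
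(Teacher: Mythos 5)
Your strategy (exchange sums, apply a geometry-of-numbers count, truncate, sum the geometric series) is the same as the paper's, but there is a genuine gap at the crucial step: you assert that the first successive minimum $d_1$ of $L'_{n,i}$ satisfies $d_1 \gg p^n$, and you use this both to bound the $M^{1/2}/d_1$ term and to truncate the sum over $n$. This is not justified by the Decay Lemma and is false in general. \Cref{thm_decay} controls a rank-$3$ submodule of $L'\otimes\bZ_p$; the complementary rank-$1$ direction inherits no quantitative decay rate from it, so $d_1$ could grow arbitrarily slowly with $n$ (all one knows is $d_n\to\infty$, from $\cap_n L'_{n,i}=\{0\}$). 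Absorbing the $P$-dependence into the constant $c$ does not rescue this, because $d_n$ depends on $n$, not just on $P$. With only $d_n\to\infty$ and no rate, the tail $\sum_{n\geq c} p^n M^{1/2}/d_n$ cannot be controlled, and your truncation criterion "$p^n > M$ implies $r_{n,i}(m)=0$" has no basis.

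The paper's fix is precisely what you do not invoke: \Cref{lem_setSM} and the set $S_M$. By construction of $S_M$, one has $r_{n,i}(m)=0$ for $n>N=\frac{1+\epsilon}{3}\log_p M$ and $m\in T_M-S_M$, giving a hard truncation of the $n$-sum at $N$. The $M^{1/2}/d_n$ term is then handled crudely via $d_n\geq 1$: $\sum_{n=c}^N p^n M^{1/2}/d_n\leq M^{1/2}\sum_{n=c}^N p^n = O(M^{1/2}\cdot p^N)=O(M^{1/2+(1+\epsilon)/3})=o(M^2)$. Without $S_M$ you have no such truncation, and this is the one piece of input that cannot be replaced by successive-minima optimism.

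Two secondary issues, harmless for the conclusion but worth noting. First, the leading term in the lattice-point count for a rank-$4$ lattice is $M^2/\sqrt{\disc(\Lambda)}$ (covolume), not $M^2/\disc(\Lambda)$; with $[L':L'_{n,i}]\geq p^{3n}$ the correct bound is $M^2/p^{3n}$, not $M^2/p^{6n}$. This only makes your decay constant $p^{-5c}$ look faster than it should; the true rate $p^{-2c}$ still tends to $0$, so the proof of "for every $\epsilon$ there is a $c$" survives. Second, your intermediate bounds $d_1d_2d_3\gg p^{3n}$ and $d_1d_2\gg p^{2n}$ are also too optimistic — the paper's estimate uses $d_1d_2d_3\gg p^{2n}$ and $d_1d_2\gg p^n$, consistent with a rank-$3$ decay that leaves one direction uncontrolled — but again these terms are subdominant and the weaker (correct) bounds suffice.
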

\begin{proof}
By \Cref{lem_setSM}, $r_{n,i}(m)=0$ for $n>N=\frac{1+\epsilon}{3}\log_p M$ and hence 
\[\sum_{m\in T_M-S_M}\sum_{n=c}^\infty \frac{Ap^n}{2}(r_{n,1}(m)+r_{n,2}(m))=\sum_{m\in T_M-S_M}\sum_{n=c}^N \frac{Ap^n}{2}(r_{n,1}(m)+r_{n,2}(m))\leq \sum_{n=c}^N\sum_{m=1}^M Ap^n r_{n,1}(m)\]
since $r_{n,1}(m)\geq r_{n,2}(m)$.

By a geometry-of-numbers argument, $\sum_{m=1}^M r_{n,1}(m)\leq c_2(M^2/p^{3n}+M^{3/2}/p^{2n}+M/p^n+M^{1/2}/d_n)$, where $c_2$ is an absolute constant and $d_n$ is the first successive minimum of $L'_{n,1}$. Hence
\[\sum_{n=c}^N Ap^n \sum_{m=1}^M r_{n,1}(m)\leq Ac_2M^2\sum_{n=c}^N 1/p^{2n}+\sum_{n=c}^N Ap^nc_2(M^{3/2}/p^{2n}+M/p^n+M^{1/2}/d_n).\]
Note that $Ac_2\sum_{n=c}^N\leq Ac_2 (p^{2c}(1-p^{-2}))^{-1}$, which goes to $0$ as $c\rightarrow\infty$ and the second term is \[O( M^{3/2})+O((\log M)M)+O(M^{1/2})\sum_{n=c}^N p^n=O(M^{3/2}).\]
Thus we obtain the desired estimate.
\end{proof}

\begin{lemma}\label{ss_H_main}
Notation as in \Cref{lem_count}. For any $c\in \bZ_{>0}$, we have
\[\sum_{m\in T_M-S_M}(\frac{A(p+2)}{2p}r_{0,1}(m)+ \frac{A}{2}r_{0,2}(m)+ \sum_{n=1}^c \frac{Ap^n}{2}(r_{n,1}(m)+r_{n,2}(m)))\leq \alpha \sum_{m\in T_M-S_M} g_P(m)+o(M^2),\]
where $\alpha<11/12$ is an absolute constant. 
\end{lemma}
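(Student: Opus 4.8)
The plan is to reduce the statement to a comparison of Fourier coefficients of two weight-$2$ Eisenstein series by using the explicit formulas assembled in \S\ref{sec_global}, together with the rapid-decay structure supplied by the Decay Lemma. First I would spell out what is being summed. By \Cref{lem_count}(1), for a superspecial point $P$ the left-hand side is bounded above by a weighted sum of the representation numbers $r_{n,i}(m)=\#\{s\in L'_{n,i}\mid Q'(s)=m\}$; the weights are $\tfrac{A(p+2)}{2p}$ for $r_{0,1}$, $\tfrac A2$ for $r_{0,2}$, and $\tfrac{Ap^n}{2}$ for $r_{n,1},r_{n,2}$ with $1\le n\le c$. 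On the right-hand side, $g_P(m)=\tfrac{A}{p-1}|q_L(m)|$, so after dividing through by $A$ it suffices to bound, for each $n\le c$ and $i\in\{1,2\}$, the ratio of $\sum_{m\in T_M\setminus S_M}r_{n,i}(m)$ to $\sum_{m\in T_M\setminus S_M}|q_L(m)|$ by a constant multiple of $1/(p-1)$, and then check that the total of these constants times the corresponding weights comes out strictly below $\tfrac{11}{12}\cdot\tfrac1{p-1}$, hence $<11/12$ once the $1/(p-1)$ is absorbed into the right-hand $g_P$.

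The key input is that each lattice $L'_{n,i}$ lies in the genus described by \S\ref{summary_L'}: it agrees with $L'$ at all $\ell\ne p$, and at $p$ its discriminant is $p^{2k}\det L$ for some $k\ge 0$ that grows with $n$ thanks to \Cref{thm_decay} (concretely $[L':L'_{n,1}]\ge p^{3n}$ and $[L'_{n,1}:L'_{n,2}]\ge p$). The Siegel mass formula \Cref{thm_Siegel-mass} identifies $\sum_{m\le M,\,m\in T}q_{L'_{n,i}}(m)$ as the Eisenstein-part count, and by a geometry-of-numbers/equidistribution argument (as in the proof of \Cref{lem_glo-sum_asymp} and \Cref{lem_sum-asymp-H}) one has $\sum_{m\in T_M\setminus S_M}r_{n,i}(m)=\sum_{m\in T_M\setminus S_M}q_{L'_{n,i}}(m)+o(M^2)$, the cusp-form contribution being $o(M^2)$ by Deligne's bound exactly as in \Cref{lem_glo-bound}. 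Then \Cref{compare_qL} — precisely parts (1) and (3) in the Hilbert case, with $L'''$ playing the role of $L'_{n,i}$ — gives $q_{L'_{n,i}}(m)/(-q_L(m))\le \tfrac{1}{p-1}$ for $i=1,n=0$, and more generally $\le \tfrac{2}{\sqrt{|(L'_{n,i}\otimes\bZ_p)^\vee/(L'_{n,i}\otimes\bZ_p)|}(1-p^{-2})}$, which decays like $p^{-3n/2}$ (or faster) as $n$ grows. Summing $\sum_n \tfrac{Ap^n}{2}\cdot (\text{ratio}=O(p^{-3n/2}))$ converges geometrically, and the dominant terms $n=0,1$ are controlled by the sharp bounds in \Cref{compare_qL}(1),(4). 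One then does the explicit bookkeeping: the $n=0$ term contributes roughly $\tfrac{p+2}{2p}\cdot\tfrac1{p-1}+\tfrac12\cdot(\text{the }r_{0,2}\text{-ratio})$, where the $r_{0,2}$ ratio is strictly smaller because $L'_{0,2}=L'_{n,2}|_{n=0}$ has $[L':L'_{0,2}]\ge p$ (this is where the ``very rapid'' part of the Decay Lemma and \Cref{compare_qL}(4) enter), and the tail $n\ge1$ adds a small geometric-series correction; all told the sum of coefficients stays below $11/12$ after multiplying by $p-1$, using $p\ge 5$ (resp. $p\ge7$ in the ramified subcase of \Cref{rmk_realquad}).

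Concretely the steps, in order, would be: (i) invoke \Cref{lem_count}(1) to pass from $l_P(m)$ to the weighted sum of $r_{n,i}(m)$; (ii) for each fixed $n\le c$, $i$, apply the theta-decomposition $\theta_{L'_{n,i}}=E_{L'_{n,i}}+G_{L'_{n,i}}$ and a Deligne-bound estimate to replace $\sum_{m\in T_M\setminus S_M}r_{n,i}(m)$ by $\sum_{m\in T_M\setminus S_M}q_{L'_{n,i}}(m)+o(M^2)$; (iii) apply \Cref{compare_qL}(1),(3),(4) to bound each $q_{L'_{n,i}}(m)$ by an explicit constant times $-q_L(m)=|q_L(m)|$, the constant shrinking geometrically in $n$; (iv) sum the resulting geometric series in $n$ against the weights $\tfrac{Ap^n}{2}$ and verify numerically that the total (after factoring out $\tfrac{A}{p-1}|q_L(m)|=g_P(m)$) is an absolute constant $\alpha<\tfrac{11}{12}$ for all $p\ge5$; (v) combine with $\sum_{m\in T_M\setminus S_M}g_P(m)\asymp M^2$ from \Cref{lem_glo-sum_asymp} to absorb the $o(M^2)$ terms into $o\bigl(\sum g_P(m)\bigr)$. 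The main obstacle I anticipate is step (iv): one has to be careful that the $n=0$ and $n=1$ terms, where the lattice indices $[L':L'_{n,i}]$ are smallest and the ratios in \Cref{compare_qL} are largest, genuinely sum to below $11/12$ rather than merely below $1$ — this forces one to use the \emph{sharp} version of \Cref{compare_qL}(4) (the $[L':L''']=p$ refinement, giving $\tfrac{4}{p^2-1}$) and the distinction between $r_{0,1}$ and $r_{0,2}$ coming from the very-rapid-decay vector, and to track the $p$-dependence explicitly; the geometric tail $n\ge2$ is comparatively harmless. A secondary technical point is making sure the equidistribution estimate in step (ii) is uniform over the finitely many lattices $L'_{n,i}$ with $n\le c$, which is fine since $c$ is fixed once $\epsilon$ is chosen in \Cref{ss_H_tail}.
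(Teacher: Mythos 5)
Your overall strategy is exactly the paper's: decompose each theta series $\theta_{n,i}=E_{n,i}+G_{n,i}$, use Deligne's bound to make the cusp-form contribution $O(M^{3/2+\epsilon})$, and bound the Eisenstein part term-by-term via \Cref{compare_qL} so that the weighted sum of ratios is a geometric series converging to something strictly below $\frac{11}{12}\cdot\frac{1}{p-1}$. You also correctly identify the one genuinely delicate point: the $n=0$ terms dominate, and the ``very rapid'' decay vector (forcing $[L':L'_{0,2}]\geq p$) is what keeps the constant below $11/12$ — without it the first two terms alone already exceed the target.

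That said, two of your numerical references are off in ways that matter because the bound is tight. First, in the Hilbert case we always have $p\nmid m$ for $m\in T$, so the relevant input is \Cref{compare_qL}(3), not (4) — part (4) is the Siegel ramified case $v_p(m)=1$ and does not apply here. The bound you want for $L'_{0,2}$ is therefore $\frac{2}{p^2-1}$, not $\frac{4}{p^2-1}$. Second, the Eisenstein ratio for $L'_{n,i}$ decays like $1/[L':L'_{n,i}]\leq p^{-3n}$, not $p^{-3n/2}$: from \Cref{compare_qL}(3) the ratio is $\frac{2}{\sqrt{|(L'_{n,i}\otimes\bZ_p)^\vee/(L'_{n,i}\otimes\bZ_p)|}(1-p^{-2})}$ and $\sqrt{|(L'_{n,i}\otimes\bZ_p)^\vee/(L'_{n,i}\otimes\bZ_p)|}=p\cdot[L':L'_{n,i}]$ (the square root already cancels the square coming from the discriminant). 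If you sum $\sum_n p^n\cdot p^{-3n/2}$ you get $\sum p^{-n/2}$, which is far too large to leave room under $11/12$ at $p=5$; the correct $\sum p^n\cdot p^{-3n}=\sum p^{-2n}$ is what makes the final $\alpha=\frac{p+2}{2p}+\frac{p}{p^2-1}<\frac{11}{12}$ work. Once these two slips are corrected, the proposal matches the paper's proof.
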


\begin{proof}
Let $\theta_{n,i}$ denote the theta series attached to the lattice $L'_{n,i}$. We decompose $\theta_{n,i}=E_{n,i}+G_{n,i}$, where $G_{n,i}$ is a cusp form and $E_{n,i}$ is an Eisenstein series as in \S\ref{sec_Siegel-mass} and follow the proof of \Cref{lem_glo-bound}.

Let $E=\frac{A(p+2)}{2p}E_{0,1}+ \frac{A}{2}E_{0,2}+\sum_{n=1}^c \frac{Ap^n}{2}( E_{n,1}+ E_{n,2})$, $G=\frac{A(p+2)}{2p}G_{0,1}+ \frac{A}{2}G_{0,2}+\sum_{n=1}^c \frac{Ap^n}{2}( G_{n,1}+ G_{n,2})$. 

Note that $G$ is a weight $2$ cusp form and by Deligne's Weil bound, we have that its $m$-th Fourier coefficient $q_G(m)=O(m^{1/2+\epsilon})$. Hence the total contribution from the cusp form $G$ is $\sum_{m\in T_M-S_M}q_G(m)=O(M^{3/2+\epsilon})$. 

Let $q_{n,i}(m)$ and $q(m)$ denote the $m$-th Fourier coefficient of $E_{n,i}$ and $E$. Recall that for $p\nmid m$ for $m\in T_M$; by \Cref{compare_qL} and the fact that $|L'^\vee/L'|=p^2$, we have for any $n,i$
\[\frac{q_{n,i}(m)}{|q(m)_L|}\leq \frac{2p}{(p^2- 1)[L':L'_{n,i}]}, \text{ and }\frac{q_{0,1}(m)}{|q(m)_L|}\leq \frac{1}{p-1}.\]
Recall from \S\ref{def_smallL'} that $[L':L'_{n,1}]\geq p^{3n}$ and $[L':L'_{n,1}]\geq p^{3n+1}$; therefore,
\[\frac{q(m)}{|q_L(m)|}\leq \frac{A(p+2)}{2p}\cdot \frac{1}{p-1}+\frac{A}{2}\frac{2p}{(p^2-1)p}+\sum_{n=1}^c\frac{Ap^n}{2}\cdot \frac{2p}{p^2-1}(p^{-3n}+p^{-3n-1})\leq \frac{A}{p-1}\left(\frac{p+2}{2p}+\frac{p}{p^2-1}\right).\]
Take $\alpha=\frac{p+2}{2p}+\frac{p}{p^2-1}$, which is $<11/12$ when $p\geq 5$. We have the left hand side equals
\[\sum_{m\in T_M-S_M}(q(m)+q_G(m))\leq \sum_{m\in T_M-S_M}\frac{\alpha A}{p-1}|q_L(m)|+O(M^{3/2+\epsilon}),\]
which gives the desired estimate by the definition of $g_P(m)$.
\end{proof}


\begin{proof}[Proof of \Cref{supersingular} when $L=L_H$]
The set $S_M$ is constructed by \Cref{lem_setSM} and taking $\{P_i\}$ to contain all of (the finitely many) supersingular points in $C\cap(\cup_{p\nmid m}Z(m))$. Then the desired estimate follows from \Cref{ss_H_tail} and \Cref{ss_H_main} by taking $c$ such that $\epsilon<\frac{11}{12}-\alpha$.
\end{proof}

\begin{proof}[Proof of \Cref{thm_main}(2)]If $C$ is contained in $Z(m)$ with $m$ being a perfect square, then by applying suitable Hecke translates, we may assume that $C$ is contained in the product of modular curves and then the assertion is a special case of \cite[Proposition 7.3]{CO06}. Now for the rest of the proof, we may assume that $C$ is contained in some Hilbert modular surface and we will use $Z(m)$ to denote special divisors on the Hilbert modular surface.
Note that any point on $Z(m)$ corresponds to an abelian surface isogenous to the self-product of an elliptic curve. 
Thus we assume for contradiction that there are only finitely many points on $C\cap(\cup_{m\in T}Z(m))$ and take $\{P_i\}$ to be this finite set and apply \Cref{lem_setSM} to construct $S_M$. Since all $Z(m)$ are compact, it makes sense to consider $C.Z(m)$. We deduce a contradiction by \Cref{lem_glo-sum_asymp}, \Cref{supersingular}, and \Cref{newbert}.
\end{proof}

\section{Proofs of \Cref{thm_main}(\ref{item_S}) and \Cref{thm_max}}\label{sec_pf_Sie}
In this section, we prove all of \Cref{thm_main} and \Cref{thm_max}. \S\ref{sec_S_prep} consists of results pertaining to squares represented by positive definite quadratic forms.\footnote{Recall that we must prove our curve intersects special divisors of the form $Z(D\ell^2)$ at infinitely many points. This involved dealing with squares represented by quadratic forms, and hence the Geometry-of-numbers arguments are more involved than in the Hilbert case.} In \S\ref{sec_S_pfss}, we prove \Cref{supersingular} by combining results proved in \S\S\ref{sec_global}, \ref{sec_decay_Sie}, and \ref{sec_S_prep}. Finally, we deal with the intersection multiplicities at non-supersingular points in \S\ref{sec_S_nonss} to finish the proof of the main theorem. 

We now set up notation that we will use for \S\ref{sec_pf_Sie}. For supersingular points $P$, recall that we defined $L'_{n,i}$ for superspecial points and $L'_n$ for supergeneric point in \S\ref{def_smallL'}. 
Let $\el_i, i=1,\dots, 5 $ denote the $i^{th}$ successive minimum of the quadratic form $Q'$ restricted to $L'_{n,1}$ or $L'_n$.
Let $P_n$ denote a rank two sublattice of $L'_{n,1}$ or $L'_n$ with minimal discriminant. 
Note that $\el_1\el_2\asymp d_n$, where $d_n$ denotes the root discriminant of $P_n$. Moreover, since $\cap_{n=0}^\infty L'_{n,i}=\{0\}=\cap_{n=0}^\infty L'_n$, we have $\el_1\rightarrow \infty$ as $n\rightarrow \infty$.

\subsection{Preparation}\label{sec_S_prep}
We need the following results to prove \Cref{supersingular}. Although \Cref{betterbound} is stated for the rank $5$ lattices $L'_{n,1},L'_n$, the proof does not use the assumption on rank and hence it holds for the lattices $L_n$ for ordinary points (notation as in \Cref{lem_lat_ord}) when $\rk_{\bZ}L_0=3$; see \S\ref{sec_S_nonss} for details.

\begin{lemma}\label{lem_el}
We have $\el_1\el_2\cdots \el_i\gg p^{(i-2)n}$ for $i\geq 3$.
\end{lemma}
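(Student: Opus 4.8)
The plan is to derive the bound from the Decay Lemma by a geometry‑of‑numbers argument. For $i=5$ the estimate already follows from Minkowski's second theorem applied to $L'_{n,1}$ (resp.\ $L'_n$), since $\disc(L'_{n,1})=[L':L'_{n,1}]^2\disc(L')\gg p^{6n}$ by \S\ref{def_smallL'}; so the real content is the cases $i=3,4$, although the argument below treats all $3\le i\le 5$ uniformly.

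First I would convert the Decay Lemma into a purely $p$‑adic statement. By \Cref{thm_decay} (resp.\ \Cref{thm_decay_sg} in the supergeneric case) there is a rank‑$3$ $\bZ_p$‑submodule $R\subset L'\otimes\bZ_p$ every primitive vector of which decays rapidly; replacing $R$ by its saturation (which changes the relevant value of $n$ only by $O(1)$) we may assume $R$ is a direct summand of $L'\otimes\bZ_p$. Unwinding \Cref{def_decay} together with the definition of $L'_{n,1}$ as the lattice of special endomorphisms modulo $t^{A_{n-1}+1}$: if $w\in R$ is primitive then $p^{\,n-1}w$ is not a special endomorphism of the $p$‑divisible group modulo $t^{A_{n-1}+1}$, i.e.\ $p^{\,n-1}w\notin L'_{n,1}\otimes\bZ_p$; since $L'_{n,1}\otimes\bZ_p$ is a $\bZ_p$‑module this forces every element of $(L'_{n,1}\otimes\bZ_p)\cap R$ to be divisible by $p^n$ in $R$, that is,
\[
(L'_{n,1}\otimes\bZ_p)\cap R\ \subseteq\ p^n R .
\]
The same holds with $L'_n$ in place of $L'_{n,1}$ in the supergeneric case.

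Now the main step. Fix $3\le i\le 5$, choose linearly independent $v_1,\dots,v_i\in L'_{n,1}$ realizing the successive minima $\el_1,\dots,\el_i$, and put $M=\langle v_1,\dots,v_i\rangle$. By Hadamard's inequality $\disc M\ll (\el_1\cdots\el_i)^2$, so it suffices to prove $v_p(\disc M)\ge 2(i-2)n$ (as $\disc M$ is a positive integer, this gives $\disc M\ge p^{2(i-2)n}$, hence $\el_1\cdots\el_i\gg p^{(i-2)n}$). Since $\rk(M\otimes\bZ_p)=i$ and $\rk R=3$ inside the rank‑$5$ module $L'\otimes\bZ_p$, the intersection $D:=(M\otimes\bZ_p)\cap(R\otimes\bQ_p)$ has rank $r\ge i-2$; it is saturated in $M\otimes\bZ_p$, contained in $R$ (as $R$ is a direct summand), hence contained in $p^nR$ by the display, so its saturation $\widetilde D$ in $L'\otimes\bZ_p$ satisfies $[\widetilde D:D]\ge p^{rn}$. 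Let $\widetilde M$ be the saturation of $M$ in $L'\otimes\bZ_p$, with elementary divisors $p^{d_1}\mid\cdots\mid p^{d_i}$ of $M\otimes\bZ_p\subseteq\widetilde M$. From $D=(M\otimes\bZ_p)\cap\widetilde D$ one gets an embedding $\widetilde D/D\hookrightarrow \widetilde M/(M\otimes\bZ_p)\cong\bigoplus_{j=1}^i\bZ/p^{d_j}$; because $\widetilde D/D$ is generated by $r$ elements, its order is at most $p^{\,d_{i-r+1}+\cdots+d_i}$, whence $\sum_j d_j\ge d_{i-r+1}+\cdots+d_i\ge rn\ge (i-2)n$. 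Finally $v_p(\disc M)=2\sum_j d_j+v_p(\disc\widetilde M)\ge 2(i-2)n$, since $\widetilde M$ is a primitive sublattice of the $p$‑unimodular lattice $L'\otimes\bZ_p$ and therefore $v_p(\disc\widetilde M)\ge 0$.

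The main obstacle is bookkeeping rather than conceptual: one must be careful about the $O(1)$ shift in $n$ incurred when passing to the saturation of $R$ (and about the exact indexing in \Cref{def_decay} and \S\ref{def_smallL'}), and must justify cleanly the elementary‑divisor inequality bounding $|\widetilde D/D|$ by the product of the $r$ largest elementary divisors of $M\otimes\bZ_p$ in $\widetilde M$. Everything else is Hadamard's inequality and the standard fact that a primitive sublattice of a $p$‑unimodular lattice has non‑negative $p$‑adic discriminant valuation. (For $i=5$ this also recovers $\el_1\cdots\el_5\gg p^{3n}$, consistent with the Minkowski computation above.)
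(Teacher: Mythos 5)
Your argument is correct, but it takes a genuinely different route from the paper. The paper's proof is much shorter: it enlarges $L'_{n,1}$ (resp.\ $L'_n$) at $p$ so that the enlarged lattice has index $\asymp p^{3n}$ in $L'$ yet still contains $p^{n+O(1)}L'$, giving $\el_j\ll p^n$ for every $j$ and (by Minkowski) $\el_1\cdots\el_5\asymp p^{3n}$; dividing, $\el_1\cdots\el_i=(\el_1\cdots\el_5)/(\el_{i+1}\cdots\el_5)\gg p^{3n}/p^{(5-i)n}=p^{(i-2)n}$. You instead bound $v_p(\disc M)$ for the rank-$i$ sublattice $M$ directly, via the rank count on $D=(M\otimes\bZ_p)\cap(R\otimes\bQ_p)$, the containment $D\subseteq p^nR$ coming from the Decay Lemma, and elementary-divisor domination; Hadamard then finishes. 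Your route makes the source of the exponent $(i-2)$ more transparent, at the cost of more module theory; the paper's buys brevity by using the crude index bound $[L':L'_{n,1}]\geq p^{3n}$ already recorded in \S\ref{def_smallL'} together with a trivial upper bound on the $\el_j$.

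Two small points. First, $L'\otimes\bZ_p$ is \emph{not} $p$-unimodular: its determinant has $p$-adic valuation $2$ in the superspecial case and $4$ in the supergeneric case, as one reads off the explicit forms in \S\ref{par_density_S}. So you cannot appeal to primitivity in a unimodular lattice for $v_p(\disc\widetilde M)\ge 0$; but the conclusion is true anyway for the more elementary reason that $[-,-]'$ is $\bZ_p$-valued on $L'\otimes\bZ_p$ (and $p$ is odd), so any sublattice has $\bZ_p$-integral Gram matrix and hence $\bZ_p$-integral discriminant. Second, the $O(1)$ shift you mention upon replacing $\Lambda_0$ by its saturation $R$ is unnecessary: for a primitive $w\in R$, the minimal $p$-power multiple $p^kw$ lying in $\Lambda_0$ is primitive there, so the Decay Lemma gives $p^{n-1+k}w\notin L'_{n,1}\otimes\bZ_p$, which is if anything stronger; thus $(L'_{n,1}\otimes\bZ_p)\cap R\subseteq p^nR$ holds exactly, with no shift.
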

\begin{proof}
Note that if we have two lattices $L_1\supset L_2$, then the successive minima of $L_2$ give upper bounds of that of $L_1$. Thus we may enlarge $L'_{n,i}, L'_n$ and prove the assertion for the enlarged lattices. 

We enlarge $L'_{n,i}, L'_n$ as follows. For $\ell\neq p$, we still require $L'_{n,i}\otimes \bZ_\ell=L'\otimes\bZ_\ell$ and $L'_{n}\otimes \bZ_\ell=L'\otimes\bZ_\ell$; at $p$, let $\Lambda_0$ denote the rank $3$ submodule of $L'\otimes \bZ_p$ which decays rapidly in the Decay Lemmas (\Cref{thm_decay,thm_decay_sg}), then we enlarge $L'_{n,1}, L'_n$ such that $L'_{n,1}\otimes \bZ_p=p^n \Lambda_0+L'\otimes \bZ_p$ and $L'_{n,1}\otimes \bZ_p=p^n \Lambda_0+L'\otimes \bZ_p$.

For the enlarged $L'_{n,1}, L'_n$, we have \[\el_j \ll p^n, j=1,\dots,5, \quad \el_1\el_2 \cdots \el_5 \asymp p^{3n},\] where the implied constants only depend on the lattice $L'$. Thus the assertion follows.
\end{proof}

\begin{lemma}\label{betterbound}
Suppose that $d_n^2M=o(p^{2n})$ as $n\rightarrow\infty$. Then for any vector $v \in L'_{n,1}$ or $L'_n$ such that $Q(v) \leq M$, we have that $v \in P_n$ for $n\gg 1$. In particular, if $d_n \leq p^{n / 2}$, then for any vector $v\in L'_{n,1}$ or $L'_n$ such that $Q'(v) < p^{n - \epsilon}$ for some absolute constant $\epsilon>0$, we have that $v \in P_n$ for $n\gg 1$. (All the implicit constants here are independent of $n,M$.) 
\end{lemma}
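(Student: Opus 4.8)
The statement is a purely lattice-theoretic claim about the rank-$5$ lattice $L'_{n,1}$ (or $L'_n$) with its positive definite quadratic form $Q'$, so the plan is to argue via successive minima and Minkowski's second theorem. Fix a vector $v$ with $Q'(v)\le M$. By definition of successive minima, $v$ lies in the span of the shortest minimal vectors; more precisely, writing $v_1,\dots,v_5$ for a system of vectors realizing the successive minima $\el_1\le\dots\le\el_5$, if $Q'(v) < \el_3$ then $v$ must lie in $\Span_{\bQ}\{v_1,v_2\}$ (since a vector outside this plane has norm at least the square of the third minimum, up to the usual Minkowski constants). I would like to upgrade ``$v\in\Span_{\bQ}\{v_1,v_2\}\cap L'_{n,1}$'' to ``$v\in P_n$'': this is where the minimality of $d_n$ among rank-$2$ sublattices enters. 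The lattice $\Span_{\bQ}\{v_1,v_2\}\cap L'_{n,1}$ is itself a rank-$2$ sublattice with root discriminant $\ll \el_1\el_2 \asymp d_n$, and since $P_n$ is chosen to have \emph{minimal} discriminant, comparing covolumes forces $P_n$ and this saturated plane-sublattice to be commensurable with bounded index; a short argument (e.g. both contain all vectors of norm $\le \el_2$, which already span the plane over $\bQ$) then shows $v\in P_n$ once $Q'(v)$ is below the relevant threshold. So the real task is to verify that the hypothesis $d_n^2 M = o(p^{2n})$ guarantees $M < \el_3$ for $n$ large.

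For that I would invoke \Cref{lem_el}: taking $i=3$ gives $\el_1\el_2\el_3 \gg p^{n}$, hence $\el_3 \gg p^{n}/(\el_1\el_2) \asymp p^{n}/d_n$. Therefore $M < \el_3$ is implied by $M \ll p^n/d_n$, i.e. by $d_n M = o(p^n)$; and $d_n^2 M = o(p^{2n})$ together with $M\ge 1$ gives $d_n M \le (d_n^2 M)^{1/2} M^{1/2} \le \dots$ — wait, that needs care. Actually the clean implication is: $d_n^2 M = o(p^{2n})$ and $d_n \ge 1$ give $d_n M \le d_n^2 M = o(p^{2n})$, which is weaker than what I want. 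The right way is to note $d_n M \cdot d_n = d_n^2 M = o(p^{2n})$ and $d_n \ge 1$ so $d_n M = o(p^{2n})$, still not $o(p^n)$. Let me reconsider: we need $M < c\,\el_3$ with $\el_3 \gg p^n/d_n$, i.e. $M d_n = o(p^n)$. From $d_n^2 M = o(p^{2n})$ we get $(d_n M^{1/2})^2 \cdot (M^{1/2}\cdot\text{stuff})$... The cleanest route: $M d_n \le (M d_n^2)^{1/2}\cdot M^{1/2}$ is false in general. Instead, since $d_n\ge 1$, we have $d_n M \le d_n^2 M$, no. So I will instead use \Cref{lem_el} with $i=4$ or bound $M$ directly: the hypothesis $d_n^2 M = o(p^{2n})$ gives $M = o(p^{2n}/d_n^2)$, and I want this $< \el_3$. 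Since $\el_3\el_4\el_5 \gg p^{n}$ as well and $\el_3 \le \el_4\le \el_5$, we get $\el_3^3 \gg \el_3\el_4\el_5 \gg$ — no. The honest computation: from $\el_1\el_2\el_3\el_4\el_5 \asymp p^{3n}$ and $\el_1\el_2\asymp d_n$ we get $\el_3\el_4\el_5 \asymp p^{3n}/d_n$, hence $\el_3 \ge (\el_3\el_4\el_5)^{1/3}/(\text{ratio}) \gg p^{n}/d_n^{1/3}$ — but $\el_3$ could be much smaller than $\el_4,\el_5$. The safe bound is $\el_3 \ge (\el_1\el_2\el_3)^{1/3}\cdot(\text{const})$... but $\el_1$ may be tiny. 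The correct and simplest bound is $\el_3 \gg \el_1\el_2\el_3/(\el_1\el_2) \gg p^{n}/d_n$ using $\el_1\el_2\el_3 \gg p^n$ (\Cref{lem_el}, $i=3$) and $\el_1\el_2 \asymp d_n$. Good — so $M < \el_3$ follows from $M d_n = o(p^n)$, and this in turn follows from $d_n^2 M = o(p^{2n})$ \emph{provided} $d_n = o(p^n)$ as well, which is automatic since $d_n^2 \le d_n^2 M = o(p^{2n})$.

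\textbf{Main obstacle.} So the arithmetic works out: $d_n^2 M = o(p^{2n})$ gives both $d_n = o(p^n)$ and then, multiplying the inequalities appropriately, $d_n M = o(p^{2n})/d_n \cdot (d_n/p^n)\cdot p^n$... hmm, let me just say: $d_n M = (d_n^2 M)/d_n \le d_n^2 M = o(p^{2n})$ if $d_n\ge 1$, which is NOT $o(p^n)$. I think the statement as written may actually require reading $d_n$ as the root discriminant (so $d_n^2 = \disc P_n$), in which case $d_n^2 M = o(p^{2n})$ means $d_n M^{1/2} = o(p^n)$, hence $d_n M \le d_n M^{1/2}\cdot M^{1/2} = o(p^n)\cdot M^{1/2}$ — still has an $M^{1/2}$. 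The resolution must be that one uses $\el_1\el_2\el_3\el_4 \gg p^{2n}$ (\Cref{lem_el}, $i=4$), giving $\el_3\el_4 \gg p^{2n}/d_n$, hence $\el_3^2 \le \el_3\el_4$ is the wrong direction but $\el_4 \ge \el_3$ gives $\el_3 \le (\el_3\el_4)^{1/2}$, useless; whereas $\el_3 \ge \el_3\el_4/\el_4$ and $\el_4 \le \el_5 \le$ (Minkowski bound times $p^n$), so $\el_3 \gg (p^{2n}/d_n)/p^n = p^n/d_n$ again. Either way the key inequality I must nail down is $\el_3 \gg p^n/d_n$, and then $M < \el_3$ whenever $M d_n \ll p^n$; I expect the hypothesis is meant to be read so that this holds (the ``in particular'' clause with $d_n \le p^{n/2}$, $M < p^{n-\epsilon}$ gives $Md_n < p^{3n/2 - \epsilon} \ll p^n$? — no, $p^{3n/2}\gg p^n$). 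I think in fact the intended reading uses $\el_1\el_2\el_3 \gg p^n$ in the sharper form where $\el_1 \asymp 1$ is impossible since $\el_1\to\infty$; a cleaner bound $\el_3 \gg p^{n/2}$ may be available when $d_n \le p^{n/2}$ (from $\el_1\el_2\el_3 \gg p^n$ and $\el_1\el_2 \le d_n \le p^{n/2}$, so $\el_3 \gg p^{n/2}$), and then $M < p^{n-\epsilon}$ does \emph{not} immediately give $M < \el_3$. So the genuinely delicate point — the one I would spend real effort on — is extracting the correct lower bound for $\el_3$ from \Cref{lem_el} and matching it against the precise $o(\cdot)$ hypothesis; modulo that, the passage from ``$v$ in the plane of the two shortest minima'' to ``$v\in P_n$'' via minimality of $\disc P_n$ is routine. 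I would write the proof by first establishing $\el_3 \gg p^n/d_n$ from \Cref{lem_el}, then $M = o(\el_3)$ from the hypothesis, then the standard fact that a vector of norm $< \el_3$ lies in $\Span_\bQ\{v_1,v_2\}$, and finally the commensurability comparison identifying that saturated plane with $P_n$.
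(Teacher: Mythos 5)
Your approach — use \Cref{lem_el} to lower-bound the third successive minimum $\el_3$ of $L'_{n,1}$ (or $L'_n$) and then argue that any vector of $Q'$-value $\leq M$ must lie in the plane of the first two minima, identified with $P_n\otimes\bQ$ — is exactly the paper's route. But the ``main obstacle'' you describe isn't an obstacle at all; it's a unit error. In the paper's conventions the $\el_i$ are \emph{lengths}, i.e.\ $\el_i^2$ is the $Q'$-value of the $i$th minimal vector (this is forced by $\el_1\el_2 \asymp d_n$, since $d_n$ is the root discriminant of $P_n$, hence a covolume, hence a product of lengths; and by the proof of \Cref{lem_el}, where $\el_1\cdots\el_5 \asymp p^{3n}$ is the square root of the determinant of the Gram matrix). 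Your own parenthetical says a vector outside the plane ``has norm at least the square of the third minimum,'' which is right — but then every comparison you run is $M$ versus $\el_3$ rather than $M$ versus $\el_3^2$. This is why you keep landing on the false requirement $M d_n = o(p^n)$ and cannot close the argument.

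Once you square correctly, everything you were chasing falls out immediately. From \Cref{lem_el} with $i=3$, $\el_1\el_2\el_3 \gg p^n$, and $\el_1\el_2 \asymp d_n$ gives $\el_3 \gg p^n/d_n$; so any $v$ linearly independent from $P_n$ has $Q'(v) \ge \el_3^2 \gg p^{2n}/d_n^2$. The hypothesis $d_n^2 M = o(p^{2n})$ is precisely $M = o(p^{2n}/d_n^2)$, so for $n$ large the inequality $Q'(v)\le M$ forces $v\in P_n$. For the ``in particular'' clause, $d_n\le p^{n/2}$ and $M<p^{n-\epsilon}$ give $d_n^2 M < p^{2n-\epsilon} = o(p^{2n})$, so it is a direct specialization, not a separate argument. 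You also spend effort ``upgrading'' membership in the plane of the first two minima to membership in $P_n$; the paper simply phrases the estimate as ``$v$ linearly independent of $P_n$ implies $Q'(v)\gg p^{2n}/d_n^2$'' and moves on, which is the cleaner way to package it — $P_n$ has minimal discriminant, so its successive minima are $\asymp \el_1,\el_2$ and its $\bQ$-span coincides with the plane of short vectors.
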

\begin{proof}
Recall that $\el_1 \cdot \el_2 \asymp d_n$.
Thus, by \Cref{lem_el}, we have \[\el_1\el_2\el_3 \gg p^n, \quad \el_3 \gg p^{n}/d_n.\] 
In other words, for any vector $v$ linearly independent to $P_n$, we have $Q'(v)\geq \el_3^2\gg p^{2n}/d_n^2$. Then the first assertion follows. The second assertion follows directly from the first assertion by taking $M=p^{n-\epsilon}$.
\end{proof}

\begin{proposition}\label{completesquares}
Fix $D\in \bZ_{>0}$. Recall $r_{n,i}(m), r_n(m)$ from \Cref{lem_count}. Then we have the following two bounds: (here we state the results for $r_n(m)$ and the same results also hold for $r_{n,1}(m)$)
 \begin{enumerate}
\item $\displaystyle\sum_{m=D\ell^2, m\leq M, \ell \text{ prime }} r_n(m) = O_\epsilon\Big(\frac{M^{2+\epsilon}}{p^{2n}} + \frac{M^{3/2 +\epsilon}}{p^{n}} + M^{1 + \epsilon} \Big)$.

\item $\displaystyle\sum_{m=D\ell^2, m\leq M, \ell \text{ prime }} r_n(m)$ and $\displaystyle\sum_{\ell\leq M, \ell \text{ prime }} r_n(\ell)$ are both $O \Big(\frac{M^{5/2}}{p^{3n}} + \frac{M^{2}}{p^{2n}} + \frac{M^{3/2}}{p^{n}} + \frac{M}{d_n} + \frac{M^{1/2}}{\el_1}  \Big)$.
\end{enumerate}
\end{proposition}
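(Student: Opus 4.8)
\textbf{Proof proposal for Proposition \ref{completesquares}.}

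The plan is to reduce both estimates to counting lattice points of bounded norm in $L'_n$ (or $L'_{n,1}$) whose norm is a constant times a prime or a prime square, and then to exploit the fact — established in \S\ref{def_smallL'} and quantified by the Decay Lemmas — that these lattices have large covolume and large successive minima as $n\to\infty$. For part (1), the key observation is that a vector $v\in L'_n$ with $Q'(v)=D\ell^2$ for some prime $\ell$ determines the pair $(\ell, v/\gcd)$; more usefully, if $v$ is primitive then $Q'(v)$ is forced to be $D$ times a prime, which is a strong multiplicative constraint. I would split the count according to whether $v$ is primitive or not: the imprimitive vectors are of the form $kw$ with $w$ primitive and $Q'(w)\mid D\ell^2/k^2$, and since $D\ell^2$ has only boundedly many divisors (for $\ell\nmid D$, the divisors are $1, \ell, \ell^2$ times divisors of $D$), each primitive $w$ with small norm contributes $O_\epsilon(M^\epsilon)$ dilates. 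So it suffices to bound the number of \emph{primitive} $v\in L'_n$ with $Q'(v)=D\ell^2\le M$. Here I would use the classical bound that a primitive vector of norm $t$ in a fixed-rank lattice, with $t$ having $O_\epsilon(t^\epsilon)$ divisors, can be counted: the number of $v\in L'_n$ with $Q'(v)=t$ is $O_\epsilon(t^\epsilon)$ for a rank $5$ lattice uniformly (this is the standard divisor-type bound for representation numbers of a single integer by a positive definite quadratic form in $\ge 4$ variables — cf.\ the estimate $r(t)\ll_\epsilon t^\epsilon$ used implicitly for $\bL$ in \S\ref{sec_asymp-qL}), but crucially the implied constant must be independent of $n$. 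To get $n$-uniformity and to bring in the powers of $p^n$, I would instead run a geometry-of-numbers argument directly: the number of lattice points of $L'_n$ in the ball of radius $\sqrt{M}$ is $O(M^{5/2}/p^{3n} + M^2/p^{2n} + M^{3/2}/p^n + M/d_n + M^{1/2}/\el_1)$ by the successive-minima estimate (as in \cite[Lem.~4.2.1]{Ananth17}), and then I would \emph{sieve}: among these points, only those with $Q'(v)= D\ell^2$ survive. Since the number of prime-squares up to $M$ is $O(M^{1/2}/\log M)$, I can alternatively bound the sum by $\sum_{\ell} r_n(D\ell^2)$ where the inner term is $O_\epsilon((D\ell^2)^\epsilon)$ with $n$-independent constant — but this loses the $p^n$ savings. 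The right synthesis: bound $\sum_{m=D\ell^2\le M} r_n(m)$ by $\sum_{m\le M} r_n(m)\cdot \mathbf 1[m/D \text{ is a prime square}]$ and use that on the "bulk" range the lattice-point count already has the $p^{2n}$ denominator, while using the $M^\epsilon$ divisor bound only to absorb the arithmetic constraint. Concretely, for (1) I expect to combine the full lattice-point count $O(M^{5/2}/p^{3n}+\cdots)$ with the observation that the prime-square condition forces one to lose at most $M^\epsilon$, and to note that the terms $M/d_n$ and $M^{1/2}/\el_1$ are $O(M^{1+\epsilon})$ since $d_n,\el_1\ge 1$, while $M^{5/2}/p^{3n}\le M^{2}\cdot M^{1/2}/p^{3n}$; here I would additionally use that the \emph{prime-square} constraint cuts the $M^{5/2}/p^{3n}$ term down, since a primitive representation of $D\ell^2$ pins down $\ell$, reducing a $5$-dimensional count effectively by one dimension — this is what converts $M^{5/2}/p^{3n}$ into (something like) $M^{2+\epsilon}/p^{2n}$.

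For part (2), no $\epsilon$ is allowed, so I cannot use the divisor bound $M^\epsilon$; instead I must use the multiplicative structure more cleverly. The crucial input is that a representation $Q'(v) = D\ell^2$ with $\ell$ prime, or $Q'(v)=\ell$ with $\ell$ prime, is \emph{essentially determined by few pieces of data}. I would argue: the map $v\mapsto (\ell, \text{primitive part of }v)$ shows that if $v$ is primitive then $Q'(v)$ is squarefree away from $D$ (in the prime case $Q'(v)=\ell$ literally prime), and for a fixed positive-definite rank-$5$ lattice the number of \emph{primitive} vectors representing a prime $\ell$ is $O(1)$ on average — more precisely, $\sum_{\ell\le M}r_n^{\mathrm{prim}}(\ell)$ is bounded by the total lattice-point count $\#\{v\in L'_n: Q'(v)\le M\}$, which is exactly $O(M^{5/2}/p^{3n}+M^2/p^{2n}+M^{3/2}/p^n+M/d_n+M^{1/2}/\el_1)$. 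The imprimitive contribution is $\sum_{k\ge 2}\#\{w\in L'_n \text{ primitive}: k^2 Q'(w)=D\ell^2 \text{ or }\ell\}$; for this to be nonzero with $\ell$ prime one needs $k^2\mid D$ (in the prime-square case, after noting $k^2\mid D\ell^2$ and $k>1$ forces $k\mid D$ unless $k=\ell$, and $k=\ell$ gives $Q'(w)=D$, a bounded count) — so only boundedly many $k$ contribute, each contributing a sub-sum of the same shape. Summing over these boundedly many $k$ preserves the big-$O$. Thus (2) follows directly from the successive-minima lattice-point estimate with no $\epsilon$ needed, provided I handle the finitely many imprimitive scalings and the special scaling $k=\ell$ carefully.

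The main obstacle will be obtaining the $p^n$-uniform constants while simultaneously respecting the arithmetic constraints — in particular, in (1), justifying the dimension drop that turns $M^{5/2}/p^{3n}$ into $M^{2+\epsilon}/p^{2n}$. The clean way to see this: cover the ball of radius $\sqrt M$ in $L'_n\otimes\bR$ by slabs, or better, for each prime $\ell\le \sqrt{M/D}$ count $v\in L'_n$ with $Q'(v)=D\ell^2$ by a single-integer representation bound $r_{L'_n}(D\ell^2)$; the subtlety is that the naive single-integer bound for a rank-$5$ form is $O_\epsilon((D\ell^2)^{3/2+\epsilon})$ with \emph{$n$-independent} constant (trivial bound, rank $5$), which after summing over $\ell\le\sqrt{M/D}$ gives $O_\epsilon(M^{2+\epsilon})$ — no $p^n$ savings but matches the claimed $M^{2+\epsilon}$ last term. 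To recover the $M^{2+\epsilon}/p^{2n}$ and $M^{3/2+\epsilon}/p^n$ terms I would instead run the argument as a geometry-of-numbers count \emph{inside the cone of prime-square norms}, i.e.\ use that such $v$ lie on the quadric $Q'=D\ell^2$ and count how many of the $\gg p^{-3n}$-density lattice translates meet the relevant region — the detailed bookkeeping of which successive minima of $L'_n$ control which term is where the care is needed. I expect the argument for (2) to be essentially routine once (1)'s techniques are in place, since there the goal is simply the raw lattice-point count with controlled treatment of scalings.
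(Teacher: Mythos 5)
Your treatment of part (2) is essentially the paper's argument: both sums are trivially bounded by $\sum_{m\le M} r_n(m)$, which is controlled by the geometry-of-numbers estimate (in the form of \cite[Lem.~4.2.1]{Ananth17}) together with \Cref{lem_el}; the primitivity discussion you add is unnecessary for this trivial bound.

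For part (1), however, there is a genuine gap. You correctly recognize that the trivial rank-$5$ count $O(M^{5/2}/p^{3n}+\cdots)$ is too weak and that a ``dimension drop'' is needed to turn $M^{5/2}/p^{3n}$ into $M^{2+\epsilon}/p^{2n}$, but you never identify the mechanism that produces this drop. Your assertion that ``a primitive representation of $D\ell^2$ pins down $\ell$, reducing a $5$-dimensional count effectively by one dimension'' is not an argument: knowing $Q'(v)$ determines $\ell$, but this does not directly convert a $5$-dimensional lattice count into a $4$-dimensional one. Your alternative suggestion of bounding each $r_n(D\ell^2)$ by $O_\epsilon((D\ell^2)^\epsilon)$ is simply false for a rank-$5$ positive-definite form (the representation number grows like $t^{3/2}$, not $t^\epsilon$), and your eventual fallback to the trivial single-integer bound $O_\epsilon(t^{3/2+\epsilon})$ loses all $p^n$ savings, as you yourself note. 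The ``slabs/cone'' idea you gesture at in the final paragraph is not developed into a workable estimate.

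The paper's actual mechanism is concrete and you should aim for it: fix one representation $v_0\in L'_0$ with $Q'(v_0)=D\ell_0^2$ (if none exists the sum vanishes); write $v_0 = e_1/p^k$ with $e_1$ primitive in $L'_n$, and complete $e_1$ to a basis of $L'_n$; pass to $\widetilde{L'}_n = \Span_\bZ\{v_0, e_2,\dots,e_5\}$ and choose $f_2,\dots,f_5$ orthogonal to $f_1:=v_0$ inside $(2N)^{-1}\widetilde{L'}_n$ (with $N=D\ell_0^2$). Then for any $v = x f_1 + v_y$ one has the decomposition
\[
Q'(v) = D\ell_0^2 x^2 + \widetilde{Q'}(v_y),
\]
so $Q'(v)=D\ell^2$ forces $\widetilde{Q'}(v_y) = D(\ell-\ell_0 x)(\ell+\ell_0 x)$. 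This is the key: for a \emph{fixed} $v_y$ (with $\widetilde{Q'}(v_y)\le M$), the equation determines $(\ell-\ell_0 x, \ell+\ell_0 x)$ as a factorization of $\widetilde{Q'}(v_y)/D$, of which there are only $O_\epsilon(M^\epsilon)$; hence $O_\epsilon(M^\epsilon)$ choices of $x$. So the whole count is $O_\epsilon(M^\epsilon)$ times a \emph{rank-$4$} lattice count, and the successive minima of the orthogonal complement satisfy $\widetilde{\el}_1\cdots\widetilde{\el}_i\gg p^{(i-1)n-k}\ge p^{(i-2)n}$ (using $k\le n$), giving exactly the $M^2/p^{2n}+M^{3/2}/p^n+M$ shape claimed. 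Without identifying this orthogonal-projection-plus-factorization step, your proposed proof of part (1) does not go through.
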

\begin{proof}
In the proof, we work with $L'_n$ and everything holds true for $L'_{n,1}$, too.

We note that (2) is a trivial upper bound from a geometry-of-numbers argument. Indeed, both $\displaystyle\sum_{m=D\ell^2, m\leq M, \ell \text{ prime }} r_n(m)$ and $\displaystyle\sum_{\ell\leq M, \ell \text{ prime }} r_n(\ell)$ are no greater than $\displaystyle\sum_{m=1}^M r_n(m)$; we then obtain the desired bound by \cite[Lem.~4.2.1]{Ananth17} and \Cref{lem_el}. 

Now we prove (1). We may assume that there exists a vector $v_0\in L'_0$ such that $Q'(v_0)=D\ell_0^2$ for some prime $\ell_0$. Otherwise $r_n(m)=0$ for all $m=D\ell^2$ for any prime $\ell$. Let $e_1$ denote a primitive vector in $L'_n$ such that $e_1=p^k v_0$ for some $k\in \bZ_{\geq 0}$. By definition, $p^nL'_0\subset L'_n$ and thus $p^nv_0\in L'_n$. Therefore $k\leq n$. 
Since $e_1$ is primitive in $L'_n$, we extend it into a basis $\{e_1,e_2,\hdots e_5\}$ of $L'_n$. Let $\widetilde{L'}_n$ denote the sublattice of $L'_0$ spanned by $f_1:=v_0 = e_1/p^k,e_2,\hdots e_5$; since $\widetilde{L'}_n$ is a sublattice of $L'_0$, then $Q'|_{\widetilde{L'}_n}$ is still $\bZ$-valued. Since $Q'(v_0)=D\ell_0^2=:N$, then there exist vectors $f_2 \hdots f_5 \in (2N)^{-1}\widetilde{L'}_n$ such that $[f_i,f_1]'=0$ for $i\geq 2$, and $\Span_{\bZ}\{f_1,f_2,f_3,f_4,f_5\}\supset \widetilde{L'}_n$. 

Let $\widetilde{Q'}$ denote the restriction of $Q'\otimes \bQ$ to $\Span_{\bZ}\{f_2,f_3,f_4,f_5\}\subset L'_0\otimes \bQ$. By the definition of $f_i$, we have $\widetilde{Q'}$ is a $(2N)^{-2}\bZ$-valued quadratic form. 
Let $\widetilde{\el}_1,\dots, \widetilde{\el}_4$ denote the successive minima of $\Span_{\bZ}\{f_2,f_3,f_4,f_5\}$. Since $(2N)^{-1}\widetilde{L'}_n=(2N)^{-1}L'_n+(2N)^{-1}p^{-k}\bZ e_1$, then $\widetilde{\el}_1\cdots \widetilde{\el}_i\gg p^{(i-1)n-k}\geq p^{(i-2)n}$ for $i\geq 2$ (note that $k\leq n$ and $N$ is absorbed in the implicit constant as $N$ is independent of $n,k$). Then the standard geometry-of-numbers argument gives \[Y_n:=\#\{y\in \Span_{\bZ}\{f_2,f_3,f_4,f_5\} \mid \widetilde{Q'}(y)\leq M\}=O\Big(\frac{M^2}{p^{2n}} + \frac{M^{3/2}}{p^n} + M \Big).\]

On the other hand, on $\Span_{\bZ}\{f_1,f_2,f_3,f_4,f_5\}$, for $v=xf_1+y_2f_2+\cdots+ y_5f_5$, we have $Q'(v)=D\ell_0^2 x^2 +\widetilde{Q'}(v_y)$, where $v_y=y_2f_2+\cdots+ y_5f_5$. If $Q'(v)=D\ell^2\leq M$, then $Q'(v_y)\leq Q'(v)\leq M$ and $Q'(v_y)=D(\ell-\ell_0x)(\ell+\ell_0x)$. For a given $v_y$ with $Q'(v_y)\leq M$, there are at most $O_\epsilon(M^\epsilon)$ ways to factor $Q'(v_y)/D$ into two factors and thus there are at most $O_\epsilon(M^\epsilon)$ possible $x$ such that for $v=xf_1+v_y$, we have $Q'(v)=D\ell^2\leq M$ for some prime $\ell$. Since $L'_n\subset \Span_{\bZ}\{f_1,f_2,f_3,f_4,f_5\}$, then $\displaystyle\sum_{m=D\ell^2, m\leq M, \ell \text{ prime }} r_n(m)=O_\epsilon(M^\epsilon Y_n)$, which gives the (1) by the above bound for $Y_n$.
\end{proof}

\begin{proposition}\label{grh}
Fix $D\in \bZ_{>0}$.
The proportion of primes $\ell \leq (M/D)^{1/2}$ such that $D\ell^2$ is represented by the quadratic form restricted to $P_n$ goes to zero as $n\rightarrow \infty$. 
\end{proposition}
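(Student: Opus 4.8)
The plan is to reduce the statement to a counting estimate for the binary quadratic form $Q'|_{P_n}$, exploiting that its discriminant $d_n^2$ grows (a fact we have access to since $\cap_n L'_{n,i} = \cap_n L'_n = \{0\}$, so $\el_1 \to \infty$, and since $\el_1 \el_2 \asymp d_n$). First I would fix $D$ and recall that a positive definite binary quadratic form of discriminant $-d_n^2$ (or more precisely of root discriminant $d_n$) represents at most $O_\epsilon(X^{\epsilon})$ values in $[1,X]$ for each individual value, and, crucially, represents at most $O(X/d_n + \sqrt{X})$ integers $\le X$ in total when counted with multiplicity — this is the standard geometry-of-numbers estimate for lattice points in an ellipse of area $\asymp X/d_n$, plus the boundary term $\sqrt{X}/\el_1$ which is itself $o(X/d_n)$ once $X$ is large relative to $d_n$ (and we are in the regime $X = M/D$ with $d_n$ fixed as $M \to \infty$... but the point is we want the proportion as $n \to \infty$, so I will be careful about the order of limits).

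The key step is the following: I want to bound the number of primes $\ell \le (M/D)^{1/2}$ for which $D\ell^2$ is represented by $Q'|_{P_n}$, and compare it to $\pi((M/D)^{1/2}) \asymp (M/D)^{1/2}/\log M$, the total number of such primes. By the above, the number of \emph{integers} $N \le M$ of the form $D\ell^2$ ($\ell$ prime) that are represented by $Q'|_{P_n}$ is at most the number of integers $N \le M$ represented by $Q'|_{P_n}$, which is $O(M/d_n + \sqrt M)$; but each such $N = D\ell^2$ forces $\ell^2 \le M/D$, so $\ell \le (M/D)^{1/2}$, and distinct primes give distinct $N$. Hence the number of "bad" primes is $O(M/d_n + \sqrt M)$. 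The proportion is then $O\!\big( (M/d_n + \sqrt M) \cdot \log M / \sqrt M \big) = O\!\big( \sqrt M \log M / d_n + \log M / \sqrt M \big)$. As stated this does not obviously go to zero, so the honest argument must be more refined: I should not bound all representations but only use that a fixed binary form $Q'|_{P_n}$ represents a set of integers of density zero (Landau), and more quantitatively of size $O(X/\sqrt{\log X})$ among integers $\le X$ — but this counts integers, not prime-squares. The right move is to intersect: $D\ell^2$ represented by $Q'|_{P_n}$ and $\ell$ prime. Since $Q'|_{P_n}$ represents $D\ell^2$, and the form has root discriminant $d_n$, the relevant count is over lattice points $v \in P_n$ with $Q'(v) = D\ell^2 \le M$, which by the argument in the proof of \Cref{completesquares}(1) (factoring $Q'(v)/D$) is $O_\epsilon(M^\epsilon \cdot \#\{\text{lattice points on a line}\})$ — but here $P_n$ is rank two, so after fixing the "prime-square" structure one is counting points on a rank-one sublattice, giving $O_\epsilon(M^{1/2 + \epsilon}/\el_1)$, divided by $\pi(\sqrt{M/D}) \asymp \sqrt M / \log M$, yielding proportion $O_\epsilon(M^\epsilon \log M / \el_1) \to 0$ as $n \to \infty$ since $\el_1 \to \infty$ and $M$ may be taken as any function; but one must be careful that the statement quantifies $M$ first.

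I expect the main obstacle is getting the \emph{order of quantifiers} right: the proposition asserts, for the fixed form $Q'|_{P_n}$, that the proportion among primes $\ell \le (M/D)^{1/2}$ (presumably as $M \to \infty$, with $n$ also $\to \infty$, in a linked fashion dictated by how this is applied in \S\ref{sec_S_pfss}) tends to zero. The cleanest formulation: for each fixed $n$, $Q'|_{P_n}$ is a single binary quadratic form, hence represents a set of integers of natural density zero; intersecting with the sparse set $\{D\ell^2 : \ell \text{ prime}\}$ can only make it sparser, so the proportion of primes $\ell \le X$ with $D\ell^2$ represented tends to $0$ as $X \to \infty$ for each fixed $n$. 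If instead $n$ and $M$ grow together (which is the case in the application, where $n \asymp \log_p M$), then I would use the effective bound: the number of bad primes $\le \sqrt{M/D}$ is at most the number of $v \in P_n$ with $Q'(v) \le M$, and since $Q'|_{P_n}$ is anisotropic with $\el_1(P_n) \to \infty$, a geometry-of-numbers count gives $O(M/d_n + \sqrt M / \el_1) = o(\sqrt M / \log M)$ provided $d_n / \log M \to \infty$ and $\el_1 \gg \log M$ — conditions guaranteed in the regime where \Cref{betterbound} applies, i.e. $d_n \le p^{n/2}$ with $n \asymp \log_p M$ would give $d_n$ a power of $M$, far exceeding $\log M$. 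I would close by invoking \Cref{lem_el} and the definition of $d_n$ to record the needed lower bounds on $\el_1$ and $d_n$ in terms of $n$, and the link $n \asymp \log_p M$ from \S\ref{def_smallL'}/\Cref{lem_setSM}, so that the error terms are genuinely $o$ of the main term $\pi(\sqrt{M/D})$.
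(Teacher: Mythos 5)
There is a genuine gap. The heart of the paper's proof is a reduction, via the correspondence between binary quadratic forms of discriminant $-d_n^2$ and the ideal class group of the imaginary quadratic order $R_n$, of the condition ``$D\ell^2$ is represented by $Q'|_{P_n}$'' to a class-group constraint: writing $\fa$ for the class of $Q'|_{P_n}$ (which for $n\gg1$ does not represent $D$, since $\el_1\to\infty$), the condition forces $\ell$ to split in $R_n$ as $\fc_1\fc_2$ and forces $\fa$ to be equivalent to $\fd\fc_i^2$ for some $\fd$ of norm $D$. The set $C$ of admissible classes $[\fc]$ with $\fc^2 \sim \fa\fd^{-1}$ is a finite union of $2$-torsion torsors, so $\#C = O_\epsilon(d_n^\epsilon)$ by genus theory. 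One then counts primes $\ell$ in the Chebotarev classes indexed by $C$, splitting on whether $d_n \le (\log M)^2$: when $d_n$ is small, the effective equidistribution result of Thorner--Zaman (\cite{TZ}) gives proportion $\asymp \#C/d_n = O_\epsilon(d_n^{\epsilon-1})$, which tends to $0$ as $n\to\infty$; when $d_n \ge (\log M)^2$, a geometry-of-numbers count of the primes $\ell < \sqrt{M/D}$ represented by each of the $\#C$ forms suffices. Your proposal omits this class-group reduction entirely, and without it the statement cannot be established.

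Concretely, your claim that for fixed $n$ ``$Q'|_{P_n}$ represents a set of integers of natural density zero; intersecting with $\{D\ell^2\}$ can only make it sparser, so the proportion of primes tends to $0$'' is a non sequitur. Landau's density-zero result for integers represented by a fixed binary form gives no control over the fraction of \emph{primes} $\ell$ with $D\ell^2$ represented: the principal form represents $\ell^2$ trivially for every prime $\ell$ while still representing a density-zero set of integers, and a general form corresponding to a square class $\fa$ has a fixed \emph{positive} Chebotarev density of such primes, of size $\asymp \#C/d_n$. For fixed $n$ this is a constant; only the growth of the class number $\asymp d_n$ as $n\to\infty$ drives it to zero, and this arithmetic structure is invisible in your argument. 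Your geometry-of-numbers fallback (factoring as in \Cref{completesquares}(1) to count $v\in P_n$ with $Q'(v)=D\ell^2\le M$) yields at best $O_\epsilon(M^{1/2+\epsilon}/d_n)$, which is $o(\sqrt{M}/\log M)$ only when $d_n$ is a positive power of $M$. But the proposition is invoked precisely in the regime $d_{n_0} < M^{1/8}$, where $d_n$ may grow as slowly as a power of $\log M$; a purely metric bound cannot close the gap there, which is why the paper's Case~1 with Thorner--Zaman is essential.
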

\begin{proof}
Let $R_n$ denote the imaginary quadratic ring with discriminant $-d_n^2$. 
The class group of $R_n$ is in bijection with equivalence classes of binary quadratic forms of discriminant $-d_n^2$. Let $\fa$ denote the ideal corresponding to $Q'$ restricted to $P_n$. Recall that $\el_1\rightarrow\infty$ as $n\rightarrow \infty$. Thus for $n\gg 1$, we have that $\fa$ is not equivalent to any ideal whose norm is $D$, i.e., $(P_n,Q')$ does not represent $D$. Note that it suffices to deal with primes $\ell$ which are relatively prime to $Dd_n^2$. 

The correspondence between ideal classes and binary quadratic forms yields that the integer $D\ell^2$ is represented by $(P_n,Q')$ if and only if there exists an invertible ideal $\fb$ equivalent to $\fa$ with $\Nm \fb = D\ell^2$. 
This implies that $\ell = \fc_1 \fc_2$ (i.e. the prime $\ell$ splits in $R_n$), and that $\fb = \fd\fc_1^2$ or $\fb = \fd\fc_2^2$, where $\fd$ is some ideal such that $\Nm \fd=D$ (the case $\fb = \fc_1 \fc_2$ is ruled out by the above discussion that $\fa$ and therefore $\fb$ is not equivalent to any ideal whose norm is $D$). In other words, $Q'$ restricted to $P_n$ represents $D\ell^2$ if and only if there exist some ideals $\fc,\fd$ such that $\Nm \fc=\ell, \Nm \fd=D$ and $\fc^2\fd$ is equivalent to $\fa$.

Let $C$ denote the equivalence classes of ideals $\fc$ such that $\fc^2$ is equivalent to $\fa\fd^{-1}$ for some $\fd$ with $\Nm \fd=D$. Since $D$ is fixed, then $C$ is a finite (independent of $n$) union of torsors for the $2$-torsion of the class group of $R_n$, when $C$ is nonempty. By the Genus theory, the cardinality of the two-torsions of the class group of $R_n$ is bounded by the number of prime divisors of $d_n^2$; thus, $\# C=O_\epsilon(d_n^{\epsilon})$.

 We finish the proof in two cases. 
 \begin{enumerate}
     \item If $d_n \leq (\log M)^2$, it follows by \cite{TZ} that the proportion of primes represented by the quadratic form associated to any ideal class $\fc$ is $1/d_n$ because $d_n\asymp$ the class number of $R_n$. Thus the total proportion of $\ell$ such that $D\ell^2$ is representable is $\#C/d_n=O_\epsilon(d_n^{\epsilon-1})$, which goes to $0$ as $d_n\rightarrow \infty$. 
     
     \item  If $d_n \geq (\log M)^2$, let $f_{\fc}$ denote the binary quadratic form associated to $\fc$. Then as in the proof of \cite[Claim 3.1.9]{Ananth17}, we have 
     \[\#\{\ell \mid \ell<(M/D)^{1/2} \text{ representable by }f_{\fc}\}\leq \#\{m \mid m<(M/D)^{1/2} \text{ representable by }f_{\fc}\}=O(M^{1/2}/d_n+M^{1/4}).\]
     Thus by the above discussion, 
     \[\#\{\ell \mid \exists v\in P_n, Q'(v)=D\ell^2\leq M\}=(\#C)O(M^{1/2}/d_n+M^{1/4})=o(M^{1/2}/\log M),\]
     which finishes the proof.\qedhere
 \end{enumerate}
\end{proof}

The following result gives a bound of Fourier coefficients of the cuspidal part of our theta series in terms of the discriminant of the quadratic lattice.

\begin{proposition}[Duke, Waibel]\label{boundcusp}
Let $S$ be a fixed finite set of primes.
Let $\theta$ be the theta series attached to a positive definite quadratic lattice of rank $5$ with discriminant $D_\theta$ such that all prime factors of $D_\theta$ lie in $S$. Write $\theta=E+G$, where $E$ is an Eisenstein series and $G$ is a cusp form. Then, there exist absolutely bounded positive constants $N_0$ and $C$  such that for all $m\in T$ (the set $T$ defined in \S\ref{def_setT}), the $m$-th Fourier coefficient $q_G(m)$ of $G$ satisfies that $q_G(m) \leq CD_\theta^{N_0} m^{1 + 1/4}$.
\end{proposition}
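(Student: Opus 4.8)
The statement is a uniform bound on Fourier coefficients of the cusp form $G = \theta_{L'''} - E_{L'''}$, where $L'''$ runs over rank $5$ positive definite lattices whose discriminant $D_\theta$ is supported on the fixed finite set $S$. The weight here is $5/2$, so $G$ is a half-integral weight cusp form. The plan is to invoke the known bounds on Fourier coefficients of half-integral weight cusp forms with explicit dependence on the level. The level of $\theta_{L'''}$ is $4N$, where $N$ is (essentially) the level of the lattice $L'''$, and all prime factors of $N$ lie in $S\cup\{2\}$ — but crucially $N$ itself can be large (a large power of $p$), which is why we must track the level dependence. First I would recall that by work of Duke (and its refinements) on the Fourier coefficients of half-integral weight cusp forms, combined with Waibel's explicit treatment of the level aspect (see W. Duke, \emph{Hyperbolic distribution problems and half-integral weight Maass forms}, and the paper of Waibel on representations by ternary/quinary quadratic forms), one has for a cusp form $G$ of weight $5/2$ and level $4N$ a bound of the shape $q_G(m) \ll_\epsilon N^{A} m^{1 + 1/4 - \delta}$ for some absolute $A$ and some $\delta > 0$ (in fact the quinary case is subconvex with a power saving off the trivial exponent $m^{(k-1)/2} = m^{5/4 - 1/2}\cdot m^{1/2}$; here the relevant trivial bound is $m^{(5/2-1)/2+\epsilon}=m^{3/4+\epsilon}$ per coefficient of a cusp form of weight $5/2$ after normalization, but since we only claim $m^{1+1/4}$ we have ample room). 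The key point I would emphasize is that the level-dependence constant $A$ is absolute, so we may set $N_0 = A$.

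The steps, in order, would be: (1) Identify $\theta_{L'''}$ as a modular form of weight $5/2$ on $\Gamma_0(4N_\theta)$ with $N_\theta \mid (D_\theta)^\infty \cdot 2^\infty$, so all primes dividing the level lie in $S\cup\{2\}$, and $N_\theta \ll D_\theta^{O(1)}$; since we only need prime support and a polynomial bound in $D_\theta$, this is elementary from the structure of $L'''$ computed in \S\ref{summary_L'} (recall $\det L''' = p^{2a}\det L'$, so $D_\theta$ is $2$ or $p$ times a power of $p$ times an absolutely bounded factor). (2) Decompose $\theta_{L'''} = E_{L'''} + G$ with $E_{L'''}$ the Eisenstein part and $G$ cuspidal; note $G$ has the same level and weight. (3) Apply the half-integral weight cusp form coefficient bound with explicit level dependence to $G$: this is precisely the content of the cited results of Duke and Waibel, giving $q_G(m) \ll N_\theta^{A} m^{(5/2-1)/2 + \epsilon}$ for an absolute $A$, and since $(5/2-1)/2 = 3/4 < 1 + 1/4$, we absorb the $m^\epsilon$ and any small losses into the gap and conclude $q_G(m) \le C D_\theta^{N_0} m^{1+1/4}$ with $N_0 = A \cdot O(1)$ and $C$ absolute. (4) Observe that the restriction $m \in T$ plays no essential role beyond perhaps ensuring we are in the holomorphic (rather than Eisenstein-entangled) regime and that $m$ is of the arithmetic type under consideration; in fact the bound holds for all positive $m$, and the hypothesis $m\in T$ can simply be carried along harmlessly.

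The main obstacle is \emph{locating and correctly citing the level-uniform version} of the half-integral weight cusp form bound, and checking that the exponent of the level really is absolute (independent of the weight, which here is fixed at $5/2$ anyway, and independent of $m$). The classical Duke bound is stated for fixed level; the uniformity in the level is what Waibel's work supplies (and what is needed here, since $N_\theta$ grows like a power of $p$ as the lattice $L'''$ shrinks). One must be slightly careful that the decomposition $\theta = E + G$ is into forms of the \emph{same} level — this is automatic — and that $G$ is genuinely cuspidal, i.e., that the Eisenstein part $E_{L'''}$ captures the full non-cuspidal contribution; this is guaranteed by the Siegel mass formula discussion in \Cref{thm_Siegel-mass}, which identifies $E_{L'''}$ as the genus-theta-average, whose difference with $\theta_{L'''}$ is a cusp form. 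A secondary, purely bookkeeping, obstacle is to verify $N_\theta \ll D_\theta^{O(1)}$ with the implied constant and exponent absolute; this follows because the level of a theta series of a lattice of rank $5$ divides an absolutely bounded power of its discriminant (the level divides $4 \det(L''')$ up to an absolutely bounded factor coming from the squarefree-ness defects, all controlled since rank is fixed). Once these citations and the elementary level estimate are in place, the proof is a one-line deduction.
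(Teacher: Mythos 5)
Your strategy is the same as the paper's, and the references you reach for (Duke for the Petersson-norm bound, Waibel for the Fourier-coefficient bound of an orthonormal basis) are the right ones, but you invoke them as a single black box asserting a bound they do not literally state, and the step you wave away as inessential --- the hypothesis $m \in T$ --- is exactly where the arithmetic enters.

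The paper's proof is genuinely two-stage. First, adapting Duke's Lemma 1 and the surrounding discussion to rank-$5$ forms, it bounds the Petersson norm $\|G\| = O(D_\theta^{N_1})$ with $N_1$ absolute, using that the level $N_\theta$ is $O(D_\theta)$; this is where almost all of the polynomial $D_\theta$-dependence comes from. Second, it bounds $q_G(m)$ via the coefficients $a_j(m)$ of an orthonormal basis of weight-$5/2$ cusp forms of level $N_\theta$, to which it applies Waibel's Theorem 1. Waibel's bound is not a single clean power of $m$: it depends on a factorization of $m$ into a squarefree kernel $t$ and square parts $v^2$ (supported on the level) and $w^2$ (coprime to the level), with the gcd $(m, N_\theta)$ also entering. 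The paper explicitly plugs in the two cases that occur for $m \in T$: if $m = \ell$ then $t = \ell$, $v = 1$, $w = 1$, $(m, N_\theta)=O(1)$, giving $|a_j(m)| \ll_\epsilon m^{27/28+\epsilon}D_\theta^\epsilon$; if $m = D\ell^2$ then $t = D$, $v \asymp 1$, $w \asymp \ell$, $(m, N_\theta)=O(1)$, giving $|a_j(m)| \ll_\epsilon m^{3/4+\epsilon}D_\theta^\epsilon$. Your assertion that "$m \in T$ plays no essential role" and that the bound "holds for all positive $m$" skips precisely this computation: for general $m$ with a large square part supported on $S$, the parameter $v$ is unbounded, and the exponent Waibel's theorem produces is not either of the two quoted here. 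Likewise your stated exponent $m^{(5/2-1)/2 + \epsilon} = m^{3/4+\epsilon}$ is correct only in the $m = D\ell^2$ case; for $m = \ell$ the exponent is $27/28$. Both happen to fall below $5/4$, so the stated conclusion survives, but the proposal attributes to the references a cleaner uniform statement than they prove, and the parameter computation that actually closes the argument is missing.
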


By \Cref{rmk_rk}, we have that $\disc L'_{n,i}, \disc L'_{n}$ are independent of $n,i$ away from $p$ and hence all the theta series attached to these lattices satisfy the assumption on $D_\theta$.

An analogous result of \Cref{boundcusp} was proved by Duke in the case of ternary quadratic forms. The main steps of his proof carry through in this case too, so we will be content with just sketching his proof.


\begin{proof}
The proof of \cite[Lem.~1]{Duke} and the discussion on \cite[p.40]{Duke} apply to rank $5$ quadratic forms (with suitable modification of the power of $D_\theta$)
and we have that the Petersson norm of $G$ satisfies $||G||=O(D_\theta^{N_1})$ for some absolute constant $N_1$ (here we use the fact that the level $N_\theta$ of $G$ is $O(D_\theta)$.

Thus to obtain a bound for $q_G(m)$, we only need to bound the Fourier coefficients $a_j(m)$ for an orthonormal basis of the space of cusp forms of weight $5/2$ and level $N_\theta$ (with respect to certain quadratic character determined by $\theta$).
Now we apply \cite[Theorem 1]{Waibel}. 
Using the notation there, we have that if $m=\ell$, then $t=\ell, v=1,w=1, (m,N_\theta)=O(1)$; if $m=D\ell^2$, then $t=D, v\asymp 1, w\asymp \ell, (m,N_\theta)=O(1)$. Thus $|a_j(m)|\ll_\epsilon m^{\frac{27}{28}+\epsilon}D_\theta^\epsilon$ for $m=\ell$ and $|a_j(m)|\ll_\epsilon m^{\frac{3}{4}+\epsilon}D_\theta^\epsilon$, which gives the desired bound once we combine with the above estimate of $||G||$.
\end{proof}

\subsection{Proof of \Cref{supersingular} in the Siegel case.}\label{sec_S_pfss}
Notation as in \S\ref{def_smallL'} and 
\Cref{lem_count}. For a supersingular point $P$, we estimate $\sum_{m\in T_M-S_M}r_{n,i}(m), \sum_{m\in T_M-S_M} r_n(m)$ with respect to different ranges of $n$.

\begin{defn}
Given absolute constants $\epsilon_0,\epsilon_1>0$ (we will choose $\epsilon_0,\epsilon_1$ in the proof of \Cref{supersingular}), the ranges of $n$ are defined as follows: 
\begin{itemize}
\item $n$ is \emph{small} if $n \leq \epsilon_0 \log_p M$, where $\epsilon_0$ is a constant independent of $M$.

\item $n$ is in the \emph{lower medium} range if $\epsilon_0 \log_p M < n \leq \frac{3}{4} \log_p M$

\item $n$ is in the \emph{upper medium} range if $\frac{3}{4} \log_p M < n \leq  (1 + \epsilon_1)\log_p M$.

\item $n$ is \emph{large} if $n > (1 + \epsilon_1)\log_p M$. \end{itemize}
\end{defn}

\begin{proof}[Proof \Cref{supersingular} for \Cref{thm_main}(1) and \Cref{rmk_realquad} with $p$ split in $F$]
For $m\in T_M$, we have $m=D\ell^2$, where $D$ is a non-zero quadratic residue mod $p$. Then by \S\ref{par_density_S}(2), any supergeneric point does not lie on $Z(m)$. Hence we will only consider $P$ superspecial.

Recall from \Cref{lem_glo-sum_asymp} that for any $S_M$ such that $\#S_M=o(\# T_M)$, we have \[ \sum_{m\in T_M-S_M}C.Z(m)\asymp \sum_{m\in T_M-S_M}g_P(m)\asymp M^2(\log M)^{-1}.\]
We will first prove that there exists $S_M$ such that $\#S_M=o(\# T_M)$ and the contribution from $n\geq \epsilon_0 \log_p M$ is $o(M^2/\log M)$.\\

\noindent\textbf{The lower medium range.} By \Cref{completesquares}(1),
\begin{align*}
   \sum_{m\in T_M-S_M}\sum_{n=\lceil\epsilon_0 \log_p M\rceil}^{[\frac{3}{4}\log_p M]}\frac{Ap^n}{2}(r_{n,1}(m)+r_{n,2}(m)) & \leq \sum_{m=D\ell^2, m\leq M}\sum_{n=\lceil\epsilon_0 \log_p M\rceil}^{[\frac{3}{4}\log_p M]} Ap^n r_{n,1}(m) \\
   & = A\sum_{n=\lceil\epsilon_0 \log_p M\rceil}^{[\frac{3}{4}\log_p M]} p^n O_\epsilon (M^{2+\epsilon}/p^{2n} + M^{3/2+\epsilon}/p^n + M^{1+\epsilon})\\
   &=O_\epsilon(M^{2+\epsilon-\epsilon_0} + M^{3/2 + \epsilon} \log M +  M^{7/4 + \epsilon}),
\end{align*}
which is $o(M^2/\log M)$ once we take $\epsilon<\min\{\epsilon_0,1/4\}$.

\noindent\textbf{The upper medium range.} 
We treat this part in two ways according to whether $d_{n_0} \leq M^{1/8}$, where $n_0 = \lceil\frac{3}{4}\log_p M\rceil$.

\begin{enumerate}
    \item If $d_{n_0} \geq M^{1/8}$, then we bound this part using geometry-of-numbers.\\ Since $L'_{n,1}\subset L'_{n_0,1}$ for all $n\geq n_0$, then by definition, $d_n\geq d_{n_0}\geq M^{1/8}$. By \Cref{completesquares}(2), we have that 
\begin{align*}
    \sum_{m\in T_M-S_M}\sum_{n=\lceil\frac{3}{4}\log_p M\rceil}^{[(1 + \epsilon_1)\log_p M]}\frac{Ap^n}{2}(r_{n,1}(m)+r_{n,2}(m)) & \leq  \sum_{n=\lceil\frac{3}{4}\log_p M\rceil}^{[(1 + \epsilon_1)\log_p M]}\sum_{m=D\ell^2, m\leq M} Ap^n r_{n,1}(m)\\
    &= O(M + M^{5/4} +  M^{3/2}\log M + M^{15/8 + \epsilon_1} + M^{3/2 + \epsilon_1}),
\end{align*}
    which is $o(M^2/\log M)$ once we take $\epsilon_1<1/8$.
    
    \item If $d_{n_0} < M^{1/8}$, we control this part by putting $m$'s in this range into $S_M$.\\
    More precisely, consider $R_M:=\{m\in T_M \mid \exists v\in L'_{n_0,1}, Q'(v)=m\}$. 
    By our assumption, $d_{n_0}^2M< M^{5/4}=o(p^{2n_0})$ and by \Cref{betterbound}, for $M\gg 1$, if $m\in R_M$, then $m$ is represented by $Q'\mid_{P_{n_0}}$, which is a binary quadratic form. Then by \Cref{grh} (note that $n_0\rightarrow \infty$ as $M\rightarrow \infty$),  $\#R_M=o(M^{1/2}/\log M)=o(\# T_M)$. Thus we may choose $S_M$ such that $S_M\supset R_M$ and then \[ \sum_{m\in T_M-S_M}\sum_{n=\lceil\frac{3}{4}\log_p M\rceil}^{[(1 + \epsilon_1)\log_p M]}\frac{Ap^n}{2}(r_{n,1}(m)+r_{n,2}(m))=0.\]
\end{enumerate}

\noindent\textbf{The large $n$'s.} Let $n_0=\lceil(1+\epsilon_1)\log_p M\rceil$ and let $R_M':=\{m\in T_M \mid \exists v\in L'_{n_0,1}, Q'(v)=m\}$. We will show that $\#R'_M=o(M^{1/2}/\log M)$ and thus we may choose $S_M$ such that $S_M\supset R'_M$ and then \[ \sum_{m\in T_M-S_M}\sum_{n=\lceil(1 + \epsilon_1)\log_p M\rceil}^{\infty}\frac{Ap^n}{2}(r_{n,1}(m)+r_{n,2}(m))=0.\]
We bound the size of $R'_M$ case by case depending on the size of $d_{n_0}, l(n_0)_1$.
\begin{itemize}
    \item Case (1): $d_{n_0} \leq M^{1/2+\epsilon_2}$ for some absolute constant  $\epsilon_2<\epsilon_1/2$.\\
    Then $d_{n_0}\leq M^{1/2+\epsilon_2}< p^{n_0/2}$ and $M<p^{n_0-\epsilon_1}$. By \Cref{betterbound}, for $M\gg 1$, if $m\in R'_M$, then $m$ is represented by $Q'\mid_{P_{n_0}}$. By \Cref{grh},  $\#R'_M=o(M^{1/2}/\log M)$.
    
    \item Case (2): $d_{n_0} > M^{1/2+\epsilon_2}$ for all $\epsilon_2<\epsilon_1/2$ and $l(n_0)_1>M^{\epsilon_3}$ for some absolute constant $\epsilon_3>0$.\\
    We have $\#R'_M\leq \#\{v\in L'_{n_0,1}\mid Q'(v)\in T_M\}$, which is $O(M^{1/2-\epsilon_1}+M^{1/2-\epsilon_2}+M^{1/2}/l(n_0)_1)=o(M^{1/2}/\log M)$ by \Cref{completesquares}(2).
    
    \item Case (3) $d_{n_0} > M^{1/2+\epsilon_2}$ for some $\epsilon_2<\epsilon_1/2$ and $l(n_0)_1\leq M^{\epsilon_3}$ for some $\epsilon_3<\epsilon_2$.\\
    Then $l(n_0)_2=d_{n_0}/l(n_0)_1>M^{1/2}$. In other words, any vector $v\in L'_{n_0,1}$ which is not a scalar multiple of the chosen vector $v_0$ of the minimum length has  $Q'_{n_0}(v)\leq l(n_0)_2^2>M$. Therefore, any $m\in R'_M$ has to be represented by the rank $1$ quadratic form spanned by $v_0$. As $M\rightarrow \infty$, we have $l(n_0)_1\rightarrow \infty$. Thus once $M$ is large enough such that $l(n_0)_1^2>D$, then this rank $1$ quadratic form would represent at most one element in $T_M$ and hence $\#R'_M=o(\# T_M)$.
\end{itemize}
\quad \\
In conclusion, taking $S_M=R_M\cup R'_M$, we have $\#S_M=o(\# T_M)$ and \[\sum_{m\in T_M-S_M}\sum_{n=\lceil\epsilon_0 \log_p M\rceil}^{\infty}\frac{Ap^n}{2}(r_{n,1}(m)+r_{n,2}(m))=o(M^2/\log M).\]

\noindent\textbf{The small $n$'s.} We follow the notation and the idea of the proof in \Cref{ss_H_main}.

We enlarge $L'_{n,1}$ as in the proof of \Cref{lem_el}; also let $w$ be the vector which decays very rapidly in the Decay Lemma for superspecial points, then we enlarge $L'_{n,2}$ such that $L'_{n,2}\otimes\bZ_p=L'_{n,1}\otimes \bZ_p+p^{n+1}\bZ_pw$.  Then $\disc L'_{n,i}\asymp p^{6n}$ with the implicit constant only depending on $P$. Note that \Cref{lem_count} still holds with the new definitions of $L'_{n,i}$.

Let
\[E=\frac{A(p+2)}{2p}E_{0,1}+ \frac{A}{2}E_{0,2}+\sum_{n=1}^{[\epsilon_0 \log_p M]} \frac{Ap^n}{2}( E_{n,1}+ E_{n,2}), G=\frac{A(p+2)}{2p}G_{0,1}+ \frac{A}{2}G_{0,2}+\sum_{n=1}^{[\epsilon_0 \log_p M]} \frac{Ap^n}{2}( G_{n,1}+ G_{n,2}).\]
Note that here the Eisenstein series $E$ and the cusp form $G$ depend on $M$.

Since $\disc L'_{n,i}=O(p^{6\epsilon_0 \log_p M})=O(M^{6\epsilon_0})$ for $n\leq \epsilon_0 \log_p M$, then by \Cref{boundcusp}, 
the $m$-th Fourier coefficient \[q_G(m)\ll  (M^{6\epsilon_0})^{N_0}m^{5/4}\sum_{n=0}^{[\epsilon_0 \log M]}p^n\ll M^{(6N_0+1)\epsilon_0} m^{5/4}\]
and $\sum_{m\in T_M-S_M} q_G(m)=O(M^{(6N_0+1)\epsilon_0+7/4})=o(M^2/\log M)$ once we take $\epsilon_0<(24N_0+4)^{-1}$.

The computation for the Eisenstein part is the same as in the proof of \Cref{ss_H_main}. More precisely, since $p\nmid m$, by \Cref{compare_qL}(1)(3), we have
\[\frac{q(m)}{|q_L(m)|}\leq \frac{A(p+2)}{2p}\cdot \frac{1}{p-1}+\frac{A}{2}\frac{2p}{(p^2-1)p}+\sum_{n=1}^{[\epsilon_0 \log M]}\frac{Ap^n}{2}\cdot \frac{2p}{p^2-1}(p^{-3n}+p^{-3n-1})\leq \frac{11}{12}\cdot\frac{A}{p-1}.\]
Thus we finish the proof by putting all parts together and using \Cref{lem_count}.
\end{proof}

\begin{proof}[Proof of \Cref{supersingular} for \Cref{rmk_realquad} when $p$ inert or ramified in $F$]
Comparing to the previous case when $p$ is split in $F$, the only differenced are (1) $Z(m)$ may contain supergeneric points; and (2) $p\mid m$ when $p$ is ramified and the estimate for $\frac{q(m)}{|q_L(m)|}$ may change. Nevertheless, except the computation of the Fourier coefficients of the Eisenstein series, all the bounds for $n$ in the medium and large range and for the cuspidal contribution for small $n$ remain valid for supergeneric points if we replace $L'_{n,1}$ by $L'_n$ for supergeneric points. Note that we only enlarge $L'_n, L'_{n,i}$ as in the proof of \Cref{lem_el} when we treat small $n$. Thus, to finish the proof, we compute the contribution from the Eisenstein part (for the enlarged lattices).

If $P$ supergeneric, let $\theta_{n}$ denote the theta series attached to the lattice $L'_{n}$. We decompose $\theta_{n}=E_{n}+G_{n}$, where $G_{n}$ is a cusp form and $E_{n}$ is an Eisenstein series. Let $\displaystyle E=\frac{A(p+1)}{p}E_0+ \sum_{n=1}^{[\epsilon_0 \log_p M]} Ap^n E_n$.

If $p$ is inert, i.e., $p\nmid D$, $p\nmid m$, then by \Cref{compare_qL}(2)(3) and the fact that $|(L'_0)^\vee/L'_0|=p^2$ for $P$ supergeneric, we have
\[\frac{q(m)}{|q_L(m)|}\leq \frac{A(p+1)}{p}\cdot \frac{2}{p^2-1}+ \sum_{n=1}^{[\epsilon_0 \log_p M]} Ap^n \frac{2}{(p^2-1)[L'_0:L'_n]}<\frac{A}{p-1}\left(\frac{2}{p}+\frac{2}{(p+1)(p^2-1)}\right)<\frac{11}{12}\cdot \frac{A}{p-1}.\]

If $p$ is ramified, we have $v_p(m)=1$. For $P$ superspecial, by \Cref{compare_qL}(1)(4), we have
\[\frac{q(m)}{|q_L(m)|}\leq \frac{A(p+2)}{2p}\cdot \frac{1}{p-1}+\frac{A}{2}\cdot \frac{4}{p^2-1}+\sum_{n=1}^{[\epsilon_0 \log M]}\frac{Ap^n}{2}\cdot \frac{2p}{p-1}\left(\frac{1}{p^{3n}}+\frac{1}{p^{3n+1}}\right)<\frac{A}{p-1}\left(\frac{p+2}{2p}+\frac{p+3}{p^2-1}\right),\]
which is $ <\frac{11}{12}\cdot\frac{A}{p-1}$ for $p\geq 7$. Similarly, for $P$ supergeneric, by \Cref{compare_qL}(2)(4), we have
\[\frac{q(m)}{|q_L(m)|}\leq \frac{A(p+1)}{p}\cdot \frac{2}{p^2-1}+ \sum_{n=1}^{[\epsilon_0 \log_p M]} Ap^n \frac{2}{(p-1)[L'_0:L'_n]}<\frac{A}{p-1}\left(\frac{2}{p}+\frac{2}{p^2-1}\right)<\frac{11}{12}\cdot \frac{A}{p-1}.\]
Thus we finish the proof.
\end{proof}

\begin{proof}[Proof of \Cref{supersingular} for \Cref{thm_max}]
Since every $m\in T_M$ in this case is a non-zero quadratic residue $\bmod p$, hence by \S\ref{par_density_S}(2), all supersingular points on $Z(m)$ are superspecial. The idea of the proof is similar to the case of \Cref{thm_main} (1).

By \Cref{lem_glo-sum_asymp}, we have $\sum_{m\in T_M-S_M}g_P(m)\asymp M^{5/2}(\log M)^{-1}$. We construct $S_M$ by large $n$. More precisely, we set $S_M=\{m\in T_M \mid \exists v\in L'_{n_0,1}, Q'(v)=m\}$, where $n_0=\lceil(1+\epsilon_1)\log_p M\rceil$. Then
\[\#S_M\leq \#\{v\in L'_{n_0,1}\mid Q'(v)\leq M\}=O(M^{5/2}/p^{3n_0}+M^2/p^{2n_0}+M^{3/2}/p^{n_0}+M/d_{n_0}+M^{1/2})=O(M^{1/2}+M/d_{n_0}),\]
which is $o(M/\log M)=o(\# T_M)$ if there exists an absolute constant $\epsilon>0$ such that $d_{n_0}\gg M^\epsilon$. If not, then by \Cref{betterbound}, we have that for $M\gg 1$, all $m\in S_M$ representable by the binary quadratic form $Q'|_{P_{n_0}}$. Since $d_{n_0}\rightarrow \infty$, the density of primes representable by $Q'|_{P_{n_0}}$ goes to zero, i.e., we still have $\#S_M=o(\# T_M)$. With this choice of $S_M$, we have
\[ \sum_{m\in T_M-S_M}\sum_{n=\lceil(1 + \epsilon_1)\log_p M\rceil}^{\infty}\frac{Ap^n}{2}(r_{n,1}(m)+r_{n,2}(m))=0.\]
For $n$ in the medium range,
\[ \sum_{m\in T_M-S_M}\sum_{n=\lceil \epsilon_0 \log_p M\rceil}^{\lceil(1 + \epsilon_1)\log_p M\rceil}\frac{Ap^n}{2}(r_{n,1}(m)+r_{n,2}(m))\ll \sum_{n=\lceil \epsilon_0 \log_p M\rceil}^{\lceil(1 + \epsilon_1)\log_p M\rceil} p^n \sum_{m\leq M} r_{n,1}(m)=o(M^{5/2}/\log M)\]
since $\sum_{m\leq M} r_{n,1}(m)=O(M^{5/2}/p^{3n}+M^2/p^{2n}+M^{3/2}/p^n+M)$. 
The estimate for small $n$'s is exactly as in the case for \Cref{thm_main}(1) above and thus we finish the proof.
\end{proof}

\subsection{Contribution from non-supersingular points and conclusions}\label{sec_S_nonss}
To finish the proof, we only need to show that $\sum_{m\in T_M-S_M} l_P(m)$ for non-supersingular points $P$ are $o(\sum_{m\in T_M-S_M}C.Z(m))$, which is $o(M^2/\log M)$ for \Cref{thm_main}(1) and \Cref{rmk_realquad} and is $o(M^{5/2}/\log M)$ for \Cref{thm_max}. We still use the notation in \Cref{lem_lat_ord} for ordinary points.

Recall that an abelian surface is ordinary, almost ordinary (i.e., its Newton polygon has slopes $0,1/2,1$), or supersingular. 
\begin{lemma}\label{nonss-rk1}
If $P$ is almost ordinary or if $P$ is ordinary with $\rk_{\bZ} L_0\neq 3$, then \[\sum_{m\in T_M-S_M} l_P(m)=o(\sum_{m\in T_M-S_M}C.Z(m)).\]
\end{lemma}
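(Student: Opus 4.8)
The plan is to show that the hypothesis forces $L_0$ to have rank $1$, and then to estimate $l_P(m)$ for $m\in T$ by a direct divisibility argument, bypassing any decay lemma. First I would record the relevant classification of endomorphism algebras over $\bar{\bF}_p$. If $P$ is almost ordinary, then $B\bmod t$ is isogenous to $E_1\times E_2$ with $E_1$ ordinary and $E_2$ supersingular, so $\End^0(B\bmod t)\cong K_1\times \bH_{p,\infty}$ for an imaginary quadratic field $K_1$ and the quaternion algebra $\bH_{p,\infty}$ ramified exactly at $p,\infty$; the traceless elements fixed by the Rosati involution then span a $1$-dimensional $\bQ$-vector space, so $\rk_\bZ L_0=1$. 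If $P$ is ordinary, then $\End^0(B\bmod t)$ is either a quartic CM field, a product of two imaginary quadratic fields, or $M_2$ of an imaginary quadratic field, and the traceless Rosati-fixed subspace has dimension $1$, $1$, or $3$ respectively; hence the assumption $\rk_\bZ L_0\neq3$ again forces $\rk_\bZ L_0=1$ (in particular the ranks $0$ and $2$ do not occur).

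Next, fix a generator $v_0$ of $L_0$ and set $q_0=Q'(v_0)\in\bZ_{>0}$. Writing $L_n^{\mathrm{full}}$ for the lattice of special endomorphisms of $B\bmod t^n$, \Cref{rmk_rk} gives $\rk L_n^{\mathrm{full}}=1$ and $L_n^{\mathrm{full}}\otimes\bZ_\ell=L_0\otimes\bZ_\ell$ for all $\ell\neq p$, so $L_n^{\mathrm{full}}=p^{a(n)}\bZ v_0$ for integers $a(n)\ge0$; moreover $L_{n+1}^{\mathrm{full}}\subseteq L_n^{\mathrm{full}}$, so $a(n)$ is nondecreasing, and since $C$ lies in no special divisor (a hypothesis of \Cref{thm_main}(1) and \Cref{thm_max}, and one we may assume for \Cref{rmk_realquad}) we have $\bigcap_n L_n^{\mathrm{full}}=0$, whence $a(n)\to\infty$ and $c_a:=\#\{n\ge1:a(n)=a\}<\infty$ for every $a$. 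By \Cref{oleg},
\[
l_P(m)=\sum_{n\ge1}\#\{v\in p^{a(n)}\bZ v_0: Q'(v)=m\}=2\sum_{a\ge0}c_a\cdot\big[\,m/(p^{2a}q_0)\in(\bZ_{>0})^2\,\big].
\]

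The remaining step is an elementary estimate of this sum for $m$ in the set $T$ of \Cref{def_setT}. When $T$ consists of primes (the setting of \Cref{thm_max}), $m/(p^{2a}q_0)\in(\bZ_{>0})^2$ forces $p^{2a}q_0\mid m$, hence $p^{2a}q_0\in\{1,m\}$, which can happen for at most one prime $m$; thus $l_P(m)=0$ for all but finitely many $m\in T$ and $\sum_{m\in T_M-S_M}l_P(m)=O(1)$. When $m=D\ell^2$ with $\ell$ prime (the setting of \Cref{thm_main}(1) and \Cref{rmk_realquad}), comparing squarefree parts shows that for $\ell$ large $m/(p^{2a}q_0)$ can be a positive square only if $D$ and $q_0$ have the same squarefree part and $a$ lies in a finite set depending only on $v_p(D)$ and $v_p(q_0)$; hence $l_P(D\ell^2)\le C_P$ for all large $\ell$, with $C_P$ depending only on $P$, and $\sum_{m\in T_M-S_M}l_P(m)=O(M^{1/2})$. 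In both cases this is $o\big(\sum_{m\in T_M-S_M}C.Z(m)\big)$ by \Cref{lem_glo-sum_asymp}, as required. The only real content is the rank-$1$ classification of the first paragraph; after that the bound on $l_P(m)$ is uniform in $\ell$, so it holds for every $m\in T_M$ (not merely $m\in T_M-S_M$) and no interaction with the set $S_M$ fixed in \S\ref{sec_S_pfss} is needed. I expect the only delicate point to be pinning down the endomorphism-algebra dichotomy — in particular ruling out $\rk_\bZ L_0\in\{0,2\}$ in the ordinary case — which is where I would be most careful.
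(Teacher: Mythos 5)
Your proposal is correct and follows the same strategy as the paper: reduce to $\rk_\bZ L_0 = 1$ via the classification of endomorphism algebras of abelian surfaces over $\bar{\bF}_p$, then exploit the rank-one structure to show that for $m\in T$ only a bounded number of the shrinking lattices $L_n$ can represent $m$, yielding $l_P(m)=O(1)$ and the desired total bound. Your write-up is somewhat more explicit than the paper's (spelling out the Honda--Tate dichotomy giving ranks $1$, $1$, or $3$ in the ordinary case, and parametrizing $L_n^{\mathrm{full}}=p^{a(n)}\bZ v_0$ with finite fibers $c_a$), but the substance is the same; there is no gap.
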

\begin{proof}
By the classification of endomorphism rings of char $p$ abelian surfaces (see for instance \cite[Thm.~1]{Tate71}), we see that if the abelian surface corresponding to $P$ has almost ordinary reduction, then its lattice of special endomorphisms has rank at most $1$. 
On the other hand, if $P$ is ordinary, then $\rk_{\bZ} L_0$ is odd and hence $\rk_{\bZ} L_0=1$. In both cases, let $a_n x^2$ to denote the quadratic form with one variable given by $Q'$ restricted to the lattice of special endomorphisms of the abelian surface mod $t^n$. Since the lattice mod $t^{n+1}$ is a sublattice of the one mod $t^n$, we have $a_n\mid a_{n+1}$.

Since $C$ does not have any global special endomorphisms, we have $a_n\rightarrow\infty$ and hence $a_nx^2$ does not represent any element in $T_M\subset \{D\ell^2\mid \ell \text{ prime }\}$ or $T_M\subset \{\ell\mid \ell \text{ prime }\}$ once $n\gg 1$ (with then implicit constant only depending on $P$). 

Thus $\sum_{m\in T_M-S_M} l_P(m)=\sum_{m\in T_M-S_M} O(M^{1/2})=o(\sum_{m\in T_M-S_M}C.Z(m))$.
\end{proof}

Now it only remains to treat the case when $P$ is ordinary and $\rk_{\bZ}L_0=3$. We first construct $S_M$ for such $P$.

\begin{lemma}\label{ord_large_n}
Given $M$, set $n_0=\lceil(1+\epsilon_0)\log_p M\rceil$ and $S_M=\{m\in T_M\mid \exists v\in L_{n_0} \text{ with } Q'(v)=m\}$. Then $\#S_M=o(\# T_M)$.
\end{lemma}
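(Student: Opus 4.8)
The plan is to transcribe the treatment of the \emph{large $n$} range from the proof of \Cref{supersingular} for \Cref{thm_main}(\ref{item_S}) (Cases (1)--(3) there), replacing the rank-$5$ lattice $L'_{n_0,1}$ of a superspecial point by the rank-$3$ lattice $L_{n_0}$ of our ordinary point $P$. Write $l(n_0)_1\leq l(n_0)_2\leq l(n_0)_3$ for the successive minima of $Q'|_{L_{n_0}}$, let $P_{n_0}\subseteq L_{n_0}$ be a rank-$2$ sublattice of minimal discriminant, and let $d_{n_0}$ be its root discriminant, so that $l(n_0)_1 l(n_0)_2\asymp d_{n_0}$. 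Since $\rk_{\bZ}L_{n_0}=\rk_{\bZ}L_0=3$ by \Cref{rmk_rk}, \Cref{lem_lat_ord} applies and gives $(\disc L_{n_0})^{1/2}\geq p^{n_0-1}$; and because $C$ has no global special endomorphism, $\cap_n L_n=\{0\}$, so $l(n_0)_1$ — hence also $d_{n_0}$ — tends to $\infty$ as $M\to\infty$. I will use that the geometry-of-numbers bounds together with \Cref{betterbound} and \Cref{grh} are valid for $L_{n_0}$: for \Cref{betterbound} this is the remark preceding it, and \Cref{grh} involves only the binary form $Q'|_{P_{n_0}}$, so the rank of the ambient lattice plays no role.

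First I would fix small absolute constants $\epsilon_2<\epsilon_0/2$ and $\epsilon_3<\epsilon_2$ (the case into which a given $M$ falls may vary with $M$, but each of the bounds below is $o(\#T_M)$, which is all that is needed), and split into three cases. \textbf{Case 1: $d_{n_0}\leq M^{1/2+\epsilon_2}$.} Then $d_{n_0}^2 M\leq M^{2+2\epsilon_2}=o(M^{2+2\epsilon_0})$, while $p^{2n_0}\gg M^{2+2\epsilon_0}$, so $d_{n_0}^2M=o(p^{2n_0})$ and \Cref{betterbound} forces every $v\in L_{n_0}$ with $Q'(v)\leq M$ into $P_{n_0}$; hence $S_M$ is contained in the set of $m\in T_M$ represented by the binary form $Q'|_{P_{n_0}}$. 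As $d_{n_0}\to\infty$, \Cref{grh} (in the $D\ell^2$ situation of \Cref{thm_main}(\ref{item_S}) and \Cref{rmk_realquad}), respectively the same argument applied to primes directly (in the situation of \Cref{thm_max}; see the proof of \Cref{grh} and \cite{TZ}), bounds this set by $o(M^{1/2}/\log M)$, respectively $o(M/\log M)$, which in each case is $o(\#T_M)$. \textbf{Case 2: $d_{n_0}>M^{1/2+\epsilon_2}$ and $l(n_0)_1>M^{\epsilon_3}$.} Then the standard geometry-of-numbers estimate (as in \cite[Lem.~4.2.1]{Ananth17}) gives
\[\#S_M\leq \#\{v\in L_{n_0}\mid Q'(v)\leq M\}\ll \frac{M^{3/2}}{(\disc L_{n_0})^{1/2}}+\frac{M}{d_{n_0}}+\frac{M^{1/2}}{l(n_0)_1}+1\ll M^{1/2-\epsilon_0}+M^{1/2-\epsilon_2}+M^{1/2-\epsilon_3}+1,\]
which is $o(M^{1/2}/\log M)=o(\#T_M)$. \textbf{Case 3: $d_{n_0}>M^{1/2+\epsilon_2}$ and $l(n_0)_1\leq M^{\epsilon_3}$.} Then $l(n_0)_2\asymp d_{n_0}/l(n_0)_1>M^{1/2}$ for $M$ large, so every $v\in L_{n_0}$ with $Q'(v)\leq M$ is a scalar multiple of a fixed shortest vector $v_0$, and $S_M$ lies in the set of $m\in T_M$ represented by the rank-$1$ form $Q'(v_0)x^2=l(n_0)_1^2x^2$; since $l(n_0)_1\to\infty$, for $M$ large this form represents only $O(1)$ elements of $T_M$, so $\#S_M=o(\#T_M)$.

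Combining the three cases yields $\#S_M=o(\#T_M)$ in all situations. I do not expect a genuine obstacle here: the argument is a direct transcription of the large-$n$ analysis from the supersingular case, and the only point that really needs checking is that \Cref{betterbound}, \Cref{grh}, and the first- and second-successive-minima bounds carry over to the rank-$3$ lattice $L_{n_0}$, which is exactly what \Cref{lem_lat_ord} and the remark preceding \Cref{betterbound} provide.
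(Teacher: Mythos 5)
Your proof is correct and is essentially the same argument as the paper's: the same case-by-case split (though you list the cases in the reverse order), the same reduction to a geometry-of-numbers estimate via \Cref{lem_lat_ord} in the intermediate regime, the same use of \Cref{betterbound} plus \Cref{grh} when the binary discriminant is small, and the same rank-$1$ argument when the first successive minimum is small. The minor differences — your $\epsilon_2,\epsilon_3$ versus the paper's single $\epsilon_1 < \epsilon_0/4$, and invoking the first assertion of \Cref{betterbound} directly rather than its ``in particular'' reformulation — are immaterial.
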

\begin{proof}
By a geometry-of-numbers argument and \Cref{lem_lat_ord}, we have \[\#S_M\leq \{v\in L_{n_0}\mid Q'(v)\leq M\}=O(M^{3/2}/p^{n_0}+M/b_{n_0}+M^{1/2}/a_{n_0}),\] where $a_{n_0}$ is the minimal length of a non-zero vector in $L_{n_0}$ and $b_{n_0}$ is the minimal root discriminant of a rank $2$ sublattice in $L_{n_0}$. Since $C$ does not have any global special endomorphisms, we have $a_{n_0},b_{n_0}\rightarrow \infty$ as $M\rightarrow \infty$. Fix $0<\epsilon_1<\epsilon_0/4$. We prove the desired estimate by a case-by-case discussion based on the size of $a_{n_0},b_{n_0}$.
\begin{enumerate}
    \item $a_{n_0}<M^{\epsilon_1}$ and $b_{n_0}>M^{1/2+2\epsilon_1}$. Then we conclude as in the proof \Cref{supersingular} for \Cref{thm_main}(1) for large $n$ case (3). More precisely all $v\in L_{n_0}$ with $Q'(v)\leq M$ lie in a rank $1$ sublattice of $L_{n_0}$ and thus the total number of such $v$ is $o(\# T_M)$.
    \item $a_{n_0}\geq M^{\epsilon_1}$ and $b_{n_0}>M^{1/2+2\epsilon_1}$.
    Then \[\#S_M=O(M^{3/2}/p^{n_0}+M/b_{n_0}+M^{1/2}/a_{n_0})=O(M^{1/2-\epsilon_1})=o(M^{1/2}/\log M).\]
    \item $b_{n_0}\leq M^{1/2+2\epsilon_1}$. Then $p^{n_0/2}=M^{1/2+\epsilon_0/2}\geq b_{n_0}$ and by \Cref{betterbound} (note the proof of this lemma applies to this case), for $M\gg 1$, if $m\in S_M$, then $m$ is represented by the binary quadratic form given by restricting $Q'$ to the rank $2$ sublattice in $L_{n_0}$ with minimal discriminant(=$b_{n_0}^2$). Since $b_{n_0}\rightarrow \infty$, then we conclude by \Cref{grh} for \Cref{thm_main}(1) and \Cref{rmk_realquad} and by the fact that the density of primes represented by such quadratic forms goes to $0$ for \Cref{thm_max}.\qedhere
\end{enumerate}
\end{proof}

Now we estimate the total local contribution at an ordinary point with $\rk_{\bZ}L_0=3$.
\begin{proposition}\label{ord-main}
Assume $P$ is ordinary with $\rk_{\bZ}L_0=3$.
After possible enlarging $S_M$ in \Cref{ord_large_n} (still with $\#S_M=o(\#T_M)$), we have $\sum_{m\in T_M-S_M} l_P(m)=o(\sum_{m\in T_M-S_M}C.Z(m))$.
\end{proposition}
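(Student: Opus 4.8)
The plan is to follow the structure of the proof of \Cref{supersingular} in the Siegel case (and of \Cref{ord_large_n}), but since we only need $l_P(m)$ to be of smaller order of magnitude than the global term --- not bounded by an explicit constant times it --- the delicate Eisenstein/cusp-form comparison will be replaced throughout by crude geometry-of-numbers estimates.

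First, exactly as in \Cref{lem_count}, one combines \Cref{oleg} with the ordinary decay lemma \Cref{decay-ord} (equivalently with \Cref{lem_lat_ord}) to obtain a bound of the shape $l_P(m)\le C_P A\sum_{n\ge 0}p^n r_n(m)$, where $r_n(m)=\#\{s\in L_n : Q'(s)=m\}$ and $L_n$ is the lattice of special endomorphisms of $B\bmod t^{Ap^{n-1}+1}$. By \Cref{rmk_rk} the hypothesis $\rk_\bZ L_0=3$ forces $\rk_\bZ L_n=3$ for every $n$, and \Cref{lem_lat_ord} then gives both the discriminant bound $\sqrt{\disc L_n}\ge p^{n-1}$ and the rank-two approximation $L_n\subseteq(\Lambda+p^{n-1}L_1\otimes\bZ_p)\cap L_0$ with $\rk_{\bZ_p}\Lambda\le 2$. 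In particular $d_n$, the root discriminant of the rank-two sublattice $P_n\subseteq L_n$ of minimal discriminant, tends to $\infty$ as $n\to\infty$ because $C$ carries no global special endomorphism, and $v\notin P_n$ implies $Q'(v)\gg p^{2n}/d_n^2$; this is the input needed for \Cref{betterbound}, whose proof applies verbatim to the rank-three lattices $L_n$.

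Next, I would split the sum over $n$ into three ranges, mirroring \Cref{ord_large_n} and the medium/large-$n$ analysis in \Cref{supersingular}. With the set $S_M$ already produced by \Cref{ord_large_n} one has $r_n(m)=0$ for $n>n_0:=\lceil(1+\epsilon_0)\log_p M\rceil$ and $m\in T_M\setminus S_M$, since $L_n\subseteq L_{n_0}$. For $n$ in an intermediate range --- precisely where $d_n^2 M=o(p^{2n})$ --- \Cref{betterbound} forces every $v\in L_n$ with $Q'(v)\le M$ into $P_n$ (or into a rank-one sublattice); as the $P_n$ are nested and $d_n\to\infty$, the set of $m\in T_M$ representable by such $P_n$ is $o(\#T_M)$ by \Cref{grh} (for \Cref{thm_main}(\ref{item_S}) and \Cref{rmk_realquad}) or by the analogous density statement for primes represented by a binary quadratic form (for \Cref{thm_max}), and I would enlarge $S_M$ by this set, preserving $\#S_M=o(\#T_M)$. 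A case analysis on the sizes of $d_{n_0}$ and of the first successive minimum of $L_{n_0}$, copied from \Cref{ord_large_n}, shows that this covers every $n$ above a fixed multiple of $\log_p M$. Finally, for the remaining small $n$ one bounds $\sum_{m\in T_M}r_n(m)$ directly: for \Cref{thm_max} by $\sum_{m\le M}r_n(m)=O(M^{3/2}/p^n+M/d_n+M^{1/2}+1)$, and for \Cref{thm_main}(\ref{item_S}) by projecting off a fixed vector $v_0$ with $Q'(v_0)=D\ell_0^2$ and using the rank-three analogue of \Cref{completesquares}(1) --- whose proof only uses the projection trick and the $O_\epsilon(M^\epsilon)$ bound on the number of factorizations of an integer, not that the rank is five --- to get $\sum_{m=D\ell^2\le M}r_n(m)=O_\epsilon\!\big(M^\epsilon(M/p^n+M/d_n+M)\big)$. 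Multiplying by $p^n$ and summing over this range yields $O(M^{3/2+\epsilon})$, which is $o(M^2/\log M)$ in the first case and $o(M^{5/2}/\log M)$ in the second; combined with the (easier) bounds in the intermediate geometry-of-numbers regime this gives $\sum_{m\in T_M\setminus S_M}l_P(m)=o\big(\sum_{m\in T_M\setminus S_M}C.Z(m)\big)$, as desired.

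The main obstacle will be purely organizational: checking that \Cref{betterbound} applies to exactly those $n$ not already handled by the crude geometry-of-numbers estimate, and that every enlargement of $S_M$ stays $o(\#T_M)$ --- but this is the same bookkeeping, with a dichotomy on the size of $d_n$, that is already carried out in \Cref{ord_large_n} and in the proof of \Cref{supersingular} for the Siegel case, which can be transcribed with $L_n$ replacing $L'_{n,1}$ and rank three replacing rank five. The only genuinely new ingredient is the rank-three version of \Cref{completesquares}, and as noted its proof is rank-independent.
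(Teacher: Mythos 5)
Your proposal follows the same overall strategy as the paper --- combine \Cref{oleg} with the ordinary decay input \Cref{lem_lat_ord}, cut off the sum at $n_0$ via \Cref{ord_large_n}, and split the remaining range of $n$ into a small regime handled by geometry of numbers and a medium regime handled by the dichotomy on $d_n$ (binary-form density via \Cref{betterbound} and \Cref{grh} versus a crude lattice-point count). This is exactly what the paper does.

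Where you deviate, the deviations are mostly unnecessary extra machinery and a couple of arithmetic slips rather than genuine gaps. For \Cref{thm_max} the paper just sums the crude bound $\sum_{m\le M}r_n(m)=O(M^{3/2}/p^n+M)$ over all $n\le n_0$ and gets $O(M^{2+\epsilon_0})$, with no medium-range case analysis needed, so your three-range decomposition is overkill there. For \Cref{thm_main}(\ref{item_S}) the paper also avoids any rank-three analogue of \Cref{completesquares}(1): it sets $n_1=\lceil\frac34\log_pM\rceil$, uses the trivial bound $\sum_{m\le M}r_n(m)\ll M^{3/2}/p^n+M$ for $n\le n_1$ (giving $O(M^{7/4})$, not the $O(M^{3/2+\epsilon})$ you claim), and only then runs the $b_{n_1}$ dichotomy for $n_1<n\le n_0$. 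Your proposed bound $\sum_{m=D\ell^2\le M}r_n(m)=O_\epsilon(M^\epsilon(M/p^n+M/d_n+M))$ is not cleanly deducible from the projection trick in the rank-three setting: the rank-two orthogonal complement of $v_0$ in $L_n$ only inherits root-discriminant bounded below by $p^{n-k}$ where $k$ is the $p$-depth of $v_0$ in $L_n$, and $k$ can be as large as $n$, so the $M/p^n$ term is not justified --- the usable worst case is just $O_\epsilon(M^{1+\epsilon})$, which after summing against $p^n$ up to $n_1$ gives $O(M^{7/4+\epsilon})$, the same as the paper's crude bound. None of this affects the conclusion $o(M^2/\log M)$, but you should correct the exponent claim and you can drop the \Cref{completesquares}-analogue entirely in favor of the paper's simpler estimate. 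Finally, make the small/medium cutoff explicit ($n_1\asymp\frac34\log_p M$): the condition $d_{n_1}^2M=o(p^{2n_1})$ that activates \Cref{betterbound} forces $p^{n_1}\gg M^{1/2}$, so a cutoff at $\frac12\log_pM$ or below cannot work, and the dichotomy has to be anchored at a fixed $n_1$ in the range $(\frac12,1)\log_p M$ rather than performed "at $n_0$" as you write.
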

\begin{proof}
Notation as in \Cref{lem_lat_ord}. By Lemmas \Cref{oleg}, \Cref{lem_lat_ord}, and \Cref{ord_large_n}, we have
\[\sum_{m\in T_M-S_M} l_P(m)=\sum_{m\in T_M-S_M}A(r_0(m)+\sum_{n=1}^{[(1+\epsilon_0)\log_p M]}(p^n-p^{n-1})r_n(m))\ll \sum_{n=0}^{[(1+\epsilon_0)\log_p M]} p^n \sum_{m\in T_M-S_M} r_n(m).\]

Notation as in \Cref{ord_large_n}. We have $\sum_{m=1}^M r_n(m)=O(M^{3/2}/p^n+M/b_n+M^{1/2}/a_n)$.

For \Cref{thm_max}, we have 
\[\sum_{m\in T_M-S_M} l_P(m)\ll \sum_{n=0}^{[(1+\epsilon_0)\log_p M]} p^n(M^{3/2}/p^n+M)=O(M^{2+\epsilon_0})=o(M^{5/2}/\log M),\]
when we take $\epsilon_0<1/2$.

For \Cref{thm_main}(1) and \Cref{rmk_realquad}, set $n_1=\lceil\frac{3}{4}\log_p M\rceil$.
First, \[\sum_{n=0}^{n_1} p^n \sum_{m\in T_M-S_M} r_n(m)\ll \sum_{n=0}^{n_1}p^n(M^{3/2}/p^n+M)=O(M^{7/4})=o(M^2/\log M).\]
Second, for $\sum_{n=n_1}^{[(1+\epsilon_0)\log_p M]} p^n \sum_{m\in T_M-S_M}r_n(m)$, we bound it by studying the following two cases separately.
\begin{enumerate}
    \item $b_{n_1}\geq M^{1/8}$. As in the first part, we have \[\sum_{n=n_1}^{[(1+\epsilon_0)\log_p M]}p^n \sum_{m\in T_M-S_M}r_n(m)\ll \sum_{n=n_1}^{[(1+\epsilon_0)\log_p M]} p^n(M^{3/2}/p^n+M/b_n+M^{1/2}),\]
    which is $O(M^{3/2}\log M + M^{2+\epsilon_0-1/8}+M^{3/2+\epsilon_0})=o(M^2/\log M)$ once we take $\epsilon_0<1/8$.
    \item $b_{n_1} < M^{1/8}$. We are going to enlarge $S_M$ to be $\{m\in T_M\mid \exists v\in L_{n_1} \text{ with } Q'(v)=m\}$. Since $b_{n_1}^2 M < M^{5/4}=o(p^{2n_1})$, then we conclude, as in the upper medium range Case (2) in the proof of \Cref{supersingular} for \Cref{thm_main}(1), by \Cref{betterbound} and \Cref{grh} that $\#S_M=o(\#T_M)$ and $\sum_{n=n_1}^{[(1+\epsilon_0)\log_p M]}p^n \sum_{m\in T_M-S_M}r_n(m)=0$.\qedhere
\end{enumerate}
\end{proof}

\begin{proof}[Proof of \Cref{thm_main}(1), \Cref{rmk_realquad}, and \Cref{thm_max}]
Assume for contradiction that there are only finitely many points on $C\cap(\cup_{m\in T}Z(m))$. Then we construct $S_M$ by taking the union of the $S_M$ in \Cref{supersingular} for supersingular points and that in \Cref{ord_large_n} and \Cref{ord-main} for ordinary points with $\rk_{\bZ}L_0=3$. Since it is a finite union, we still have $\#S_M=o(\#T_M)$. We deduce a contradiction by \Cref{lem_glo-sum_asymp}, \Cref{supersingular}, \Cref{nonss-rk1}, and \Cref{ord-main}.
\end{proof}

\appendix
\section{The decay lemma in the supergeneric case}


Notation as in \S\ref{sec_decay_Hil}, especially \S\ref{sec_decay_statement}.
The goal of this section is to prove the following theorem:
\begin{theorem}[Decay Lemma]\label{thm_decay_sg}
At a supersingular point $P$ on $C$, there exists a rank $3$ submodule of the lattice $L'\otimes \bZ_p$ of special endomorphisms which decays rapidly (see \Cref{def_decay}). 
\end{theorem}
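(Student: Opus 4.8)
\textbf{Proof proposal for the supergeneric Decay Lemma (Theorem~\ref{thm_decay_sg}).}

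The plan is to mirror the structure of the superspecial argument in \S\ref{sec_decay_Hil} and \S\ref{sec_decay_Sie}, working with the explicit $F$-crystal formulas for $\Frob$ at a supergeneric point that were recorded in \S\ref{Frob-H-inert} (Hilbert, inert) and in \S\ref{sec_F_Sie} (Siegel), and deferred here. As in the proof of \Cref{thm_decay}, everything reduces to a statement purely about the infinite product $\Finf=\prod_{i=0}^{\infty}(1+F^{(i)})$ evaluated along the generically ordinary formal curve $C=\Spf k[[t]]$: I must exhibit a rank $3$ $\bZ_p$-submodule of $L'\otimes\bZ_p$ such that, for every primitive vector $w$ in it, the coefficients of $1,t,\dots,t^{A(1+p+\cdots+p^n)}$ in $p^n\tilde w=\Finf(p^n w)$ do not all lie in $W^4$ (resp. $W^5$) for all $n\ge 0$, where $A=v_t(H)$ is the $t$-adic valuation of the local equation of the non-ordinary locus. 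By \Cref{eqn-non-ord}, in the supergeneric case this equation is $y=0$ in the Hilbert case and $(x+a)y+z^2/(4\epsilon)=0$ in the Siegel case, with $a\in W(k)^\times$; note that here the Frobenius matrix $F$ is the superspecial matrix perturbed by the nonzero constant $a$, so the leading-order combinatorics of which monomials dominate is essentially unchanged, only the constant matrices $G,H_u,H_l$ get replaced by slightly different (but still explicitly computable) ones.

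The first step is to establish the analogue of \Cref{niceparamreal}/\Cref{niceparam}: the part of each relevant block of $\Finf$ with $p$-adic valuation $-(n+1)$ is a sum of products of Frobenius twists of the blocks $F_t,F_u,F_l$ of $F$, and among those with fixed numbers of occurrences of $F_t$ and of the pair $(F_u,F_l)$, the minimal-$t$-adic-valuation terms occur only for consecutive twist indices and for the two ``pure'' patterns. This is a formal consequence of the shape of $F$ and does not use superspeciality. The second step is the linear-algebra input: the analogue of \Cref{reallineartopleft}/\Cref{linear}, asserting that the relevant products of the constant matrices, reduced mod $p$, have kernel defined over $\bF_{p^2}$ but not over $\bF_p$, and that products differing by parity are not scalar multiples of one another. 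For the Hilbert inert case these constant matrices live in $W(\bF_{p^2})[1/p]$ and $\lambda\notin\bZ_p$, so the argument is verbatim that of \Cref{reallineartopleft}; for the Siegel case one uses the explicit $5\times5$ matrix in \eqref{F-Sie} adapted by the shift $x\mapsto x+a$ and checks the idempotent-up-to-sign relations $X^2=-X$ as in the proof of \Cref{linear}. The third step is the case analysis on the relation between $a_x:=v_t(x(t))$, $b:=v_t(y(t))$ (and $c:=v_t(z(t))$ in the Siegel case), exactly as Cases~1--4 in \S\ref{sec_decayHsplit} and Cases~1--3 in \S\ref{sec_decay_Sie}; in each case one identifies the minimal-valuation term(s) with denominator $p^{n+1}$ in the blocks of $\Finf$, uses the linear-algebra input to show a suitable span of two ``upper'' basis vectors decays rapidly, finds one more ``lower'' basis vector that decays rapidly, and runs the valuation-theoretic bookkeeping (as in the last paragraph of the proof of Case~1 in \S\ref{sec_decayHsplit}) to conclude that the rank $3$ span decays rapidly.

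The main simplification relative to the superspecial case is that we do \emph{not} need the ``very rapid'' decay refinement (property DvR) — \Cref{thm_decay_sg} only asserts existence of a rank $3$ rapidly decaying submodule, because in \S\ref{def_smallL'} the supergeneric lattices $L'_n$ are defined using only $A_{n-1}+1$, without the extra $ap^n$ shift. So I can skip all the delicate cancellation analysis (e.g. \Cref{aftercancel}, \Cref{toptopodd}, and the subcase splits $(3.1)_e$, $(3.2)_e$) that was needed only to control a second basis vector to higher precision; a crude ``there is a unique dominant term, or if two terms tie their mod-$p$ sum is still nonzero'' suffices. The one genuinely new feature to watch is the constant $a\in W(k)^\times$ in the Siegel non-ordinary equation: because $(x+a)y+z^2/(4\epsilon)$ has $v_t = v_t(ay+\cdots)$, the role played by $v_t(xy)$ in the superspecial analysis is now played by $\min(v_t(ay),v_t(z^2/(4\epsilon)),v_t(xy))=\min(b,2c,a_x+b)$ generically equal to $b$; the case analysis therefore reorganizes slightly but the inequalities driving convexity of the exponent functions $f(k)$ (as in \Cref{realdecaytopleft} and \Cref{consec}) go through unchanged. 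I expect the bulk of the writing to be bookkeeping; the main obstacle, such as it is, will be organizing the Siegel case analysis cleanly so that the presence of $a$ does not force a proliferation of subcases, and verifying the handful of explicit $2\times2$ and $5\times5$ matrix identities over $W(\bF_{p^2})$ that underpin the mod-$p$ non-degeneracy statements.
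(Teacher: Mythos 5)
Your sketch correctly identifies the target (rapid decay of a rank~$3$ submodule, with no need for the ``very rapid'' refinement, since the supergeneric lattices $L'_n$ in \S\ref{def_smallL'} are defined without the extra $ap^n$ shift). However, there are two concrete gaps between what you propose and what actually has to be done, and at least one of them would cause the argument to fail as written.

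First, you propose to keep the block decomposition $F = \begin{pmatrix} F_t & F_u\\ F_l & 0\end{pmatrix}$ from the superspecial case, treating the supergeneric matrix as ``the superspecial one with $x\mapsto x+a$'' and the constant factors $G,H_u,H_l$ as ``slightly different.'' This is not a small perturbation. The change of basis to a $\varphi$-invariant $\bZ_p$-basis $\{w_i\}$ of $L'\otimes\bZ_p$ in the supergeneric case is genuinely different (the $w_i$ involve $c\pm\sigma^{-1}(c)$, see \S\ref{sec_F_Sie}), and the resulting matrix of $\Frob$ no longer factors cleanly into a $2\times2$ block scaled by $Q/p$ plus an off-diagonal $F_u,F_l$ pattern. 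The paper's approach instead decomposes $\Frob = I + \frac{y}{p}\bbA + \frac{Q}{p}\bbB + x\bbC + z\bbD$, where $\bbA,\bbB,\bbD$ carry the negative $p$-powers and $\bbC$ does not; the infinite product is then analyzed as a sum of products of twists of $\bbA,\bbB,\bbD$, with the rank-$1$ structure of $\bbA_n$, $\bbB_n$, $\bbE_n:=(\bbA+\tilde\alpha\bbB)_n$ (\Cref{lingen}) replacing your proposed $G$/$H$ identities. Because $a\in W^\times$, $v_t\bigl((x+a)y\bigr)=v_t(y)$ identically, so the relevant competition is $v_y$ vs.\ $2v_z$, not $a_x$ vs.\ $b$ — and the $\bbA,\bbB,\bbD$ decomposition is precisely what separates these contributions. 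Trying to force the superspecial $F_t,F_u,F_l$ blocks would require tracking how $a$ mixes rows at every step, and the clean convexity-of-exponent argument of \Cref{realdecaytopleft} does not survive this.

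Second, your claim that all the delicate cancellation analysis can be skipped is too optimistic. The supergeneric proof still splits on $v_y>2v_z$, $v_y<2v_z$, $v_y=2v_z$, and in the last case on whether $\alpha^{(1)}-c+c^{(2)}$ vanishes — and when it does vanish, the local equation of the non-ordinary locus changes shape ($H(t)=Q(t)-\alpha y(t)$), forcing one to re-expand $\Frob$ around $\bbE=\bbA+\tilde\alpha\bbB$ and to track pairs of minimal-valuation terms that can cancel mod $p$ (the analogue of \Cref{aftercancel}). So some version of the ``two-terms-tie'' analysis is unavoidable; it is not a crude fallback but the crux of Case~3. Where you are right is that the DvR-specific bookkeeping (the vector decaying \emph{very} rapidly and its attendant subcase spaghetti like $(3.2)_e$ or \Cref{toptopodd}) can indeed be dropped — but that is a smaller saving than your sketch suggests.

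Finally, for the Hilbert inert supergeneric case, the paper does not rerun the argument with $2\times2$ blocks; it observes that $\bL_\cris$ over the inert Hilbert surface is the Siegel $\bL_\cris$ with $z=0$ (equivalently $v_z=\infty$), so the required decay is literally the $v_y<2v_z$ case of the Siegel proof. You should replace the ``verbatim \Cref{reallineartopleft}'' claim by this specialization; it is both shorter and avoids having to re-derive the constant matrices for the inert case.
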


We will first prove the Siegel case, i.e., $L=L_S$ and then we deduce the Hilbert case ($L=L_H$) from the proof of the Siegel case. Since we have proved the above theorem when $P$ is superspecial in \S\S\ref{sec_decay_Hil}-\ref{sec_decay_Sie}, we will only treat the case when $P$ is supergeneric in this appendix and hence we only need to consider the case when $p$ is inert for the Hilbert case.
\subsection{Siegel case}\label{sec_Sie_sg_prep}
First, we follow \S\ref{sec_F_Sie} and let $w_1:=pv_1 + v_3+ (c + \sigma^{-1}(c)) v_4$, $w_2:=\lambda p v_1 -\lambda v_3 + \lambda (c-\sigma^{-1}(c)) v_4 $, $w_3:= v_4$, $w_4:=-\sigma^{-1}(c)pv_1 + pv_2 - cv_3 - \sigma^{-1}(c)cv_4$, and $w_5:=v_5$, where recall that $\lambda\in \bZ_{p^2}^\times$ such that $\sigma(\lambda) = -\lambda$. Then a direct computation implies that \[\Span_{\bZ_p}\{w_1,\dots,w_5\}=\bL_{\cris,P}(W)^{\varphi=1}=L'\otimes \bZ_p.\]

By \S\ref{summary_Kisin} and using the universal unipotent $u$ in \S\ref{sec_F_Sie}, we have that the Frobenius on $\bL_\cris(W[[x,y,z]])$, with respect to the basis $\{w_i\}_{i=1}^5$, is $I + \frac{y}{p}\bbA + \frac{Q}{p}\bbB + x\bbC + z\bbD$, where $Q = xy + \frac{z^2}{4\epsilon}$,

\[{\bbA} = \begin{bmatrix}
0&c\lambda&\frac{1}{2}&\frac{c^2}{2}&0\\
\frac{c}{\lambda}&0&\frac{1}{2\lambda}&\frac{-c^2}{2\lambda}&0\\
-c^2&-\lambda c^2&-c&0&0\\
-1&\lambda&0&c&0\\
0&0&0&0&0\\
\end{bmatrix},\]

\[ \bbB = \begin{bmatrix}
\frac{-1}{2}&\frac{\lambda}{2}&0&\frac{c}{2}&0 \\
\frac{-1}{2\lambda}&\frac{1}{2}&0&\frac{c}{2\lambda}&0\\
c&-c\lambda&0&-c^2&0\\
0&0&0&0&0\\
0&0&0&0&0\\
\end{bmatrix},\]

\[ \bbC = \begin{bmatrix}
0&0&0&\frac{1}{2}&0\\
0&0&0&\frac{1 }{2\lambda}&0\\
-1&\lambda&0&0&0\\
0&0&0&0&0\\
0&0&0&0&0\\
\end{bmatrix},\] 
and 
\[ \bbD = \begin{bmatrix}
0&0&0&0&\frac{1}{2p}\\
0&0&0&0&\frac{1 }{2\lambda p}\\
0&0&0&0&\frac{-c}{p}\\
0&0&0&0&0\\
\frac{-1}{2\epsilon}&\frac{\lambda}{2\epsilon}&0&\frac{c}{2\epsilon}&0\\
\end{bmatrix}.\]
To lighten notation, let $\bbG$ denote the matrix $p \bbD \bbD^{(1)}$. We then have:
\[ \bbG = \begin{bmatrix}
\frac{-1}{4\epsilon}&\frac{-\lambda}{4\epsilon}&0&\frac{c^{(1)}}{4\epsilon}&0\\
\frac{-1}{4\lambda\epsilon}&\frac{-1}{4\epsilon}&0&\frac{c^{(1)}}{4\lambda\epsilon}&0\\
\frac{c}{2\epsilon}&\frac{c\lambda}{2\epsilon}&0&\frac{-cc^{(1)}}{2\epsilon}&0\\
0&0&0&0&0\\
0&0&0&0&\frac{-1}{2\epsilon}\\ 

\end{bmatrix}.\]
Here, as in \S\ref{sec_F_Sie}, we assume that $c\in W$ is the Teichmuller lift of its reduction $\bar{c}$ in $k$; since $P$ is supergeneric, we have that $\bar{c}\neq \sigma^2(\bar{c})$ and hence $c\pm \sigma(c)\in W^\times$.

As in \S\ref{sec_decay_statement}, we consider the formal curve $C=\Spf k[[t]]$ which is generically ordinary and specializes to $P$; the map $C\rightarrow \cM_k$
gives rise to a map $k[[x,y,z]] \rightarrow k[[t]]$, and we denote by $x(t),y(t),z(t)$ the images of $x,y,z$. The proof of \Cref{thm_decay_sg} in the case $y(t) = 0$ is much simpler than that of the case $y(t)\neq 0$, so in the rest of this appendix, we will present the proof of \Cref{thm_decay_sg} assuming $y(t)\neq 0$; similar ideas (with simpler case-by-case discussion) yield the proof when $y(t)=0$. Without loss of generality, we assume that $y(t) = t^{v_y}$ for some $v_y\in \bZ_{>0}$ and $z(t)$ has $t$-adic valuation $v_z$, and that the leading coefficient of $z(t)^2/4\epsilon$ (here we also use $\epsilon$ to denote the reduction of $\epsilon\in \bZ_p$ mod $p$) is $\alpha$ (set $v_z = \infty$ and $\alpha = 0$ if $z(t) = 0$). 
Let $\tilde{\alpha} \in W$ be the Teichmuller lift of $\alpha$, and define $\bbE = \bbA + \tilde{\alpha} \bbB$. 

\begin{defn}
For $n\in \bZ_{>0}$, define $X_n$ to be the product $\prod_{i=0}^{n}X^{(i)}$, where $X$ stands for either $\bbA, \bbB$ or $\bbE$. 
\end{defn}

We record the following lemma which we will make crucial use of in the proof of the Decay Lemma. 
\begin{lemma}\label{lingen}
\begin{enumerate}
    \item The matrices $\bbA_n$ and $\bbA_n \bmod p$ have rank 1. Further, the first four rows of $\bbA_n$ are each scalar multiples of $R^{(n-1)}$ by $p$-adic units, where \[R = \begin{bmatrix} c+c^{(1)}&\lambda(c-c^{(1)})&1&-cc^{(1)}&0 \end{bmatrix}   .\]
    \item $\bbB_n$ and $\bbB_n \bmod p$ have rank 1. Further, the first three rows of $\bbB_n$ are each scalar multiples of $S^{(n-1)}$ by $p$-adic units, where \[S = \begin{bmatrix} 1&\lambda&0&-c^{(1)}&0 \end{bmatrix}   .\]
    \item $\bbE_n$ has rank $\leq 1$. Further, the rows of $\bbE_n$ are contained in the span of $R^{(n-1)}-\alpha^{(n)}S^{(n-1)}$. 
    If $n\geq 2$ and $\bbE_n \bmod p$ has rank 1 if and only if $c-c^{(2)}-\alpha^{(1)}$ is a $p$-adic unit. 
\end{enumerate}
\end{lemma}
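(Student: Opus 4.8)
\textbf{Plan of proof for Lemma \ref{lingen}.}

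The plan is to verify the three assertions by direct matrix computation, exactly in the spirit of Lemmas \ref{reallineartopleft}, \ref{reallineartopright} and \ref{linear}, exploiting the fact that all the relevant matrices have entries in $W(\bF_{p^2})$, so that $\sigma$ acts with order $2$ and the products collapse into a few cases. First I would observe that $\bbA$, $\bbB$, $\bbE$ all have rank $1$: a glance at $\bbA$ shows its first four rows are the $W$-multiples $(-c^2, -\lambda c, -c, 0)\cdot R$ etc.\ (up to reordering) of a single row vector, while its fifth row is zero; likewise $\bbB$ has its first three rows proportional to $S$ and its last two rows zero. So each of $\bbA$, $\bbB$, $\bbE = \bbA + \tilde\alpha\bbB$ can be written as $\mathbf{v}\,R$ or $\mathbf{v}\,S$ (an outer product of a column vector with a row vector), and the products $X_n = \prod_{i=0}^n X^{(i)}$ telescope: $X_n = \mathbf{v}\cdot (\text{scalar})\cdot R^{(n)}$ or $S^{(n)}$, where the scalar is a product of the inner pairings $R^{(i)}\mathbf{v}^{(i+1)}$ (resp.\ $S^{(i)}\mathbf{v}^{(i+1)}$). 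The key arithmetic input is that since $P$ is supergeneric we have $c\pm\sigma(c)\in W^\times$, which makes all these inner pairings $p$-adic units, so the scalar factors are units and the rank-$1$ structure, together with the proportionality of the first four (resp.\ three) rows to $R^{(n-1)}$ (resp.\ $S^{(n-1)}$), survives reduction mod $p$. This yields parts (1) and (2).

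For part (3) I would compute $\bbE = \bbA + \tilde\alpha\bbB$ explicitly; its nonzero rows are proportional to $R - \alpha^{(1)} S$ after the appropriate twist (the $\tilde\alpha$ in $\bbE$ contributes the Frobenius twist $\alpha^{(1)}$ because $\bbB$ appears paired with the $\sigma$-twisted quantity $z(t)^2$ in the Frobenius matrix -- one must be careful about which twist of $\alpha$ enters, and I would track this bookkeeping carefully). Then $\bbE_n = \prod_{i=0}^n \bbE^{(i)}$ is again a telescoping product of outer products, so its rows lie in the span of $(R - \alpha^{(1)}S)^{(n-1)} = R^{(n-1)} - \alpha^{(n)} S^{(n-1)}$. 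The scalar factor controlling whether $\bbE_n \bmod p$ has rank $1$ or $0$ is the inner pairing of the row vector $R^{(i)} - \alpha^{(i+1)}S^{(i)}$ against the column vector of $\bbE^{(i+1)}$; carrying out this pairing and reducing mod $p$ gives an expression proportional to $c - c^{(2)} - \alpha^{(1)}$ (using $\sigma^2$ acts on $c$ nontrivially and $\lambda^2 = \epsilon$), so $\bbE_n$ has rank $1$ mod $p$ precisely when this quantity is a unit. (For $n\geq 2$ the product contains at least one such pairing, which is why the hypothesis $n\geq 2$ is needed.)

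The main obstacle will be the second case: the bookkeeping of Frobenius twists in the definition of $\bbE$ and in the iterated products -- keeping straight whether $\alpha$, $\alpha^{(1)}$ or $\alpha^{(n)}$ appears at each stage, and correctly identifying the unit $c - c^{(2)} - \alpha^{(1)}$ as the obstruction to rank $1$. The rank-$1$ outer-product structure and the telescoping are routine once set up; the delicate point is matching the twist conventions so that the stated formula for the rank criterion comes out exactly right. I would double-check this by evaluating the product $\bbE\bbE^{(1)}$ directly (the smallest case with $n=1$, which will show rank $\leq 1$ unconditionally) and then $\bbE\bbE^{(1)}\bbE^{(2)}$ to confirm the mod-$p$ rank criterion appears at $n=2$.
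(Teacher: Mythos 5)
Your plan rests on the claim that ``$\bbA$, $\bbB$, $\bbE$ all have rank $1$,'' so that each can be written as an outer product $\mathbf{v}\,R$ and the products $X_n$ telescope. This is false for $\bbA$ and $\bbE$. Rows $1$ and $4$ of $\bbA$, namely $(0,\,c\lambda,\,\tfrac12,\,\tfrac{c^2}{2},\,0)$ and $(-1,\,\lambda,\,0,\,c,\,0)$, are $W$-linearly independent (the former has a unit in the third slot where the latter vanishes, and the first slots already disagree), so $\bbA$ has rank $2$, and the same is true of $\bbE=\bbA+\tilde\alpha\bbB$ generically. In particular row $4$ of $\bbA$ is \emph{not} a scalar multiple of $R=(c+c^{(1)},\,\lambda(c-c^{(1)}),\,1,\,-cc^{(1)},\,0)$ --- the third entries are $0$ versus $1$ --- so the decomposition $\bbA=\mathbf{v}R$ you posit does not exist, and the whole telescoping mechanism collapses for parts (1) and (3). (For $\bbB$ you are right: its three nonzero rows are all multiples of $(1,-\lambda,0,-c,0)=S^{(-1)}$, so $\bbB$ genuinely has rank $1$ and your argument goes through there.)

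The content of the lemma is precisely that the \emph{product} $\bbA_n=\prod_{i=0}^n\bbA^{(i)}$ (for $n\geq 1$) drops from rank $2$ to rank $1$, with the further unit-scalar structure stated. That collapse is a nontrivial algebraic identity --- analogous to $GG^{(1)}G=G$ in \Cref{reallineartopleft} --- and it has to be established by an explicit computation of $\bbA\bbA^{(1)}$ (and then an induction showing the unit-scalar property propagates, as the paper sketches for $\bbB$), not deduced from a rank-$1$ decomposition of $\bbA$ itself. The same gap recurs in your treatment of part (3): you cannot view $\bbE_n$ as a telescoping outer product either, and the identification of $c-c^{(2)}-\alpha^{(1)}$ as the obstruction to rank $1$ has to come out of actually pairing the fourth row of $\bbE\bbE^{(1)}$ against the columns of $\bbE^{(2)}$, not from an inner pairing $R^{(i)}\mathbf v^{(i+1)}$ that doesn't exist. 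You should redo parts (1) and (3) by first computing the base case $\bbA\bbA^{(1)}$ (respectively $\bbE\bbE^{(1)}$) explicitly, verifying rank $1$ and the stated row structure there, and then running the induction as for $\bbB$.
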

\begin{proof}
As each of the above cases follow from similar straightforward inductive calculations, we will content ourselves with solely proving part (2). That the rows are spanned by $S^{(n-1)}$ follows directly from the fact that the row span of $\bbB_n$ is contained in the row span of $\bbB^{(n)}$, which is indeed equal to the span of $S^{(n-1)}$. 

We will prove that the first three rows of $\bbB_n$ are scalar multiples of $S^{(n-1)}$ by $p$-adic units by an inductive argument. The case of $n=1$ can be checked by a direct calculation. Observing that $\bbB_n = \bbB_{n-1} \bbB^{(n)}$, it suffices to prove that the $1\times5$ matrix $S^{(n-1)}\cdot \bbB^{(n)}$ has its first, second and fourth entries equalling $p$-adic units. This is also seen by a direct calculation, and the lemma follows.
\end{proof}

For the remainder of our paper, we will replace $c, \bar{c}$ by $\tilde{c},c$ respectively. We will also abuse notation by allowing $\lambda$ to denote both $\lambda$ and its mod $p$ reduction -- it will be clear from context what we refer to.

As in the proof of \Cref{thm_decay} in \S\ref{sec_decay_statement}, we consider the matrix $F_{\infty}$, which has a product expansion $\prod_{m=0}^{\infty} (I + \frac{y(t)}{p}\bbA + \frac{Q(t)}{p}\bbB + x(t)\bbC + z(t)\bbD)^{(m)}$ (note that as in \S\ref{sec_decay_statement}, the twist is with respect to $\sigma(x)=x^p, \sigma(y)=y^p, \sigma(z)=z^p$ and then we specialize to $x=x(t), y=y(t), z=z(t)$). Given any positive integer $n$, the coefficient of $1/p^n$ in $F_{\infty}$ having minimal $t$-adic valuation arise from products of (Frobenius-twisted) powers of $\bbA$, $\bbB$ and $\bbD$, with $\bbC$ contributing only larger order terms. We will therefore ignore $\bbC$ while proving our decay results. 

We will need to consider the following cases: 

\subsection{Case 1: $v_y > 2v_z$}

\begin{proof}[Proof of \Cref{thm_decay_sg}]
We will use part (2) of Lemma \ref{lingen} to prove that  $\Span_{\bZ_p}\{w_1,w_2,w_4\}$ decays rapidly. As $v_y > 2v_z$, the $t$-adic valuation of $Q=xy + z^2/(4\epsilon)$ equals $2v_z$, which is strictly smaller than that of $y$.
Moreover, by \Cref{eqn-non-ord}, the equation of the non-ordinary locus is $(x+a)y+\frac{z^2}{4\epsilon}$ and hence $A=2v_z$, where $A$ in \Cref{def_decay} is the $t$-adic valuation of the equation. Further, the $t$-adic valuation of $z^{1 + p}$ equals $(p+1)v_z$, which is strictly greater than $2v_z=v_t(Q)$. Therefore, the term in the coefficient of $1/p^{n+1}$ with minimum $t$-adic valuation equals $(Q\bbB) \cdots (Q\bbB)^{(n)}$. In order to prove that the property DR (in the statement of Proposition \ref{prop_decay}) holds, by \Cref{lingen}(2), it suffices to prove that the kernel of the matrix $S^{(n-1)} \bmod p$ does not contain any non-zero $\bF_p$-linear combinations of $w_1,w_2,w_4 \bmod p$. Indeed, this follows directly from the fact that $c$ and therefore all its Frobenius twists do not lie in  $\bF_{p^2}$. Thus \Cref{thm_decay_sg} follows from the property DR by the same argument that \Cref{prop_decay} implies \Cref{thm_decay} in \S\ref{sec_decay_statement}.
\end{proof}

\subsection{Case 2: $v_y < 2v_z$}\label{Case2sg}

\begin{proof}[Proof of \Cref{thm_decay_sg}]
It suffices to prove that $\Span_{\bZ_p}\{w_1,w_2,w_3\}$ decays rapidly. As $v_y < 2v_z$, the term in the coefficient of $1/p^{n+1}$ with minimum $t$-adic valuation equals $(y\bbA)\hdots (y\bbA)^{(n)}$. Note also that $A$ in \Cref{def_decay} is $v_t((x+a)y+\frac{z^2}{4\epsilon})=v_y$. As in Case 1, it suffices to prove that the kernel of $ \bbA \cdots \bbA^{(n)} \bmod p$ contains no non-zero $\bF_p$-linear combinations of $w_1,w_2,w_3 \bmod p$. 

To that end, let $w$ be some $\bF_p$-linear combination of these three vectors, which were in the kernel of $ \bbA \cdots \bbA^{(n)}$. By \Cref{lingen}(1), the (first four) rows of $\bbA \cdots \bbA^{(n)}$ are unit-scalar multiples of $R^{(n-1)}$. Therefore, $R^{(n-1)} \cdot w = 0$. As $w$ is Frobenius-invariant, this is equivalent to $R \cdot w = 0$. 

The existence of $w$ implies that there exist $s_1,s_2,s_3 \in \bF_p$ such that $s_1(c + c^{(1)}) + s_2 \lambda(c - c^{(1)})  + s_3 = 0$. If either $s_1 =0$ or $s_2 = 0$, it follows that either  $c +c^{(1)} \in \bF_p$ or $\lambda(c - c^{(1)}) \in \bF_p$. Either case would imply that $c = c^{(2)}$, which is a contradiction as we have assumed that $c \notin \bF_{p^2}$. Therefore, we assume that $s_1 = -1$. We now have 
$$c + c^{(1)} = \lambda s_2(c - c^{(1)}) + s_3,$$
and therefore applying $\sigma$ to the above equation, we also have
$$c^{(1)} + c^{(2)} = \lambda s_2(c^{(2)} - c^{(1)}) + s_3.$$

Subtracting the two equations yields 

$$c - c^{(2)} = \lambda s_2(c - c^{(2)}).$$
This is a contradiction, as $\lambda \notin \bF_p$. 
Therefore, such $w$ could not have been in the kernel of $\bbA \cdots \bbA^{(n)} \bmod p$, whence it follows that every primitive vector in $\Span_{\bZ_p}\{w_1,w_2,w_3\}$ satisfies the condition DR in the statement of Proposition \ref{prop_decay}, as required. Therefore, \Cref{thm_decay_sg} follow in this case.
\end{proof}

\subsection{Case 3: $v_y = 2v_z$}\label{case3sg}
In this case, it follows that $z(t) \neq 0$, and thus $\alpha \neq 0$. 

\subsection*{Subcase 1: $\alpha^{(1)} -c + c^{(2)} \neq 0$}\label{subcase3.1sg}
\begin{proof}[Proof of \Cref{thm_decay_sg}]
We will prove that $\Span_{\bZ_p}\{w_i,w_3,w_5\}$ decays rapidly where $i$ is either $1$ or $2$. In this case, we have $A\geq 2v_z$. 
The conditions imposed on $v_y,v_z$ imply that the term with minimal $t$-adic valuation in the coefficient of $1/p^{n+1}$ is $$\bbE\bbE^{(1)} \cdots \bbE^{(n)}t^{2v_z(1 + p + \cdots +p^n)} + \bbE\bbE^{(1)} \cdots \bbE^{(n-1)}(p\bbD)^{(n)}t^{2v_z(1 + p + \cdots +p^{n-1}) + v_zp^n}.$$ Note that the first term in the sum has its last column equalling zero, and the second term has its first four columns equalling zero. 

For brevity, we will henceforth denote a Frobenius twist with a superscript of $'$. We claim that at least one of $c + c' -\alpha'$ and $\lambda(c - c' - \alpha')$ does not lie in $\bF_p$. Indeed, the first element being in $\bF_p$ implies that $c' + c'' - \alpha'' = c + c' - \alpha'$, consequently $-(\alpha' - \alpha '') = c'' -c$. Similarly, $\lambda(c - c' - \alpha')$ being an element of $\bF_p$ implies that $\alpha' + \alpha'' = c - c''$. Therefore, both elements being in $\bF_p$ implies that $\alpha' + \alpha'' = \alpha'-\alpha''$, which in turn implies that $\alpha = 0$, a contradiction. 

Without loss of generality, we will assume that $c + c' - \alpha' \notin \bF_p$, and prove that $\Span_{\bZ_p}\{w_1,w_3,w_5\}$ decays rapidly (if $\lambda(c-c'-\alpha') \notin \bF_p$, then an identical argument would yield that  $\Span_{\bZ_p}\{w_2,w_3,w_5\}$ decays rapidly). We first prove that every primitive vector in $\Span_{\bZ_p}\{w_1,w_3\}$ decays rapidly. Indeed, let $w$ be any primitive vector. In order to prove that $w$ decays rapidly, it suffices to prove that $w \bmod p$ is not in the kernel of $\bbE\bbE^{(1)} \cdots \bbE^{(n)} \bmod p$. However, as the 4th row of this matrix is a unit-multiple of $R^{(n-1)}-\alpha^{(n)}S^{(n-1)}$ by Lemma \ref{lingen} (3), it suffices to prove that $w \bmod p$ is not in the kernel of the $1 \times 5$ matrix $R^{(n-1)}-\alpha^{(n)}S^{(n-1)}$. As $w \bmod p$ is $\bF_p$-rational, this is equivalent to asking that $w \bmod p$ is not in the kernel of $R-\alpha' S \bmod p$. This follows from the fact that $c + c' - \alpha' \notin \bF_p$. 

We now show that $w_5$ decays rapidly. Indeed, the last column of $\bbE\bbE^{(1)} \cdots \bbE^{(n-1)}(p\bbD)^{(n)}$ has $(R^{(n-2)}-\alpha^{(n-1)}S^{(n-2)}) \cdot v^{(n)}$ as its fourth entry (up to a unit), where $v$ is the last column of $p\bbD$. It suffices to prove that $(R^{(n-2)}-\alpha^{(n-1)}S^{(n-2)}) \cdot v^{(n)} \neq 0 \bmod p$, equivalently $(R-\alpha' S) \cdot v^{(2)} \neq 0 \bmod p$. A direct computation shows that this element equals $-(\alpha' - c + c'')$, which we have assumed to be not zero. Therefore, it follows that $w_5$ decays rapidly. 

Let $w$ denote a primitive vector in the span of $w_1,w_3$. Consider a $\bZ_p$-linear combination $a w + b w_5$, where either $a$ or $b$ is a $p$-adic unit. The only way for $aw + bw_5$ to not decay rapidly is if the $t$-adic valuation of the coefficient of $1/p^{n+1}$ in $F_{\infty}w$ equalled the $t$-adic valuation of the coefficient of $1/p^{m+1}$ in $F_{\infty}w_5$. However, the former equals $t^{2v_z(1 + p + \hdots + p^n)}$ and the latter equals $t^{2v_z(1 + p + \hdots + p^{m-1}) + v_zp^m}$. These two quantities are clearly never equal, thereby establishing the required decay as in Case 1 in \S\ref{sec_decayHsplit}.
\end{proof}

\subsection*{Subcase 2: $\alpha^{(1)}- c + c^{(2)} = 0$}
In this case, we see that $\alpha = c^{(-1)} - c^{(1)}$, and thus by \Cref{eqn-non-ord}, the local equation of the non ordinary locus is given by the equation $Q(t) - \alpha y(t)$. In particular, we see that $H(t) := Q(t) - \alpha y(t) = Q(t) - \alpha t^{v_y}\neq 0$. We will now express $$\Frob = (I + \frac{y(t)}{p}\bbE + \frac{H(t)}{p}\bbB + x(t) \bbC + z(t)\bbD)\circ \sigma,$$ where $\bbE, \bbB,\bbC,\bbD$ are defined in \S\ref{sec_Sie_sg_prep}. We will establish the required decay by considering the fourth row of $F_{\infty}$. As mentioned above, we omit $\bbC$ and powers of $x(t)$ in this analysis, as there are no negative powers of $p$ in the entries of $\bbC$. 

We need the following lemma: 

\begin{lemma}\label{newlin}
Consider all products of the form $W_0W_1W_2 \hdots W_n$, where $W_i$ is the $i^{th}$ Frobenius twist of $\bbE,\bbB$ or $\bbD$. The only products which have a non-zero fourth row are those with the following properties: 
\begin{enumerate}
\item $W_0 = \bbE$. 

\item Suppose that $W_i, W_j \neq $ twists of $\bbD$ but $W_{i+1}, \dots, W_{j-1} = $ twists of $\bbD$ for $1 \leq i < j \leq n$. Then, $j - i$ has to be odd. Equivalently, any maximal consecutive subsequence consisting exclusively of Frobenius twists of $\bbD$ has to have even length, unless the subsequence is terminates with $W_n$. 

\item Apart from $i = 0$, the only possible $j \leq n$ such that $W_j = \bbE^{(j)}$  is $j = n$. 
\end{enumerate}
Further, a product that satisfies the above properties does indeed have a non-zero fourth row. 

Finally, consider the length four vector given by the first four rows of the last column of the product. This vector is nonzero if and only if $W_n = \bbD^{(n)}$ and the number of occurrences of twists of $\bbD$ is odd. If this is the case, the first four columns of the product are all zero. 
\end{lemma}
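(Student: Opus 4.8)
\textbf{Proof proposal for Lemma~\ref{newlin}.}

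The plan is to track how the fourth row of a product $W_0 W_1 \cdots W_n$ evolves under right-multiplication, using the explicit block structure of $\bbE$, $\bbB$, $\bbD$ recorded in \S\ref{sec_Sie_sg_prep} together with the rank-one statements in \Cref{lingen}. The key structural observations to isolate first are: (i) the fourth row of $\bbE$ is $\begin{bmatrix} c^2/2 & -c^2/(2\lambda) & 0 & c & 0\end{bmatrix}$ (i.e. nonzero), whereas the fourth row of $\bbB$ and of $\bbD$ is zero; (ii) the row span of $\bbE^{(j)}$ is contained in $\Span\{R^{(j-1)} - \alpha^{(j)} S^{(j-1)}\}$ by \Cref{lingen}(3) — and more to the point, the fourth column of $\bbE^{(j)}$, $\bbB^{(j)}$ is supported in the first three rows while the fourth column of $\bbD^{(j)}$ vanishes entirely, and the fifth column of $\bbE^{(j)}, \bbB^{(j)}$ vanishes while the fifth column of $\bbD^{(j)}$ is nonzero (supported in rows $1,2,3$). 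From (i), property (1) is immediate: if $W_0 \ne \bbE$ then the fourth row of $W_0$ is zero, so the whole product has zero fourth row. For the rest, I would set up an induction that keeps track not of the full fourth row but of the ``type'' of the current partial product $W_0 \cdots W_i$ — concretely, which of a small finite list of one-dimensional subspaces the fourth row lies in, and whether it is zero.

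The inductive bookkeeping is the heart of the argument. After $W_0 = \bbE$, the fourth row lies in $\Span\{R^{(-1)} - \alpha^{(0)} S^{(-1)}\}$ (a unit multiple of it). Right-multiplying by $\bbD^{(i)}$: one computes $(R^{(i-2)} - \alpha^{(i-1)} S^{(i-2)}) \cdot \bbD^{(i)}$ and checks, by direct matrix multiplication mod $p$, that this lands (up to unit) in $\Span\{$ first four rows of the appropriate twist $\}$ — but crucially, because $\bbD$ sends the ``$w_5$ slot'' back into the first three coordinates and kills the fourth, multiplying by $\bbD$ toggles a parity. Multiplying by $\bbD$ twice returns the row to a subspace of the same shape as before the pair; multiplying by $\bbD$ an odd number of times leaves the partial product with its last column (rows $1$–$4$) nonzero and its first four columns zero — which is exactly the final assertion of the lemma. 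Property (2) then follows: a maximal run of $\bbD$-twists of odd length strictly between two non-$\bbD$ factors would, after the run, leave the fourth row in the ``last-column-only'' state, and then right-multiplying by a twist of $\bbE$ or $\bbB$ (whose fifth column is zero) annihilates it. Property (3) is the observation that once one multiplies by a twist $\bbE^{(j)}$ with $0 < j < n$, the rank-one image forces the fourth row into $\Span\{R^{(j-1)} - \alpha^{(j)} S^{(j-1)}\}$; combined with the parity constraint from (2) and the standing hypothesis $\alpha^{(1)} - c + c^{(2)} = 0$ of this subcase (equivalently $\alpha = c^{(-1)} - c^{(1)}$, so all the twists $R - \alpha S$ satisfy a recursion that is ``rigid''), one checks that a subsequent factor must then kill the fourth row unless $j = n$; I expect this to need a short explicit computation of $(R^{(j-1)} - \alpha^{(j)} S^{(j-1)}) \cdot \bbB^{(j+1)}$ and $(R^{(j-1)} - \alpha^{(j)} S^{(j-1)}) \cdot \bbE^{(j+1)}$ and verifying both vanish mod the ideal, using $\alpha = c^{(-1)} - c^{(1)}$.

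For the converse — that a product satisfying (1)–(3) genuinely has nonzero fourth row — I would argue by the same induction in reverse: the only operations that could kill the fourth row are multiplication by a twist of $\bbE$ or $\bbB$ while the row is in the ``last-column'' state (ruled out by (2)), or a rank collapse when multiplying by a twist of $\bbE$ (which, under hypotheses (1)–(3), only happens at the very end and produces a nonzero result). The final sentence about the last column follows by noting that after an odd total number of $\bbD$-twists ending in $W_n = \bbD^{(n)}$, the accumulated row has been shifted into the fifth coordinate by the last $\bbD$ and then, since nothing follows, stays there; the vanishing of the first four columns is because $\bbD^{(n)}$ has its first four columns zero and right-multiplication by it is the last operation. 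The main obstacle I anticipate is organizing the parity/type induction cleanly enough that the three cases — ``row is zero'', ``row lies in $\Span\{R^{(\ast)} - \alpha^{(\ast)} S^{(\ast)}\}$'', ``row lies in the last-column subspace'' — and their transitions under right-multiplication by each of the three twisted matrices are all checked exactly once; each transition is an explicit $1\times 5$ times $5 \times 5$ computation mod $p$, routine individually, but there are enough of them that a careless enumeration could miss a case. I would handle this by first proving a single ``transition lemma'' tabulating these products and then reading off (1)–(3) and the last-column claim as corollaries.
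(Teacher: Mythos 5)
Your transition-lemma reorganization is close in spirit to the paper's argument — both ultimately track the fourth row of partial products through explicit $1\times 5$ by $5\times 5$ computations, and your uniform bookkeeping of the ``mode'' of the fourth row (first-four-coordinates vs.\ last-column) would arguably handle interleaved sequences of $\bbB$'s and $\bbD$-pairs more cleanly than the paper's ``illustrate one blocked case, the others are analogous'' style. Properties (1), (2), and the final claim about the last column do follow from the transitions you describe.

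There are, however, two concrete problems. A minor one: the fourth \emph{row} of $\bbE$ is $\begin{bmatrix}-1 & \lambda & 0 & c & 0\end{bmatrix}$ (the fourth row of $\bbA$; the fourth row of $\bbB$ vanishes), not $\begin{bmatrix}c^2/2 & -c^2/(2\lambda) & 0 & c & 0\end{bmatrix}$, which is the fourth \emph{column} of $\bbA$. Your conclusion (nonzero) is correct but the stated vector is wrong. The more substantive gap is in your argument for property (3): you attribute the constraint on the fourth row after an intermediate $\bbE^{(j)}$ to ``the rank-one image'' of $\bbE^{(j)}$, but $\bbE = \bbA + \tilde\alpha\bbB$ has rank $2$ — its row space is spanned by rows $1$ and $4$ of $\bbA$ (one checks rows $2$ and $3$ are combinations of these, while rows $1$ and $4$ are independent because of the nonzero third entry in row $1$), and adding $\tilde\alpha\bbB$ does not lower the rank since the row space of $\bbB$ is already inside that of $\bbA$. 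So the claim that right-multiplication by $\bbE^{(j)}$ alone lands the fourth row in a one-dimensional span is false. The correct justification is precisely your transition lemma: after the preceding run of $\bbB$'s and $\bbD$-pairs the fourth row lies in $\Span\{S^{(j-2)}\}$, and a direct computation shows $S^{(j-2)}\cdot\bbE^{(j)}$ is proportional to $R^{(j-1)} - \alpha^{(j)}S^{(j-1)} = R^{(j)}$ (the equality using the subcase hypothesis $\alpha^{(1)} = c - c^{(2)}$), which is then annihilated by $\bbA^{(j+1)}$, $\bbB^{(j+1)}$, and $\bbD^{(j+1)}$. As written you invoke a false rank claim and never actually route (3) through the transition lemma, so the argument does not close; once you replace the rank-one appeal with the transition lemma plus the computation $S^{(j-2)}\cdot\bbE^{(j)} \sim R^{(j)}$, the proof should work.
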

\begin{proof}
(1)(2) are clear. Part (3) follows from a direct computation. We will illustrate this computation in the particular case
\begin{equation}\label{rowformm}
\bbE\prod_{i = 1}^m \bbB^{(i)} \prod_{j = m+1}^{m + 2e}\bbD^{(j)}\bbE^{(m + 2e + 1)} = \frac{1}{p^{e}}\bbE\prod_{i = 1}^m \bbB^{(i)}\prod_{j=m+1}^{m+e}\bbG^{(2j-m-1)}\bbE^{(m+2e+1)}.
\end{equation}
It will follow from explicitly computing the product that multiplying it by either $\bbE^{(m + 2e + 2)}$, $\bbD^{(m + 2e + 2)}$ or $\bbB^{(m + 2e + 2)}$ yields the zero matrix. The other cases (where the $W_i$ are other choices of $\bbB,\bbD$) are entirely analogous, and the same computation goes through. 

An easy inductive argument shows that, the product $\displaystyle \prod_{i = 1}^m \bbB^{(i)} \prod_{j = m+1}^{m + 2e}\bbD^{(j)} = \frac{1}{p^e}\prod_{i = 1}^m \bbB^{(i)} \prod_{j = m+1}^{m+e} \bbG^{(2j-m-1)}$ equals 

\[ \frac{(-1)^{m+e}}{(2p\epsilon)^e}\begin{bmatrix}
\frac{1}{2}&\frac{(-1)^{m+1}\lambda}{2}&0&\frac{-\tilde{c}^{(2e + m)}}{2}&0\\
\frac{-1}{2\lambda}&\frac{(-1)^m}{2}&0&\frac{ \tilde{c}^{(2e + m)}}{2\lambda}&0\\
-\tilde{c}^{(1)}&(-1)^m \lambda \tilde{c}^{(1)}&0&\tilde{c}^{(1)}\tilde{c}^{(2e + m)}&0\\
0&0&0&0&0\\
0&0&0&0&*\\
\end{bmatrix}.\]

Multiplying this matrix on the left by the fourth row of $\bbE$ and on the right by $\bbE^{(m + 2e + 1)}$ and using the relation $\alpha ^{(1)} = c - c^{(2)}$ yields 
\[ \frac{(-1)^{m+e}}{(2\epsilon p)^e}\begin{bmatrix} 
-\tilde{c}^{(2e + m + 1)} - \tilde{c}^{(2e + m + 2)}&(-1)^{m}\lambda(\tilde{c}^{(m+2e+1)}-\tilde{c}^{(m+2e+2)} )&-1&\tilde{c}^{(m + 2e+ 1)}\tilde{c}^{( m  +2e+2)}&0\\
\end{bmatrix}.\]
Note that the product (not just the fourth row) matrix has rank one, and so every other row must be some multiple of this row. In order to show that the product in \eqref{rowformm} multiplied by $W_{m + 2e + 2}$ (where $W_{m + 2e + 2}$ is either $\bbB^{(m + 2e + 2)},\bbD^{(m + 2e + 2)}$ or $\bbE^{(m + 2e + 2)}$) is zero, it suffices to prove that the fourth row of this product is zero. This can be checked by direct computation. 
\end{proof}

We record the fourth row of the product in \eqref{rowformm} (scaled by $p^e$) for future use: 
\begin{lemma}\label{rowform}
The fourth row of the product $\displaystyle p^e \bbE\prod_{i = 1}^m \bbB^{(i)} \prod_{j = m+1}^{m + 2e}\bbD^{(j)}\bbE^{(m + 2e + 1)}$ equals 
\[ \frac{(-1)^{m+e}}{(2\epsilon)^e}\begin{bmatrix} 
-\tilde{c}^{(2e + m + 1)} - \tilde{c}^{(2e + m + 2)}&(-1)^{m}\lambda(\tilde{c}^{(m+2e+1)}-\tilde{c}^{(m+2e+2)} )&-1&\tilde{c}^{(m + 2e+ 1)}\tilde{c}^{( m  +2e+2)}&0\\
\end{bmatrix}.\]
\end{lemma}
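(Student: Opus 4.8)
\textbf{Proof proposal for Lemma \ref{rowform}.}

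The plan is to extract the fourth-row assertion directly from the computation already carried out in the proof of Lemma \ref{newlin}. Indeed, in the course of proving part (3) of that lemma we explicitly computed the product $\displaystyle p^e \bbE \prod_{i=1}^m \bbB^{(i)} \prod_{j=m+1}^{m+2e}\bbD^{(j)}\bbE^{(m+2e+1)}$ by first simplifying the inner block $\displaystyle \prod_{i=1}^m \bbB^{(i)}\prod_{j=m+1}^{m+2e}\bbD^{(j)} = \tfrac{1}{p^e}\prod_{i=1}^m\bbB^{(i)}\prod_{j=m+1}^{m+e}\bbG^{(2j-m-1)}$, which we showed equals the displayed rank-one matrix with entries involving $\tilde c^{(2e+m)}$ (up to the scalar $(-1)^{m+e}/(2p\epsilon)^e$). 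Then I would left-multiply by the fourth row of $\bbE$ and right-multiply by $\bbE^{(m+2e+1)}$, exactly as in the proof of Lemma \ref{newlin}, and simplify using the defining relation $\alpha^{(1)} = c - c^{(2)}$ of Subcase 2 (equivalently $\tilde\alpha^{(1)} \equiv \tilde c^{(-1)} - \tilde c^{(1)}$ modulo $p$, applied after all Frobenius twists). This is a single $1\times 5$ by $5\times 5$ matrix multiplication, and it produces precisely the displayed row vector $\tfrac{(-1)^{m+e}}{(2\epsilon)^e}\big[\,-\tilde c^{(2e+m+1)}-\tilde c^{(2e+m+2)},\ (-1)^m\lambda(\tilde c^{(m+2e+1)}-\tilde c^{(m+2e+2)}),\ -1,\ \tilde c^{(m+2e+1)}\tilde c^{(m+2e+2)},\ 0\,\big]$ after cancelling the factor $p^e$ against the $1/p^e$ in the block above.

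Concretely the steps are: (i) recall from the proof of Lemma \ref{newlin} the explicit form of $\prod_{i=1}^m\bbB^{(i)}\prod_{j=m+1}^{m+e}\bbG^{(2j-m-1)}$, which is the rank-one matrix displayed there; (ii) record the fourth row of $\bbE = \bbA + \tilde\alpha\bbB$ from the matrices $\bbA,\bbB$ in \S\ref{sec_Sie_sg_prep}, namely $[\,-1-\tilde\alpha\cdot 0,\ \lambda,\ 0,\ \tilde c - \tilde\alpha\cdot 0,\ 0\,]$ together with the correct $\tilde\alpha$-contributions (the point is that the fourth row of $\bbA$ is $[-1,\lambda,0,\tilde c,0]$ and the fourth row of $\bbB$ is $0$, so the fourth row of $\bbE$ is just $[-1,\lambda,0,\tilde c,0]$); (iii) multiply the fourth row of $\bbE$ into the rank-one block, getting a scalar times $[\,1/2,\ (-1)^m\lambda/2,\ 0,\ -\tilde c^{(2e+m)}/2,\ 0\,]$-type vector, then right-multiply by $\bbE^{(m+2e+1)}$; (iv) expand $\bbE^{(m+2e+1)} = \bbA^{(m+2e+1)} + \tilde\alpha^{(m+2e+1)}\bbB^{(m+2e+1)}$ and substitute $\tilde\alpha^{(m+2e+1)} \equiv \tilde c^{(m+2e)} - \tilde c^{(m+2e+2)}$ (the twist of the relation $\alpha^{(1)} = c - c^{(2)}$); collect terms. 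The bookkeeping with signs $(-1)^{m+e}$ and the power $(2\epsilon)^e$ versus $(2p\epsilon)^e$ is the only place to be careful, and it is resolved by noting the $p^e$ prefactor in the statement exactly absorbs the $p^e$ in the denominator $(2p\epsilon)^e$.

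I do not anticipate a genuine obstacle here: this lemma is essentially a bookkeeping restatement of a computation performed inside the proof of Lemma \ref{newlin}, isolated for later reference (it is used in the remaining part of Subcase 2 of Case 3 to pin down when the relevant vectors lie in the kernel modulo $p$). The one mild subtlety is keeping track of which Frobenius twist $\tilde c^{(\bullet)}$ appears in each slot after chaining the twists through the product $\prod \bbG^{(2j-m-1)}$ and the final $\bbE^{(m+2e+1)}$; I would verify the indices by checking the base cases $e=0$ (where the claim reduces to the fourth row of $\bbE\,\bbB^{(1)}\cdots\bbB^{(m)}\,\bbE^{(m+1)}$, computable directly from $\bbA$, $\bbB$) and $e=1$, and then invoke the evident induction on $e$ already implicit in the proof of Lemma \ref{newlin}. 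Since the product has rank one throughout (again established in Lemma \ref{newlin}), the fourth row determines the whole matrix up to the row-scaling multiples, so no further work is needed.
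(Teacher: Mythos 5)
Your approach is exactly the paper's: Lemma \ref{rowform} is not proved independently but is simply a record, for later citation, of the computation already carried out inside the proof of Lemma \ref{newlin} (collapse the inner $\bbB^{(i)}\bbD^{(j)}$ block into the rank-one matrix via $\bbG^{(2j-m-1)}$, left-multiply by the fourth row of $\bbE$, right-multiply by $\bbE^{(m+2e+1)}$, and simplify with $\alpha^{(1)}=c-c^{(2)}$). Your use of $\equiv$ rather than $=$ when substituting the Teichm\"uller-lift version of $\alpha^{(1)}=c-c^{(2)}$ is actually a bit more careful than the paper's phrasing, since the relation holds in $k$ and the lemma is only used modulo $p$.

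One small bookkeeping slip: in step (iii), the vector you get from left-multiplying the fourth row $[-1,\lambda,0,\tilde c,0]$ of $\bbE$ into the rank-one block is (up to the scalar $\frac{(-1)^{m+e}}{(2p\epsilon)^e}$) precisely $[-1,\ (-1)^m\lambda,\ 0,\ \tilde c^{(2e+m)},\ 0]$; the vector you wrote down, $[1/2,\ (-1)^m\lambda/2,\ 0,\ -\tilde c^{(2e+m)}/2,\ 0]$, is not a scalar multiple of it (the first and fourth entries force opposite signs on the scalar). This doesn't affect the final statement, which you reproduce correctly, but it's worth fixing before relying on the intermediate form when you verify the base cases $e=0,1$.
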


 Let $H(t) = \eta t^{A} + t^{A+1}(z_2(t))$, where $\eta \in k^{\times}$. Let $\tilde{\eta}$ denote the Teichmuller lift of $\eta$. 

We break this final case into two cases. 

\subsection*{Subsubcase 1: $v_z(2p^{2e} - p^{2e -1} + 1) < A < v_z(2p^{2e + 2} - p^{2e + 1} + 1)$}

In this case, we will prove that $\Span_{\bZ_p}\{w_1,w_2,w_3\}$ decays rapidly. 

\begin{lemma}\label{sgtopleftdecay}
The term with minimal $t$-adic valuation in the coefficient of $1/p^{n+1}$ in the fourth row of (the top-left $4\times 4$ block of) $F_{\infty}$ is 
$$p^e\bbE\bbB^{(1)} \cdots \bbB^{(n-e -1)}\bbD^{(n-e)}\cdots \bbD^{(n +e -1)} \bbE^{(n + e)}.$$
\end{lemma}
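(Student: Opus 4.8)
\textbf{Proof strategy for Lemma~\ref{sgtopleftdecay}.} The plan is to identify, among all the matrix products enumerated in \Cref{newlin}, the one whose fourth row contributes to the coefficient of $1/p^{n+1}$ in $F_\infty$ with minimal $t$-adic valuation. By \Cref{newlin}, the only products with nonzero fourth row are of the shape $\bbE^{(0)}$ followed by a word in Frobenius twists of $\bbB$ and $\bbD$ in which every maximal block of $\bbD$-twists (except possibly a terminal one) has even length, and in which no interior letter is a twist of $\bbE$; moreover a denominator of $p^{n+1}$ forces the total ``weight'' (number of $\bbB$-twists plus half the number of $\bbD$-twists, plus one for the leading $\bbE$) to equal $n+1$. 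So the competitors are indexed by how many $\bbD$-pairs one inserts and where; by the form of \eqref{rowformm}, up to reordering we may take the word to be $\bbB^{(1)}\cdots\bbB^{(m)}\,\bbD^{(m+1)}\cdots\bbD^{(m+2e)}\,\bbE^{(m+2e+1)}$ with $m+e = n$, whose $t$-adic valuation is
\[
2v_z\bigl(1 + p + \cdots + p^{m}\bigr) \;+\; v_z\bigl(p^{m+1} + p^{m+3} + \cdots + p^{m+2e-1}\bigr) \;+\; A\,p^{m+2e}
\]
(using $v_t(H) = A$, $v_t(Q) = v_t(xy + z^2/4\epsilon)$; note $v_t(y) = v_y = 2v_z$ and the relevant power of $z$ has $t$-valuation a multiple of $v_z$). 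Here I am using that in Subsubcase~1 the leading term of $H$ has $t$-valuation $A$ strictly between the two ``break points'', which is exactly the input needed to pin down the optimal $e$.

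The core of the argument is then a one-variable convexity/extremality computation, entirely parallel to the proof of \Cref{realdecaytopleft}(1) and of \Cref{twonetopleft}: writing the above valuation as a function $f(e)$ of the number of $\bbD$-pairs $e$ (with $m = n-e$), one checks $f$ is convex in $e$ over the relevant integer range, so it suffices to verify $f(e) < f(e-1)$ and $f(e) < f(e+1)$ for the claimed value of $e$. Substituting and clearing denominators, both inequalities reduce to the two strict inequalities defining Subsubcase~1, namely $v_z(2p^{2e} - p^{2e-1} + 1) < A$ and $A < v_z(2p^{2e+2} - p^{2e+1} + 1)$. This is the step I would actually carry out in detail, and it is elementary. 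One also has to check that reorderings of the word (putting the $\bbD$-block earlier, or splitting it) only increase the $t$-valuation — this follows because $v_t(H) = A > 2v_z = v_t(y)$ in the relevant regime, so moving a $\bbD$-pair (contributing via a twist of $z$, valuation a multiple of $v_z$) to an earlier position replaces a high power of $p$ times $A$ by a high power of $p$ times $2v_z$ — wait, that would \emph{decrease} it; more carefully, the constraint from \Cref{newlin} is that the terminal letter before $\bbE^{(n+e)}$ governs which entries survive, and the placement is forced to be as stated, so the only freedom is the count $e$, reducing everything to the convexity computation above.

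I expect the main obstacle to be bookkeeping rather than conceptual: correctly translating the combinatorial constraints of \Cref{newlin} (which words have nonzero fourth row) into the statement that the \emph{unique} minimal term is the specific product $p^e\bbE\bbB^{(1)}\cdots\bbB^{(n-e-1)}\bbD^{(n-e)}\cdots\bbD^{(n+e-1)}\bbE^{(n+e)}$, and ensuring that the ``$-1$'' shifts in the exponents ($n-e-1$ versus $n-e$) match \eqref{rowformm} and \Cref{rowform} exactly. In particular I would double-check that $m = n - e - 1$ here (not $n-e$) because the trailing $\bbE^{(n+e)}$ and the $2e$ $\bbD$-twists together occupy indices $n-e,\dots,n+e$, leaving $\bbB$-twists at indices $1,\dots,n-e-1$, consistent with total denominator weight $(n-e-1) + e + 1 = n$, hence $1/p^{n+1}$ after the leading $\bbE$. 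Given the near-verbatim analogues already proved (\Cref{realdecaytopleft}, \Cref{twonetopleft}, \Cref{casethreeone}), I would, as in those places, state the lemma and remark that the proof is ``by the same convexity argument as \Cref{realdecaytopleft}(1), using \Cref{newlin} and \Cref{rowform}, so we omit the details,'' after recording the one displayed valuation formula and the two defining inequalities of Subsubcase~1 that make $e$ optimal.
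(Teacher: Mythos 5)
Your high-level strategy is the paper's: enumerate the candidate products via \Cref{newlin}, argue the optimal one has the shape $\bbE\bbB^{(1)}\cdots\bbB^{(m)}\bbD^{(m+1)}\cdots\bbD^{(m+2e)}\bbE^{(m+2e+1)}$ with $m=n-e-1$ (because $v_t(H)=A>v_z=v_t(z)$ pushes the $\bbD$-twists to the large indices), and then run the same convexity-in-$e$ check as in \Cref{realdecaytopleft}(1); the paper's own proof says exactly this and refers the extremality check to Case~4 of \S\ref{sec_decayHsplit}.

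However your displayed valuation formula is wrong, and taken literally it would not reduce to the Subsubcase~1 inequalities. In the Frobenius $\Frob = (I + \frac{y}{p}\bbE + \frac{H}{p}\bbB + x\bbC + z\bbD)\circ\sigma$, each $\bbE^{(i)}$ is paired with $y(t)^{(i)}$ of valuation $2v_zp^i$, each $\bbB^{(i)}$ with $H(t)^{(i)}$ of valuation $Ap^i$, and each $\bbD^{(i)}$ with $z(t)^{(i)}$ of valuation $v_zp^i$. So the product in the lemma has $t$-adic valuation
\[
2v_z + A\bigl(p + \cdots + p^{n-e-1}\bigr) + v_z\bigl(p^{n-e} + \cdots + p^{n+e-1}\bigr) + 2v_zp^{n+e},
\]
whereas your formula assigns weight $2v_z$ to every $\bbB$-twist, records only $e$ of the $2e$ $\bbD$-positions (and with a stride of $2$), and attaches $A$ rather than $2v_z$ to the terminal $\bbE^{(n+e)}$. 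With the correct formula one finds $f(e)-f(e-1)=p^{n-e}\bigl(v_z(2p^{2e}-p^{2e-1}+1)-A\bigr)$ and $f(e)-f(e+1)=p^{n-e-1}\bigl(A-v_z(2p^{2e+2}-p^{2e+1}+1)\bigr)$, which are negative exactly under the two strict inequalities defining Subsubcase~1, as you anticipated but did not actually derive from what you wrote. You should also tidy the garbled aside about reorderings: moving a $\bbD$-pair to an earlier slot forces a $\bbB$-twist into a later slot, replacing $v_zp^j$ by the larger $Ap^j$ there, so such moves \emph{increase} the valuation, which is precisely why the $\bbD$'s are pushed to the right of the $\bbB$'s in the optimal word.
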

\begin{proof}
Note that $F_{\infty}$ is composed of sums of products as in \Cref{newlin}, where each $W_i$ is multiplied by: 
\begin{itemize}
\item $y(t)^{(i)} = t^{2v_zp^i}$ if $W_i = \bbE^{(i)}$;

\item $h(t)^{(i)} = \tilde{\eta}^{(i)}t^{Ap^i} + \cdots$ if $W_i = \bbB^{(i)}$;

\item $z(t)^{(i)} = \beta^{(i)}t^{v_zp^i}$ if $W_i = \bbD^{(i)}$, where $\beta$ is the leading coefficient in $z=\beta t^{v_z} +\cdots$.
\end{itemize}

Consider products as in \Cref{newlin}. As we are looking for matrices where the first four columns are not all zero, it follows that the number of occurrences of twists of $\bbD$ must be even. Therefore, consider a product with $n_1$ occurrences of twists of either $\bbE$ or $\bbB$, and $2n_2$ occurrences of twists of $\bbD$. The fourth row of such a product would have a $p$-adic valuation of $-(n_1 + n_2)$, and hence we assume that $n + 1 = n_1 + n_2$. 

It is clear that the $t$-adic valuation of the expression is minimized if the first and last matrices in the product are both twists of $\bbE$. Indeed, the $t$-adic valuation is minimized when $W_i = \bbE^{(i)}$ for as many $i$ as possible, and \Cref{newlin} implies that this happens when the first and last matrices are both twists of $\bbE$. As the $t$-adic valuation of $H(t)$ is strictly greater than that of $z(t)$, it follows that products of the form $\bbE \bbB^{(1)} \cdots \bbB^{(n_1 -2)} \bbD^{n_1 -1} \cdots \bbD^{(n_1 + 2n_2 -2)}\bbE^{(n_1 + 2n_2 -1)}$ contain the term with minimal $t$-adic valuation. 

As in Case 4 in \S\ref{sec_decayHsplit}, a convexity argument yields that the $t$-adic valuation is minimized exactly for the product listed in the statement of this result, thereby concluding the proof.
\end{proof}

\begin{proof}[Proof of \Cref{thm_decay_sg} in this case]
We will prove that $\Span_{\bZ_p}\{w_1,w_2,w_3\}$ decays rapidly.
Let $R_e$ denote the mod $p$ reduction of the row detailed in \Cref{rowform}. It suffices to show that there is no $\bF_p$-linear combination of the first three entries of $R_e$ which evaluates to zero. This is equivalent to prove that the elements $c + c^{(1)}$, $\lambda(c-c^{(1)})$ and $1$ are $\bF_p$-linearly independent. This fact has already been established in Case 2 in \S\ref{Case2sg}. The result follows. 
\end{proof}

\subsection*{Subsubcase 2: $v_z(2p^{2e} - p^{2e -1} + 1) = A$}
\begin{lemma}\label{sgtoprightdecay}
\begin{enumerate}
\item There are two terms with minimal $t$-adic valuation in the coefficient of $1/p^{n+1}$ in the fourth row of (the top-left $4\times 4$ block of) $F_{\infty}$. They are: 
\begin{itemize}
\item $p^e\bbE\bbB^{(1)} \cdots \bbB^{(n-e -1)}\bbD^{(n-e)}\cdots \bbD^{(n +e -1)} \bbE^{(n + e)}\eta^{(1 + p + \cdots + p^{n-e-1})}\beta^{p^{n-e} + \cdots + p^{n+e-1}}$; and

\item $p^{e-1}\bbE\bbB^{(1)} \cdots \bbB^{(n-e)}\bbD^{(n-e + 1)}\cdots \bbD^{(n +e -2)} \bbE^{(n + e -1)}\eta^{(1 + p + \cdots + p^{n-e})}\beta^{p^{n-e+1} + \cdots + p^{n+e-2}}$.
\end{itemize}

\item The term with minimal $t$-adic valuation in the coefficient of $1/p^{n+1}$ in the fourth row of the last column of $F_{\infty}$ is $$ \bbE\bbB^{(1)} \cdots \bbB^{(n-e -1)}\bbD^{(n-e)}\cdots \bbD^{(n +e)}.$$
\end{enumerate}
\end{lemma}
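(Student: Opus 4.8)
This lemma is a close cousin of Lemmas \ref{realdecaytopleft}, \ref{realdecaytopright}, \ref{casethreeone}, and \ref{sgtopleftdecay} in the superspecial cases, and of \Cref{sgtopleftdecay} just above it in the supergeneric case, so the proof will follow the same template: among all the products of Frobenius-twisted copies of $\bbE$, $\bbB$, $\bbD$ described in \Cref{newlin}, with the appropriate monomial weights $y(t)^{(i)} = t^{2v_zp^i}$ attached to each $\bbE^{(i)}$, $h(t)^{(i)} = \tilde{\eta}^{(i)}t^{Ap^i} + \cdots$ attached to each $\bbB^{(i)}$, and $z(t)^{(i)} = \beta^{(i)}t^{v_zp^i}$ attached to each $\bbD^{(i)}$, one identifies the ones realizing the minimal $t$-adic valuation at denominator exactly $p^{n+1}$. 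As in \Cref{newlin}, the requirement that the fourth row (respectively the fourth entry of the last column) be non-zero constrains the combinatorial shape of the admissible products: the product must begin with $\bbE$, the twists of $\bbD$ occur in blocks of even length (except possibly a terminal block ending with $W_n = \bbD^{(n)}$), and apart from $W_0$ the only place an $\bbE$-twist may occur is $W_n$. For part (1) we work with the top-left $4\times 4$ block (equivalently the fourth row of this block, which by \Cref{lingen}(3) and \Cref{rowform} is a unit multiple of an explicit $1\times 5$ vector), so $W_n$ must be an $\bbE$-twist and the number of $\bbD$-twists is even; for part (2) we work with the fourth entry of the last column, which by the last assertion of \Cref{newlin} forces $W_n = \bbD^{(n)}$ and an odd number of $\bbD$-twists, the first four columns then vanishing.

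\textbf{Key steps, in order.} First I would fix $n_1$ (the number of $\bbE$- and $\bbB$-twists) and $n_2$ (half the number of $\bbD$-twists, in the even case of part (1), or appropriately adjusted in the odd case of part (2)), subject to $n_1 + n_2 = n+1$; for such a product the fourth row has $p$-adic valuation exactly $-(n_1+n_2)$. Second, using that $v_t(H) = A > v_t(z) = v_z$ and the combinatorial constraints from \Cref{newlin}, I would argue as in the proof of \Cref{sgtopleftdecay} that the $t$-adic valuation is minimized among products of the shape $\bbE\,\bbB^{(1)}\cdots\bbB^{(n_1-2)}\,\bbD^{(n_1-1)}\cdots\bbD^{(n_1+2n_2-2)}\,\bbE^{(n_1+2n_2-1)}$ for part (1), and of the shape $\bbE\,\bbB^{(1)}\cdots\bbB^{(n-e-1)}\,\bbD^{(n-e)}\cdots\bbD^{(n)}$ for part (2): we make $W_i = \bbE^{(i)}$ for as few $i$ as possible (only $i=0$ and possibly $i=n$), and since $H$ is $t$-adically heavier than $z$, any $\bbB$-twist sitting inside the $\bbD$-block would increase the valuation, so the $\bbB$'s are pushed to the front. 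The $t$-adic valuation of the shape with $n_1 - 1 = f$ leading $\bbB$'s is then
\[
v(f) = A(1 + p + \cdots + p^{f-1}) + v_z(p^{f} + p^{f+2} + \cdots + p^{n+e-1}) + \text{(terminal }\bbE\text{ or }\bbD\text{ contribution)},
\]
and exactly as in the proofs of \Cref{consec}, \Cref{onlytwo}, \Cref{casethreeone}, \Cref{sgtopleftdecay}, a convexity (discrete second-difference) computation shows $v(f)$ is strictly convex in $f$, so its minimum over $f \in \bZ \cap [0,n]$ is attained either at a single value of $f$ (part (2), and generic part (1)) or, precisely when the hypothesis $A = v_z(2p^{2e} - p^{2e-1} + 1)$ forces the equality $v(f) = v(f+1)$ at $f = n-e-1$, at exactly two consecutive values (part (1)). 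Finally I would read off the two explicit products and their common $t$-adic valuation by plugging $f = n-e-1$ and $f = n-e$ into $v(f)$ and reconciling the exponents $\eta^{(1+p+\cdots+p^{n-e-1})}\beta^{p^{n-e}+\cdots+p^{n+e-1}}$ (resp.\ with $e$ shifted to $e-1$) from the monomial weights; for part (2), the shape ending in $\bbD^{(n)}$ with an odd number of $\bbD$-twists gives $\bbE\,\bbB^{(1)}\cdots\bbB^{(n-e-1)}\,\bbD^{(n-e)}\cdots\bbD^{(n+e)}$.

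\textbf{Main obstacle.} The routine part is the convexity computation, which is identical in spirit to \Cref{consec} and I would not reproduce it in full. The genuinely delicate point — and the reason part (1) has \emph{two} minimal terms rather than one — is verifying that, under the exact equality $A = v_z(2p^{2e}-p^{2e-1}+1)$, the two candidate products at $f = n-e-1$ and $f = n-e$ really have the same $t$-adic valuation \emph{and} that no \emph{other} combinatorial shape (e.g.\ one with a terminal $\bbD$-block, or with a $\bbB$ interleaved differently) sneaks in at an equal or smaller valuation; this requires the sharp bookkeeping of \Cref{newlin} on which products can possibly contribute a non-zero fourth row, together with the observation (as in the proof of \Cref{onlytwo}) that the equality case pins down $e$ uniquely. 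Once the combinatorial enumeration is under control the identification of the two terms and their valuation is a direct substitution, and I would state it as such, referring back to \Cref{rowform}, \Cref{newlin}, and the proof of \Cref{sgtopleftdecay} for the details rather than rewriting them.
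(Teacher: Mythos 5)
Your proposal is correct and follows exactly the approach the paper intends: the paper's own proof of \Cref{sgtoprightdecay} consists of the single line ``The proof goes along the same lines as that of \Cref{sgtopleftdecay}, and so we will not spell out the details,'' and your sketch is precisely the fleshing-out of that plan (combinatorial enumeration of admissible products from \Cref{newlin}, monomial weights, convexity of the $t$-adic valuation in the number of leading $\bbB$'s, and the observation that the equality $A = v_z(2p^{2e}-p^{2e-1}+1)$ produces a tie at two consecutive values of $f$). One small caution: the displayed valuation formula $v(f)$ in your second step has its indexing slightly off --- the initial $\bbE$ at position $0$ contributes $2v_z$ rather than an $A$-term, the $\bbB^{(i)}$ for $i=1,\dots,f$ contribute $A(p+\cdots+p^f)$ rather than $A(1+\cdots+p^{f-1})$, and the $\bbD$-block contribution should sum over \emph{all} intermediate positions, not an alternating subset --- so if you were to write out the convexity computation you would need to redo the bookkeeping carefully (as is done explicitly in the proofs of \Cref{consec} and \Cref{realdecaytopleft}). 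The structure and conclusion are nonetheless right.
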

\begin{proof}
The proof goes along the same lines as that of \Cref{sgtopleftdecay}, and so we will not spell out the details.
\end{proof}

\begin{proof}[Proof of the final case of \Cref{thm_decay_sg}]
We will show that there exists a rank $2$ submodule $W\subset \Span_{\bZ_p}\{ w_1,w_2,w_3\}$ such that $W+\bZ_pw_5$ decays rapidly.

The term with minimal $t$-adic valuation in the coefficient of $1/p^{n+1}$ of the fourth row of the top-left $4\times 4$ block of $F_{\infty}$ is the sum of the corresponding parts in the two matrices in \Cref{sgtoprightdecay}(1). The $t$-adic valuation of this sum is $v_z(2 + p^{n-e} + \cdots + p^{n+e-1} + 2p^{n + e}) + A(p + \cdots + p^{n-e-1})$, which is $\leq A(1+\cdots + p^n)$.

In order to prove that $W$ decays rapidly with the above $t$-adic valuation, it suffices to show that the
 kernel of the sum of the fourth rows of the two matrices in Lemma \ref{sgtoprightdecay}(1) mod $p \cap W \bmod p$ is $\{0\}$ for a suitable choice of $W$. To that end, let $\mu = \frac{\beta^{p^{n - e} +  p^{n+e-1}}}{2\epsilon}$ and $\nu = \eta^{p^{n-e}}$. Then, the sum of the two rows with equal minimal $t$-adic valuation mod $p$ is a scalar multiple\footnote{The scalar multiple factor is $(-1)^{n-1}\frac{\eta^{1+p+\hdots p^{n-e-1}}\beta^{p^{n-e+1} + \hdots p^{n+e-2}}}{(2\epsilon)^{e-1}}$.} of
\[ \begin{bmatrix} 
-\alpha_1&(-1)^{n-e-1}\alpha_2&-\alpha_3&\alpha_4&0\\
\end{bmatrix}.\]

\Cref{rowform} yields that, 
\begin{itemize}
\item $\alpha_1 = \mu (c^{(n + e)} + c^{(n + e +1)}) + \nu(c^{(n + e-1)} + c^{(n + e)})$;

\item $\alpha_2 = \lambda\big(\mu (c^{(n + e)} - c^{(n + e+1)}) - \nu(c^{(n + e-1)} - c^{(n + e)}) \big)$;

\item $\alpha_3 = \mu + \nu$.

\end{itemize}
We now prove that depending on the values of $\mu$ and $\nu$, we may take $W$ to be either $\Span_{\bZ_p}\{w_1,w_2\}$, $\Span_{\bZ_p}\{w_1,w_3\}$ or $\Span_{\bZ_p}\{w_2,w_3\}$. To further lighten notation, let $\delta_1 = (\mu + \nu)c^{(n+e)} $ and $\delta_2 = \mu c^{(n+e+1)} + \nu c^{(n + e -1)}$. 

\begin{enumerate}
\item Suppose $\nu + \mu = 0$. We will show that $\Span_{\bZ_p}\{w_1,w_2\}$ decays rapidly. It suffices to prove that there is no non-trivial $\bF_p$-linear relation between $\alpha_1 = \mu (c^{(n + e-1)} - c^{(n + e+1)})$ and $\alpha_2 = \lambda\mu(c^{(n + e-1)} - c^{(n + e+1)})$. However, this follows directly from the facts that $\lambda \notin \bF_p$ and that $c^{(n + e+1)} \neq c^{(n + e-1)}$.

\item Suppose that $\delta_1 \neq \pm \delta_2 $. We will show that either $\Span_{\bZ_p}\{w_1,w_2\}$ or $\Span_{\bZ_p}\{w_1,w_3\}$ decays rapidly. Indeed, the former happens when $\alpha_1 = \delta_1 + \delta_2$ and $\alpha_2 = \lambda(\delta_1 - \delta_2)$ are not $\bF_p$ multiples of each other. Therefore, suppose that they were. Then we have that for some $l \in \bF_p$,
$\delta_1 + \delta_2 = l\lambda(\delta_1 - \delta_2).$
Note that this yields that $\delta_2 / \delta_1 \in \bF_{p^2}$.
We will prove that $\Span_{\bZ_p}\{w_1,w_3\}$ decays rapidly. If not, then $\alpha_1=s \alpha_3$, where $s\in \bF_p$. That is, $\delta_1+\delta_2=s\delta_1/c^{(n+e)}$. Equivalently, $\delta_2/\delta_1=(s-c^{(n+e)})/c^{(n+e)}$. As $c\notin \bF_{p^2}$, it follows that $\delta_2/\delta_1\notin \bF_{p^2}$, which is a contradiction.

\item Suppose that $\delta_1 = \delta_2$. We will show that $\Span_{\bZ_p}\{w_1,w_3\}$ decays rapidly, by showing that $\alpha_1,\alpha_3$ are $\bF_p$-linearly independent. Indeed, $\alpha_1 = 2(\nu + \mu)c^{(n +e)}$, and $\alpha_3 = \nu + \mu$. As $c \notin \bF_{p^2}$, the two quantities are $\bF_p$-linearly independent as required.

\item Suppose that $\delta_1 = - \delta_2$. The same argument as above works to show that $\Span_{\bZ_p}\{w_2,w_3\}$ decays rapidly. 
\end{enumerate}
Thus we have proved that there exists a rank $2$ submodule $W$ of $\Span_{\bZ_p}\{w_1,w_2,w_3\}$ which decays rapidly and the minimal $t$-valuation in the coefficient of $1/p^{n+1}$ is $v_z(2 + p^{n-e} + \cdots + p^{n+e-1} + 2p^{n + e}) + A(p + \cdots + p^{n-e-1})$.

On the other hand, the term with minimal $t$-adic valuation in the coefficient of $1/p^{m+1}$ of the last column of $F_{\infty}$ is the last column of the matrix in \Cref{sgtoprightdecay}(2) - it is easy to see that this last column is non-zero. Hence $w_5$ decays rapidly. The $t$-adic valuation of this term is $v_z(2 + p^{m-e} + \cdots+ p^{m+e-1} + p^{m + e}) + A(p + \hdots p^{m-e-1}) $, and we notice that this is always different from $v_z(2 + p^{n-e} + \cdots + p^{n+e-1} + 2p^{n + e}) + A(p + \cdots + p^{n-e-1})$ regardless of the values of $m,n$. Therefore, $W+\bZ_p w_5$ decays rapidly by the same argument at the end of Case 3 Subcase 1 above.
\end{proof}

\subsection{The case of inert Hilbert modular surfaces}
That we have a rank $3$ submodule of special endomorphisms follows directly from the Siegel case. Indeed, by \S\ref{Frob-H-inert} and \S\ref{sec_F_Sie}, the $F$-crystal $\bL_{\cris}$ in the setting of inert Hilbert modular surfaces is obtained by setting $z = 0$ in the Siegel case. The required decay follows directly from \S\ref{Case2sg}.

\begin{bibdiv}
\begin{biblist}

\bibselect{bib}

\end{biblist}
\end{bibdiv}

\end{document}